\def\namedlabel#1#2{\begingroup
    #2%
    \def\@currentlabel{#2}%
    \phantomsection\label{#1}\endgroup
}
\newcommand{\R}{\mathbb{R}}\newcommand{\Z}{\mathbb{Z}}\newcommand{\F}{\mathcal{F}}\newcommand{\C}{\mathbb{C}}
\newcommand{\p}{\mathrm{p}}
\newcommand{\s}{\mathrm{s}}
\newcommand{\ho}{\mathrm{h}}
\newcommand{\K}{\mathcal{K}}
\newcommand{\El}{\mathcal{L}}
\newcommand{\h}{\mathcal{H}}
\newcommand{\ord}{\mathcal{O}}
\newcommand{\D}{\mathcal{D}}
\newcommand{\Es}{\mathcal{S}}
\newcommand{\Proj}{\mathcal{P}}
\newcommand{\U}{\mathcal{U}}
\newcommand{\E}{\mathcal{E}}
\newcommand{\A}{\mathcal{A}}
\newcommand{\J}{\mathcal{J}}
\newcommand{\M}{\mathcal{M}}
\newcommand{\N}{\mathcal{N}}
\newcommand{\B}{\mathcal{B}}
\newcommand{\G}{\mathcal{G}}
\newcommand{\Ce}{\mathcal{C}}
\newcommand{\T}{\mathcal{T}}
\newcommand{\X}{\mathcal{X}}
\newcommand{\V}{\mathcal{V}}
\newcommand{\I}{\mathcal{I}}
\newcommand{\Ef}{\mathcal{F}}
\newcommand{\W}{\mathcal{W}}
\newcommand{\Ze}{\mathcal{Z}}
\newcommand{\eps}{\epsilon}
\newcommand{\Y}{\mathcal{Y}}
\numberwithin{equation}{section}
\let\eps\epsilon
\let\epsilon\varepsilon
\let\pih\phi
\let\phi\varphi
\def\xx{{\check{x}}} 
\def\yy{{\check{y}}} 
\def\zz{{\check{z}}} 
\def\ww{{\check{w}}} 
\newtheoremstyle{theoremsty}{14pt}{14pt}{\itshape}{}{\bfseries}{.}{.5em}{}
\newtheoremstyle{definitionsty}{14pt}{14pt}{\normalfont}{}{\bfseries}{.}{.5em}{}
\theoremstyle{theoremsty}
\newtheorem{theo}{Theorem}[section]
\newtheorem{lem}[theo]{Lemma}
\newtheorem{prop}[theo]{Proposition}
\newtheorem{cor}[theo]{Corollary}
\theoremstyle{definitionsty}
\newtheorem{exa2}[theo]{Example}
\newtheorem{defi}[theo]{Definition}
\newtheorem{nota}[theo]{Notation}
\newtheorem{rem}[theo]{Remark}
\renewenvironment{proof}[1][\proofname] {\par\pushQED{\qed}\normalfont\topsep6\p@\@plus6\p@\relax\trivlist\item[\hskip\labelsep\bfseries#1\@addpunct{.}]\ignorespaces}{\popQED\endtrivlist\@endpefalse} \makeatother
\title{Spectra and stability of spatially periodic pulse patterns: Evans function factorization via Riccati transformation\thanks{This work was partially supported by the Dutch science foundation (NWO) cluster NDNS+.}}
\author{Bj\"orn de Rijk\footnotemark[2] \\ \texttt{brijk@math.leidenuniv.nl}
\and Arjen Doelman\footnotemark[2] \\ \texttt{doelman@math.leidenuniv.nl}
\and Jens Rademacher\footnotemark[3] \\ \texttt{rademach@math.uni-bremen.de}}
\date{\vspace{-2em}}
\begin{document}
\maketitle

\renewcommand{\thefootnote}{\fnsymbol{footnote}}
\footnotetext[2]{Mathematisch Instituut, Universiteit Leiden, P.O. Box 9512, 2300 RA Leiden, The Netherlands.}
\footnotetext[3]{Universit\"at Bremen, Fachbereich Mathematik, Postfach 33 04 40, 28359 Bremen, Germany.}

\begin{abstract}In the spectral stability analysis of localized patterns to singular perturbed evolution problems, one often encounters that the Evans function respects the scale separation. In such cases the Evans function of the full linear stability problem can be approximated by a product of a slow and a fast reduced Evans function, which correspond to properly scaled slow and fast singular limit problems. This feature has been used in several spectral stability analyses in order to reduce the complexity of the linear stability problem. In these studies the factorization of the Evans function was established via geometric arguments that need to be customized for the specific equations and solutions under consideration. In this paper we develop an alternative factorization method. In this analytic method we use the Riccati transformation and exponential dichotomies to separate slow from fast dynamics. We employ our factorization procedure to study the spectra associated with spatially periodic pulse solutions to a general class of multi-component, singularly perturbed reaction-diffusion equations. Eventually, we obtain expressions of the slow and fast reduced Evans functions, which describe the spectrum in the singular limit. The spectral stability of localized periodic patterns has so far only been investigated in specific models such as the Gierer-Meinhardt equations. Our spectral analysis significantly extends and formalizes these existing results. Moreover, it leads to explicit instability criteria.
\end{abstract}

\thispagestyle{empty}

\section{Introduction}
Localized patterns arise frequently in evolution problems with a strong spatial scale separation, which naturally leads to the question of their dynamic stability. Several methods have been developed to study the often decisive spectral stability, especially for the paradigmatic class of parabolic semi-linear reaction-diffusion systems on the line of the form,
\begin{align}
\left\{\!\begin{array}{rcl} \partial_t u &=& D_1\partial_{\xx\xx} u - H(u,v,\epsilon)\\
\partial_t v &=& \epsilon^2 D_2\partial_{\xx\xx} v - G(u,v,\epsilon)\end{array}\right., \ \ \ u \in \R^m, v \in \R^n,\label{reac}
\end{align}
where $0 < \epsilon \ll 1$ is asymptotically small and $D_{1,2}$ are (strictly) positive diagonal matrices. The majority of these methods are built on the complex analytic Evans function $\E_\epsilon(\lambda)$, which vanishes precisely on the spectrum and thus is a tool to locate the critical spectrum -- see \cite{AGJ,EVA} and Remark \ref{applitrem}. \\
\\
The singular nature of the equations and the patterns under consideration can reduce the complexity of finding the roots of the Evans function: the slow-fast structure in the linear stability problem induces a factorization of the Evans function into a slow and a fast component,
\begin{align}
\E_\epsilon(\lambda) = \E_{s,\epsilon}(\lambda)\cdot \E_{f,\epsilon}(\lambda), \label{factorintr}
\end{align}
as was first observed by Alexander, Gardner and Jones \cite{AGJ} in the context of traveling pulses in the FitzHugh-Nagumo equations. While operating in a geometric framework, they showed that the unstable bundle formed from the projectivized linear stability equations splits into a Whitney sum of two line bundles, which are governed by slow and fast singular limit problems. This splitting can be directly linked to the factorization (\ref{factorintr}) of the Evans function in a slow and a fast component. However, although the geometric arguments behind the decomposition in \cite{AGJ} are very general, they need to be based on an analytical result that indeed controls the relevant subbundles, which has been established in \cite{AGJ,JON} in the context of the FitzHugh-Nagumo equations.  \\
\\
In subsequent work, Gardner and Jones \cite{GJO} validated the geometric argument of \cite{AGJ} and thus the factorization (\ref{factorintr}), in the context of traveling fronts in a predator-prey model. They tracked the fast subbundle analytically via the so-called `elephant trunk lemma'. Then, using the control on the fast subbundle, they approximated the slow subbundle. Further technical adaptations of the elephant trunk lemma to spectral stability problems associated with localized homoclinic or heteroclinic structures in the Gray-Scott and Fabry-P\'erot model have been developed in \cite{DGK} and \cite{RUB}, respectively. Nowadays, it is widely accepted that the elephant trunk procedure can be mimicked -- or better: adapted -- for a large class of singularly perturbed systems. However, for every application one should in principle go through one of the extensive proofs developed in the setting of the aforementioned specific systems to check whether technicalities still hold true.
\\ \\
By tracking the fast subsystem/subbundle through the elephant trunk procedure, it is possible to derive an explicit complex analytic fast reduced Evans function $\E_{f,0}(\lambda)$ whose zeros approximate those of $\E_{f,\epsilon}(\lambda)$ -- see also \cite{DGK2}. Moreover, an explicit, but meromorphic, slow reduced Evans function $\E_{s,0}(\lambda)$ can be obtained via the so-called `NLEP (= NonLocal Eigenvalue Problem) approach', which was developed in \cite{DGK} in the context of stability of homoclinic pulses in the Gray-Scott model. Thus, a combination of the elephant trunk procedure and the NLEP approach yields a \emph{reduced Evans function},
\begin{align}
\E_0(\lambda) = \E_{s,0}(\lambda)\cdot \E_{f,0}(\lambda),
\label{factorintrred}
\end{align}
whose zeros approximate those of $\E_\epsilon(\lambda)$ and whose factors $\E_{s,0}(\lambda)$ and $\E_{f,0}(\lambda)$ can be derived explicitly as $\epsilon \to 0$ through properly scaled slow and fast singular limit problems. The NLEP approach and thus the validation of the decomposition (\ref{factorintr}) and its explicit reduction (\ref{factorintrred}), was further developed in the context of localized homoclinic or heteroclinic pulses or fronts in certain classes of $2$- or $3$-component singularly perturbed reaction-diffusion equations in \cite{DGK2,DIN,VEA,HEI,VEE}. It should be remarked that in neither of these papers the elephant trunk procedure is worked out in full analytical detail. \\
\\
This leads us to the main goal of this paper. We establish the validity of both the decomposition (\ref{factorintr}) of the Evans function $\E_\epsilon(\lambda)$ and its singular limit structure (\ref{factorintrred}) for a large class of nonlinearities $H(u,v,\epsilon)$, $G(u,v,\epsilon)$ in system (\ref{reac}) and general dimensions $n,m\geq 1$. Hence, we provide a generalized analytic alternative to both the geometric elephant trunk and NLEP procedures. The method presented here is based on the Riccati transformation \cite{CHA,CH2}. This transformation, which satisfies a matrix Riccati equation, diagonalizes the linear stability problem and thus explicitly separates fast from slow dynamics. This separation yields the factorization of the Evans function (\ref{factorintr}) and provides a framework for the passage to the singular limit (\ref{factorintrred}).
\newpage
The factorization procedure can easily be outlined in a way that does neither depend on the specific structure of the system nor on the specific patterns under consideration. The first step is to write the spectral problem associated with the linearization of (\ref{reac}) about a singular, localized pattern,
\begin{align*}
 \El_\epsilon \left(\begin{array}{c} u \\ v \end{array}\right) = \left(\begin{array}{c} D_1 \partial_{\xx\xx} u \\ \epsilon^2 D_2 \partial_{\xx\xx} v\end{array}\right) - \B_\epsilon(\xx) \left(\begin{array}{c} u \\ v \end{array}\right) = \lambda \left(\begin{array}{c} u \\ v \end{array}\right),
\end{align*}
as a $2(n+m)$-dimensional slow-fast system in block matrix form,
\begin{align} \left(\begin{array}{c} \partial_x \phi \\ \partial_x \psi \end{array}\right) = \left(\begin{array}{cc} \sqrt{\epsilon} \A_{11,\epsilon}(x,\lambda) & \sqrt{\epsilon} \A_{12,\epsilon}(x,\lambda) \\ \A_{21,\epsilon}(x,\lambda) & \A_{22,\epsilon}(x,\lambda) \end{array}\right)\left(\begin{array}{c} \phi \\ \psi \end{array}\right), \label{LSPintro}\end{align}
in the rescaled spatial variable $x = \epsilon^{-1}\xx$. Subsequently, one establishes that the $2n$-dimensional fast singular limit problem,
\begin{align}\partial_x \psi = \A_{22,0}(x,\lambda) \psi, \label{fastintro}\end{align}
in which $\A_{22,0}(x,\lambda)$ represents the singular limit of $\A_{22,\epsilon}(x,\lambda)$, has exponential dichotomies on both half lines due to the localized nature of the solutions under consideration. These dichotomies can be pasted together to an exponential dichotomy on the whole real line, as long as $\lambda$ is not an eigenvalue of (\ref{fastintro}). In the third step, roughness techniques are used to carry the exponential dichotomy of (\ref{fastintro}) on $\R$ to the perturbed problem,
\begin{align}
\partial_x \psi = \A_{22,\epsilon}(x,\lambda) \psi, \label{pertintro}
\end{align}
for $0 < \epsilon \ll 1$ and $\lambda$ away from the eigenvalues of (\ref{fastintro}). This exponential dichotomy on $\R$ of (\ref{pertintro}) allows us to successfully diagonalize the linear stability problem (\ref{LSPintro}) with the Riccati transformation yielding a factorization (\ref{factorintr}) of the Evans function in a `slow' factor $\E_{s,\epsilon}(\lambda)$ and a `fast' factor $\E_{f,\epsilon}(\lambda)$. In the last step, we approximate the two blocks, in which (\ref{LSPintro}) diagonalizes, by their singular limits. Once we have understood these singular limit problems, we are able to explicitly obtain leading order expressions $\E_{s,0}(\lambda)$ and $\E_{f,0}(\lambda)$. As a consequence, the roots of the Evans function can be approximated by the roots of the reduced Evans function $\E_0(\lambda)$ given in (\ref{factorintrred}). Since $\E_{s,0}(\lambda)$ is meromorphic and $\E_{f,0}(\lambda)$ is analytic, one determines the location of the spectrum by calculating the roots and poles of $\E_{s,0}(\lambda)$ and $\E_{f,0}(\lambda)$. \\
\\
In summary, the complex task of finding the roots of the Evans function $\E_\epsilon(\lambda)$ associated with the $2(n+m)$-dimensional system (\ref{LSPintro}) reduces to determining the roots and poles of $\E_{s,0}(\lambda)$ and $\E_{f,0}(\lambda)$, which can be derived from lower-dimensional singular limit problems. More precisely, $\E_{f,0}(\lambda)$ is an Evans function associated with the $2n$-dimensional fast singular limit problem (\ref{fastintro}). In contrast, $\E_{s,0}$ is an explicit expression in terms of a particular solution to an inhomogeneous version of (\ref{fastintro}) and the leading order evolution of the $2m$-dimensional slow subsystem $\partial_x \phi = \sqrt{\epsilon} \A_{11,\epsilon}(x,\lambda)\phi$. We emphasize that some of the analytic techniques used in the factorization process can be linked to the geometric concepts developed in \cite{AGJ,GJO} in the setting of the elephant trunk procedure and in \cite{DGK,DGK2} for the NLEP approach, which is discussed at the end this paper.
\\
\\
We employ our factorization method to determine the critical spectra associated with stationary, spatially periodic pulse solutions to the general class of systems \eqref{reac}. Recall that the proof of the elephant trunk lemma has only been worked out in full analytic detail for some specific $2$-component systems \cite{DGK,ESZ,GJO,RUB}. Hence, the choice for a large class of multi-component reaction-diffusion systems illustrates the general setting to which our method applies. In addition, the choice for \emph{periodic} patterns is motivated by the fact that for certain nonlinearities $H(u,v,\epsilon)$, there is no simple modification of the elephant trunk procedure that works in this setting. We elaborate on the latter claim.
\newpage
It is a general principle that pattern solutions to singularly perturbed models as \eqref{reac} can be `built' from exponentially localized pulses (or fronts) in the fast component(s), but allow for non-localized behavior of the slow components. In other words, the solutions exhibit semi-strong interactions \cite{DOK} (of second order \cite{RAD}). Most spectral analyses, which make use of the elephant trunk and NLEP procedures, are of `slowly linear' nature, in the sense that the dynamics of the slow component in between localized fast pulses/fronts are driven by linear equations -- see Remark \ref{slowlynonlinear}. This slow (non)linearity plays a crucial role in the analysis of the Evans function and its decomposition and reduction. In fact, it is essential for an application of the elephant trunk procedure to periodic patterns that the matrix in the linear stability problem \eqref{LSPintro} is to leading order of constant coefficient type near the boundaries of the spatial domain (determined by the periodicity of the pattern) -- as is the case in \cite{PLO}. In slowly nonlinear systems, the matrix \eqref{LSPintro} explicitly varies in $x$ over the entire domain, thus obstructing an application of the elephant trunk lemma.\\
\\
Before applying our factorization method to study the spectra of these periodic pulse solutions, we consider their existence. An essential observation is that stationary solutions to \eqref{reac} satisfy an ordinary differential equation that admits a reversible symmetry $\xx \mapsto -\xx$. With the aid of geometric singular perturbation theory we establish the existence of reversible symmetric periodic pulse solutions to the $(m+n)$-component slowly nonlinear system \eqref{reac}. This by itself is a significant extension of similar results in the literature that only consider $2$-component slowly linear Gierer-Meinhardt type models \cite{PLO2}. However, we emphasize that our spectral analysis does not rely on reversible symmetry arguments. In fact, we do not assume that the underlying spatially periodic patterns are reversible. This makes an extension of our methods to models with (symmetry-breaking) convective terms or to traveling wave trains natural.\\
\\
Our general results can also be interpreted in more simple cases in which either $n=1$, $m=1$, or both $n=m=1$. In the latter case, we directly recover the expressions obtained in \cite{PLO} for the spectral stability of spatially periodic pulse patterns in the Gierer-Meinhardt equation. The outcome of our spectral analysis shows that the $n=m=1$ Gierer-Meinhardt setting represents a very special case. This restriction hides the underlying general structure of $\E_{s,0}(\lambda)$ and $\E_{f,0}(\lambda)$ in terms of the singular limit problems as obtained here. On the other hand, the restriction of \eqref{reac} to a more general slowly nonlinear $2$-component model as in \cite{VEA} yields a (relatively) simple instability criterion in terms of the signs of a number of explicit integral expressions that can be computed with only an asymptotic approximation of the underlying pattern as input. Thereby, we extend a similar result of \cite{VEA} on homoclinic pulses to spatially periodic patterns. We also refer to an companion paper in preparation \cite{RIJV}, in which the nature of the mechanisms driving the destabilization of spatially periodic patterns in general slowly nonlinear systems \eqref{reac} -- but with $n=m=1$ -- are studied in full analytical (and computational) detail. Thereby, it provides insight in the generic nature of the `Hopf and belly dances' discovered in \cite{STE} in the context of the slowly linear Gray-Scott and Gierer-Meinhardt models. \\
\\
This paper is organized as follows. In Section \ref{sec1} we introduce the class of reaction-diffusion systems under consideration. Moreover, we elaborate on the existence of periodic pulse solutions. Section \ref{sec2} contains the main results of our spectral analysis. In Section \ref{nm1} these results are expanded further in the case one of the components is scalar. The construction of the Riccati transformation is performed in Section \ref{sec5}. The actual spectral analysis via the analytic factorization method is presented in Section \ref{sec6}. Section \ref{sec7} contains some concluding remarks and future research possibilities. In Appendix \ref{A0} one can find the proof of the existence result stated in Section \ref{secexistence}. Moreover, in Appendix \ref{A1} we treat the prerequisites needed for our spectral analysis. Appendix \ref{A2} contains the proofs of some technical, but not fundamentally difficult, results in this paper.
\newpage
\begin{rem}[\textit{Slow nonlinearity}] \label{slowlynonlinear}
As mentioned earlier, most spectral analyses, using the elephant trunk and NLEP procedures, are done in models of `slowly linear' nature, which include the classical Gray-Scott and Gierer-Meinhardt models. In recent work \cite{VEA,VEE} an NLEP approach has been developed for the spectral analysis of homoclinic pulses to a general class of singularly perturbed slowly nonlinear $2$-component reaction-diffusion systems. Earlier, spectral stability of fronts was studied in a specific slowly nonlinear model in \cite{DIN}. Although the models in these works are slowly nonlinear, the elephant trunk procedure is still applicable, because eventually the dynamics of the slow component becomes linear due to homoclinic or heteroclinic nature of the patterns. However, we emphasize that there are significant adaptations necessary in the NLEP procedure for an extension of the slowly linear to the general slowly nonlinear case -- see \cite{VEA}. $\hfill \blacksquare$
\end{rem}
\begin{rem}[\textit{Spectral stability}] \label{specintr}
It should be noted that only studying the roots of $\E_{f,0}(\lambda)$ and $\E_{s,0}(\lambda)$ is not sufficient to decide upon spectral stability of the spatially periodic patterns, because there must be essential spectrum attached to zero: the leading order results of the present work do not provide information on the exact position of this `small spectrum' \cite{PLO} with respect to the imaginary axis. Therefore, an additional study of the fine structure of the spectrum around zero is necessary to determine spectral stability -- see Section \ref{secsmallspectrum}. This is subject of work in progress. $\hfill \blacksquare$
\end{rem}
\begin{rem}[\textit{The Evans function and other tools to locate the spectrum}]
\label{applitrem}
The concept of the Evans function as a method to determine the spectrum associated with a localized solution of a system of reaction-diffusion equations on the line was introduced in \cite{EVA} and was established as a general and powerful approach in \cite{AGJ,GJO,JON}. Core aspects of the NLEP approach have been developed independently in \cite{IWW,WAW}. The SLEP (= Singular Limit Eigenvalue Problem) method \cite{NI2,NI3} is an alternative method that has been linked to the Evans function approach in \cite{INS}. In \cite{DGK2}, the relation between the Evans function, the NLEP method and the SLEP method is discussed. The Evans function approach was originally developed in the context of localized homoclinic and/or heteroclinic patterns, it was first generalized to spatially periodic pulse patterns in reaction-diffusion equations in \cite{ESZ,GAR,PLO}. $\hfill \blacksquare$
\end{rem}
\section{Setting} \label{sec1}
\subsection{Reaction-diffusion equations with semi-strong interaction}\label{S2.1}
In this section we introduce the class of systems under consideration in this paper. Take $m, n \in \Z_{>0}$ and consider a general reaction-diffusion system in one space dimension with a scale separation in the diffusion lengths \eqref{reac}. Following \cite{VEA}, we write
\begin{align*}
H(u,v,\epsilon) &= H(u,0,\epsilon) + \tilde{H}_2(u,v,\epsilon),
\end{align*}
where $\tilde{H}_2(u,v,\epsilon) := H(u,v,\epsilon) - H(u,0,\epsilon)$, so that $\tilde{H}_2$ vanishes at $v = 0$. To sustain stable localized patterns in semi-strong interaction (of second order \cite{RAD}) in system (\ref{reac}), we allow $\tilde{H}_2(u,v,\epsilon)$ to scale with $\epsilon^{-1}$ and define
\begin{align*}
H_2(u,v) &:= \lim_{\epsilon \downarrow 0} \epsilon \tilde{H}_2(u,v,\epsilon).
\end{align*}
Finally, we write
\begin{align}
H(u,v,\epsilon) &= H_1(u,v,\epsilon) + \epsilon^{-1}H_2(u,v),\label{reac2}
\end{align}
with $H_1(u,v,\epsilon) := H(u,0,\epsilon) + [\tilde{H}_2(u,v,\epsilon) - \epsilon^{-1} H_2(u,v)]$. By construction $H_2(u,v)$ vanishes at $v = 0$. Moreover, we assume that $H_1(u,v,\epsilon)$ and $G(u,v,\epsilon)$ are smooth functions of $\epsilon$ at $\epsilon = 0$. Note that we allow for the possibility that $H_2(u,v) \equiv 0$ in the upcoming analysis. We emphasize that, when both $H_2(u,v) \equiv 0$ and $n = 1$, all patterns are unstable -- see Remark \ref{weakint}. This confirms the scalings used for classical systems as the Gray-Scott and Gierer-Meinhardt models \cite{DGK,DGK2,IWW,WAW} -- see also \cite{VEA}. For the benefit of our spectral analysis, we need one extra condition on $G$. That is, $G$ vanishes at $v = 0$. We postpone the discussion of this extra condition to Remark \ref{condG1}. In summary, the model class we consider is of the form
\begin{align} \left\{\!\begin{array}{rcl} \partial_t u &=& D_1\partial_{\xx\xx} u - H_1(u,v,\epsilon) - \epsilon^{-1}H_2(u,v)\\ \partial_t v &=& \epsilon^2 D_2\partial_{\xx\xx} v - G(u,v,\epsilon)\end{array}\right., \ \ \ u \in \R^m, v \in \R^n,\label{diff2} \end{align}
or, in the `small' spatial scale $x = \epsilon^{-1} \xx$,
\begin{align}\left\{\!\begin{array}{rcl} \epsilon^{2} \partial_t u &=& D_1 \partial_{xx} u - \epsilon^{2} H_1(u,v,\epsilon) - \epsilon H_2(u,v) \\
  \partial_t v &=& D_2 \partial_{xx} v - G(u,v,\epsilon)\end{array}\right., \ \ \ u \in \R^m, v \in \R^n, \label{diff}\end{align}
in which we will usually work. The aforementioned conditions read:
\smallskip
\begin{enumerate}
\item[\namedlabel{assS1}{\textbf{(S1)}}] \textit{\textbf{Conditions on the interaction terms}}\\
There exists open, connected sets $U \subset \R^m, V \subset \R^n$ and $I \subset \R$ with $0 \in V$ and $0 \in I$ such that $H_1,G$ and $H_2$ are $C^2$ on their domains $U \times V \times I$ and $U \times V$, respectively. Moreover, we have $H_2(u,0) = 0$ and $G(u,0,\epsilon) = 0$ for all $u \in U$ and $\epsilon \in I$.
\end{enumerate}
\begin{rem}[\textit{The quantities $D_{1,2}$ in the cases $m = 1$ and $n = 1$}] \label{quantD12}
If we have $n = 1$, we can without loss of generality assume $D_2 = 1$ in (\ref{diff2}) by rescaling the spatial variable $\xx$. Similarly, in the case $m = 1$, we can without loss of generality assume $D_1 = 1$ by rescaling the small parameter $\epsilon$. $\hfill \blacksquare$
\end{rem}
\subsection{Stationary, spatially periodic pulse solutions}
In this paper we are interested in the spectra associated with stationary, spatially periodic pulse solutions satisfying the reaction-diffusion system (\ref{diff}). Stationary solutions to (\ref{diff}) satisfy the singularly perturbed ordinary differential equation,
\begin{align}\left\{\!
\begin{array}{rcl} D_1 \partial_x u &=& \epsilon p \\ \partial_x p &=& \epsilon H_1(u,v,\epsilon) + H_2(u,v) \\ D_2 \partial_x v &=& q \\ \partial_x q &=& G(u,v,\epsilon)\label{ODE}\end{array}\right., \ \ \ u \in U, p \in \R^m, v \in V, q \in \R^n.\end{align}
If we take $\epsilon = 0$ in (\ref{ODE}), the dynamics is given by the so-called \emph{fast reduced system},
\begin{align}
\left\{\!
\begin{array}{rcl}
\partial_x u &=& 0\\
\partial_x p &=& H_2(u,v)\\
D_2 \partial_x v &=& q\\
\partial_x q &=& G(u,v,0)\end{array}\right., \ \ \ u \in U, p \in \R^m, v \in V, q \in \R^n. \label{relp}
\end{align}
We observe that $\M = \{(u,p,0,0) : u \in U, p \in \R^m\}$ consists entirely of equilibria of (\ref{relp}) by assumption \ref{assS1}. We require $\M$ to be normally hyperbolic.
\smallskip
\begin{enumerate}
\item[\namedlabel{assS2}{\textbf{(S2)}}] \textit{\textbf{Normal hyperbolicity}}\\
For each $u \in U$ the real part $\mathrm{Re}(\G(u)) = \frac{1}{2}\left(\G(u) + \G(u)^*\right)$ of $\G(u) := \partial_v G(u,0,0)$ is positive definite.
\end{enumerate}
\smallskip
When $\epsilon > 0$, the manifold $\M$ consists no longer of equilibria, but remains invariant. The flow restricted to $\M$ is to leading order governed by the so-called \emph{slow reduced system},
\begin{align} \left\{\!
\begin{array}{rcl}
D_1 \partial_\xx u &=& p\\
\partial_\xx p &=& H_1(u,0,0) \end{array}\right., \ \ \ u \in U, p \in \R^m.\label{slowp}
\end{align}
It is well-known that the dynamics around such a normally hyperbolic manifold $\M$ is captured by Fenichel's geometric singular perturbation theory \cite{FEN2,JOE}. Fenichel's theory can reduce the complexity of finding periodic orbits in the following way. Suppose we have obtained a so-called singular periodic orbit by piecing together orbit segments of the fast and slow reduced systems in such a way that they form a closed loop. Although this singular orbit is not a solution to the full system, one can prove with the aid of Fenichel's theory that an actual periodic orbit lies in the vicinity of the singular one, provided $\epsilon > 0$ is sufficiently small. This construction is performed in Section \ref{secexistence}. \\
\\
In this paper we are interested in periodic solutions to (\ref{ODE}) that are close to singular periodic orbits, which consists of two orbit segments, to wit: a pulse satisfying the fast reduced system (\ref{relp}) and a segment on the invariant manifold $\M$, satisfying the slow reduced system (\ref{slowp}). We make the hypotheses on the solutions precise in the following assumption.
\smallskip
\begin{enumerate}
\item[\namedlabel{assS3}{\textbf{(S3)}}] \textit{\textbf{Existence of a periodic pulse solution}}\\
For $\epsilon > 0$ sufficiently small, there exists a $2L_\epsilon$-periodic solution $\pih_{\p,\epsilon} \colon \R \to U \times \R^m \times V \times \R^n$ to system (\ref{ODE}) with the following three properties.
\begin{enumerate}
\item[\textbf{1.}] \textbf{Order of the period}\\
It holds $L_\epsilon = \epsilon^{-1}\check{L}_\epsilon$ and $\check{L}_\epsilon \to \check{L}_0>0$ as $\epsilon\to 0$.
\item[\textbf{2.}] \textbf{Existence of a pulse}\\
There exists $u_0 \in U, p_0 \in \R^m$ and a solution $(v_{\ho}(x),q_{\ho}(x))$ to system,
\begin{align}
\left\{\!
\begin{array}{rcl}
D_2 \partial_x v &=& q\\ \partial_x q &=& G(u_0,v,0)\end{array}\right., \ \ \ v \in V, q \in \R^{n}, \label{fastintr}
\end{align}
which is homoclinic to $(0,0)$, such that
\begin{align}\|\pih_{\p,\epsilon}(0) - (u_0,p_0,v_\ho(0),q_\ho(0))\| = \ord(\epsilon).  \label{fastestS3}\end{align}
\item[\textbf{3.}] \textbf{Exponential decay to a solution in $\M$}\\
There exists $L_s, \beta_s > 0$ and a solution $(u_{\s}(\xx),p_{\s}(\xx))$ to (\ref{slowp}) such that it holds for $x \in [0,2L_\epsilon]$
\begin{align} d(\pih_{\p,\epsilon}(x),\M) &\leq L_s e^{-\beta_s \min\{x,2L_\epsilon - x\}}, \label{slowestS31}\\
\|\pih_{\p,\epsilon}(x) - (u_{\s}(\epsilon x),p_\s(\epsilon x),0,0)\| &\leq L_s\max\left\{\epsilon,e^{-\beta_s \min\{x,2L_\epsilon - x\}}\right\}. \label{slowestS32}\end{align}
\end{enumerate}
\end{enumerate}
\begin{rem}[\textit{Reversible symmetry}] \label{reversiblerem}
Note that (\ref{ODE}) is $R$-reversible, where $R \colon \R^{2(m+n)} \to \R^{2(m+n)}$ is the reflection in the space $p = q = 0$. Indeed, if $\pih$ is a solution to (\ref{ODE}), then $x \mapsto R \pih(-x)$ is also a solution to (\ref{ODE}). Similarly, denote by $R_s \colon \R^{2m} \to \R^{2m}$ and $R_f \colon \R^{2n} \to \R^{2n}$ the reflections in $p = 0$ and $q = 0$, respectively. Systems (\ref{slowp}) and (\ref{fastintr}) are $R_s$- and $R_f$-reversible, respectively.\\
 \\
Reversible symmetry arguments significantly simplify our existence analysis in Section \ref{secexistence}. As a consequence, the constructed periodic orbits are symmetric about the space $p = q = 0$. However, reversible symmetry arguments do not essentially simplify our spectral analysis. In fact, these arguments could obscure an extension of our results to models with convective terms, which do not obey the reversible symmetry. Therefore, we do not impose a reversible symmetry assumption on the solutions $\pih_{\p,\epsilon}$ in assumption \ref{assS3}. $\hfill \blacksquare$
\end{rem}
\begin{rem}[\textit{Embedding in an $\epsilon$-independent compactum}] \label{S3.4} An immediate and important consequence of \ref{assS3}-3 is that there exists a compact set $\K \subset U \times \R^m \times V \times \R^n$, independent of $\epsilon$, such that $\pih_{\p,\epsilon}(x) \in \K$ for all $x \in \R$. Note that we can choose $\K$ in such a way that its projection $\K_U \subset U$ on the $u$-component contains $u_0$. This will be convenient for later purposes. $\hfill \blacksquare$
\end{rem}
\begin{rem}[\textit{Exponential decay}] \label{homoclin} By assumption \ref{assS2} $(0,0)$ is a hyperbolic saddle in system (\ref{fastintr}). Hence, the homoclinic $(v_\ho,q_\ho)$ is exponentially localized, i.e. there exists constants $K,\mu > 0$ such that $\|(v_\ho(x),q_\ho(x))\| \leq Ke^{-\mu|x|}$ for $x \in \R$. $\hfill \blacksquare$
\end{rem}
\begin{rem}[\textit{Extending estimate (\ref{fastestS3})}] \label{introbetaf} Consider the solution $\pih_\ho(x)$ to system (\ref{relp}) given by
\begin{align*} \pih_{\ho}(x) = \left(u_0,p_0 + \int_0^x H_2(u_0,v_{\ho}(z))dz,v_\ho(x),q_\ho(x)\right).\end{align*}
System (\ref{ODE}) can be written as $\partial_x \psi = f(\psi,\epsilon)$, where $f$ is locally Lipschitz by \ref{assS1}. Denote by $L_f$ the Lipschitz constant of $f$ on $\K \times [0,\epsilon_0]$ for some $0 < \epsilon_0 \in I$. We infer via Gr\"onwall's inequality and assumption \ref{assS3}-2 that there exists a constant $K > 0$, independent of $\epsilon$, such that
\begin{align} \|\pih_{\p,\epsilon}(x) - \pih_\ho(x)\| \leq \epsilon(K + L_f|x|)e^{L_f|x|}, \label{fastestS31} \end{align}
for $x \in \R$. First, we employ (\ref{fastestS31}) on $J_\epsilon := [x_1\log(\epsilon),-x_1\log(\epsilon)]$ with $x_1 > 0$ $\epsilon$-independent. Second, we use Remark \ref{homoclin} and \ref{assS3}-3 to derive estimates of $d(\pih_{\p,\epsilon}(x),\M)$ and $d(\pih_{\ho}(x),\M)$ on $[-L_\epsilon,L_\epsilon] \setminus J_\epsilon$. Putting these items together yields $\beta_f, \rho > 0$ such that
\begin{align*} \|\pih_{\p,\epsilon}(x) - \pih_\ho(x)\| = \ord(\epsilon^{\beta_f}),\end{align*}
for $x \in I_\epsilon := [-\epsilon^{-\rho},\epsilon^{-\rho}]$. We will refer to $I_\epsilon$ as the \emph{pulse region}. $\hfill \blacksquare$
\end{rem}
\begin{rem}[\textit{The condition that $G$ vanishes at $v = 0$}] \label{condG1} As mentioned in Section \ref{S2.1}, our model (\ref{diff2}) is a general reaction-diffusion system that allows for semi-strong interaction \eqref{reac2}, with the extra condition that the term $G$ vanishes at $v = 0$. For general $G$, consider a $2n$-dimensional compact submanifold $\M_0$ of $\{(u,p,v,0) \colon G(u,v,0) = 0\} \subset U \times \R^m \times V \times \R^n$. By Fenichel's theory $\M_0$ perturbs, for $\epsilon > 0$ sufficiently small, to a locally invariant manifold $\M_\epsilon$ to (\ref{ODE}). This manifold $\M_\epsilon$ is diffeomorphic to $\M_0$ and lies at Hausdorff distance $\ord(\epsilon)$ from $\M_0$. When $\M_0$ can be given as a graph over $(u,p) \in U \times \R^m$, the same holds for $\M_\epsilon$. Thus, in that case one can change coordinates in (\ref{ODE}) relative to $\M_\epsilon$ and we obtain $\M_\epsilon = \M_0 \subset \M$. Therefore, on the existence level, the condition that $G$ vanishes at $v = 0$, corresponds to an a priori coordinate change in (\ref{ODE}).\\
\\
However, on the stability level, one introduces more than additional technical difficulties when $G$ does not vanish at $v = 0$. Indeed, without relative coordinates, we do not achieve estimate (\ref{slowestS31}), which is essential in our spectral stability analysis. However, applying the coordinate change to equation \eqref{diff2} changes its structure fundamentally. In the new coordinates \eqref{diff2} is not even of reaction-diffusion type. Hence, we expect that the spectral analysis differs essentially, when $G$ does not vanish at $v = 0$. This is an interesting subject of future research, especially since it includes the possibility of localized patterns with oscillating tails \cite{CAS,VEA}, but is outside the scope of this paper. $\hfill \blacksquare$
\end{rem}
\subsection{Existence of stationary, spatially periodic pulse solutions} \label{secexistence}
To motivate assumption \ref{assS3}, we elaborate on the existence of reversible periodic pulse solutions to system (\ref{ODE}) satisfying assertions \ref{assS3}-1,2,3. For a general class of slow-fast systems the following result is established in \cite{SOT}. Suppose a singular periodic orbit is given, obtained by piecing together orbit segments of the fast and slow reduced systems in such a way that they form a closed loop. Then, under certain conditions, an actual periodic orbit lies close to the singular one, provided $\epsilon > 0$ is sufficiently small.\\
 \\
An essential condition for the result in \cite{SOT} is that the slow components are constant in the fast reduced system.  In our case the slow $p$-component is non-constant along orbits in \eqref{relp}. Therefore, the result in \cite{SOT} is not applicable. However, we will obtain a similar result. Consider a singular periodic orbit (Figure \ref{fig:sub2}) obtained by gluing a pulse solution to (\ref{relp}) to a solution on $\M$ governed by (\ref{slowp}). For small $\epsilon > 0$, we will show that there exists a periodic orbit in (\ref{ODE}) in the vicinity of this singular periodic orbit.\\
 \\
Concerning the existence of the singular periodic orbit, we need some assumptions. We start with the assumption on the fast reduced system (\ref{relp}).
\smallskip
\begin{itemize}
\item[\namedlabel{assE1}{\textbf{(E1)}}] \textit{\textbf{Existence of a pulse solution to the fast reduced system}}\\
There exists $u_0 \in U$ such that (\ref{fastintr}) has a solution $\psi_{\ho}(x,u_0) = (v_\ho(x,u_0),q_\ho(x,u_0))$ homoclinic to $0$. The stable manifold $W_{u_0}^s(0)$ to the hyperbolic saddle $0$ in system (\ref{fastintr}) intersects the space $\ker(I - R_f)$ transversely in the point $\psi_{\ho}(0,u_0)$, where $R_f$ is as in Remark \ref{reversiblerem}.
\end{itemize}
\begin{rem}[\textit{Elementary homoclinic orbits}]
In the terminology of \cite{VAN} homoclinics that cross a transverse intersection of $W_{u_0}^s(0)$ and $\ker(I - R_f)$ are called \emph{elementary}. In particular, any non-degenerate homoclinic solution is elementary by \cite[Lemma 4]{VAN}.$\hfill \blacksquare$
\end{rem}
\begin{rem}[\textit{Extension of assumption \ref{assE1}}] \label{extE1}
Since transverse intersection are robust under perturbations, assumption \ref{assE1} implies the existence of an open neighborhood $U_\ho \subset U$ of $u_0$ such that for every $u \in U_\ho$ there exists a homoclinic solution $\psi_{\ho}(x,u)$ to
\begin{align*}\left\{\begin{array}{rcl} D_2 \partial_x v &=& q\\ \partial_x q &=& G(u,v,0)\end{array}\right., \ \ \ v \in V, q \in \R^n,\end{align*} crossing $W_u^s(0) \pitchfork \ker(I-R_f) = \{\psi_{\ho}(0,u)\}$. $\hfill \blacksquare$
\end{rem}
\begin{rem}[\textit{Take-off and touch-down manifolds}] \label{TOTD}
The homoclinics $\psi_{\ho}(x,u)$ for $u \in U_\ho$ yield solutions,
\begin{align*}\pih_{\ho}(x,u) := \left(u,\int_0^x H_2(u,v_{\ho}(z,u))dz,v_{\ho}(x,u),q_{\ho}(x,u)\right),\end{align*}
to (\ref{relp}), which are homoclinic to $\M$. The limits $\lim_{x \to \pm \infty} \pih_{\ho}(x,u)$ give rise to the so-called \emph{take-off} and \emph{touch-down manifolds}. For that reason, we define the mapping $\J \colon U_\ho \to \R^m$ by
\begin{align*} \J(u) = \int_{-\infty}^0 H_2(u,v_{\ho}(z,u))dz. \end{align*}
The $m$-dimensional graphs $\T_\pm := \{(u,\pm \J(u)) : u \in U_\ho\}$ on $\M$ are the take-off and touch-down manifolds. Indeed, it holds $\lim_{x \to \pm \infty} \pih_{\ho}(x,u) = (u,\pm \J(u),0,0)$. $\hfill \blacksquare$
\end{rem}
The manifolds $\T_\pm$ of Remark \ref{TOTD} allow us to piece the pulse solutions $\pih_\ho$ to solutions that lie in $\M$ in order to obtain a singular periodic orbit (Figure \ref{fig:sub2}). Therefore, we shift our attention to the slow reduced system (\ref{slowp}). Recall that system (\ref{slowp}) is $R_s$-reversible by Remark \ref{reversiblerem}. It holds $R_s[\T_+] = \T_-$. Hence, to establish a connection between the take-off and touch-down manifolds $\T_\pm$, it is sufficient to find a solution that starts in $\ker(I - R_s)$ and crosses the take-off manifold $\T_-$ at some point. This is the content of our next assumption.
\smallskip
\begin{itemize}
\item[\namedlabel{assE2}{\textbf{(E2)}}] \textit{\textbf{Existence of connecting orbit in slow reduced system}}\\
Let $R_s, \T_{\pm}, \J$ be as in Remarks \ref{reversiblerem} and \ref{TOTD}. There exists a solution $\psi_{\s}(\xx) = (u_\s(\xx),p_\s(\xx))$ to system (\ref{slowp}) such that $\psi_{\s}(0) \in \ker(I - R_s)$ and $\psi_{\s}(\check{L}_0) \in \T_-$ for some $\check{L}_0 > 0$. Moreover, let $\Phi_\s(\xx,\yy)$ be the evolution of the associated variational equation,
\begin{align*} \partial_\xx \phi = \left(\begin{array}{cc} 0 & D_1^{-1} \\ \partial_u H_1(u_\s(\xx),0,0) & 0 \end{array}\right)\phi.\end{align*}
Denote for $i,j \in \{1,\ldots,m\}$ by $h_j$ the coordinates of $H_1(u_\s(0),0,0)$ and by $A_{ij}$ the $(m \times m)$-submatrix of
\begin{align*} \Phi_\s(0,\check{L}_0)\left(\begin{array}{c} I \\ -\partial_u \J(u_0)\end{array}\right),\end{align*}
containing rows $\{i,m+1,\ldots,2m\}\setminus\{m+j\}$. There exists $i_* \in \{1,\ldots,m\}$ such that
\begin{align} \sum_{j = 1}^m (-1)^j h_j \det(A_{i_*j}) \neq 0. \label{transvers} \end{align}
\end{itemize}
\smallskip
To obtain persistence under small perturbations, one could require that the intersection between the take-off manifold $\T_-$ and the trajectory $\psi_\s$ is transversal. However, this is impossible for $m > 1$. Therefore, the technical condition \eqref{transvers} is employed to generate a `good' set of initial conditions in $\ker(I-R_s)$. This set becomes under the forward flow of the slow reduced system \eqref{slowp} an $m$-dimensional manifold, which contains the solution $\psi_\s$ and does intersect $\T_-$ transversally (within $\M$).
\begin{rem}[\textit{Assumption \ref{assE2} in the case $m = 1$}] \label{E2m1} In the case $m = 1$ system (\ref{slowp}) is Hamiltonian. Therefore, assumption \ref{assE2} is satisfied if and only if there exists $u_1 \in U$ such that the following four conditions hold true,
\begin{align*} \int_{u_1}^{u_0} H_1(\hat{u},0,0)d\hat{u} = \frac{1}{2}\J(u_0)^2,&  \ \ \  (u_1-u_0)\J(u_0) > 0,\\
 H_1(u_1,0,0) \neq 0,& \ \ \  H_1(u_0,0,0) \neq \J(u_0)\partial_u \J(u_0).  \end{align*}
The first three conditions can be related to the existence of a trajectory from $(u_1,0) \in \ker(I-R_s)$ to $(u_0,-\J(u_0))$. The last condition is associated with the transversality of the intersection between this trajectory and the take-off curve $\T_-$. $\hfill \blacksquare$
\end{rem}
\begin{figure}[t]
\centering
\begin{subfigure}[b]{.54\textwidth}
  \centering
  \includegraphics[width=\linewidth]{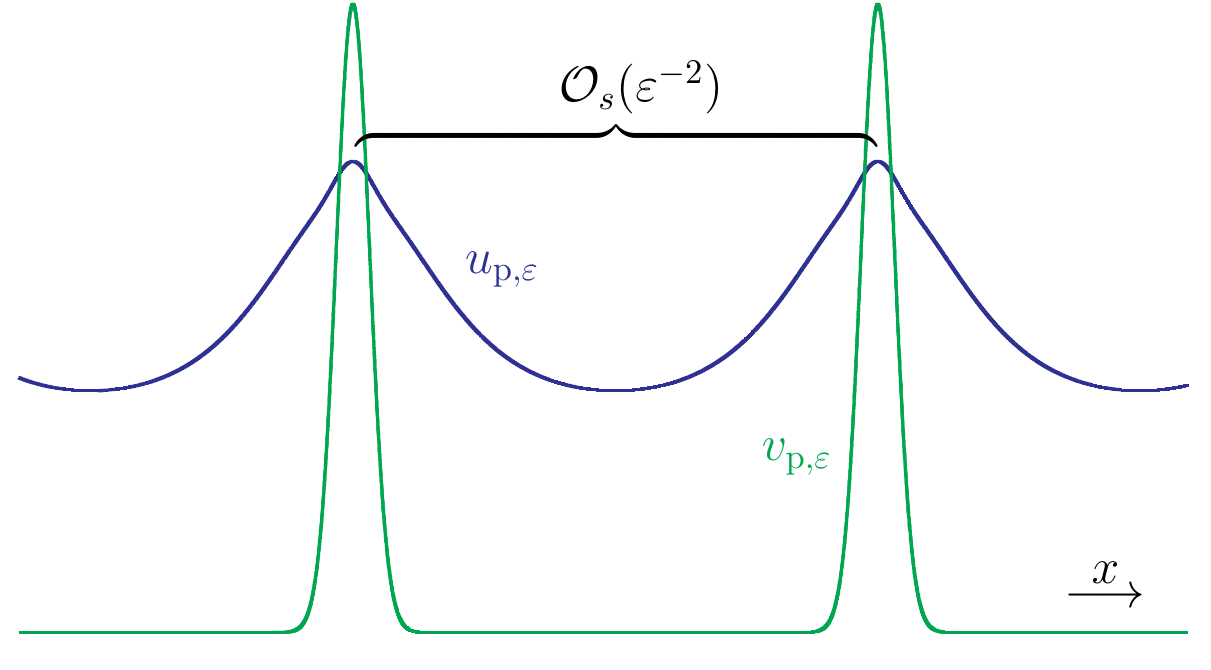}
  \caption{Slow-fast dynamics: the $v$-component exhibits localized pulse behavior and the $u$-component evolves slowly.}
\end{subfigure}%
\hspace{.03\textwidth}
\begin{subfigure}[b]{.34\textwidth}
  \centering
  \includegraphics[width=\linewidth]{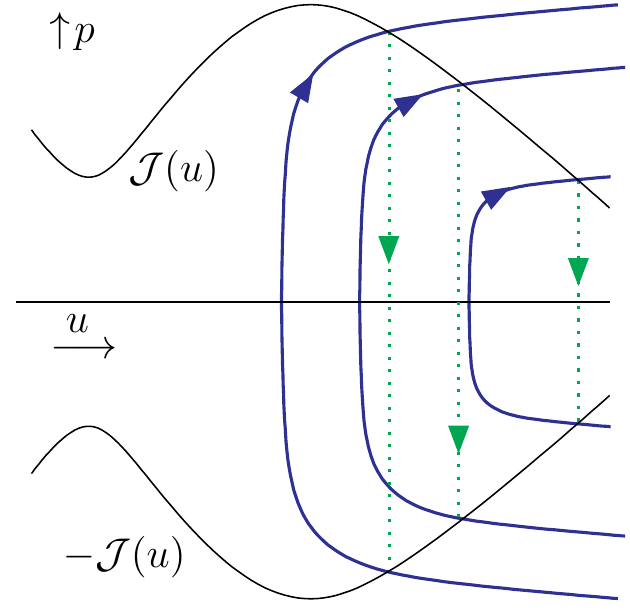}
  \caption{Orthogonal projection of the singular periodic orbit on the slow manifold $\M$.}
\label{fig:sub2}
\end{subfigure}
\caption{Periodic pulse solutions in the case $n = m = 1$.}
\label{fig:test}
\end{figure}
We are now able to state the existence result. The proof uses techniques developed in \cite{PLO2}, where the existence of stationary, spatially periodic pulse solutions in the Gierer-Meinhardt equation is considered. However, we emphasize that the framework in \cite{PLO2} differs from ours due to a difference in scaling in the $p$-component. This is explained in more detail in Remark \ref{scaling}.
\newpage
We extend and adapt the techniques in \cite{PLO2} to the more general existence problem (\ref{ODE}). This existence result, which focuses on periodic solutions in a large class of singularly perturbed systems beyond the type of slow-fast systems considered in \cite{SOT}, is to our knowledge new -- we are not aware of a similar general result in the literature. To avoid too much diversion, we present the proof in Appendix \ref{A0}. The proof has a geometric nature. It relies on Fenichel's theory, reversible symmetry arguments and an appropriate version of the Exchange Lemma.
\begin{theo} \label{maintheorem}
Assume \ref{assS1}-\ref{assS2} and \ref{assE1}-\ref{assE2} hold true. Then, for $\epsilon > 0$ sufficiently small, there exists a $2L_\epsilon$-periodic solution $\pih_{\p,\epsilon}$ to  (\ref{ODE}) satisfying assertions 1-3 in \ref{assS3}. Moreover, $\pih_{\p,\epsilon}$ is symmetric about the space $p = q = 0$. It holds $\pih_{\p,\epsilon}(x) = R\pih_{\p,\epsilon}(-x)$ and $\pih_{\p,\epsilon}(L_\epsilon - x) = R\pih_{\p,\epsilon}(L_\epsilon + x)$ for $x \in \R$, where $R$ is as in Remark \ref{reversiblerem}.
\end{theo}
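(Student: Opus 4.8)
The plan is to construct the periodic orbit by a shooting argument on the symmetric section $\ker(I-R)$, piecing together the fast pulse and the slow segment using Fenichel's geometric singular perturbation theory together with the Exchange Lemma. First I would exploit the $R$-reversibility of \eqref{ODE}: it suffices to produce a solution $\pih_{\p,\epsilon}$ starting in $\ker(I-R)$ at $x=0$ that returns to $\ker(I-R)$ at $x=L_\epsilon$; reflecting via $R$ then automatically closes it into a $2L_\epsilon$-periodic orbit, and yields the two stated symmetry identities $\pih_{\p,\epsilon}(x)=R\pih_{\p,\epsilon}(-x)$ and $\pih_{\p,\epsilon}(L_\epsilon-x)=R\pih_{\p,\epsilon}(L_\epsilon+x)$. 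So the whole task reduces to a single half-return to the symmetric section.

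Next I would set up the singular skeleton. By \ref{assE1} and Remark \ref{extE1} the fast reduced system \eqref{relp} has, for each $u\in U_\ho$, a homoclinic $\psi_\ho(\cdot,u)$ to $\M$, crossing $\ker(I-R_f)$ transversely; Remark \ref{TOTD} packages the asymptotics into the take-off and touch-down manifolds $\T_\pm=\{(u,\pm\J(u)):u\in U_\ho\}$. Starting from the symmetric point of the fast pulse $(u_0,p_0,v_\ho(0),q_\ho(0))$ with $p_0$ chosen so that $(u_0,p_0)\in\ker(I-R_s)$ — note $q_\ho(0)=0$ by the $R_f$-symmetry of the homoclinic, so this point already lies in $\ker(I-R)$ — one flows the fast pulse forward, lands near $\T_-$ on $\M$, then follows the slow reduced flow \eqref{slowp}; assumption \ref{assE2} supplies a slow orbit $\psi_\s$ from $\ker(I-R_s)$ into $\T_-$, with the nondegeneracy \eqref{transvers} guaranteeing that a suitable $m$-dimensional manifold of slow initial data through $\psi_\s(0)$ is carried transversally onto $\T_-$ within $\M$. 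This produces a singular periodic orbit of the type in Figure \ref{fig:sub2}, and the period of its singular slow segment is $\check L_0$, explaining assertion \ref{assS3}-1.

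Then I would run the perturbation argument. By \ref{assS2}, $\M$ is normally hyperbolic, so Fenichel's theory provides a locally invariant slow manifold $\M_\epsilon$, $\ord(\epsilon)$-close to $\M$, together with its stable and unstable foliations. The fast pulse perturbs (again by transversality of $W^s_{u_0}(0)\pitchfork\ker(I-R_f)$, persistent under the $\ord(\epsilon)$ perturbation) to an orbit that enters a neighborhood of $\M_\epsilon$ along $W^s(\M_\epsilon)$; the Exchange Lemma then tracks this orbit as it passes near $\M_\epsilon$ and exits $\ord(\epsilon)$-close along $W^u(\M_\epsilon)$, exponentially close to the slow orbit that shadows $\psi_\s$. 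Along the slow manifold the flow is an $\ord(\epsilon)$-perturbation of \eqref{slowp}, so the set of initial data from \ref{assE2} still maps transversally onto (the perturbation of) $\T_-$, and a dimension count / implicit function theorem argument gives, for $\epsilon$ small, a genuine orbit from $\ker(I-R)$ back to $\ker(I-R)$ at time $L_\epsilon=\epsilon^{-1}\check L_\epsilon$ with $\check L_\epsilon\to\check L_0$. Estimates \eqref{fastestS3}, \eqref{slowestS31}, \eqref{slowestS32} then follow from the Fenichel foliation estimates (exponential attraction to $\M_\epsilon$ off the pulse region) together with Gr\"onwall on the pulse region, exactly as sketched in Remark \ref{introbetaf}.

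The main obstacle, as the text flags just before the statement, is that the slow $p$-component is \emph{not} constant along fast orbits of \eqref{relp} — $\partial_x p = H_2(u,v)\neq 0$ — so the clean slow-fast structure assumed in \cite{SOT} fails, and the take-off/touch-down maps $\T_\pm$ depend on the accumulated integral $\J(u)$ rather than being trivial. Handling this requires the more delicate Exchange Lemma bookkeeping of \cite{PLO2}, adapted here to general $m,n$ and to the present scaling of the $p$-equation (cf. Remark \ref{scaling}); in particular the transversality needed to close the loop is not literal transversality of $\psi_\s$ with $\T_-$ (which is impossible for $m>1$) but the weaker determinant condition \eqref{transvers}, and verifying that this condition indeed yields a transversal intersection of the propagated $m$-dimensional manifold with $\T_-$ inside $\M_\epsilon$ is the technical heart of the argument. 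The full details are carried out in Appendix \ref{A0}.
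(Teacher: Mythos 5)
Your proposal is essentially the same approach the paper takes in Appendix~\ref{A0}: reversible shooting between the two symmetric sections of $\ker(I-R)$, Fenichel theory together with the Exchange Lemma to control the slow passage, and condition~\eqref{transvers} to provide the transversality that closes the loop. You correctly isolate the obstruction to invoking the ready-made result of \cite{SOT} (the slow $p$-component is non-constant along fast orbits, so the take-off and touch-down manifolds $\T_\pm$ are nontrivial graphs over $U_\ho$), and you correctly note that for $m>1$ the relevant transversality is that of the propagated $m$-dimensional manifold with $\T_-$, not of $\psi_\s$ itself.

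Two points where your sketch deviates from the actual argument are worth flagging. First, a sign error in the singular skeleton: the \emph{forward} limit of $\pih_\ho(\cdot,u_0)$ is $(u_0,+\J(u_0),0,0)\in\T_+$, not $\T_-$ (see Remark~\ref{TOTD}), so the slow segment matching that forward pass is the $R_s$-reflection $R_s\psi_\s(\check L_0-\,\cdot\,)$ of the orbit supplied by \ref{assE2}, not $\psi_\s$ itself. Second, the paper runs the shot in the opposite direction from what you describe: the shooting manifold is the $(m+n-1)$-dimensional $\Ze\subset\ker(I-R)$ placed near the \emph{slow} symmetric point $(u_1,0,0,0)$, constructed by dropping the $e_{i_*}$-direction singled out by \eqref{transvers}; this is propagated forward through the slow passage and the fast excursion, and the transversal crossing of $\ker(I-R)$ is exhibited near the pulse centre $\pih_\ho(0,u_0)$ by comparison with the union $\Y_0$ of unstable fibres of the fast reduced flow. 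The two directions are equivalent by reversibility, but the paper's choice matches the entry--pass--exit format of the Exchange Lemma, whereas starting at the pulse centre and flowing forward would require an entry-only tracking argument that is more awkward to phrase in that language. In either direction, the correct choice of the codimension-one shooting manifold in $\ker(I-R)$ — which for $m>1$ genuinely uses the index $i_*$ from \eqref{transvers} — is precisely the point your sketch leaves implicit and defers to the appendix; it is the crux of why \eqref{transvers} replaces literal transversality.
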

\begin{rem}[\textit{An expression for $\check{L}_0$ in the case $m = 1$}] \label{L0m1}
Let $m = 1$ and $u_1 \in U$ as in Remark \ref{E2m1}. Using the Hamiltonian nature of system (\ref{slowp}), we can express $\check{L}_0$, as defined in \ref{assS3}-1, in terms of $u_0$ and $u_1$ as
\begin{align*}\check{L}_0 = \int_{u_1}^{u_0} \frac{d\hat{u}}{\sqrt{2 F(\hat{u}) + \J(u_0)^2}},\end{align*}
where $F(u) := \int_{u_0}^u H_1(\hat{u},0,0)d\hat{u}$ for $u \in U$. $\hfill \blacksquare$
\end{rem}
\begin{exa2}[\textit{Generalized Gierer-Meinhardt Equations with slow nonlinearity}]
An instance of (\ref{diff}), which satisfies assumptions \ref{assS1}-\ref{assS2} and \ref{assE1}-\ref{assE2}, is the generalized Gierer-Meinhardt equation with slow nonlinearity,
\begin{align}
\left\{\!
\begin{array}{rcl}
\partial_t u &=& \epsilon \partial_{xx} u - \epsilon^2 f(u) + \epsilon u^{\alpha_1}v^{\beta_1} \\
\partial_t v &=& \partial_{xx} v - v + u^{\alpha_2}v^{\beta_2}\end{array}\right., \ \ \ u \in \R_{> 0}, v \in \R, \label{GM}
\end{align}
with parameters $\alpha_{1,2} \in \R, \beta_{1,2} \in \Z_{> 1}$ and $f \in C^2(0,\infty)$. One can take $f(u) = \mu u$ for some $\mu > 0$ to obtain the slowly linear Gierer-Meinhardt equations considered in \cite{PLO}. In the model considered in \cite{VEE} one takes $f(u) = c_1 u - c_2 u^d$ with $c_{1,2} > 0$ and $d > 1$. Also, the choice $f(u) = \sin(u)$ is possible. Then, one obtains the sinusoidal Gierer-Meinhardt model considered as a guiding example in the companion paper \cite{RIJV}. $\hfill \blacksquare$
\end{exa2}
\section{Main results} \label{sec2}
Throughout the rest of this paper, assume \ref{assS1}-\ref{assS3} and regard $\pih_{\p,\epsilon}(x) = (u_{\p,\epsilon}(x),v_{\p,\epsilon}(x))$, for $\epsilon > 0$ sufficiently small, as a stationary, spatially $2L_\epsilon$-periodic pulse solution to (\ref{diff}) as described in \ref{assS3}. Let $\check{\pih}_{\p,\epsilon}(\xx) = (\check{u}_{\p,\epsilon}(\xx),\check{v}_{\p,\epsilon}(\xx))$ be the corresponding solution to the rescaled system (\ref{diff2}). We linearize system (\ref{diff2}) about $\check{\pih}_{\p,\epsilon}$ and obtain the differential operator $\El_\epsilon \colon C_{ub}^2(\R,\R^{m+n}) \subset C_{ub}(\R,\R^{m+n}) \to C_{ub}(\R,\R^{m+n})$ given by:
\begin{align} \El_\epsilon \left(\begin{array}{c} u \\ v \end{array}\right) = \left(\begin{array}{c} D_1\partial_{\xx\xx} u \\ \epsilon^2 D_2\partial_{\xx\xx} v \end{array}\right) - \B_\epsilon(\xx) \left(\begin{array}{c} u \\ v \end{array}\right), \label{defL} \end{align}
with
\begin{align}\B_\epsilon(\xx) := &\left(\begin{array}{cc} \partial_u H_1 (\check{\pih}_{\p,\epsilon},\epsilon) + \epsilon^{-1} \partial_u H_2 (\check{\pih}_{\p,\epsilon}) & \partial_v H_1(\check{\pih}_{\p,\epsilon},\epsilon) +  \epsilon^{-1} \partial_v H_2 (\check{\pih}_{\p,\epsilon}) \\ \partial_u G (\check{\pih}_{\p,\epsilon},\epsilon) &  \partial_v G (\check{\pih}_{\p,\epsilon},\epsilon)\end{array}\right), \label{defB}
\end{align}
suppressing the $\xx$-dependence of $\check{\pih}_{\p,\epsilon}$ in the right hand side of \eqref{defB}. Here, $C_{ub}^k(\R,\R^{m+n})$ denotes the Banach space of $k$ times continuously differentiable functions, with derivatives up to order $k$ bounded and uniformly continuous. It is endowed with the norm,
\begin{align*} \|f\| = \sum_{i = 0}^k \|(\partial_\xx)^i f\|_\infty.\end{align*}
Note that by assumption \ref{assS3}-1, $\El_\epsilon$ is a periodic differential operator with period $\check{L}_\epsilon$. \\
\\
In this section we collect the main outcomes of our spectral analysis. We formulate a leading order approximation result for the spectrum $\sigma(\El_\epsilon)$. This leads to explicit (spectral) instability criteria. As mentioned in Remark \ref{specintr} the leading order approximation results are not sufficient to decide upon spectral stability. However, we will show that the question of spectral stability can be reduced to a study of the fine  structure of the spectrum around $0$.
\subsection{Formulation of the Evans function}
Before stating the main results, we focus on the structure of the spectrum $\sigma(\El_\epsilon)$ and introduce the analytic Evans function that vanishes precisely on $\sigma(\El_\epsilon)$. \\
\\
Regard $\El_{\C,\epsilon}$ as the extension of $\El_\epsilon$ to $C_{ub}(\R,\C^{m+n})$ given by \begin{align*}\El_{\C,\epsilon} \phi = \El_\epsilon[\mathrm{Re}(\phi)] + i\El_\epsilon[\mathrm{Im}(\phi)].\end{align*} Fundamental in our spectral analysis is that by \cite[Proposition 2.1]{GAR} a point $\lambda \in \C$ is in the spectrum $\sigma(\El_\epsilon)$ if and only if there exists $\phi \in C_{ub}^2(\R,\C^{m+n}) \!\setminus\! \{0\}$ such that $\El_{\C,\epsilon} \phi = \lambda \phi$. In the small spatial scale $x = \epsilon^{-1}\xx$ the eigenvalue problem $\El_{\C,\epsilon}\phi = \lambda \phi$ can be written as the linear system,
\begin{align}
\partial_x \phi = \left(\begin{array}{cc} \sqrt{\epsilon} \A_{11,\epsilon}(x,\lambda) & \sqrt{\epsilon} \A_{12,\epsilon}(x) \\ \A_{21,\epsilon}(x) & \A_{22,\epsilon}(x,\lambda)\end{array}\right)\phi, \ \ \ \phi = (u,p,v,q) \in \C^{2(m+n)}, \label{fullstab}
\end{align}
with periodic coefficient matrices,
\begin{align*}
\A_{11,\epsilon}(x,\lambda) &:=\left(\begin{array}{cc} 0 & D_1^{-1} \\ \epsilon \left(\partial_u H_1 (u_{\p,\epsilon}(x),v_{\p,\epsilon}(x),\epsilon) + \lambda\right) + \partial_u H_2 (u_{\p,\epsilon}(x),v_{\p,\epsilon}(x)) & 0 \end{array}\right),\\
\A_{12,\epsilon}(x) &:= \left(\begin{array}{cc} 0 & 0 \\ \epsilon \partial_v H_1 (u_{\p,\epsilon}(x),v_{\p,\epsilon}(x),\epsilon) + \partial_v H_2 (u_{\p,\epsilon}(x),v_{\p,\epsilon}(x)) & 0 \end{array}\right),
\end{align*}
and
\begin{align*}
\A_{21,\epsilon}(x) := \left(\begin{array}{cc} 0 & 0 \\ \partial_u G(u_{\p,\epsilon}(x),v_{\p,\epsilon}(x),\epsilon) & 0 \end{array}\right), \ \ \A_{22,\epsilon}(x,\lambda) := \left(\begin{array}{cc} 0 & D_2^{-1} \\ \partial_v G(u_{\p,\epsilon}(x),v_{\p,\epsilon}(x),\epsilon) + \lambda & 0 \end{array}\right).
\end{align*}
We will refer to (\ref{fullstab}) as the \emph{linear stability problem}.
\begin{rem}[\textit{Scaling of the $p$-component}] \label{scaling} We emphasize that there is a difference in scaling in the $p$-component between the linear stability problem (\ref{fullstab}) and the existence problem (\ref{ODE}). The scaling regime in (\ref{fullstab}) brings the system in slow-fast form, which has the advantage of making important tools available such as the Riccati transform (Theorem \ref{ric}). In principle, the existence problem could also be put in this form by introducing $p = \epsilon^{-1/2} D_1 \partial_x u$ instead -- as is done in existence analysis of periodic pulse solutions in the Gierer-Meinhardt equations in \cite{PLO2}. The equation for the $p$-component in the slow reduced system (\ref{slowp}) would read $\partial_\xx p = 0$ in that case. This makes the construction of the desired singular periodic orbit, performed in Section \ref{secexistence}, impossible. Therefore, the scaling regime in (\ref{ODE}) is the most natural for our existence analysis. In \cite{PLO2} one avoids setting $\epsilon = 0$ in the existence analysis and makes a distinction between slow and `super-slow' behavior. $\hfill \blacksquare$
\end{rem}
We are looking for solutions $\phi \in C_{ub}(\R,\C^{m+n})$ to (\ref{fullstab}). It is well-known by Floquet Theory \cite[Chapter 1]{BES} that bounded solutions to (\ref{fullstab}) must satisfy $\phi(-L_\epsilon) = \gamma \phi(L_\epsilon)$ for some $\gamma$ in the unit circle $S^1$. This fact gives rise to the following definition, which provides a tool to locate the spectrum of $\El_\epsilon$.
\begin{defi}
Denote by $\T_\epsilon(x,z,\lambda)$ the evolution operator of system (\ref{fullstab}) with initial condition $z$. The \emph{Evans function} $\E_\epsilon \colon \C \times \C \to \C$ is given by
\begin{align*} \E_\epsilon(\lambda,\gamma) := \det(\T_\epsilon(0,-L_\epsilon,\lambda) - \gamma \T_\epsilon(0,L_\epsilon,\lambda)).\end{align*}
\end{defi}
\begin{prop} \label{Evansprop} The Evans function has the following properties.
\begin{enumerate}
 \item[1.] The Evans function is analytic in both $\lambda$ and $\gamma$.
 \item[2.] We have $\lambda \in \sigma(\El_\epsilon)$ if and only if there exists $\gamma \in S^1$ such that the dispersion relation $\E_\epsilon(\lambda,\gamma) = 0$ is satisfied.
 \item[3.] It holds $\overline{\E_\epsilon(\lambda,\gamma)} = \E_\epsilon(\overline{\lambda},\overline{\gamma})$ for $\lambda,\gamma \in \C$. Thus, the spectrum $\sigma(\El_\epsilon)$ is closed under complex conjugation.
\end{enumerate}
\end{prop}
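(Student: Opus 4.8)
\emph{Strategy and item 1 (analyticity).} I would establish the three assertions essentially independently, each reducing to standard facts about linear ODEs with an analytic parameter together with Floquet theory and the spectral characterization \cite[Proposition 2.1]{GAR} already quoted above. For analyticity, first note that the coefficient matrix of \eqref{fullstab}, call it $A_\epsilon(x,\lambda)$, is entire — in fact affine — in $\lambda$, with coefficients bounded and continuous in $x$. The evolution operator $\T_\epsilon(x,z,\lambda)$ is the locally uniform limit of the Picard iterates $\T_\epsilon^{(0)}\equiv I$, $\T_\epsilon^{(k+1)}(x,z,\lambda)=I+\int_z^x A_\epsilon(y,\lambda)\,\T_\epsilon^{(k)}(y,z,\lambda)\,dy$; each iterate is a polynomial in $\lambda$ with coefficients continuous in $(x,z)$, and the usual Gr\"onwall estimate makes the convergence uniform on compact sets in $(x,z,\lambda)$. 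Hence $\lambda\mapsto\T_\epsilon(x,z,\lambda)$ is holomorphic (Morera's theorem, as a locally uniform limit of holomorphic functions), locally uniformly in $(x,z)$; alternatively one differentiates the variational equation in $\lambda$. Since $\E_\epsilon(\lambda,\gamma)$ is a polynomial in the entries of $\T_\epsilon(0,-L_\epsilon,\lambda)$ and $\T_\epsilon(0,L_\epsilon,\lambda)$ and in $\gamma$ (of degree at most $2(m+n)$ in $\gamma$), joint analyticity in $(\lambda,\gamma)$ follows, and analyticity in $\gamma$ alone is immediate.

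\emph{Item 2 (spectral characterization).} By \cite[Proposition 2.1]{GAR}, $\lambda\in\sigma(\El_\epsilon)$ iff the eigenvalue equation $\El_{\C,\epsilon}\phi=\lambda\phi$, i.e.\ the system \eqref{fullstab}, possesses a nonzero bounded solution $\phi\in C_{ub}^2(\R,\C^{m+n})$ (a bounded solution of \eqref{fullstab} automatically lies in $C_{ub}^2$, since the coefficients are bounded and $C^2$). Writing $A:=\T_\epsilon(0,-L_\epsilon,\lambda)$ and $B:=\T_\epsilon(0,L_\epsilon,\lambda)$ (both invertible), the monodromy of \eqref{fullstab} across $[-L_\epsilon,L_\epsilon]$ is $M(\lambda):=\T_\epsilon(L_\epsilon,-L_\epsilon,\lambda)=B^{-1}A$, so $\E_\epsilon(\lambda,\gamma)=\det(A-\gamma B)=\det(B)\,\det(M(\lambda)-\gamma I)$; hence $\E_\epsilon(\lambda,\gamma)=0$ iff $\gamma$ is an eigenvalue of $M(\lambda)$. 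On the other hand, by Floquet theory \cite[Chapter 1]{BES} (as recalled before the definition), \eqref{fullstab} has a nonzero bounded solution iff the period map of its coefficients has a unit-modulus eigenvalue, equivalently — since $M(\lambda)$ is a positive integer power of that period map and $|\rho|^k=1\Leftrightarrow|\rho|=1$ — iff $M(\lambda)$ has an eigenvalue on $S^1$. Combining the two equivalences gives item 2; concretely, a bounded $\phi$ must satisfy $\phi(-L_\epsilon)=\gamma\phi(L_\epsilon)$ for some $\gamma\in S^1$, and using $\phi(0)=A\phi(-L_\epsilon)=B\phi(L_\epsilon)$ with $\phi(L_\epsilon)\neq0$ one reads off $\E_\epsilon(\lambda,\gamma^{-1})=0$, which is harmless since $S^1$ is closed under inversion.

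\emph{Item 3 (conjugation symmetry).} The matrix $A_\epsilon(x,\lambda)$ satisfies $\overline{A_\epsilon(x,\lambda)}=A_\epsilon(x,\overline{\lambda})$, because $\lambda$ enters \eqref{fullstab} only through the combination $\lambda I$ alongside the real matrices $D_{1,2}^{-1}$ and the real derivatives of $H_{1,2}$ and $G$. Conjugating the defining initial value problem $\partial_x\T_\epsilon=A_\epsilon(x,\lambda)\T_\epsilon$, $\T_\epsilon(z,z,\lambda)=I$, and invoking uniqueness yields $\overline{\T_\epsilon(x,z,\lambda)}=\T_\epsilon(x,z,\overline{\lambda})$. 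Taking complex conjugates in the definition of $\E_\epsilon$, using that the determinant commutes with entrywise conjugation, then gives $\overline{\E_\epsilon(\lambda,\gamma)}=\E_\epsilon(\overline{\lambda},\overline{\gamma})$. Finally, if $\lambda\in\sigma(\El_\epsilon)$, item 2 supplies $\gamma\in S^1$ with $\E_\epsilon(\lambda,\gamma)=0$; then $\E_\epsilon(\overline{\lambda},\overline{\gamma})=0$ with $\overline{\gamma}\in S^1$, so $\overline{\lambda}\in\sigma(\El_\epsilon)$, i.e.\ $\sigma(\El_\epsilon)$ is closed under complex conjugation.

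\emph{Expected obstacle.} None of the steps is deep; this proposition is really an assembly of standard facts. The only points demanding care are the rigorous holomorphy of $\lambda\mapsto\T_\epsilon(x,z,\lambda)$ (covered by the uniform-limit-of-polynomials argument), and the precise bookkeeping in the Floquet correspondence — in particular the relation between the period of the coefficients of \eqref{fullstab}, the operator $M(\lambda)=\T_\epsilon(L_\epsilon,-L_\epsilon,\lambda)$, and the appearance of $\gamma^{-1}$ in place of $\gamma$ — together with the (routine) observation that a bounded solution of \eqref{fullstab} has the regularity required by \cite[Proposition 2.1]{GAR}.
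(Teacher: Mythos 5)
Your proof is correct and, modulo a small wrinkle, it is the same proof: the paper simply cites \cite[Lemma 2.4]{GAR} for item 1, \cite[Proposition 2.1]{GAR} for item 2, and invokes the reflection principle for item 3, while you unpack exactly the arguments that underlie those citations (Picard iteration for analyticity of $\lambda\mapsto\T_\epsilon(x,z,\lambda)$; the Floquet/monodromy dictionary reducing ``bounded solution of \eqref{fullstab}'' to ``$M(\lambda)=\T_\epsilon(L_\epsilon,-L_\epsilon,\lambda)$ has an eigenvalue on $S^1$''; and conjugation of the defining IVP, which is precisely how the reflection principle is implemented since $\E_\epsilon$ is real on $\R\times\R$). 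One small point to tighten: in your item 2 the parenthetical regularity remark conflates the $\phi\in\C^{m+n}$ in $\El_{\C,\epsilon}\phi=\lambda\phi$ with the $\phi\in\C^{2(m+n)}$ solving \eqref{fullstab} — the passage from a bounded solution of the first-order system to an eigenfunction in $C^2_{ub}(\R,\C^{m+n})$ is the standard bootstrap, but it should not be phrased as ``a bounded solution of \eqref{fullstab} lies in $C^2_{ub}(\R,\C^{m+n})$'' since it lies in a $2(m+n)$-dimensional space; otherwise the bookkeeping (including the harmless swap $\gamma\leftrightarrow\gamma^{-1}$) is accurate.
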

\begin{proof}
The first two properties are the content of \cite[Lemma 2.4]{GAR} and \cite[Proposition 2.1]{GAR}, respectively. Since (\ref{fullstab}) is a real valued problem for $\lambda \in \R$, the third property follows by the reflection principle.
\end{proof}
Proposition \ref{Evansprop} shows that the spectrum of $\El_\epsilon$ is the union of the discrete zero sets $\N(\E_\epsilon(\cdot,\gamma))$, where $\gamma$ ranges over $S^1$. Hence the spectrum $\sigma(\El_\epsilon)$ is an at most countable union of curves, each of which is parameterized over $S^1$. This gives rise to the definition of $\gamma$-eigenvalues, which is due to Gardner \cite{GAR}.
\begin{defi}
Let $\gamma \in S^1$. A zero $\lambda$ of the Evans function $\E_\epsilon(\cdot,\gamma)$ is called a \emph{$\gamma$-eigenvalue} of $\El_\epsilon$.
\end{defi}
\begin{rem}[\textit{Reversible symmetry}] \label{revsym}
Recall that system (\ref{ODE}) is $R$-reversible by Remark \ref{reversiblerem}. Suppose $\pih_{\p,\epsilon}$ obeys this reversible symmetry, which is the case for the solutions obtained in existence result Theorem \ref{maintheorem}. Then, the linear stability problem (\ref{fullstab}) is $R$-reversible at $0$, i.e. it holds $R\T_\epsilon(x,y,\lambda)R = \T_\epsilon(-x,-y,\lambda)$ for $x,y \in \R$. One readily deduces $\E_\epsilon(\lambda,\gamma) = \E_\epsilon(\lambda,\gamma^{-1})\gamma^{2(m+n)}$ for $\lambda \in \C$ and $\gamma \in \C \!\setminus\! \{0\}$. So, if $\lambda \in \C$ is a $\gamma$-eigenvalue for some $\gamma \in S^1$, then $\lambda$ is also a $\overline{\gamma}$-eigenvalue, since we have $\overline{\gamma} = \gamma^{-1}$ for $\gamma \in S^1$. Therefore, the image of $S^1$ covers each curve of spectrum twice, which yields degenerate spectrum (Figure \ref{fig2:test}). $\hfill \blacksquare$
\end{rem}
\subsection{Approximation of the spectrum} \label{mainresult3}
The main outcome of our spectral analysis is an explicit \emph{reduced Evans function} $\E_0(\lambda,\gamma)$, whose zeros, for $\gamma$ restricted to $S^1$, approximate the zeros of the Evans function $\E_\epsilon(\lambda,\gamma)$, provided that $\epsilon > 0$ is sufficiently small. Therefore, the reduced Evans function $\E_0$ proves to be a powerful tool to study the shape of the spectrum $\sigma(\El_\epsilon)$ and decide upon spectral (in)stability questions. We emphasize that the approximation of $\E_\epsilon$ by $\E_0$ is only valid on certain half planes $\Ce_\Lambda$, which contain the critical spectrum of $\El_\epsilon$.
\begin{nota} \label{notCEL}
For every $\Lambda < 0$ we denote by $\Ce_\Lambda$ the open half plane,
\begin{align*} \Ce_\Lambda := \{\lambda \in \C : \mathrm{Re}(\lambda) > \Lambda\}.\end{align*}
\end{nota}
The slow-fast structure of the linear stability problem (\ref{fullstab}) is reflected by the fact that $\E_0$ can be written as a product
\begin{align}\E_0(\lambda,\gamma) = (-\gamma)^n\E_{f,0}(\lambda)\E_{s,0}(\lambda,\gamma). \label{e:evprod}\end{align}
Here, the analytic map $\E_{f,0} \colon \Ce_\Lambda \to \C$ is called the \emph{fast reduced Evans functions}. It is associated with the \emph{homogeneous fast limit problem},
\begin{align}
\partial_x \phi = \A_{22,0}(x,\lambda)\phi, \ \ \ \phi \in \C^{2n}, \ \ \ \ \ \ \A_{22,0}(x,\lambda) := \left(\begin{array}{cc} 0 & D_2^{-1} \\ \partial_v G (u_0,v_{\ho}(x),0) + \lambda & 0 \end{array}\right). \label{redstab}
\end{align}
System \eqref{redstab} corresponds to the homogeneous variant of the fast layer problem in the singular limit of \eqref{fullstab}. We establish the existence of the fast reduced Evans function.
\begin{prop} \label{fastredEv}
There exists $\Lambda < 0$ and an analytic map $\E_{f,0} \colon \Ce_\Lambda \to \C$, which has a zero if and only if problem (\ref{redstab}) admits a non-trivial bounded solution. In particular, $\lambda \in \Ce_{\Lambda}$ is a simple zero of $\E_{f,0}$ if and only if there exists a unique non-trivial bounded solution to (\ref{redstab}) up to scalar multiples.
\end{prop}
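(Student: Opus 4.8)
The plan is to construct $\E_{f,0}$ as a \emph{geometric} Evans function for the asymptotically autonomous system (\ref{redstab}), in the spirit of \cite{AGJ,GAR}. The first step is to record the hyperbolicity of the limiting coefficient matrix. By Remark \ref{homoclin} the homoclinic $v_\ho$ decays exponentially, so (as $G$ is $C^2$) the coefficient matrix in (\ref{redstab}) converges exponentially, with the \emph{same} limit on both half-lines and locally uniformly in $\lambda$, to
\[\A_{22,0}^\infty(\lambda)=\left(\begin{array}{cc}0 & D_2^{-1}\\ \G(u_0)+\lambda & 0\end{array}\right),\qquad \G(u_0)=\partial_v G(u_0,0,0).\]
A purely imaginary number $i\omega$ is an eigenvalue of $\A_{22,0}^\infty(\lambda)$ exactly when $\G(u_0)+\lambda+\omega^2 D_2$ is singular; pairing a null vector $a$ with itself, taking real parts, and using assumption \ref{assS2} together with positivity of $D_2$ yields $0=\langle a,\mathrm{Re}(\G(u_0))a\rangle+\mathrm{Re}(\lambda)\|a\|^2+\omega^2\langle a,D_2 a\rangle\ge (c+\mathrm{Re}(\lambda))\|a\|^2$, where $c>0$ is the least eigenvalue of $\mathrm{Re}(\G(u_0))$. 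Hence $\A_{22,0}^\infty(\lambda)$ is hyperbolic whenever $\mathrm{Re}(\lambda)>-c$, and I take $\Lambda\in(-c,0)$. Letting $\lambda\to+\infty$ along $\R$ shows $\A_{22,0}^\infty(\lambda)$ then has $n$ eigenvalues in each open half-plane, and since $\Ce_\Lambda$ is connected and no eigenvalue crosses $i\R$, this Morse index stays $n$ throughout $\Ce_\Lambda$.

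\textbf{Dichotomies and the definition of $\E_{f,0}$.} Standard roughness then gives exponential dichotomies of (\ref{redstab}) on $[0,\infty)$ and on $(-\infty,0]$ with $n$-dimensional ranges. Let $E^s_+(\lambda),E^u_-(\lambda)\subset\C^{2n}$ be the $n$-dimensional subspaces of data at $x=0$ of solutions decaying as $x\to+\infty$, respectively as $x\to-\infty$. Since the coefficient matrix is analytic in $\lambda$ and exponentially asymptotic to the hyperbolic matrix $\A_{22,0}^\infty(\lambda)$, these subspaces vary analytically with $\lambda\in\Ce_\Lambda$, locally uniformly; because $\Ce_\Lambda$ is simply connected one may choose globally analytic frames $\Phi^u_-(\lambda),\Phi^s_+(\lambda)\in\C^{2n\times n}$ for them (as in the Evans function construction of \cite{AGJ}). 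I then set $\E_{f,0}(\lambda):=\det[\,\Phi^u_-(\lambda)\mid\Phi^s_+(\lambda)\,]$, which is analytic on $\Ce_\Lambda$; another choice of frames multiplies it by a nowhere-vanishing analytic function, so its zeros — counted with multiplicity — are intrinsic, which is all that the statement concerns.

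\textbf{Zeros and simplicity.} Since $\dim E^u_-(\lambda)+\dim E^s_+(\lambda)=2n$, the determinant vanishes at $\lambda_0$ precisely when $E^u_-(\lambda_0)\cap E^s_+(\lambda_0)\neq\{0\}$, i.e. precisely when (\ref{redstab}) has a solution decaying at both ends; by the dichotomies this is exactly a non-trivial bounded solution. For the simplicity claim I note first that if $E^u_-(\lambda_0)\cap E^s_+(\lambda_0)$ has dimension $k$, then, after a constant change of basis putting $[\,\Phi^u_-(\lambda_0)\mid\Phi^s_+(\lambda_0)\,]$ in block form and expanding the resulting $k\times k$ Schur complement, $\E_{f,0}$ vanishes at $\lambda_0$ to order at least $k$; hence a simple zero forces $k=1$, i.e. uniqueness of the bounded solution up to scalars. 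Conversely, when $k=1$ a Lyapunov–Schmidt reduction of the $2n\times 2n$ determinant near $\lambda_0$ gives $\E_{f,0}(\lambda)=\kappa(\lambda-\lambda_0)\int_{-\infty}^{\infty}\langle\widetilde\phi_0(x),\partial_\lambda\A_{22,0}(x,\lambda_0)\phi_0(x)\rangle\,dx+\ord((\lambda-\lambda_0)^2)$ with $\kappa\neq0$, where $\phi_0$ spans the bounded solutions of (\ref{redstab}) and $\widetilde\phi_0$ the (also one-dimensional, by the index-zero property) bounded solutions of the adjoint equation; since $\partial_\lambda\A_{22,0}=\left(\begin{smallmatrix}0&0\\ I&0\end{smallmatrix}\right)$, this integral is a pairing of the eigenfunction of the fast operator $D_2\partial_{xx}-\partial_v G(u_0,v_\ho(\cdot),0)$ with its adjoint eigenfunction, and it remains to verify that it does not vanish.

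\textbf{Expected main obstacle.} The hyperbolicity bookkeeping and the reading of the zeros are routine. The delicate points will be (i) the analytic dependence of the half-line dichotomies on $\lambda$ and the passage to a single, globally analytic Evans function on all of $\Ce_\Lambda$, and (ii) the non-degeneracy of the Melnikov-type integral underlying the equivalence ``simple zero $\Longleftrightarrow$ unique bounded solution''.
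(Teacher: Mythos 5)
Your overall construction --- exponential dichotomies on both half-lines, an Evans function as a determinant of frames for the decaying subspaces, and zeros read off as intersections --- coincides with the paper's (Lemma \ref{lemlambda0} plus Theorem \ref{Sturm}, of which this proposition is a restatement). Your hyperbolicity bound, obtained by pairing a would-be null vector of $\G(u_0)+\lambda I+\omega^2 D_2$ with itself and taking real parts, is a slightly different but perfectly valid route to the same conclusion the paper reaches with a field-of-values argument; both produce a half-plane $\mathrm{Re}(\lambda)>-c$ on which $\A_{22,0}^\infty(\lambda)$ is hyperbolic with Morse index $n$. The ``zeros iff bounded solutions'' equivalence and the implication ``simple zero $\Rightarrow$ one-dimensional kernel'' are also handled correctly.

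The genuine gap is the converse, ``one-dimensional kernel $\Rightarrow$ simple zero,'' which you flag but do not close. Your Lyapunov--Schmidt reduction correctly identifies the first-order coefficient as the Melnikov integral $\int\widetilde\phi_0\,\partial_\lambda\A_{22,0}\,\phi_0\,dx$, but its nonvanishing is not automatic. By the Fredholm alternative for $\El_\lambda=\partial_x-\A_{22,0}(\cdot,\lambda)$ (Corollary \ref{Fredholmcor}), that integral vanishes precisely when $\partial_\lambda\El_{\lambda_0}\phi_0\in\mathrm{ran}(\El_{\lambda_0})$, i.e.\ precisely when a generalized eigenfunction of order two exists, so that the algebraic multiplicity of the pencil eigenvalue exceeds the geometric multiplicity $1$. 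A one-dimensional kernel alone does not preclude this. The paper closes the gap by citing \cite[Theorem 3.19]{BEY}, which identifies the order of the Evans function zero with the algebraic multiplicity of the corresponding eigenvalue of $\lambda\mapsto\El_\lambda$; without such an input (or a structural argument of the type the paper uses in the symmetric case of Proposition \ref{Sturminhomspecial}, where it is shown that the generalized eigenspace coincides with the eigenspace), the converse implication does not follow.
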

The slow reduced Evans function $\E_{s,0} \colon [\Ce_\Lambda \setminus \E_{f,0}^{-1}(0)] \times \C \to \C$ is determined by two problems. The first is the \emph{inhomogeneous fast limit problem},
\begin{align}
\partial_x \X = \A_{22,0}(x,\lambda)\X + \A_{21,0}(x), \ \ \X \in \mathrm{Mat}_{2n \times 2m}(\C), \ \ \A_{21,0}(x) := \left(\begin{array}{cc} 0 & 0 \\ \partial_u G (u_0,v_{\ho}(x),0) & 0 \end{array}\right). \label{fastinhom}
\end{align}
The matrix system (\ref{fastinhom}) can be seen as a family of inhomogeneous fast layer problems in the singular limit of (\ref{fullstab}). The second is the \emph{slow limit problem},
\begin{align}
\left\{\! \begin{array}{rcl} D_1\partial_\xx u &=& p \\ \partial_\xx p &=& \left(\partial_u H_1 (u_\s(\xx),0,0) + \lambda\right)u\end{array}\right., \ \ \ u,p \in \C^{m}, \label{slowintr}
\end{align}
which correspond to the slow subsystem of \eqref{fullstab}.
\begin{defi} \label{slowEv}
The \emph{slow reduced Evans function} $\E_{s,0} \colon [\Ce_\Lambda \setminus \E_{f,0}^{-1}(0)] \times \C \to \C$ is defined by
\begin{align*} \E_{s,0}(\lambda,\gamma) = \det\left(\Upsilon(\lambda)\T_{s}(2\check{L}_0,0,\lambda) - \gamma I\right),  \end{align*}
where $\check{L}_0$ is given by assumption \ref{assS3}-1, $\T_s(\xx,\yy,\lambda)$ is the evolution of the slow limit problem (\ref{slowintr}) and $\Upsilon(\lambda)$ is given by
\begin{align} \begin{split}\Upsilon(\lambda) &=  \left(\begin{array}{cc} I & 0 \\  \G(\lambda) & I\end{array}\right),\\
\G(\lambda) &= \int_{-\infty}^\infty [\partial_u H_2 (u_0,v_{\ho}(x)) + \partial_v H_2(u_0,v_{\ho}(x))\V_{in}(x,\lambda)]dx,\end{split} \label{defupsilon}\end{align}
where $\V_{in}(x,\lambda)$ denotes the upper-left $(n \times m)$-block of the unique $(2n \times 2m)$-matrix solution $\X_{in}(x,\lambda)$ to the inhomogeneous fast limit problem (\ref{fastinhom}).
\end{defi}
We collect some properties of the slow reduced Evans functions $\E_{s,0}$.
\begin{prop} \label{slowredEv}
The slow reduced Evans function $\E_{s,0}$ is analytic in both $\lambda$ and $\gamma$. Moreover, $\E_{s,0}(\cdot,\gamma)$ is meromorphic on $\Ce_\Lambda$ in such a way that the reduced Evans function $\E_0(\cdot,\gamma)$ is analytic on $\Ce_\Lambda$ for each $\gamma \in \C$.
\end{prop}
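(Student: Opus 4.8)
The plan is to establish the three assertions in turn: joint analyticity of $\E_{s,0}$ on $[\Ce_\Lambda\setminus\E_{f,0}^{-1}(0)]\times\C$, meromorphic extension of each section $\E_{s,0}(\cdot,\gamma)$ to $\Ce_\Lambda$ with poles contained in $\E_{f,0}^{-1}(0)$, and the cancellation of those poles by the zeros of $\E_{f,0}$, which yields analyticity of $\E_0(\cdot,\gamma)$. Analyticity in $\gamma$ is immediate, since $\E_{s,0}(\lambda,\gamma) = \det(\Upsilon(\lambda)\T_s(2\check{L}_0,0,\lambda) - \gamma I)$ is a polynomial in $\gamma$ of degree $2m$. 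The coefficient matrix of the slow limit problem (\ref{slowintr}) is affine in $\lambda$, so $\T_s(2\check{L}_0,0,\lambda)$ is entire in $\lambda$, and everything reduces to the analytic, resp. meromorphic, dependence of $\Upsilon(\lambda)$, i.e. of $\G(\lambda)$, equivalently of $\V_{in}(x,\lambda)$, on $\lambda$.

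For $\lambda\in\Ce_\Lambda\setminus\E_{f,0}^{-1}(0)$ I would argue as follows. The homogeneous fast limit problem (\ref{redstab}) has exponential dichotomies on both half-lines for every $\lambda\in\Ce_\Lambda$, by the exponential localization of $v_\ho$ (Remark \ref{homoclin}) and assumption \ref{assS2}, and these paste to an exponential dichotomy on $\R$ precisely when $\lambda\notin\E_{f,0}^{-1}(0)$. Since \ref{assS1} forces $\partial_u G(u_0,0,0) = 0$, the inhomogeneity $\A_{21,0}(x)$ in (\ref{fastinhom}) decays exponentially as $|x|\to\infty$; variation of constants against the dichotomy on $\R$ then produces the unique bounded solution $\X_{in}(\cdot,\lambda)$, which itself decays exponentially, with rate and constants locally uniform in $\lambda$. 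By roughness of exponential dichotomies with analytic parameter dependence, the dichotomy projections and the transition operator are analytic in $\lambda$, hence so is $(x,\lambda)\mapsto\X_{in}(x,\lambda)$, with locally uniform exponential bounds. In the integrand of (\ref{defupsilon}) the term $\partial_u H_2(u_0,v_\ho(x))$ decays exponentially (because $H_2(u,0) = 0$) and $\partial_v H_2(u_0,v_\ho(x))\V_{in}(x,\lambda)$ decays exponentially as well, so the integral defining $\G(\lambda)$ converges locally uniformly in $\lambda$ and is analytic on $\Ce_\Lambda\setminus\E_{f,0}^{-1}(0)$. This gives joint analyticity of $\E_{s,0}$ on its natural domain.

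For the continuation across a zero $\lambda_0$ of $\E_{f,0}$, note that the half-line dichotomies of (\ref{redstab}) persist for all $\lambda\in\Ce_\Lambda$, so bounded solutions of the inhomogeneous problem on each half-line exist and are analytic near $\lambda_0$; gluing them into a solution on $\R$ is a linear matching problem at $x = 0$ whose determinant is a nonzero analytic multiple of $\E_{f,0}(\lambda)$. Hence $\X_{in}$, $\G$, and $\E_{s,0}(\cdot,\gamma)$ extend meromorphically to $\Ce_\Lambda$ with poles in $\E_{f,0}^{-1}(0)$. To obtain analyticity of $\E_0(\cdot,\gamma) = (-\gamma)^n\E_{f,0}(\lambda)\E_{s,0}(\lambda,\gamma)$ I would show that the pole order of $\E_{s,0}(\cdot,\gamma)$ at $\lambda_0$ does not exceed the order of the zero of $\E_{f,0}$ there. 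The principal part of $\X_{in}$ at $\lambda_0$ is built, through the cascade generated by the inhomogeneous ODE, from the (generalized) bounded solutions of (\ref{redstab}); when $\lambda_0$ is a simple zero in the sense of Proposition \ref{fastredEv}, $\G$ has a simple pole with rank-one residue, and since $\G$ enters only the lower block rows of $\Upsilon(\lambda)\T_s(2\check{L}_0,0,\lambda) - \gamma I$, Laplace expansion of the $(2m\times2m)$ determinant along those rows, together with multilinearity and the rank bound, gives that $\E_{s,0}(\cdot,\gamma)$ has at most a simple pole at $\lambda_0$ — cancelled by the simple zero of $\E_{f,0}$. The same bookkeeping, with the rank of the Laurent coefficients of $\G$ controlled by the dimension of the bounded-solution space of (\ref{redstab}) and the zero order of $\E_{f,0}$ (which always dominates that dimension), extends the argument to general zeros, and $\E_{s,0}$ is meromorphic as claimed.

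The main obstacle is this last step: controlling simultaneously the pole order of $\G$ and the ranks of its Laurent coefficients at a possibly non-semisimple zero of $\E_{f,0}$, and pushing this through the Laplace expansion so that the pole of $\E_{s,0}(\cdot,\gamma)$ never exceeds the order of the zero of $\E_{f,0}$. A cleaner route I would also pursue is to recognize $\G(\lambda)$ as the transmission coefficient of the fast pulse and to identify $\E_0(\lambda,\gamma)$ with the dispersion determinant $\det(\M_0(\lambda) - \gamma I)$ of a well-posed $2(m+n)$-dimensional reduced limit problem obtained from (\ref{fullstab}) as $\epsilon\to 0$: being the characteristic polynomial of the evolution operator of an ODE analytic in $\lambda$, this is manifestly analytic in $(\lambda,\gamma)$, whence $\E_{s,0} = \E_0/[(-\gamma)^n\E_{f,0}]$ is automatically meromorphic with poles only in $\E_{f,0}^{-1}(0)$. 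Establishing that identity still rests on the analyticity of $\lambda\mapsto\E_{f,0}(\lambda)\X_{in}(x,\lambda)$ obtained above.
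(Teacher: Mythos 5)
Your proposal is correct in outline and coincides with the paper's proof (Proposition~\ref{slowEvans}) in all essential respects: well-definedness of $\G$ via exponential localization of $\partial_uH_2(u_0,v_\ho(\cdot))$ and of $\X_{in}(\cdot,\lambda)$, analyticity via analytic dependence of evolution operators and of the dichotomy data on $\lambda$, and meromorphy inherited from $\X_{in}$. There are, however, a few points where the routes differ or where precision matters.

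For the meromorphy of $\X_{in}$ the paper does not run your shooting/matching argument at $x=0$. Instead it works through the analytic Fredholm pencil $\El_\lambda=\partial_x-\A_{22,0}(\cdot,\lambda)$ (Corollary~\ref{Fredholmcor}), invoking the abstract theory of operator pencils: $\El_\lambda^{-1}$ is finitely meromorphic by [MEN, Theorem~1.3.1], and the singular parts are given by Keldysh's formula [MEN, Theorem~1.6.5]. Your matching determinant is an equivalent route, but the paper's choice is deliberate: Keldysh's formula is precisely what powers Theorem~\ref{Sturminhom}-4, Proposition~\ref{Sturminhomspecial}, and the explicit Laurent expansions of $\E_{s,0}$ in Proposition~\ref{slowEvans}-3 used later for zero--pole cancelation.

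On the genuinely delicate assertion — that $\E_0(\cdot,\gamma)$ is analytic, i.e.\ that the pole order of $\E_{s,0}(\cdot,\gamma)$ never exceeds $\mathrm{ord}_{\lambda_\diamond}\E_{f,0}$ — your rank-one/Laplace argument for simple zeros is exactly the paper's computation in Proposition~\ref{slowEvans}-3, and your observation that this is where the real content lies is correct. The paper also does not carry this out in full generality (it treats only simple zeros and, separately, the symmetric case with simple pole of $\X_{in}$ and rank-$k_{\lambda_\diamond}$ residue), deferring the general case with a pointer to canonical systems of generalized eigenvectors in [MEN, Chapter~1]. Your stated ``main obstacle'' is thus shared with the paper, not a flaw of your approach alone. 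It can be closed cleanly: write the principal part of $\G$, via the local Smith form coming from Keldysh theory, as $A_g(\lambda)D_g(\lambda)B_g(\lambda)$ with $D_g=\mathrm{diag}\big((\lambda-\lambda_\diamond)^{-m_1},\dots,(\lambda-\lambda_\diamond)^{-m_g}\big)$ ($\sum_i m_i=k$) and $A_g,B_g$ analytic of ranks $\le g$; then the rank-$g$ update/matrix-determinant-lemma reduces $\E_{s,0}$ to $\det(I_g+D_gC(\lambda))$ for an analytic $g\times g$ matrix $C$, whose cofactor expansion gives pole order $\le\max_{S\subset\{1,\dots,g\}}\sum_{i\in S}m_i=k$. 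This is the ``bookkeeping'' you were worried about; it goes through.

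Two smaller remarks: your appeal to ``roughness with analytic parameter dependence'' to get analyticity of the dichotomy projections is slightly off — roughness gives continuity, and analyticity of the projections requires the separate result of [SAN, Theorem~1], which the paper cites in Theorem~\ref{Sturm}. Finally, your proposed ``cleaner route'' of identifying $\E_0(\lambda,\gamma)$ with $\det(\M_0(\lambda)-\gamma I)$ for a $2(m+n)$-dimensional reduced period map is not readily available: $\E_{f,0}$ is an Evans function of a problem on the whole line, not a characteristic polynomial of a monodromy operator, and no such finite-dimensional reduced problem is constructed in the paper.
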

Concerning zeros of $\E_0$, consider a simply closed curve $\Gamma\subset\Ce_\Lambda$ that avoids the roots of $\E_{s,0}$ and $\E_{f,0}$
\begin{align*}
\N_0 := \left[\N(\E_{f,0}) \cup \bigcup_{\gamma \in S^1} \N(\E_{s,0}(\cdot,\gamma))\right].
\end{align*}
Let $\gamma \in S^1$. Denote by $N_\Gamma$, $N_{\Gamma,\gamma}$ and $P_{\Gamma,\gamma}$ the number of zeros of $\E_{f,0}$, $\E_{s,0}(\cdot,\gamma)$ and poles of $\E_{s,0}(\cdot,\gamma)$ interior to $\Gamma$ counted with multiplicity, respectively. Given the analyticity and product form \eqref{e:evprod} of the reduced Evans function $\E_0(\cdot,\gamma)$, its number of roots interior to $\Gamma$ counted with multiplicity is given by
\begin{align}
N_\Gamma + N_{\Gamma,\gamma} - P_{\Gamma,\gamma}.
 \label{Eulerlike}\end{align}
Our first approximation result confirms this and its persistence for $\E_\epsilon$ with small $\epsilon>0$.
\begin{theo} \label{mainresult}
Take a simple closed curve $\Gamma$ contained in $\Ce_\Lambda \!\setminus\! \N_0$. For $\epsilon > 0$ sufficiently small, the number of $\gamma$-eigenvalues (including multiplicity) of $\El_\epsilon$ interior to $\Gamma$ equals \eqref{Eulerlike}.
\end{theo}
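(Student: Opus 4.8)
The plan is to show that the full Evans function $\E_\epsilon(\cdot,\gamma)$ converges, uniformly on compact subsets of $\Ce_\Lambda \setminus \N_0$, to the reduced Evans function $\E_0(\cdot,\gamma)$ (up to a nonzero analytic prefactor), so that the conclusion follows from Rouch\'e's theorem (the argument principle). Since $\Gamma$ is compact and avoids $\N_0$, the analytic function $\E_0(\cdot,\gamma)$ is bounded away from zero on $\Gamma$ uniformly for $\gamma \in S^1$; once we have a uniform estimate $|\E_\epsilon(\lambda,\gamma) - c_\epsilon(\gamma)\E_0(\lambda,\gamma)| \to 0$ on $\Gamma$ with $c_\epsilon(\gamma)$ analytic and nonvanishing, Rouch\'e gives that $\E_\epsilon(\cdot,\gamma)$ and $\E_0(\cdot,\gamma)$ have the same number of zeros interior to $\Gamma$, which by \eqref{Eulerlike} is $N_\Gamma + N_{\Gamma,\gamma} - P_{\Gamma,\gamma}$. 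Counting $\gamma$-eigenvalues of $\El_\epsilon$ with multiplicity is exactly counting zeros of $\E_\epsilon(\cdot,\gamma)$ with multiplicity by Proposition \ref{Evansprop}, so this finishes the theorem.

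\textbf{Construction of the factorization.} The heart of the matter — and the bulk of the later sections of the paper — is establishing the factorization $\E_\epsilon(\lambda,\gamma) = \E_{s,\epsilon}(\lambda,\gamma)\,\E_{f,\epsilon}(\lambda)$ up to nonzero prefactors, and then the singular-limit convergence $\E_{f,\epsilon} \to \E_{f,0}$, $\E_{s,\epsilon} \to \E_{s,0}$. The first step is to pass, for $\lambda$ in a fixed compact $K \subset \Ce_\Lambda \setminus \N_0$, to the fast limit problem \eqref{redstab}: because $v_\ho$ decays exponentially (Remark \ref{homoclin}), \eqref{redstab} limits to an autonomous hyperbolic system at $x \to \pm\infty$, hence has exponential dichotomies on both half-lines; since $\lambda \notin \N(\E_{f,0})$ these dichotomies paste to a dichotomy on all of $\R$. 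Roughness of exponential dichotomies, together with the estimate $\|\pih_{\p,\epsilon} - \pih_\ho\| = \ord(\epsilon^{\beta_f})$ on the pulse region $I_\epsilon$ (Remark \ref{introbetaf}) and the decay estimate \eqref{slowestS31} outside it, then transfers this dichotomy to the perturbed fast subsystem $\partial_x\psi = \A_{22,\epsilon}(x,\lambda)\psi$ on $[-L_\epsilon,L_\epsilon]$, uniformly in $\epsilon$ small and $\lambda \in K$. This dichotomy is precisely what makes the Riccati transformation of Theorem \ref{ric} (Section \ref{sec5}) well-defined: it block-diagonalizes \eqref{fullstab} into a ``slow'' $2m$-dimensional block driven by $\sqrt\epsilon\A_{11,\epsilon}$ (corrected by a Riccati term) and a ``fast'' $2n$-dimensional block. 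The determinant of the monodromy over $[-L_\epsilon,L_\epsilon]$ factorizes accordingly, giving $\E_{f,\epsilon}$ and $\E_{s,\epsilon}$ as the determinants associated with the two diagonal blocks.

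\textbf{Singular-limit convergence.} For the fast factor, one shows the Riccati-corrected fast block converges (uniformly on $K$) to the monodromy of \eqref{redstab}; the contribution of the slow variables decouples because of the $\sqrt\epsilon$ scaling and the exponential localization, so $\E_{f,\epsilon}(\lambda) \to \E_{f,0}(\lambda)$ up to a nonzero analytic prefactor. For the slow factor, the Riccati correction term in the slow block, when integrated over $I_\epsilon$, converges to the off-diagonal block $\G(\lambda)$ in \eqref{defupsilon}: this is where the particular solution $\X_{in}$ of the inhomogeneous fast limit problem \eqref{fastinhom} enters, since $\G(\lambda)$ is exactly $\int \bigl[\partial_u H_2 + \partial_v H_2\,\V_{in}\bigr]dx$. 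Outside $I_\epsilon$ the slow block is governed, to leading order, by the slow limit problem \eqref{slowintr} with evolution $\T_s$, using \eqref{slowestS32} and rescaling $x = \epsilon^{-1}\xx$; the two matching jumps at the ends of the pulse region contribute the factor $\Upsilon(\lambda)$ (it appears once since the pattern has one pulse per period and the contributions at $x = \pm L_\epsilon$ are merged). Assembling, $\E_{s,\epsilon}(\lambda,\gamma) \to \det(\Upsilon(\lambda)\T_s(2\check L_0,0,\lambda) - \gamma I) = \E_{s,0}(\lambda,\gamma)$ up to a nonzero analytic prefactor, uniformly on $K \times S^1$. Combining the two limits with the $(-\gamma)^n$ bookkeeping in \eqref{e:evprod} yields the desired uniform approximation on $\Gamma$, and Rouch\'e concludes.

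\textbf{Main obstacle.} The main difficulty is the uniform (in $\epsilon$ and $\lambda \in K$) control of the Riccati transform across the whole period — in particular verifying that the transferred exponential dichotomy for $\partial_x\psi = \A_{22,\epsilon}(x,\lambda)\psi$ has rates and constants independent of $\epsilon$, which requires carefully combining the roughness estimate on the $\ord(\epsilon^{\beta_f})$-small pulse region with the exponential-closeness-to-$\M$ estimate \eqref{slowestS31} on the complementary slow region where $\A_{22,\epsilon}$ is only slowly varying. This is also precisely the point where slow nonlinearity matters: $\A_{22,\epsilon}$ genuinely varies in $x$ over the slow region (unlike the slowly linear case), so one cannot reduce to a constant-coefficient comparison and must instead track the dichotomy through a genuinely nonautonomous but exponentially-attracted-to-$\M$ regime. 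Once this uniform dichotomy and the resulting uniform bounds on the Riccati solution are in hand, the remaining convergences are routine Gr\"onwall-type estimates, and the final Rouch\'e argument is immediate.
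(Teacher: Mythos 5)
Your overall strategy — Riccati-diagonalize the linear stability problem, approximate the slow and fast factors by their singular limits, and conclude by a Rouch\'e count — is exactly the route the paper takes. But there is a genuine gap in the final Rouch\'e step, and it is not a cosmetic one: the paper explicitly flags it in the opening paragraph of Section~\ref{4.5}.

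You posit a uniform estimate of the form $|\E_\epsilon(\lambda,\gamma) - c_\epsilon(\gamma)\E_0(\lambda,\gamma)| \to 0$ on $\Gamma$, with $c_\epsilon(\gamma)$ an analytic nonvanishing prefactor. Two things go wrong. First, the ``prefactor'' that actually appears is $\lambda$-dependent: Lemma~\ref{Evansfastestimate} produces a scalar $h_\epsilon(\lambda)$, built from a determinant of dichotomy bases at $x = \pm L_\epsilon$, and the convergence is $\E_{f,\epsilon}(\lambda,\gamma)\,h_\epsilon(\lambda) = (-\gamma)^n\E_{f,0}(\lambda) + \ord(\epsilon^{\mu_p})$, with $|h_\epsilon(\lambda)| = \ord(e^{-\mu_p L_\epsilon})$ exponentially small. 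Consequently $\E_\epsilon$ itself blows up exponentially on $\Gamma$ — it is $h_\epsilon\E_\epsilon$ that converges to $\E_0$, not $\E_\epsilon$ or $\E_\epsilon - c\E_0$ — so the comparison you write down cannot hold. Second, to compare $h_\epsilon\E_\epsilon$ with $\E_0$ via classical Rouch\'e and then drop the factor $h_\epsilon$ from the zero count, you would need $h_\epsilon$ to be analytic and nonvanishing in $\lambda$ throughout the interior of $\Gamma$. The paper does not establish this (the bases $B_\epsilon^{u,s}(\lambda)$ are constructed via the projection-comparison Lemma~\ref{compdich} and are only shown to be close to the analytic $B_r^{u,s}(\lambda)$, not themselves analytic), and there is no reason to expect it for free.

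The paper's fix is to avoid any reliance on a prefactor at all: it invokes the \emph{symmetric} (Estermann) version of Rouch\'e's theorem, which only requires $|\E_\epsilon - \E_0| < |\E_\epsilon| + |\E_0|$ on $\Gamma$. This weaker inequality is compatible with $\E_\epsilon$ being exponentially large; one checks it by splitting $\E_\epsilon - \E_0 = (1 - h_\epsilon)\E_\epsilon + (h_\epsilon\E_\epsilon - \E_0)$ and using $|h_\epsilon\E_\epsilon - \E_0| < |\E_0|$ (from Lemmas~\ref{Evansslowestimate}--\ref{Evansfastestimate} plus the lower bound on $|\E_0|$ on $\Gamma$) together with the exponential smallness of $h_\epsilon$. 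Since both $\E_\epsilon(\cdot,\gamma)$ and $\E_0(\cdot,\gamma)$ are analytic inside and on $\Gamma$ (Proposition~\ref{slowredEv} says exactly that the poles of $\E_{s,0}$ are absorbed by the zeros of $\E_{f,0}$ in the product $\E_0$), the symmetric Rouch\'e theorem yields equality of the zero counts, and the count for $\E_0(\cdot,\gamma)$ is $N_\Gamma + N_{\Gamma,\gamma} - P_{\Gamma,\gamma}$ by the argument principle applied to the product \eqref{e:evprod}. So the missing ingredient in your proposal is precisely the switch from classical Rouch\'e (with a hypothetical analytic prefactor you cannot construct) to the symmetric version, which makes the prefactor's analyticity and $\lambda$-dependence irrelevant. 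Everything else in your outline — the uniform dichotomy for the perturbed fast subsystem, its transfer via Riccati, the emergence of $\Upsilon(\lambda)$ from integrating the Riccati correction across the pulse, and the a priori sectorial bound — matches the paper's Sections~\ref{4.1}--\ref{4.rela}.
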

\begin{figure}[t]
\centering
\begin{subfigure}[b]{.45\textwidth}
  \centering
  \includegraphics[width=\linewidth]{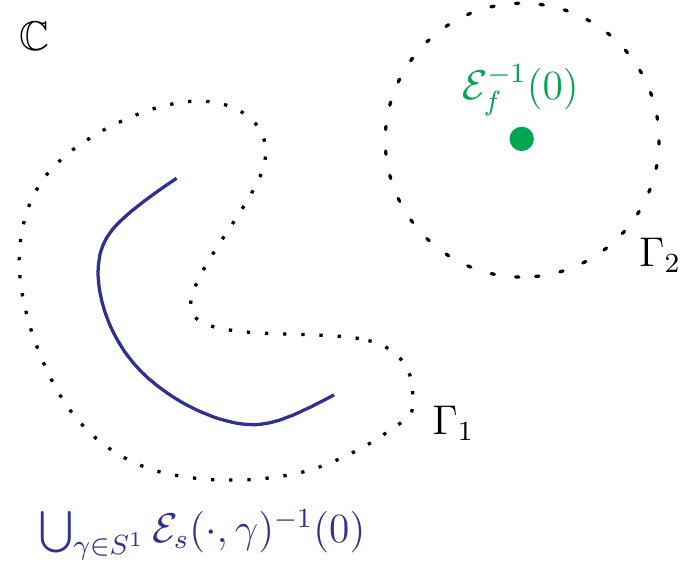}
  \caption{Spectrum in the singular limit with disjoint contours $\Gamma_1$ and $\Gamma_2$.}
\end{subfigure}%
\hspace{.05\textwidth}
\begin{subfigure}[b]{.45\textwidth}
  \centering
  \includegraphics[width=\linewidth]{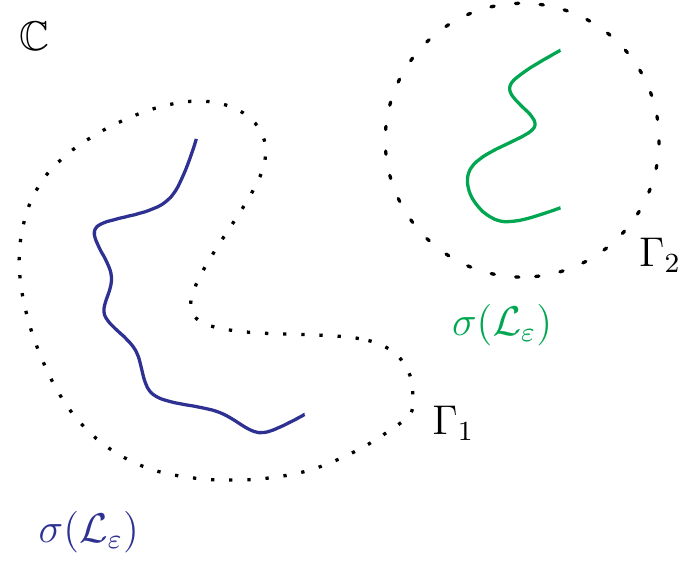}
  \caption{The true spectrum remains in the interior of $\Gamma_1$ and $\Gamma_2$ for $\epsilon > 0$ sufficiently small.}
\end{subfigure}
\caption{approximation of the spectrum $\sigma(\El_\epsilon)$ in the case of reversible symmetry (Remark \ref{revsym}).}
\label{fig2:test}
\end{figure}
Theorem \ref{mainresult} shows that the set,
\begin{align*} \sigma_0 := \bigcup_{\gamma \in S^1} \N(\E_0(\cdot,\gamma)),\end{align*}
can be seen as the spectrum in the singular limit of the operator $\El_\epsilon$ in $\Ce_\Lambda$. Indeed, choose contours close enough to and disjoint from the connected components of $\sigma_0$, with, say, Hausdorff distance $\delta$. This results in an $\epsilon_\delta > 0$ such that, if $\epsilon \in (0,\epsilon_\delta)$, then the spectrum $\sigma(\El_\epsilon) \cap \Ce_\Lambda$ is contained in a $\delta$-neighborhood of $\sigma_0$. In that sense the reduced Evans function $\E_0$ determines the shape of the spectrum $\sigma(\El_\epsilon)$ up to connected components of the set $\sigma_0$. However, this approximation could still be too rough, since it does not concern pointwise distances of spectra. For instance, when a connected component of $\sigma_0$ transversally intersects the imaginary axis, one can not decide with Theorem \ref{mainresult} upon spectral (in)stability. In that sense, our second approximation result is more refined.
\begin{theo} \label{fixgamma}
Fix $\gamma \in S^1$. There exists $\delta_\gamma > 0$, such that for every $0 < \delta < \delta_\gamma$, provided $\epsilon > 0$ is sufficiently small (with bound dependent on $\delta$), the $\gamma$-eigenvalues of $\El_\epsilon$ in $\Ce_\Lambda$ are contained in a $\delta$-neighborhood of $\N(\E_0(\cdot,\gamma))$. Moreover, for every $\lambda_\gamma \in \N(\E_0(\cdot,\gamma))$ the number of $\gamma$-eigenvalues (including multiplicity) inside $B(\lambda_\gamma,\delta)$ is equal to the multiplicity $\mathrm{ord}_{\lambda_\gamma}(\E_0(\cdot,\gamma))$ of $\lambda_\gamma$ as a root of $\E_0(\cdot,\gamma)$.
\end{theo}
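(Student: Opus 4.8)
The plan is to strengthen Theorem~\ref{mainresult} from a statement about the number of $\gamma$-eigenvalues interior to a \emph{fixed} contour to the pointwise statement above, and the extra ingredient needed is the \emph{quantitative} form of the approximation that underlies Theorem~\ref{mainresult}. Concretely, the Riccati-transform factorization of Sections~\ref{sec5}--\ref{sec6} provides, for each $\gamma\in\C$, a nowhere-vanishing analytic prefactor $c_\epsilon(\cdot,\gamma)$ on $\Ce_\Lambda$ such that the normalized Evans function $\widehat{\E}_\epsilon(\cdot,\gamma):=\E_\epsilon(\cdot,\gamma)/c_\epsilon(\cdot,\gamma)$ is analytic on $\Ce_\Lambda$, vanishes precisely at the $\gamma$-eigenvalues of $\El_\epsilon$ in $\Ce_\Lambda$ with the same multiplicities, and converges to $\E_0(\cdot,\gamma)$ uniformly on compact subsets of $\Ce_\Lambda$ as $\epsilon\downarrow0$. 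The subtle point in producing this statement is the behaviour near the finitely many zeros of $\E_{f,0}$, where $\E_{s,0}(\cdot,\gamma)$ has poles: there one shows that the approximate zeros of the fast factor cancel the approximate poles of the slow factor to leading order, so that $\widehat{\E}_\epsilon(\cdot,\gamma)$ is genuinely analytic and converges across these points, consistently with the analyticity of $\E_0(\cdot,\gamma)$ from Proposition~\ref{slowredEv}. So the first thing I would do is extract this uniform convergence explicitly from the analysis of Section~\ref{sec6}.

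Second, I would record the a priori confinement of the critical spectrum: there exist a compact set $K\subset\Ce_\Lambda$ and $\epsilon_0>0$ with $\N_0\subset K$ and $\sigma(\El_\epsilon)\cap\Ce_\Lambda\subseteq K$ for all $\epsilon\in(0,\epsilon_0)$. The mechanism is that for $\lambda\in\Ce_\Lambda$ with $\mathrm{Re}(\lambda)$ large or $|\mathrm{Im}(\lambda)|$ large the fast block $\A_{22,\epsilon}(\cdot,\lambda)$ in \eqref{fullstab} is uniformly hyperbolic, and once the Riccati transformation of Theorem~\ref{ric} decouples the slow block the full linear stability problem has an exponential dichotomy on $\R$ whose exponents, multiplied by the period $2L_\epsilon=2\epsilon^{-1}\check{L}_\epsilon$, diverge; hence the monodromy has no eigenvalue on the unit circle and $\E_\epsilon(\lambda,\gamma)\neq0$ for $\gamma\in S^1$. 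Combined with the fact that $\N_0$ is compactly contained in $\Ce_\Lambda$ (which is built into the choice of $\Lambda$), this yields the desired $K$.

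Third, fix $\gamma\in S^1$. The map $\E_0(\cdot,\gamma)$ is analytic on the connected domain $\Ce_\Lambda$ and not identically zero, so $\N(\E_0(\cdot,\gamma))$ is discrete; by the product form \eqref{e:evprod} it is contained in $\N(\E_{f,0})\cup\N(\E_{s,0}(\cdot,\gamma))\subseteq\N_0\subseteq K$, hence finite, say $\N(\E_0(\cdot,\gamma))=\{\lambda_1,\dots,\lambda_k\}$, with $m_j:=\mathrm{ord}_{\lambda_j}(\E_0(\cdot,\gamma))$. Choose $\delta_\gamma>0$ so small that the closed disks $\overline{B(\lambda_j,\delta_\gamma)}$ are pairwise disjoint and contained in $\Ce_\Lambda$, and fix $0<\delta<\delta_\gamma$. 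On each circle $\partial B(\lambda_j,\delta)$ the function $\E_0(\cdot,\gamma)$ is continuous and nonvanishing, so $\rho_j:=\min_{|\lambda-\lambda_j|=\delta}|\E_0(\lambda,\gamma)|>0$; by the uniform convergence on the compact set $\bigcup_j\partial B(\lambda_j,\delta)$ there is $\epsilon_1(\delta)\in(0,\epsilon_0)$ such that $|\widehat{\E}_\epsilon(\lambda,\gamma)-\E_0(\lambda,\gamma)|<\rho_j\le|\E_0(\lambda,\gamma)|$ on $\partial B(\lambda_j,\delta)$ whenever $\epsilon\in(0,\epsilon_1(\delta))$. By Rouch\'e's theorem $\widehat{\E}_\epsilon(\cdot,\gamma)$, and hence $\E_\epsilon(\cdot,\gamma)$, has exactly $m_j$ zeros counted with multiplicity inside $B(\lambda_j,\delta)$, which is the second assertion.

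Finally, for the first assertion set $\Omega_\delta:=K\setminus\bigcup_j B(\lambda_j,\delta)$, a compact set containing no zero of $\E_0(\cdot,\gamma)$, so $\rho_0:=\min_{\Omega_\delta}|\E_0(\cdot,\gamma)|>0$; by uniform convergence on $\Omega_\delta$ there is $\epsilon_2(\delta)\in(0,\epsilon_0)$ with $|\widehat{\E}_\epsilon(\cdot,\gamma)|\ge\rho_0-\sup_{\Omega_\delta}|\widehat{\E}_\epsilon(\cdot,\gamma)-\E_0(\cdot,\gamma)|>0$ on $\Omega_\delta$ for $\epsilon\in(0,\epsilon_2(\delta))$, so $\El_\epsilon$ has no $\gamma$-eigenvalue in $\Omega_\delta$; since by the second step all $\gamma$-eigenvalues in $\Ce_\Lambda$ lie in $K$, for $\epsilon<\min\{\epsilon_1(\delta),\epsilon_2(\delta)\}$ they all lie in the $\delta$-neighbourhood $\bigcup_j B(\lambda_j,\delta)$ of $\N(\E_0(\cdot,\gamma))$. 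The hard part of this programme is the first step: upgrading ``the root count on a given contour is eventually correct'' to ``$\widehat{\E}_\epsilon(\cdot,\gamma)\to\E_0(\cdot,\gamma)$ uniformly on compacta'', with the requisite uniformity in $\lambda$ and $\epsilon$ of the Riccati and exponential-dichotomy estimates of Sections~\ref{sec5}--\ref{sec6}, and in particular controlling this convergence across the poles of $\E_{s,0}$; the a priori confinement of the spectrum to a fixed compact subset of $\Ce_\Lambda$ is the remaining, more routine, ingredient.
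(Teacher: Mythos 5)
Your Rouch\'e-based strategy closely mirrors the paper's, and your a priori confinement of the critical spectrum to a fixed compact subset of $\Ce_\Lambda$ is essentially the paper's Proposition~\ref{structurespectrum}. However, there is a genuine gap in where you centre the Rouch\'e discs and, correspondingly, in the uniform approximation you need. You centre at $\N(\E_0(\cdot,\gamma))$ only, so the complement $\Omega_\delta = K\setminus\bigcup_j B(\lambda_j,\delta)$ may contain zero-pole cancelation points, i.e.\ zeros of $\E_{f,0}$ that are \emph{not} zeros of $\E_0(\cdot,\gamma)$. Your ``first step'' then requires a nowhere-vanishing analytic prefactor $c_\epsilon$ on $\Ce_\Lambda$ with $\widehat{\E}_\epsilon = \E_\epsilon/c_\epsilon \to \E_0$ uniformly on compacta, \emph{including across these cancelation points}. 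This is exactly what Section~\ref{sec6} does not give: the approximations in Lemmas~\ref{Evansslowestimate} and~\ref{Evansfastestimate} (and hence the key inequality~(\ref{Roucheineq})) are established only on $\Sigma_{\Lambda,\delta'}$, which by construction excludes $\delta'$-discs around all of $\N(\E_{f,0})$, and the candidate normalization $h_\epsilon$ itself is only defined there because the Riccati factorization hinges on the exponential dichotomy of Theorem~\ref{fastexpdi}, which fails near zeros of $\E_{f,0}$. The paper's ``Rate of convergence'' remark after the proofs explicitly records that these estimates break down at $\N(\E_{f,0})$, so treating the uniform convergence across those points as something to ``extract from Section~\ref{sec6}'' is not merely the hard part -- it is not in the paper at all and would require a substantially different construction.

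The fix, and what the paper actually does, is to replace $\N(\E_0(\cdot,\gamma))$ by $\N_\gamma := \N(\E_{f,0})\cup\N(\E_{s,0}(\cdot,\gamma))$ as the set of disc centres. Taking $\delta_\gamma$ small enough that the closed $\delta_\gamma$-discs around points of $\N_\gamma$ are pairwise disjoint, every circle $\partial B(\lambda_\diamond,\delta)$ with $\lambda_\diamond\in\N_\gamma$ and $0<\delta<\delta_\gamma$ lies in $\Sigma_{\Lambda,\delta'}$ for some $\delta'>0$, where~(\ref{Roucheineq}) holds and $\E_0(\cdot,\gamma)$ is bounded away from zero, so Rouch\'e applies directly to $\E_\epsilon(\cdot,\gamma)$ and $\E_0(\cdot,\gamma)$ without any normalization. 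At a cancelation point the Rouch\'e count is zero, confirming the eigenvalues cluster only at $\N(\E_0(\cdot,\gamma))$. This sidesteps the missing uniform convergence entirely; your framework becomes correct once the discs are re-centred.
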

\begin{cor} \label{fixgammacor}
The solution $\pih_{\p,\epsilon}$ is spectrally unstable for $\epsilon > 0$ sufficiently small, if one of the following is true:
\begin{enumerate}
 \item There exists $\gamma_\diamond \in S^1$ and $\lambda_\diamond \in \C$ with $\mathrm{Re}(\lambda_\diamond) > 0$ satisfying $\E_{s,0}(\lambda_\diamond,\gamma_\diamond) = 0$;
 \item There exists $\gamma_\diamond \in S^1$ and a zero $\lambda_\diamond$ of $\E_{f,0}$ with $\mathrm{Re}(\lambda_\diamond) > 0$ such that $\mathrm{ord}_{\lambda_\diamond}(\E_{s,0}(\cdot,\gamma_\diamond)) + \mathrm{ord}_{\lambda_\diamond}(\E_{f,0}) > 0$.
\end{enumerate}
\end{cor}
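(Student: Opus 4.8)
The plan is to deduce both criteria directly from the refined approximation Theorem \ref{fixgamma}: in each case I would produce a zero of the reduced Evans function $\E_0(\cdot,\gamma_\diamond)$ of strictly positive order lying in the open right half-plane, and then use Theorem \ref{fixgamma} to convert it into a genuine $\gamma_\diamond$-eigenvalue of $\El_\epsilon$ with positive real part.

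First I would record the additivity of the order of vanishing. By \eqref{e:evprod} we have $\E_0(\cdot,\gamma) = (-\gamma)^n \E_{f,0}(\cdot)\E_{s,0}(\cdot,\gamma)$, the prefactor being a nonzero constant for $\gamma \in S^1$; since $\E_{f,0}$ is analytic and $\E_{s,0}(\cdot,\gamma)$ is meromorphic on $\Ce_\Lambda$ with $\E_0(\cdot,\gamma)$ analytic there (Propositions \ref{fastredEv} and \ref{slowredEv}), one gets
\begin{align*}
\mathrm{ord}_{\lambda_\diamond}(\E_0(\cdot,\gamma_\diamond)) = \mathrm{ord}_{\lambda_\diamond}(\E_{f,0}) + \mathrm{ord}_{\lambda_\diamond}(\E_{s,0}(\cdot,\gamma_\diamond)),
\end{align*}
where the order of the meromorphic factor at a pole is counted negatively, while $\mathrm{ord}_{\lambda_\diamond}(\E_{f,0}) \geq 0$ always. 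In the first criterion, the mere fact that $\E_{s,0}(\lambda_\diamond,\gamma_\diamond)$ is defined and vanishes forces $\lambda_\diamond \notin \E_{f,0}^{-1}(0)$, hence $\mathrm{ord}_{\lambda_\diamond}(\E_{f,0}) = 0$ and $\mathrm{ord}_{\lambda_\diamond}(\E_{s,0}(\cdot,\gamma_\diamond)) \geq 1$, so $\mathrm{ord}_{\lambda_\diamond}(\E_0(\cdot,\gamma_\diamond)) \geq 1$. In the second criterion the displayed identity shows that the hypothesis is literally $\mathrm{ord}_{\lambda_\diamond}(\E_0(\cdot,\gamma_\diamond)) > 0$. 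Either way $\lambda_\diamond$ is a zero of $\E_0(\cdot,\gamma_\diamond)$ of positive order, and since $\mathrm{Re}(\lambda_\diamond) > 0 > \Lambda$ we have $\lambda_\diamond \in \Ce_\Lambda$, so $\lambda_\diamond \in \N(\E_0(\cdot,\gamma_\diamond))$.

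Then I would apply Theorem \ref{fixgamma} with $\gamma = \gamma_\diamond$: it yields $\delta_{\gamma_\diamond} > 0$, and I would fix any $\delta$ with $0 < \delta < \min\{\delta_{\gamma_\diamond},\mathrm{Re}(\lambda_\diamond)\}$, so that $B(\lambda_\diamond,\delta)$ is contained in the open right half-plane. The theorem then guarantees that, for $\epsilon > 0$ small enough, the number of $\gamma_\diamond$-eigenvalues of $\El_\epsilon$ inside $B(\lambda_\diamond,\delta)$ equals $\mathrm{ord}_{\lambda_\diamond}(\E_0(\cdot,\gamma_\diamond)) > 0$. Hence $\El_\epsilon$ has a $\gamma_\diamond$-eigenvalue with positive real part, which by Proposition \ref{Evansprop} lies in $\sigma(\El_\epsilon)$; thus $\sigma(\El_\epsilon)$ meets the open right half-plane and $\pih_{\p,\epsilon}$ is spectrally unstable.

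There is no real obstacle: the corollary is essentially a repackaging of Theorem \ref{fixgamma}. The only points deserving care are the sign convention in the additivity of orders — which is exactly why the second criterion is stated as a strict inequality on the sum $\mathrm{ord}_{\lambda_\diamond}(\E_{s,0}(\cdot,\gamma_\diamond)) + \mathrm{ord}_{\lambda_\diamond}(\E_{f,0})$ rather than on the individual terms, since a pole of $\E_{s,0}$ may partially cancel a zero of $\E_{f,0}$ — and the choice of $\delta$ small enough to keep the localizing disk strictly inside $\{\mathrm{Re}(\lambda) > 0\}$, which is possible precisely because $\lambda_\diamond$ is assumed in the open right half-plane.
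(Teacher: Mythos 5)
Your proposal is correct and matches the intended argument: the paper states this corollary without a separate proof, clearly intending it to follow at once from Theorem \ref{fixgamma} in exactly the way you describe — observe that both hypotheses force $\mathrm{ord}_{\lambda_\diamond}(\E_0(\cdot,\gamma_\diamond))>0$ via the additivity of orders in the product \eqref{e:evprod}, shrink $\delta$ so that $B(\lambda_\diamond,\delta)$ stays in the open right half-plane, and invoke the localization of $\gamma_\diamond$-eigenvalues together with Proposition \ref{Evansprop} to produce unstable spectrum. You correctly note both subtleties: that in case 1 the domain of $\E_{s,0}$ forces $\lambda_\diamond\notin\E_{f,0}^{-1}(0)$, and that in case 2 the signed-order convention (poles counted negatively) makes the stated sum precisely $\mathrm{ord}_{\lambda_\diamond}(\E_0(\cdot,\gamma_\diamond))$, which is why a pole of $\E_{s,0}$ can cancel a zero of $\E_{f,0}$.
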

When a connected component of $\sigma_0$ transversally intersects the imaginary axis, Corollary \ref{fixgammacor} shows that the periodic pulse solution $\pih_{\p,\epsilon}$ is, for $\epsilon > 0$ sufficiently small, spectrally unstable. Indeed, there must be in that case a point, which has positive real part, in one of the discrete sets $\N(\E_{s,0}(\cdot,\gamma))$ for some $\gamma \in S^1$. The fact that this point can be isolated, is the strength of Theorem \ref{fixgamma}. The trade-off is that the bound on the small parameter $\epsilon$ is $\gamma$-dependent in Theorem \ref{fixgamma}, whereas the bound on $\epsilon$ is uniform in $\gamma$ in Theorem \ref{mainresult}.\\
 \\
The proofs of our main approximation results Theorems \ref{mainresult} and \ref{fixgamma} can be found in Section \ref{sec6}.
\begin{rem}[\textit{Weak coupling}] \label{weakint}
As mentioned in section \ref{S2.1}, weak coupling $H_2(u,v) \equiv 0$ is allowed in our spectral analysis. In that case the integral term $\G(\lambda)$ in \eqref{defupsilon} is identically 0, which implies that $\E_{s,0}$ is only determined by the slow limit problem \eqref{slowintr}. Therefore, $\E_{s,0}$ is analytic on $\Ce_{\Lambda}$ and zeros of $\E_{f,0}$ can not be canceled by poles of $\E_{s,0}$. We conclude that the spectral stability problem fully splits into slow and fast subproblems with no interaction between them. As a consequence, zeros of $\E_{f,0}$ of positive real part yield spectral instability immediately by Proposition \ref{fixgammacor}-2. In particular, in the case $n = 1$ the fast reduced Evans function $\E_{f,0}$ always has a zero in the right half plane, as we will show in Proposition \ref{fastm1}. Thus, all patterns $\pih_{\p,\epsilon}$ are spectrally unstable in the case $H_2(u,v) \equiv 0$ and $n = 1$. This motivates the scaling in \eqref{reac2}. $\hfill \blacksquare$
\end{rem}
\begin{rem}[\textit{Translation invariance and spectral stability}] \label{translationinvariance}
If there is a zero $\lambda_\gamma$ of $\E_0(\cdot,\gamma)$ satisfying $\mathrm{Re}(\lambda_\gamma) > 0$ for some $\gamma \in S^1$, we know that the periodic pulse solution $\pih_{\p,\epsilon}$ is spectrally unstable by Corollary \ref{fixgammacor}. However, to determine whether $\pih_{\p,\epsilon}$ is spectrally stable, it is not sufficient to know the zeros of $\E_0$. This is due to translational invariance, i.e. the derivative of $\pih_{\p,\epsilon}$ is always an eigenfunction of $\El_\epsilon$. Hence, $0$ will always be a zero of $\E_\epsilon(\cdot,1)$. So, to decide whether $\pih_{\p,\epsilon}$ is spectrally stable, a study of the so-called \emph{small spectrum} around $0$ together with the fact that for all $\gamma \in S^1$ there are no zeros $\lambda_\gamma \neq 0$ of $\E_0(\cdot,\gamma)$ satisfying $\mathrm{Re}(\lambda_\gamma) \geq 0$ affirms us that $\pih_{\p,\epsilon}$ is spectrally stable. We refer to Corollary \ref{stability} for a more rigorous statement. $\hfill \blacksquare$
\end{rem}
\begin{rem}[\textit{Bloch-wave transform}]
It is also possible to consider the spectrum of the operator $\El_\epsilon$ in the Sobolev space of $\gamma$-twisted periodic functions for fixed $\gamma \in S^1$. The spectrum is in that case given by the discrete set $\N(\E_\epsilon(\cdot,\gamma))$ of $\gamma$-eigenvalues of $\El_\epsilon$. The decomposition of the spectrum of $\El_\epsilon$ in parts $\N(\E_\epsilon(\cdot,\gamma))$, where $\gamma$ ranges over $S^1$, corresponds to the Bloch-wave transformation of the operator $\El_\epsilon$. $\hfill \blacksquare$
\end{rem}
\subsection{Zero-pole cancelation} \label{zeropolesec}
Take $\gamma \in S^1$. By Proposition \ref{slowredEv} a pole of the slow reduced Evans function $\E_{s,0}(\cdot,\gamma)$ can only occur at a zero of the fast reduced Evans function $\E_{f,0}$, because the reduced Evans function $\E_0(\lambda,\gamma) = (-\gamma)^n\E_{f,0}(\lambda)\E_{s,0}(\lambda,\gamma)$ is analytic. In particular, by formula (\ref{Eulerlike}) in Theorem \ref{mainresult} it can occur that $\E_{f,0}$ has a zero $\lambda \in \Ce_\Lambda$ with positive real part, but the periodic pulse solution is spectrally stable. In this case the zero $\lambda$ of $\E_{f,0}$ cancels the pole of $\E_{s,0}$ at $\lambda$.\\
\\
So, besides the position of the zeros of $\E_{f,0}$ and $\E_{s,0}$ with respect to the imaginary axis, one should determine whether zero-pole cancelation occurs at the points in $\N(\E_{f,0})$ with positive real part in order to decide upon spectral (in)stability questions. The next proposition focuses on this issue.
\begin{prop} \label{zeropole} Let $\lambda_\diamond$ be a simple zero of $\E_{f,0}$. Suppose there exists $\delta_0 > 0$ such that
\begin{align*}B(\lambda_\diamond,\delta_0) \cap \bigcup_{\gamma \in S^1} \N(\E_{s,0}(\cdot,\gamma)) = \emptyset.\end{align*}
Then, there exists $\delta_1 > 0$ such that for every $0 < \delta < \delta_1$, provided $\epsilon > 0$ is sufficiently small (with bound dependent on $\delta$), either (i.) $B(\lambda_\diamond,\delta)$ contains precisely one $\gamma$-eigenvalue for every $\gamma \in S^1$ or (ii.) $B(\lambda_\diamond,\delta)$ contains no spectrum of $\El_\epsilon$. The prior (i.) is the case if
\begin{align} \int_{-\infty}^\infty \partial_v H_2(u_0,v_{\ho}(z))\phi_{\lambda_\diamond,1}(z)dz = 0, \label{idpoles1}\end{align}
where $\phi_{\lambda_\diamond}(x) = (\phi_{\lambda_\diamond,1}(x),\phi_{\lambda_\diamond,2}(x))$ is a non-trivial bounded solution to (\ref{redstab}) at $\lambda = \lambda_\diamond$, or
\begin{align} \int_{-\infty}^\infty \psi_{\lambda_\diamond,2}(z)\partial_u G(u_0,v_{\ho}(z),0)dz = 0, \label{idpoles2}\end{align}
where $\psi_{\lambda_\diamond}(x) = (\psi_{\lambda_\diamond,1}(x),\psi_{\lambda_\diamond,2}(x))$ denotes a non-trivial bounded solution to the adjoint equation of (\ref{redstab}) at $\lambda = \lambda_\diamond$.
\end{prop}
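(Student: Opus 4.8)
The plan is to read off the singularity of the slow reduced Evans function $\E_{s,0}(\cdot,\gamma)$ at $\lambda_\diamond$ from the integral $\G(\lambda)$ in \eqref{defupsilon}, and then feed the resulting pole/regularity information into the counting formula \eqref{Eulerlike} of Theorem \ref{mainresult} applied on a small circle around $\lambda_\diamond$. Since $\lambda_\diamond$ is a \emph{simple} zero of $\E_{f,0}$ and, by Proposition \ref{slowredEv}, the product $\E_0(\cdot,\gamma)=(-\gamma)^n\E_{f,0}(\cdot)\E_{s,0}(\cdot,\gamma)$ is analytic on $\Ce_\Lambda$, the function $\E_{s,0}(\cdot,\gamma)$ has at worst a simple pole at $\lambda_\diamond$, and its only poles in a small ball about $\lambda_\diamond$ can occur there; moreover $\mathrm{ord}_{\lambda_\diamond}\E_0(\cdot,\gamma)=1+\mathrm{ord}_{\lambda_\diamond}\E_{s,0}(\cdot,\gamma)$. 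Thus everything reduces to deciding, for each $\gamma\in S^1$, whether $\E_{s,0}(\cdot,\gamma)$ has a genuine simple pole at $\lambda_\diamond$ or is analytic and non-vanishing there.

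The key computation is the residue of $\G$ at $\lambda_\diamond$. By Proposition \ref{fastredEv} the homogeneous fast limit problem \eqref{redstab} at $\lambda=\lambda_\diamond$ has a one-dimensional bounded solution space spanned by $\phi_{\lambda_\diamond}=(\phi_{\lambda_\diamond,1},\phi_{\lambda_\diamond,2})$, and its adjoint has a one-dimensional bounded solution space spanned by $\psi_{\lambda_\diamond}=(\psi_{\lambda_\diamond,1},\psi_{\lambda_\diamond,2})$. For $\lambda$ near but distinct from $\lambda_\diamond$, problem \eqref{redstab} has an exponential dichotomy on $\R$ (pasting the half-line dichotomies, since $\lambda\notin\E_{f,0}^{-1}(0)$), so the unique bounded solution $\X_{in}(\cdot,\lambda)$ of the inhomogeneous problem \eqref{fastinhom} is the convolution of the corresponding dichotomy Green's function with the exponentially localized inhomogeneity $\A_{21,0}$ (note $\partial_u G(u,0,0)\equiv 0$ by \ref{assS1}, so $\A_{21,0}(x)\to 0$ exponentially). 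This Green's function develops a simple pole at $\lambda_\diamond$ with residue a non-zero multiple of $\phi_{\lambda_\diamond}(x)\,\psi_{\lambda_\diamond}(z)^\top$, whence $\X_{in}$ has a simple pole at $\lambda_\diamond$ with residue proportional to $\phi_{\lambda_\diamond}(x)\int_{-\infty}^\infty\psi_{\lambda_\diamond}(z)^\top\A_{21,0}(z)\,\mathrm{d}z$. Reading off the upper-left $(n\times m)$-block $\V_{in}$ and substituting into \eqref{defupsilon} gives
\begin{align*}
\mathrm{Res}_{\lambda_\diamond}\,\G = \kappa\left(\int_{-\infty}^\infty\partial_v H_2(u_0,v_{\ho}(z))\,\phi_{\lambda_\diamond,1}(z)\,\mathrm{d}z\right)\left(\int_{-\infty}^\infty\psi_{\lambda_\diamond,2}(z)\,\partial_u G(u_0,v_{\ho}(z),0)\,\mathrm{d}z\right),
\end{align*}
a rank-at-most-one $(m\times m)$-matrix with $\kappa\neq 0$; it vanishes precisely when \eqref{idpoles1} holds or \eqref{idpoles2} holds.

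With this in hand the conclusion follows. Writing $\G(\lambda)=\mathrm{Res}_{\lambda_\diamond}\G/(\lambda-\lambda_\diamond)+(\text{analytic})$ and substituting into $\E_{s,0}(\lambda,\gamma)=\det(\Upsilon(\lambda)\T_s(2\check{L}_0,0,\lambda)-\gamma I)$, the rank-one update formula for determinants shows that $\mathrm{Res}_{\lambda_\diamond}\E_{s,0}(\cdot,\gamma)$ is a bilinear expression in the two integral vectors above; in particular it vanishes identically in $\gamma$ as soon as $\mathrm{Res}_{\lambda_\diamond}\G=0$, i.e. as soon as \eqref{idpoles1} or \eqref{idpoles2} holds. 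In that case $\Upsilon$, hence $\E_{s,0}(\cdot,\gamma)$, is analytic at $\lambda_\diamond$ for every $\gamma\in S^1$, and the hypothesis $B(\lambda_\diamond,\delta_0)\cap\bigcup_{\gamma\in S^1}\N(\E_{s,0}(\cdot,\gamma))=\emptyset$, together with joint continuity in $(\lambda,\gamma)$ and compactness of $S^1$, yields that $\E_{s,0}(\cdot,\gamma)$ stays bounded away from $0$ near $\lambda_\diamond$ uniformly in $\gamma\in S^1$; hence $\E_0(\cdot,\gamma)$ has a \emph{simple} zero at $\lambda_\diamond$ for every $\gamma\in S^1$. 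Now choose $\delta_1<\delta_0$ also smaller than the distance from $\lambda_\diamond$ to the rest of the discrete set $\N(\E_{f,0})$, so that for $0<\delta<\delta_1$ the circle $\Gamma:=\partial B(\lambda_\diamond,\delta)$ lies in $\Ce_\Lambda\setminus\N_0$. Theorem \ref{mainresult} applied to $\Gamma$ then gives that, for $\epsilon>0$ small, the number of $\gamma$-eigenvalues of $\El_\epsilon$ in $B(\lambda_\diamond,\delta)$ equals $N_\Gamma+N_{\Gamma,\gamma}-P_{\Gamma,\gamma}=1+0-P_{\Gamma,\gamma}$ with $P_{\Gamma,\gamma}\in\{0,1\}$; when \eqref{idpoles1} or \eqref{idpoles2} holds, $P_{\Gamma,\gamma}=0$ for every $\gamma$, so $B(\lambda_\diamond,\delta)$ contains exactly one $\gamma$-eigenvalue for each $\gamma\in S^1$ — alternative (i). In the complementary case one must still check that $P_{\Gamma,\gamma}$ is the \emph{same} integer for all $\gamma\in S^1$, yielding alternative (i) (if it is $0$) or (ii) (if it is $1$).

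The main obstacle is twofold. First, making the residue formula for $\X_{in}$ (equivalently, for the dichotomy Green's function of \eqref{redstab}) at the simple zero $\lambda_\diamond$ precise: this needs the half-line dichotomies, their pasting on $\R$ for $\lambda\notin\E_{f,0}^{-1}(0)$, analyticity of the dichotomy projections in $\lambda$ (roughness), and the simplicity of $\lambda_\diamond$ to ensure the normalization constant $\kappa$ is non-zero — all of this lives in the machinery underpinning Propositions \ref{fastredEv} and \ref{slowredEv}. Second, and more delicate, is the $\gamma$-uniformity of the alternative in the complementary case: one must rule out that $\gamma\mapsto\mathrm{Res}_{\lambda_\diamond}\E_{s,0}(\cdot,\gamma)$, a polynomial in $\gamma$, has isolated zeros on $S^1$ when it is not identically zero (such a $\gamma$ would reintroduce a single $\gamma$-eigenvalue near $\lambda_\diamond$ while other values carry none). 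I would attack this by exploiting the rank-one form of $\mathrm{Res}_{\lambda_\diamond}\G$ and the block structure of $\T_s(2\check{L}_0,0,\lambda_\diamond)$ in the determinant, and this is where I expect the real work to lie.
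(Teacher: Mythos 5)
Your approach matches the paper's: derive the principal part of $\E_{s,0}(\cdot,\gamma)$ at $\lambda_\diamond$ from the simple pole of $\X_{in}$, observe that it is bilinear in the two integrals \eqref{idpoles1}--\eqref{idpoles2}, and then compare $\E_\epsilon$ with $\E_0$ on a small circle via Rouch\'e (the paper cites its Theorem~\ref{Sturminhom}-4 and Proposition~\ref{slowEvans}-3 for the residue, whereas you recompute it from the dichotomy Green's function — same content). The cases ``residue identically zero'' and the a-priori localization of zeros/poles inside $B(\lambda_\diamond,\delta)$ are handled correctly.

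The genuine issue is exactly the one you flag at the end, and your proposed repair is the wrong one. The residue $R(\gamma):=\mathrm{Res}_{\lambda_\diamond}\E_{s,0}(\cdot,\gamma)$ is a polynomial in $\gamma$ of degree up to $2m-1$, and there is no algebraic reason, coming from the rank-one form of $\mathrm{Res}_{\lambda_\diamond}\G$ or from the block structure of $\T_s$, that it cannot have isolated zeros on $S^1$ when it is not identically zero; the paper explicitly concedes this possibility in the remark following Proposition~\ref{slowEvans}. What actually excludes it here is the hypothesis $B(\lambda_\diamond,\delta_0)\cap\bigcup_{\gamma\in S^1}\N(\E_{s,0}(\cdot,\gamma))=\emptyset$, and the argument is analytic rather than algebraic. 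Set $F(\lambda,\gamma):=(\lambda-\lambda_\diamond)\E_{s,0}(\lambda,\gamma)$; this is jointly analytic near $\lambda_\diamond$ (since the pole is at most simple) and $F(\lambda_\diamond,\gamma)=R(\gamma)$. Suppose $R(\gamma_*)=0$ at an isolated $\gamma_*\in S^1$ while $R\not\equiv 0$. Then $\lambda_\diamond$ is an isolated zero of $F(\cdot,\gamma_*)$ (if $F(\cdot,\gamma_*)\equiv 0$ the hypothesis is already violated), and by Rouch\'e/Hurwitz applied to a small circle about $\lambda_\diamond$, for $\gamma\in S^1$ close to $\gamma_*$ the function $F(\cdot,\gamma)$ has the same number of zeros nearby, all of which converge to $\lambda_\diamond$ as $\gamma\to\gamma_*$. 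For $\gamma\neq\gamma_*$ close to $\gamma_*$ one has $F(\lambda_\diamond,\gamma)=R(\gamma)\neq 0$, so these zeros are $\neq\lambda_\diamond$ and hence are zeros of $\E_{s,0}(\cdot,\gamma)$; they therefore enter $B(\lambda_\diamond,\delta_0)$, contradicting the hypothesis. Thus either $R\equiv 0$ on $\C$ or $R\neq 0$ on all of $S^1$, which is exactly the uniformity you need. So replace ``exploit the rank-one form'' by ``use the hypothesis together with this Hurwitz-type continuity argument''; with that, the rest of your proof goes through.
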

The orthogonality relations (\ref{idpoles1}) and (\ref{idpoles2}) imply that there is no zero-pole cancelation. However, the converse is not true as pointed out in Section \ref{failure}. One can show that the integrals in the right hand sides of (\ref{idpoles1}) and (\ref{idpoles2}) appear as one of multiple factors in the principle part of the Laurent expansion of $\E_{s,0}(\cdot,\gamma)$. Although it is possible to write down the singular part of the Laurent series of $\E_{s,0}(\cdot,\gamma)$ explicitly at a zero $\lambda \in \N(\E_{f,0})$ (of higher multiplicity), we decide to postpone this to Section \ref{slows} for the benefit of exposition, since the involved expressions are rather complex (except in the case $m = 1$ -- see Proposition \ref{singm1}). Eventually, these singular parts provide a tool to determine precisely whether zero-pole cancelation occurs or not. Therefore, Proposition \ref{zeropole} is weaker -- but better digestible -- than the statements in Section \ref{slows}.
\subsection{Small spectrum} \label{secsmallspectrum}
The derivative of the homoclinic $(v_{\ho}(x),q_{\ho}(x))$ in (\ref{fastintr}) is a non-trivial bounded solution of (\ref{redstab}) for $\lambda = 0$ due to translation invariance. So, by Proposition \ref{fastredEv} it holds $\E_{f,0}(0) = 0$. Since, we have
\begin{align*} \int_{-\infty}^\infty \partial_v H_2(u_0,v_{\ho}(z)) v_{\ho}'(z)dz = 0, \end{align*}
there occurs no zero-pole cancelation at $\lambda = 0$ by Proposition \ref{zeropole}. This has yet been predicted in Remark \ref{translationinvariance} by the fact $\E_\epsilon(0,1) = 0$. Hence, a separate study of the spectrum close to $0$ -- the so-called \emph{small spectrum} -- is necessary in order to decide upon spectral (in)stability. The following Corollary of Theorems \ref{mainresult} and \ref{fixgamma} makes this statement rigorous.
\begin{cor}[Spectral stability is determined by the fine structure of the small spectrum] \label{stability}
Suppose the following conditions are met:
\begin{enumerate}
 \item There exists $\delta_0 > 0$ such that
 \begin{align*} \bigcup_{\gamma \in S^1} \N(\E_{s,0}(\cdot,\gamma)) \subset \{\lambda \in \C : \mathrm{Re}(\lambda) < -\delta_0\};\end{align*}
 \item For all zeros $\lambda_\diamond \in \N(\E_{f,0}) \!\setminus\! \{0\}$ of $\E_{f,0}$ with $\mathrm{Re}(\lambda_\diamond) \geq 0$, $\E_{s,0}(\cdot,\gamma)$ has a pole of order $\mathrm{ord}_{\lambda_\diamond}(\E_{f,0})$ at $\lambda_\diamond$ for every $\gamma \in S^1$.
\end{enumerate}
Then, there exists $\delta_1 > 0$, such that for all $0 < \delta < \delta_1$, provided $\epsilon > 0$ is sufficiently small (with bound dependent on $\delta$), it holds
\begin{align*} \sigma(\El_\epsilon) \cap \{\lambda \in \C : \mathrm{Re}(\lambda) \geq 0\} \subset B(0,\delta).\end{align*}
Moreover, there are for every $\gamma \in S^1$ at most $\mathrm{ord}_0(\E_{f,0})$ $\gamma$-eigenvalues inside $B(0,\delta)$.
\end{cor}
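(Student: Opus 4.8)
The plan is to derive both assertions from the argument-principle identity of Theorem~\ref{mainresult}, applied to two carefully chosen simple closed curves, exploiting the product structure $\E_0(\lambda,\gamma)=(-\gamma)^n\E_{f,0}(\lambda)\E_{s,0}(\lambda,\gamma)$ to kill the contribution of $\E_{f,0}$ away from $\lambda=0$. First I would record two facts that the rest of the argument rests on. Since the asymptotic coefficient matrix of \eqref{redstab} is hyperbolic throughout $\Ce_\Lambda$ by assumption~\ref{assS2} (this is part of the setup behind Proposition~\ref{fastredEv}), $\E_{f,0}$ is analytic on $\Ce_\Lambda$ with only finitely many zeros in every half plane $\Ce_{\Lambda'}$, $\Lambda'\in(\Lambda,0)$; hence one may fix $\delta_1\in(0,\min\{\delta_0,-\Lambda\})$ so small that $\N(\E_{f,0})\cap\{-\delta_1<\mathrm{Re}(\lambda)<0\}=\emptyset$ and $\N(\E_{f,0})\cap\overline{B(0,\delta_1)}=\{0\}$. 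Moreover, there is $R>0$, independent of $\epsilon$, with $\sigma(\El_\epsilon)\cap\{\mathrm{Re}(\lambda)\geq 0\}\subseteq\overline{B(0,R)}$ for all sufficiently small $\epsilon$: this is the standard observation that for $|\lambda|$ large the strongly hyperbolic fast subsystem \eqref{pertintro} dominates and produces, uniformly in $\epsilon$, an exponential dichotomy on $\R$ for the full linear stability problem \eqref{fullstab}, ruling out bounded solutions. Throughout I fix $\delta\in(0,\delta_1)$.

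Next I would locate $\N(\E_0(\cdot,\gamma))$ in the closed right half plane for $\gamma\in S^1$. By Proposition~\ref{slowredEv}, $\E_{s,0}(\cdot,\gamma)$ has poles only at zeros of $\E_{f,0}$, and by the first hypothesis its zeros lie in $\{\mathrm{Re}(\lambda)<-\delta_0\}$. Therefore, if $\mathrm{Re}(\lambda)\geq 0$ and $\lambda\neq 0$: when $\lambda\notin\N(\E_{f,0})$, $\E_{s,0}(\cdot,\gamma)$ is holomorphic and nonzero at $\lambda$, so $\E_0(\cdot,\gamma)$ does not vanish there; when $\lambda\in\N(\E_{f,0})$, the second hypothesis forces the pole of $\E_{s,0}(\cdot,\gamma)$ at $\lambda$ to have order exactly $\mathrm{ord}_\lambda(\E_{f,0})$, so $\mathrm{ord}_\lambda(\E_0(\cdot,\gamma))=0$. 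Thus $\N(\E_0(\cdot,\gamma))\cap\{\mathrm{Re}(\lambda)\geq 0\}\subseteq\{0\}$. At $\lambda=0$ one has $\mathrm{ord}_0(\E_{f,0})\geq 1$ by translation invariance, $0\notin\N(\E_{s,0}(\cdot,\gamma))$ by the first hypothesis, and no zero-pole cancelation by Proposition~\ref{zeropole} (because $\int_{-\infty}^{\infty}\partial_v H_2(u_0,v_{\ho}(z))v_{\ho}'(z)\,dz=0$); what the present proof actually uses, however, is only the trivial inequality $\mathrm{ord}_0(\E_0(\cdot,\gamma))\leq\mathrm{ord}_0(\E_{f,0})$, which follows from the first hypothesis alone.

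For the first assertion, set $\mathcal{K}:=\bigl(\overline{B(0,R)}\cap\{\mathrm{Re}(\lambda)\geq 0\}\bigr)\setminus B(0,\delta/2)$ and choose a simple closed curve $\Gamma\subseteq\Ce_\Lambda\setminus\N_0$, contained in $\{\mathrm{Re}(\lambda)\geq -\eta\}$ for some small $0<\eta<\delta_1$, with $\mathcal{K}$ in its interior and $\overline{B(0,\delta/4)}$ in its exterior. This is possible: the portion of $\partial\mathcal{K}$ on the imaginary axis is pushed to $\mathrm{Re}(\lambda)=-\eta$, which avoids the zeros of $\E_{s,0}(\cdot,\gamma)$ (lying in $\mathrm{Re}(\lambda)<-\delta_0<-\eta$) and of $\E_{f,0}$ with $\mathrm{Re}(\lambda)<0$ (lying in $\mathrm{Re}(\lambda)\leq -\delta_1<-\eta$), and small detours are made around the finitely many zeros of $\E_{f,0}$ with $\mathrm{Re}(\lambda)\geq 0$ so that they lie interior to $\Gamma$. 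Then the interior of $\Gamma$ contains no zero of $\E_{s,0}(\cdot,\gamma)$, so $N_{\Gamma,\gamma}=0$, and the poles of $\E_{s,0}(\cdot,\gamma)$ interior to $\Gamma$ occur exactly at the interior zeros of $\E_{f,0}$, each of matching order by the second hypothesis, so $P_{\Gamma,\gamma}=N_\Gamma$. Theorem~\ref{mainresult} then gives, for every $\gamma\in S^1$ and all sufficiently small $\epsilon$ (with $\epsilon$-bound independent of $\gamma$), that the number of $\gamma$-eigenvalues interior to $\Gamma$ equals $N_\Gamma+N_{\Gamma,\gamma}-P_{\Gamma,\gamma}=0$. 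Since $\sigma(\El_\epsilon)=\bigcup_{\gamma\in S^1}\N(\E_\epsilon(\cdot,\gamma))$, no spectrum of $\El_\epsilon$ lies interior to $\Gamma$; as $\sigma(\El_\epsilon)\cap\{\mathrm{Re}(\lambda)\geq 0\}\subseteq\overline{B(0,R)}\cap\{\mathrm{Re}(\lambda)\geq 0\}$, which is contained in the interior of $\Gamma$ together with $B(0,\delta/2)$, we conclude $\sigma(\El_\epsilon)\cap\{\mathrm{Re}(\lambda)\geq 0\}\subseteq B(0,\delta)$.

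For the second assertion, apply Theorem~\ref{mainresult} once more to the circle $\Gamma=\partial B(0,\delta)$, which lies in $\Ce_\Lambda\setminus\N_0$ because $\delta<\delta_1\leq\min\{\delta_0,-\Lambda\}$ and $\N(\E_{f,0})\cap\overline{B(0,\delta)}=\{0\}$. Its interior meets $\N(\E_{f,0})$ only in $\{0\}$ and meets no $\N(\E_{s,0}(\cdot,\gamma))$, so $N_\Gamma=\mathrm{ord}_0(\E_{f,0})$ and $N_{\Gamma,\gamma}=0$; since $P_{\Gamma,\gamma}\geq 0$, the number of $\gamma$-eigenvalues in $B(0,\delta)$ equals $\mathrm{ord}_0(\E_{f,0})-P_{\Gamma,\gamma}\leq\mathrm{ord}_0(\E_{f,0})$, uniformly in $\gamma\in S^1$ (with equality whenever $0$ is a simple zero of $\E_{f,0}$, by Proposition~\ref{zeropole}). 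The step I expect to be the main obstacle is securing the two ``soft'' inputs flagged at the outset -- the $\epsilon$-uniform bound $R$ on the right-hand spectrum and the finiteness of $\N(\E_{f,0})$ near the imaginary axis -- since these are exactly what makes a single, $\epsilon$-independent contour $\Gamma$ admissible; everything after that is bookkeeping with the argument principle through Theorem~\ref{mainresult}.
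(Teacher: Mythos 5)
Your proof is correct, and it follows the route the paper indicates: apply the zero-count formula \eqref{Eulerlike} of Theorem~\ref{mainresult} to suitable contours, using the $\epsilon$-uniform sectorial bound from Proposition~\ref{structurespectrum} and the finiteness of $\N(\E_{f,0})$ from Theorem~\ref{Sturm} (not assumption~\ref{assS2} directly, as you write, but that is a minor attribution slip). Two remarks on the comparison with what the paper says. First, the paper introduces this as ``The following Corollary of Theorems~\ref{mainresult} and~\ref{fixgamma}'', but you get the full statement from Theorem~\ref{mainresult} alone, which is arguably cleaner: Theorem~\ref{fixgamma} carries a $\gamma$-dependent $\epsilon$-threshold and would not directly deliver the $\gamma$-uniform $\epsilon$-bound that the corollary asserts, whereas Theorem~\ref{mainresult} gives uniformity in $\gamma$ for free because the contour is held fixed inside $\Ce_\Lambda\setminus\N_0$. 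Second, a presentational point: since you already push the imaginary-axis portion of $\partial\mathcal{K}$ to $\mathrm{Re}(\lambda)=-\eta$ with $\eta<\delta_1$, the zeros of $\E_{f,0}$ with $\mathrm{Re}(\lambda)\geq 0$ in the relevant range are automatically interior to $\Gamma$, so the ``small detours'' are not in fact needed; and the short heuristic you give for the $\epsilon$-uniform bound $R$ (that the fast subsystem dominates for large $|\lambda|$) is not quite how the paper obtains it -- Proposition~\ref{structurespectrum} decomposes $\El_\epsilon$ into elementary pieces and invokes sectoriality -- but since you are merely invoking that proposition's conclusion, this is fine. Otherwise the contour bookkeeping, the cancellation accounting $P_{\Gamma,\gamma}=N_\Gamma$ away from the origin, and the count $N_\Gamma-P_{\Gamma,\gamma}\leq\mathrm{ord}_0(\E_{f,0})$ for $\Gamma=\partial B(0,\delta)$ are all correct.
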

\begin{rem}[\textit{Determining the fine structure of the small spectrum}] \label{finestructure}
In \cite{SAS} the fine structure of the small spectrum is unraveled for nearly homoclinic wave trains. An expansion of the circle of critical $\gamma$-eigenvalues around $0$ is given in terms of their period. In our case only the $(v,q)$-components of the solution $\pih_{\p,\epsilon}$ are close to the homoclinic $(v_\ho(x),q_\ho(x))$ by assumption \ref{assS3} and Remark \ref{introbetaf}. Therefore, the results in \cite{SAS} are not directly applicable to our situation. Furthermore, in the Gierer-Meinhardt regime the structure of the small spectrum is known \cite[Section 5]{PLO}. However, the construction in \cite{PLO} depends essentially on the fact that the slow limit problem is linear and the model has two components. Hence, a direct extension to our (slowly nonlinear) situation is not immediately clear. This is subject of future research (Section \ref{future}). $\hfill \blacksquare$
\end{rem}
\begin{rem}[\textit{Nonlinear stability}] \label{nonlinearstab}
Since the spectrum $\sigma(\El_\epsilon)$ always touches the origin due to translational invariance, nonlinear stability of $\pih_{\p,\epsilon}$ is a delicate issue. On the one hand, one can `factor out' the translational invariance by looking at orbital (in)stability \cite{MEY}. Then one measures the distance from the perturbation to the family of all translates of the solution rather than to the solution itself. On the other hand, one could look at diffusive nonlinear stability \cite{SAN3,SCH}. In that case the perturbations are spatially localized, i.e. they are in certain Sobolev spaces, where the norms are adjusted by an appropriate weight $\rho(x) = (1-x^2)^d$ for some $d \geq 1/2$. This allows for purely diffusive behavior of the perturbed solution.\\
\\
The presence of spectrum $\sigma(\El_\epsilon)$ in the positive half-plane $\{\lambda \in \C : \mathrm{Re}(\lambda) > 0\}$ implies orbital instability by \cite[Theorem 4.3]{MEY}. Therefore, the spectral instability criteria in Corollary \ref{fixgammacor} imply nonlinear orbital instability immediately. On the other hand, if one assumes, in addition to spectral stability, that $0$ is a simple zero of $\E_{f,0}$ and that the spectrum $\sigma(\El_\epsilon)$ touches the origin in a quadratic tangency, then we achieve nonlinear diffusive stability by \cite[Theorem 1]{SAN3} or \cite[Theorem 1.1]{SCH}. Therefore, also nonlinear stability is eventually determined by the fine structure of the small spectrum. $\hfill \blacksquare$
\end{rem}
\section{The cases \texorpdfstring{$n=1$}{n=1} and \texorpdfstring{$m=1$}{m=1}} \label{nm1}
Most specific models of the form \eqref{reac} in literature have $n=1$ or $m=1$ so that the limit problems (\ref{redstab}), (\ref{fastinhom}) or (\ref{slowintr}) become 2-dimensional. In either case a more refined analysis of the fast and slow reduced Evans functions $\E_{f,0}$ and $\E_{s,0}$ is possible and allows for further insight into zero-pole cancelation and instability criteria. In this section we assume without loss of generality $D_1 = 1$ in the case $m = 1$ and $D_2 = 1$ in the case $n = 1$ -- see Remark \ref{quantD12}. We start with an analysis of the fast reduced Evans function for $n = 1$.
\begin{prop}[The case $n = 1$] \label{fastm1}
Suppose $n = 1$. All zeros of the fast reduced Evans function $\E_{f,0} \colon \Ce_\Lambda \to \C$ are real and simple. There is precisely one positive zero $\lambda_0$ of $\E_{f,0}$. The $v$-component of any non-trivial, bounded solution to (\ref{redstab}) at $\lambda = \lambda_0$ has no zeros. Moreover, $0$ is a zero of $\E_{f,0}$ and $(v_{\ho}'(x),q_\ho'(x))$ spans the space of bounded solutions to (\ref{redstab}) at $\lambda = 0$.
\end{prop}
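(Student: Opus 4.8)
The plan is to identify, in the case $n=1$ (so $D_2=1$), the fast limit problem \eqref{redstab} with a scalar self-adjoint Schr\"odinger eigenvalue problem on the line and then read off all four assertions from classical Sturm--Liouville theory. First I would rewrite \eqref{redstab} as the second order equation $\El_f\phi := \phi'' - Q(x)\phi = \lambda\phi$, $x\in\R$, with real potential $Q(x):=\partial_v G(u_0,v_{\ho}(x),0)$. By assumption \ref{assS2} one has $Q_\infty := \partial_v G(u_0,0,0) > 0$, and by Remark \ref{homoclin} $Q(x)\to Q_\infty$ exponentially as $x\to\pm\infty$; in particular $Q$ is bounded, so $\El_f$ is self-adjoint on $L^2(\R)$ with domain $H^2(\R)$, its essential spectrum is $(-\infty,-Q_\infty]$, and it has at most finitely many eigenvalues in $(-Q_\infty,\infty)$, all real and simple, the eigenfunction of the $j$-th largest ($j=0,1,2,\dots$) having exactly $j$ zeros. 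Moreover the asymptotic matrices $\A_{22,0}(\pm\infty,\lambda)$ have eigenvalues $\pm\sqrt{Q_\infty+\lambda}$, hence are hyperbolic for every $\lambda\in\Ce_\Lambda$, with $\Lambda$ the constant of Proposition \ref{fastredEv}; consequently $\Ce_\Lambda\cap\R\subset(-Q_\infty,\infty)$.

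With this in place the reality and simplicity assertions are immediate. By Proposition \ref{fastredEv} a point $\lambda\in\Ce_\Lambda$ is a zero of $\E_{f,0}$ exactly when \eqref{redstab} has a non-trivial bounded solution; by hyperbolicity of the tails such a solution decays exponentially at $\pm\infty$, so its $v$-component lies in $L^2(\R)$ and, $Q$ being bounded, is an $L^2$-eigenfunction of $\El_f$ with eigenvalue $\lambda$. Self-adjointness then forces $\lambda\in\R$. For simplicity, note that the solutions of \eqref{redstab} decaying at $+\infty$ form the one-dimensional stable subspace of $\A_{22,0}(+\infty,\lambda)$ under the forward evolution, so the space of bounded solutions is at most one-dimensional; the simplicity criterion in Proposition \ref{fastredEv} then shows every zero of $\E_{f,0}$ is simple.

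It remains to locate $0$ and the unique positive zero. Differentiating the profile equation $v_{\ho}'' = G(u_0,v_{\ho},0)$ gives $\El_f v_{\ho}' = 0$, and $v_{\ho}'$ is exponentially localized by Remark \ref{homoclin}; hence $0$ is a zero of $\E_{f,0}$, and since $q_{\ho}=v_{\ho}'$ the pair $(v_{\ho}',q_{\ho}')$ spans the one-dimensional bounded solution space of \eqref{redstab} at $\lambda=0$. A homoclinic orbit to the hyperbolic saddle $(0,0)$ of the planar Hamiltonian system \eqref{fastintr} meets $\{q=0\}$ at exactly one point besides $(0,0)$, so $v_{\ho}'$ has precisely one zero on $\R$; by the oscillation theorem for $\El_f$ this identifies $0$ as the second largest eigenvalue. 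Therefore $\El_f$ has exactly one eigenvalue $\lambda_0$ above $0$; simplicity gives $\lambda_0>0$; and $\lambda_0$ is the ground-state eigenvalue, whose eigenfunction --- the $v$-component of any non-trivial bounded solution of \eqref{redstab} at $\lambda=\lambda_0$ --- is nodeless. Combining the three paragraphs proves the proposition.

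The main obstacle is not any single computation but the bookkeeping that glues the Evans-function picture to the self-adjoint one: one must make sure the constant $\Lambda$ is such that bounded solutions of \eqref{redstab} on $\Ce_\Lambda$ are genuinely in $L^2$ (hyperbolicity of the tails), and one must get the node count exactly right --- in particular, verify that the homoclinic profile $v_{\ho}$ has a single extremum, which is what pins $0$ as the \emph{second} eigenvalue of $\El_f$ rather than a lower one, and hence produces \emph{exactly one} positive zero with a nodeless eigenfunction. Everything else is standard Sturm--Liouville theory on the line.
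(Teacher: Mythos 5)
Your proof is correct and follows essentially the same route as the paper's: reduce \eqref{redstab} to the scalar Sturm--Liouville/Schr\"odinger problem $\partial_{xx}v = (\partial_v G(u_0,v_{\ho}(x),0)+\lambda)v$ on $H^2(\R)$, invoke oscillation theory for the reality, simplicity and node counts, and then use translation invariance ($v_{\ho}'$ is the eigenfunction at $\lambda=0$ with exactly one node) to identify $0$ as the second eigenvalue and hence obtain a unique positive, nodeless ground state $\lambda_0$. The only addition is that you supply a justification (via the planar Hamiltonian level-set structure of \eqref{fastintr}) for the fact that $v_{\ho}'$ has precisely one zero, which the paper asserts without comment.
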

\begin{proof}
We refer to Theorem \ref{Sturm} for the fact that every non-trivial, bounded solution to (\ref{redstab}) for $\lambda \in \Ce_\Lambda$ is a solution to the Sturm-Liouville eigenvalue problem,
\begin{align} \partial_{xx} v = \left(\frac{\partial G}{\partial v}(u_0,v_{\ho}(x),0) + \lambda\right)v, \ \ \ v \in H^2(\R,\C) \label{SLP}.\end{align}
By \cite[Theorem 2.3.3]{KAP} the set of eigenvalues $\lambda_N < \ldots < \lambda_0$ of (\ref{SLP}) is finite, real and strictly decreasing. Moreover, the eigenfunction corresponding to $\lambda_i, i = 0,\ldots,N$ is unique up to scalar multiples and has precisely $i$ zeros. Hence, all zeros of $\E_{f,0}$ are real and simple by Proposition \ref{fastredEv}. Furthermore, the derivative $v_{\ho}'(x)$ is a solution for $\lambda = 0$ of (\ref{SLP}). The function $v_{\ho}'(x)$ has precisely one zero. So, we derive $\lambda_1 = 0$ and $\lambda_0 > 0$.
\end{proof}
We shift our focus to the case $m = 1$. Then, the slow reduced Evans function $\E_{s,0}(\lambda,\gamma)$ is a quadratic polynomial in $\gamma$, which gives rise to the following result.
\begin{prop}[The case $m = 1$] \label{slowm1} Suppose $m = 1$. The slow reduced Evans function $\E_{s,0} \colon [\Ce_\Lambda \!\setminus\! \N(\E_{f,0})] \times \C \to \C$ is given by
\begin{align*} \E_{s,0}(\lambda,\gamma) = \gamma^2 - \mathrm{Tr}(\Upsilon(\lambda)\T_s(2\check{L}_0,0,\lambda)) \gamma + 1. \end{align*}
In particular, $\lambda \in \Ce_\Lambda \!\setminus\! \N(\E_{f,0})$ is a zero of $\E_{s,0}(\cdot,\gamma)$ for some $\gamma \in S^1$ if and only if
\begin{align} t(\lambda) := \mathrm{Tr}(\Upsilon(\lambda)\T_s(2\check{L}_0,0,\lambda)) \in [-2,2]. \label{traces} \end{align}
\end{prop}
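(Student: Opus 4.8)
Proof proposal for Proposition \ref{slowm1}.

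When $m=1$, the slow limit problem (\ref{slowintr}) is two-dimensional, so $\T_s(2\check L_0,0,\lambda) \in \mathrm{Mat}_{2\times 2}(\C)$, and since the coefficient matrix of (\ref{slowintr}) has trace zero, Liouville's formula gives $\det \T_s(2\check L_0,0,\lambda) = 1$. Likewise the correction matrix $\Upsilon(\lambda)$ in (\ref{defupsilon}) is lower triangular with unit diagonal, hence $\det \Upsilon(\lambda) = 1$. Therefore the product $M(\lambda) := \Upsilon(\lambda)\T_s(2\check L_0,0,\lambda)$ is a $2\times 2$ matrix with $\det M(\lambda) = 1$. The plan is simply to expand the defining determinant from Definition \ref{slowEv}: for a $2\times 2$ matrix $M$ one has $\det(M - \gamma I) = \gamma^2 - \mathrm{Tr}(M)\gamma + \det(M)$, which with $\det M(\lambda)=1$ yields the claimed formula $\E_{s,0}(\lambda,\gamma) = \gamma^2 - \mathrm{Tr}(\Upsilon(\lambda)\T_s(2\check L_0,0,\lambda))\gamma + 1$. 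This identification is valid on $[\Ce_\Lambda \setminus \N(\E_{f,0})]\times\C$ since $\Upsilon(\lambda)$, and hence $M(\lambda)$, is defined precisely there (away from the poles coming from zeros of $\E_{f,0}$).

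For the second assertion, fix $\lambda \in \Ce_\Lambda \setminus \N(\E_{f,0})$ and set $t(\lambda) = \mathrm{Tr}(M(\lambda))$. Then $\lambda$ is a zero of $\E_{s,0}(\cdot,\gamma)$ for some $\gamma \in S^1$ exactly when the quadratic $\gamma^2 - t(\lambda)\gamma + 1 = 0$ has a root on the unit circle. Writing $\gamma = e^{i\theta}$, the equation $e^{2i\theta} - t(\lambda)e^{i\theta} + 1 = 0$ is equivalent, after dividing by $e^{i\theta}$, to $t(\lambda) = e^{i\theta} + e^{-i\theta} = 2\cos\theta$. Conversely, if $t(\lambda) = 2\cos\theta_0$ for some real $\theta_0$, then $\gamma = e^{i\theta_0} \in S^1$ solves the quadratic. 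Since the two roots of $\gamma^2 - t(\lambda)\gamma + 1$ multiply to $1$, a root lies on $S^1$ if and only if both do (they are complex conjugates in that case), and the set of values $\{2\cos\theta : \theta \in \R\}$ is exactly $[-2,2]$. Hence the existence of a unit-modulus root is equivalent to $t(\lambda) \in [-2,2]$, which is (\ref{traces}).

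There is essentially no obstacle here: the statement is a direct consequence of the $2\times 2$ structure forced by $m=1$ together with the two unit-determinant facts (Liouville's formula for the trace-free slow system, and the triangular form of $\Upsilon$). The only point requiring a word of care is that $\E_{s,0}(\cdot,\gamma)$ is a genuine polynomial in $\gamma$ of degree two with analytic (in fact meromorphic, by Proposition \ref{slowredEv}) coefficients in $\lambda$, so that the pointwise root-location argument in $\gamma$ is legitimate for each fixed admissible $\lambda$; this is immediate from the explicit formula just derived.
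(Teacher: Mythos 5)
Your proof is correct and is precisely the argument the paper intends by its one-line remark "expand $\E_{s,0}(\lambda,\gamma)$ as a quadratic polynomial in $\gamma$": you compute $\det(M-\gamma I)=\gamma^2-\mathrm{Tr}(M)\gamma+\det(M)$ for the $2\times 2$ matrix $M=\Upsilon(\lambda)\T_s(2\check L_0,0,\lambda)$, use Liouville and the unit-triangular form of $\Upsilon$ to get $\det M=1$, and then observe that a monic reciprocal quadratic has a root on $S^1$ iff its middle coefficient lies in $[-2,2]$. The approach matches the paper's; the only difference is that you spell out the elementary steps the paper leaves implicit.
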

\begin{proof}
This follows readily by expanding $\E_{s,0}(\lambda,\gamma)$ as quadratic polynomial in $\gamma$.
\end{proof}
\begin{exa2}[\textit{Generalized Gierer-Meinhardt equation}]
In \cite{PLO} the spectral stability of spatially periodic pulse patterns is studied, where (\ref{diff}) is the slowly linear generalized Gierer-Meinhardt equation (\ref{GM}) with $f(u) = \mu u$ for some $\mu > 0$. Thus, system (\ref{slowintr}) is autonomous. Condition (\ref{traces}) simplifies in that case to
\begin{align*}
2\cosh(2\check{L}_0\sqrt{\mu + \lambda}) + \frac{\G(\lambda)\sinh(2\check{L}_0\sqrt{\mu+\lambda})}{\sqrt{\mu+\lambda}} \in [-2,2],
\end{align*}
where $\G(\lambda)$ is defined in (\ref{defupsilon}). Although derived with a different method, this result agrees with \cite[Theorem 1.1.I]{PLO}.
$\hfill \blacksquare$
\end{exa2}
Recall that (\ref{ODE}) is $R$-reversible by Remark \ref{reversiblerem}. Suppose $\pih_{\p,\epsilon}(L_\epsilon)$ is contained in the space $\ker(I-R)$, which is the case for the solutions considered in existence result Theorem \ref{maintheorem}.  Combining this with \ref{assS3}-3 and the Hamiltonian nature of system (\ref{slowp}) yields that $u_\s$ is symmetric about $\check{L}_0$, i.e. it holds $u_\s(\check{L}_0 + \xx) = u_\s(\check{L}_0 - \xx)$ for all $\xx \in \R$. Hence, system (\ref{slowintr}) obeys a reversible symmetry. Thus, if $\phi(\xx,\lambda)$ is a solution to (\ref{slowintr}), then so is $\xx \mapsto R_s\phi(2\check{L}_0 - \xx,\lambda)$, where $R_s$ is defined in Remark \ref{reversiblerem}. Hence, there exists non-trivial solutions $u_+(\xx,\lambda)$ and $u_-(\xx,\lambda)$ to
\begin{align} \partial_{\xx\xx} u = \left(\partial_u H_1 (u_\s(\xx),0,0) + \lambda\right)u, \label{initvalue} \end{align}
which are symmetric and antisymmetric about $\check{L}_0$, respectively. This allows for further simplification of condition (\ref{traces}) in Proposition \ref{slowm1}.
\begin{prop}[The case $m = 1$ with reversible symmetry] \label{slowm1reversible} Suppose $m = 1$ and $u_\s$ is symmetric about $\check{L}_0$. Take $\lambda \in \Ce_\Lambda \!\setminus\! \N(\E_{f,0})$. Let $u_+(\xx,\lambda)$ and $u_-(\xx,\lambda)$ be non-trivial solutions to (\ref{initvalue}), which are symmetric and antisymmetric about $\check{L}_0$, respectively. Condition (\ref{traces}) in Proposition \ref{slowm1} simplifies to
\begin{align}
\mathrm{Tr}(\Upsilon(\lambda)\T_s(2\check{L}_0,0,\lambda)) =  \frac{2}{\W(\lambda)}\left[\frac{d}{d\xx}[u_+(\xx,\lambda)u_-(\xx,\lambda)](0) - \G(\lambda)u_+(0,\lambda)u_-(0,\lambda)\right] \in [-2,2] \label{cond4}
\end{align}
where $\W(\lambda)$ denotes the Wronskian of $u_+(\cdot,\lambda)$ and $u_-(\cdot,\lambda)$ and
\begin{align*}
\G(\lambda) = \int_{-\infty}^\infty \left[\frac{\partial H_2}{\partial u} (u_0,v_{\ho}(x)) + \partial_v H_2(u_0,v_{\ho}(x))v_{in}(x,\lambda)\right]dx,
\end{align*}
with $v_{in}(x,\lambda)$ the unique (meromorphic) solution to the inhomogeneous problem,
\begin{align} D_2 \partial_{xx} v = \left(\partial_v G(u_0,v_{\ho}(x),0) + \lambda\right)v + \frac{\partial G}{\partial u}(u_0,v_{\ho}(x),0), \ \ \ v \in \C^n. \label{fastihom3}\end{align}
\end{prop}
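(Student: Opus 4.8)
The plan is to start from Proposition \ref{slowm1}, which expresses condition \eqref{traces} through the trace of $\Upsilon(\lambda)\T_s(2\check{L}_0,0,\lambda)$, and to compute this trace explicitly using a basis of \eqref{slowintr} adapted to the reversible symmetry. First I would fix $\lambda \in \Ce_\Lambda \setminus \N(\E_{f,0})$ and write the $2\times 2$ evolution $\T_s(\xx,0,\lambda)$ of the first-order form of \eqref{slowintr} (recall $m=1$, $D_1=1$, so the slow limit problem is $\partial_{\xx\xx}u = (\partial_u H_1(u_\s(\xx),0,0)+\lambda)u$). Since $u_\s$ is symmetric about $\check{L}_0$, the coefficient $\partial_u H_1(u_\s(\xx),0,0)+\lambda$ is invariant under $\xx \mapsto 2\check{L}_0 - \xx$, so \eqref{slowintr} is $R_s$-reversible about $\check{L}_0$. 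The scalar solutions $u_+(\xx,\lambda)$ and $u_-(\xx,\lambda)$, respectively even and odd about $\check{L}_0$, form a basis because their Wronskian $\W(\lambda) = u_+ u_-' - u_+' u_-$ is a nonzero constant (it is nonzero at $\xx = \check{L}_0$, where $u_-(\check{L}_0,\lambda)=0$ and $u_+'(\check{L}_0,\lambda)=0$ but $u_+(\check{L}_0,\lambda)\neq 0 \neq u_-'(\check{L}_0,\lambda)$).

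Next I would express $\T_s(2\check{L}_0,0,\lambda)$ in this basis. Writing $\Phi(\xx) = \begin{pmatrix} u_+(\xx) & u_-(\xx) \\ u_+'(\xx) & u_-'(\xx)\end{pmatrix}$, one has $\T_s(\xx,\yy,\lambda) = \Phi(\xx)\Phi(\yy)^{-1}$, hence $\T_s(2\check{L}_0,0,\lambda) = \Phi(2\check{L}_0)\Phi(0)^{-1}$. The reversibility $u_+(2\check{L}_0 - \xx) = u_+(\xx)$, $u_-(2\check{L}_0 - \xx) = -u_-(\xx)$ gives $u_+(2\check{L}_0) = u_+(0)$, $u_+'(2\check{L}_0) = -u_+'(0)$, $u_-(2\check{L}_0) = -u_-(0)$, $u_-'(2\check{L}_0) = u_-'(0)$, so $\Phi(2\check{L}_0) = \Phi(0)\,\mathrm{diag}(1,-1)\cdot(\text{sign bookkeeping})$ — more precisely $\Phi(2\check{L}_0) = S\,\Phi(0)\,S$ with $S = \mathrm{diag}(1,-1)$ acting appropriately, from which $\T_s(2\check{L}_0,0,\lambda) = S\,\Phi(0)\,S\,\Phi(0)^{-1}$. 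Then I would multiply on the left by $\Upsilon(\lambda) = \begin{pmatrix} 1 & 0 \\ \G(\lambda) & 1\end{pmatrix}$ and take the trace. Using $\Phi(0)^{-1} = \W(\lambda)^{-1}\begin{pmatrix} u_-'(0) & -u_-(0) \\ -u_+'(0) & u_+(0)\end{pmatrix}$, the trace of $\Upsilon(\lambda)S\Phi(0)S\Phi(0)^{-1}$ becomes an explicit rational expression in the four numbers $u_\pm(0,\lambda), u_\pm'(0,\lambda)$ and $\G(\lambda)$, divided by $\W(\lambda)$; the terms should collapse to $\tfrac{2}{\W(\lambda)}[\tfrac{d}{d\xx}(u_+u_-)(0) - \G(\lambda)u_+(0)u_-(0)]$, noting $\tfrac{d}{d\xx}(u_+u_-)(0) = u_+'(0)u_-(0) + u_+(0)u_-'(0)$. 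I would double-check the sign conventions by specializing to $\G \equiv 0$, where the expression must reduce to the trace of $\T_s(2\check{L}_0,0,\lambda)$ for a reversible second-order scalar equation, a standard identity.

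Finally, for the statement about $\G(\lambda)$ I would invoke Definition \ref{slowEv} and \eqref{defupsilon}, noting that in the case $m=1$ (so $D_1 = 1$) the matrix $\X_{in}(x,\lambda)$ is $2n \times 2$ and its upper-left block $\V_{in}(x,\lambda)$ reduces to an $n$-vector-valued function $v_{in}(x,\lambda)$; one then reads off from \eqref{fastinhom} with $D_1=1$ that the first row/column of the inhomogeneous fast limit problem decouples so that $v_{in}(x,\lambda)$ solves exactly \eqref{fastihom3}, namely $D_2\partial_{xx}v = (\partial_v G(u_0,v_{\ho}(x),0)+\lambda)v + \partial_u G(u_0,v_{\ho}(x),0)$. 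Uniqueness and the meromorphic dependence on $\lambda$ follow because \eqref{redstab} has exponential dichotomies on both half-lines for $\lambda \in \Ce_\Lambda$ (this is behind Proposition \ref{fastredEv}), so the bounded particular solution exists and is unique precisely away from $\N(\E_{f,0})$, with poles at those points. Substituting into the formula for $\G(\lambda)$ in \eqref{defupsilon} gives the displayed expression. The main obstacle I anticipate is purely bookkeeping: getting all the signs right in the reversibility relations for $\Phi(2\check{L}_0)$ versus $\Phi(0)$ and in the trace computation, since a single sign slip changes $\tfrac{d}{d\xx}(u_+u_-)$ into $u_+'u_- - u_+u_-'$ (i.e.\ $\pm\W$), which is wrong; the cross-check at $\G\equiv 0$ and at $\xx=\check{L}_0$ (where $u_-=0=u_+'$) is the safeguard I would rely on.
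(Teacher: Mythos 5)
Your proposal is correct and takes essentially the same route as the paper, which simply says to express $\T_s(2\check{L}_0,0,\lambda)$ in terms of the symmetric and antisymmetric solutions $u_\pm$ and invoke Proposition \ref{slowm1}; your computation $\T_s(2\check{L}_0,0,\lambda) = S\Phi(0)S\Phi(0)^{-1}$ with $S=\mathrm{diag}(1,-1)$, followed by multiplication by $\Upsilon(\lambda)$ and taking the trace, is exactly the calculation behind that one-line proof and reproduces \eqref{cond4}. The identification of $v_{in}$ with the first-column block of $\X_{in}$ solving \eqref{fastihom3} is likewise the intended reading of Definition \ref{slowEv} in the $m=1$ case.
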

\begin{proof}
Express $\T_s(2\check{L}_0,0,\lambda)$ in terms of the symmetric and antisymmetric solutions and invoke Proposition \ref{slowm1}.
\end{proof}
\begin{exa2}[\textit{Sinusoidal Gierer-Meinhardt model}]
In the companion paper \cite{RIJV} a sinusoidal Gierer-Meinhardt equation (\ref{GM}) with $f(u) = \sin(u)$ is studied. In this special case (\ref{initvalue}) is known as Lam\'e's equation. The symmetric and antisymmetric solutions $u_\pm$ to this equation are known in terms of Jacobi Theta and Zeta functions. Moreover, the homogeneous fast limit problem (\ref{redstab}) is solvable in terms of hypergeometric functions. Applying the variation of constants formula gives the solution $v_{in}(x,\lambda)$ to problem (\ref{fastinhom}). Therefore, all terms in the right hand side of (\ref{cond4}) are explicitly known. This example shows that also in the slowly nonlinear situation the spectral stability analysis can be very explicit. The sinusoidal Gierer-Meinhardt equation serves as a guiding example in \cite{RIJV} to study the spectral phenomena occurring when the periodic patterns approach the homoclinic limit -- see also Section \ref{future}. $\hfill \blacksquare$
\end{exa2}
As mentioned in Section \ref{zeropolesec}, it is possible to obtain explicit expressions of the principle part of the Laurent series of $\E_{s,0}(\cdot,\gamma)$ at a zero $\lambda \in \N(\E_{f,0})$. Because of their complexity the expansions are treated separately in Section \ref{slows}. However, in the case $m = 1$, the expressions simplify significantly. Therefore, it is worthwhile to devote a separate Proposition to this case.
\begin{prop}[Singular part in the case $m = 1$] \label{singm1} Suppose $m = 1$. Let $\lambda_\diamond$ be a simple zero of $\E_{f,0}$ and $\gamma \in S^1$. The singular part of the Laurent expansion of $\E_{s,0}(\lambda,\gamma)$ at $\lambda = \lambda_\diamond$ is given by
\begin{align*} \frac{-\gamma u(2\check{L}_0,\lambda_\diamond)}{\lambda - \lambda_\diamond} \int_{-\infty}^\infty \partial_v H_2(u_0,v_{\ho}(x)) v_{\lambda_\diamond}(x)dx \int_{-\infty}^\infty v_{\lambda_\diamond}(x)^* \frac{\partial G}{\partial u} (u_0,v_{\ho}(x),0) dx, \end{align*}
where $u(\xx,\lambda_\diamond)$ is the solution to (\ref{initvalue}) at $\lambda = \lambda_\diamond$ with initial values $u(0) = 0, u'(0) = 1$ and $v_{\lambda_\diamond}$ is a normalized bounded solution (having $L^2$-norm $1$) to
\begin{align} D_2 \partial_{xx} v = \left(\partial_v G(u_0,v_{\ho}(x),0) + \lambda_\diamond\right)v, \ \ \  v \in \C^n. \label{sechom}\end{align}
\end{prop}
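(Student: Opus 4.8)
The plan is to read off the singular part of $\E_{s,0}(\lambda,\gamma)$ directly from Definition \ref{slowEv}, specializing to $m=1$, and to identify precisely which factor carries the pole at a simple zero $\lambda_\diamond$ of $\E_{f,0}$. Since $m=1$, the slow limit problem \eqref{slowintr} is scalar-second-order, so $\T_s(2\check L_0,0,\lambda)$ is a $2\times 2$ matrix; writing it in terms of the solution $u(\xx,\lambda)$ of \eqref{initvalue} with $u(0)=0,u'(0)=1$ and a second independent solution, and using $\Upsilon(\lambda)=\begin{pmatrix}1&0\\\G(\lambda)&1\end{pmatrix}$, one finds $\E_{s,0}(\lambda,\gamma)=\gamma^2-t(\lambda)\gamma+1$ with $t(\lambda)=\mathrm{Tr}(\Upsilon(\lambda)\T_s(2\check L_0,0,\lambda))$ (Proposition \ref{slowm1}). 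Inspecting the trace, the only $\lambda$-dependent term that can blow up is $\G(\lambda)\,u(2\check L_0,\lambda)$, since the slow evolution $\T_s$ is entire in $\lambda$ (it solves a linear ODE with entire coefficients); hence the singular part of $\E_{s,0}(\cdot,\gamma)$ at $\lambda_\diamond$ equals $-\gamma\, u(2\check L_0,\lambda_\diamond)$ times the singular part of $\G(\lambda)$ at $\lambda_\diamond$.

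So the computation reduces to the Laurent expansion of $\G(\lambda)=\int_{-\infty}^\infty[\partial_u H_2(u_0,v_\ho)+\partial_v H_2(u_0,v_\ho)\,v_{in}(x,\lambda)]\,dx$ near $\lambda_\diamond$, and since the first summand is $\lambda$-independent this is the expansion of $\int \partial_v H_2(u_0,v_\ho(x))\,v_{in}(x,\lambda)\,dx$. Here $v_{in}(\cdot,\lambda)$ is the upper-left block of the unique solution $\X_{in}$ to the inhomogeneous fast limit problem \eqref{fastinhom}; in the $m=1$ case this is the scalar-looking problem \eqref{fastihom3}. The key point is that at a \emph{simple} zero $\lambda_\diamond$ of $\E_{f,0}$, the homogeneous fast problem \eqref{redstab} (equivalently \eqref{sechom}) has a one-dimensional space of bounded solutions, spanned by $v_{\lambda_\diamond}$ normalized to $L^2$-norm $1$; the inhomogeneous problem then has a solution that is bounded only after subtracting a resonant piece, producing a simple pole in $\lambda$ whose residue is governed by a Fredholm/solvability (Lyapunov–Schmidt) argument. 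Concretely, projecting the forcing $\A_{21,0}$ onto the cokernel of $\A_{22,0}(\cdot,\lambda_\diamond)-$(resolvent operator) yields that the residue of $v_{in}(\cdot,\lambda)$ is a scalar multiple of $v_{\lambda_\diamond}$, with the scalar proportional to $\int_{-\infty}^\infty v_{\lambda_\diamond}(x)^*\,\partial_u G(u_0,v_\ho(x),0)\,dx$ divided by the normalization $\frac{\partial}{\partial\lambda}$-type constant coming from simplicity of the zero; the latter constant is absorbed once one writes the residue back in terms of $\E_{f,0}'(\lambda_\diamond)$ or, equivalently, uses the normalization of $v_{\lambda_\diamond}$. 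Pairing this residue against $\partial_v H_2(u_0,v_\ho)$ and integrating produces the second integral $\int \partial_v H_2(u_0,v_\ho(x))\,v_{\lambda_\diamond}(x)\,dx$, giving exactly the stated product.

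I would carry out the steps in this order: (1) expand $\E_{s,0}(\lambda,\gamma)$ as a quadratic in $\gamma$ for $m=1$ and isolate $\G(\lambda)u(2\check L_0,\lambda)$ as the sole source of poles; (2) set up \eqref{fastinhom}/\eqref{fastihom3} near $\lambda_\diamond$, using exponential dichotomies of the fast limit problem on the half-lines (Remark \ref{homoclin}, assumption \ref{assS2}) to write $v_{in}$ via variation of constants; (3) perform the Lyapunov–Schmidt reduction at the simple eigenvalue $\lambda_\diamond$ to extract the residue of $v_{in}$ as a multiple of $v_{\lambda_\diamond}$, identifying the scalar as $\int v_{\lambda_\diamond}^*\,\partial_u G(u_0,v_\ho,0)\,dx$ up to the normalization constant tied to simplicity; (4) substitute into $\G(\lambda)$, multiply by $-\gamma u(2\check L_0,\lambda_\diamond)$, and read off the claimed formula. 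The main obstacle is step (3): correctly tracking the normalization so that the residue is expressed purely through the $L^2$-normalized $v_{\lambda_\diamond}$ with no leftover factor — this requires relating the solvability constant at a simple eigenvalue of the self-adjoint-type fast operator to the normalization of its eigenfunction (and checking that, because $D_2$ need not be the identity, one works with the correct adjoint pairing, though in the scalar $m=1$ reduction \eqref{sechom} the operator is effectively self-adjoint so $v_{\lambda_\diamond}^*$ suffices). Once that bookkeeping is pinned down, the rest is a direct substitution.
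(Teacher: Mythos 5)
Your outline follows the same route as the paper: via Proposition \ref{slowm1} write $\E_{s,0}(\lambda,\gamma)=\gamma^2-t(\lambda)\gamma+1$, observe that in $t(\lambda)=\mathrm{Tr}(\Upsilon(\lambda)\T_s(2\check L_0,0,\lambda))$ the only non-entire term is $\G(\lambda)\,u(2\check L_0,\lambda)$ (the $(1,2)$-entry of $\T_s$ is exactly $u(2\check L_0,\lambda)$ with $u(0)=0,\,u'(0)=1$), and then reduce to the residue of $v_{in}(x,\lambda)$ inside $\G(\lambda)$. The paper does precisely this, just packaged as the $m=1$ specialization of Proposition \ref{slowEvans}-3; the ``$-\gamma u(2\check L_0,\lambda_\diamond)$'' prefactor is what drops out of the cofactor sum $\Ce_{2i}$ there.

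The genuine gap is exactly what you flag as ``the main obstacle'' in step (3): you never pin down the proportionality constant in the residue of $v_{in}(\cdot,\lambda)$ at $\lambda_\diamond$, and your suggestion to express it via $\E_{f,0}'(\lambda_\diamond)$ goes in the wrong direction --- the claimed formula has no $\E_{f,0}'$ factor, so you would then owe a separate identity tying $\E_{f,0}'(\lambda_\diamond)$ to $\|v_{\lambda_\diamond}\|_{L^2}^2$, which is extra work and not what the paper does. The paper resolves this cleanly by Keldysh's residue formula for the operator pencil (Theorem \ref{Sturminhom}-4): the residue of $\X_{in}$ at a simple zero is $\phi_{\lambda_\diamond}(x)\int\psi_{\lambda_\diamond}(z)\A_{21,0}(z)\,dz$ with the eigenpair normalized by $\int\psi_{\lambda_\diamond}\bigl(\begin{smallmatrix}0&0\\ I&0\end{smallmatrix}\bigr)\phi_{\lambda_\diamond}\,dz=1$, i.e. $\int\psi_{\lambda_\diamond,2}\,\phi_{\lambda_\diamond,1}\,dz=1$, because $\partial_\lambda\A_{22,0}=\bigl(\begin{smallmatrix}0&0\\ I&0\end{smallmatrix}\bigr)$. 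Then, exploiting the symmetric structure of the fast system (as in Proposition \ref{Sturminhomspecial}, where the adjoint eigenfunction is identified with $\phi_{\lambda_\diamond}^*\bigl(\begin{smallmatrix}0&-I\\ I&0\end{smallmatrix}\bigr)$), one finds $\psi_{\lambda_\diamond,2}$ is a scalar multiple of $v_{\lambda_\diamond}^*$, and the normalization condition collapses to $\|v_{\lambda_\diamond}\|_{L^2}=1$. Without this chain the constant stays unresolved and the stated formula is only known up to an undetermined nonzero factor. One more caution: your aside that ``in the scalar $m=1$ reduction \eqref{sechom} the operator is effectively self-adjoint'' conflates $m=1$ with $n=1$; for $n>1$ self-adjointness requires the symmetry of $\partial_v G(u_0,\cdot,0)$, and it is that symmetry, not $m=1$, which makes $v_{\lambda_\diamond}^*$ the correct adjoint pairing here.
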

\begin{proof}
The statement is proven in a more general setting in Theorem \ref{Sturminhom}-4.
\end{proof}
\subsection{Robustness of zero-pole cancelation and an example for non-cancelation} \label{failure}
Proposition \ref{singm1} shows that for $m = 1$ the slow reduced Evans function $\E_{s,0}(\cdot,\gamma)$ has a removable singularity at a simple zero $\lambda_\diamond$ of $\E_{f,0}$ if and only if one of the identities (\ref{idpoles1}), (\ref{idpoles2}) holds true or there exists a non-trivial solution to (\ref{initvalue}) at $\lambda = \lambda_\diamond$ with boundary values $u(0) = 0 = u(2\check{L}_0)$. The set of $\lambda_\diamond \in \C$ for which (\ref{idpoles1}) or (\ref{idpoles2}) holds true will in general be discrete, since the involved expressions are analytic in $\lambda_\diamond$. Moreover, \cite[Theorem 4.3.1-6]{ZET} shows that this is also the case for the set of $\lambda_\diamond \in \C$ for which the boundary value problem \eqref{initvalue}, $u(0)=0=u(2\check{L}_0)$ admits a non-trivial solution. Hence, zero-pole cancelation is a robust phenomenon in the absence of additional structure (such as the translational invariance at $\lambda = 0$ mentioned in Remark \ref{translationinvariance}). \\
\\
Being robust, zero-pole cancelation can still fail in one-parameter families. Consider the Gierer-Meinhardt equation (\ref{GM}), where $\alpha_2 \neq 0$ and $f(u) = -\mu u$ with $\mu > 0$. We emphasize that in this case the slow reduced system \eqref{slowp} is linear of center type. This differs from the `standard' Gierer-Meinhardt setting considered in \cite{DGK2,IWW,PLO2,PLO,WAW}, where $f(u) = +\mu u$ and the slow reduced system is linear of saddle type.\\
\\
Let $u_0 > 0$. Note that (\ref{GM}) satisfies \ref{assS1}-\ref{assS2} and \ref{assE1} with $v_{\ho}(x,u_0) > 0$ for all $x \in \R$. Take $u_1 < 0$ such that $\mu u_1^2 = \J(u_0)^2 + \mu u_0^2$. One easily checks that the four conditions in Remark \ref{E2m1} holds true. Therefore, assumption \ref{assE2} is satisfied. Hence, Theorem \ref{maintheorem} implies that, for $\epsilon > 0$ sufficiently small, there exists a $2L_\epsilon$-periodic solution $\pih_{\p,\epsilon}$ to (\ref{GM}) satisfying assertions \ref{assS3}-1,2,3. Moreover, it holds by Remark \ref{L0m1}
\begin{align*}\check{L}_0(\mu) = \frac{\pi}{2} + \sin^{-1} \frac{u_0}{\sqrt{\tfrac{\mathcal{J}(u_0)^2}{\mu} + u_0^2}}.\end{align*}
In \cite[Lemma 3.3]{PLO} it is shown that $\lambda_0 = 1/4(\beta_2 + 1)^2 - 1 > 0$ is the positive zero of the fast reduced Evans function $\E_{f,0}$. Note that both $\partial_v H_2(u_0,v_{\ho}(x))$ and $\partial_u G(u_0,v_{\ho}(x),0)$ are strictly negative for all $x \in \R$. Moreover, the $v$-component of any non-trivial solution to (\ref{redstab}) at $\lambda = \lambda_0$ has no zeros by Proposition \ref{fastm1}. Therefore, identities (\ref{idpoles1}) and (\ref{idpoles2}) are not satisfied. Now assume $\mu > \lambda_0$. The solution to (\ref{initvalue}) at $\lambda = \lambda_0$ with initial values $u(0) = 0, u'(0) = 1$ is given by,
\begin{align*}u(\xx,\lambda_0) = \frac{\sin(\sqrt{\mu - \lambda_0} \xx)}{\sqrt{\mu - \lambda_0}}.\end{align*}
Clearly, it holds $u(2\check{L}_0,\lambda_0) = 0$ if and only if
\begin{align}\mu = \lambda_0 + \left(\frac{k\pi}{2\check{L}_0(\mu)}\right)^2,\label{zeropolefail}\end{align}
for some $k \in \Z_{\geq 1}$. Since $\check{L}_0(\mu) \in (\pi/2,\pi)$ for every $\mu > 0$, equation (\ref{zeropolefail}) will have a solution $\mu = \mu_k > 0$ for every $k \in \Z_{\geq 1}$. We conclude with the aid of Propositions \ref{zeropole} and \ref{singm1} that for every $\mu > 0$ there exists $\delta_\mu > 0$ such that the disc $B(\lambda_0,\delta_\mu)$ contains, provided $\epsilon > 0$ is sufficiently small, either no spectrum of $\El_\epsilon$ in the case $\mu \notin \{\mu_k : k \in \Z_{\geq 1}\}$ or a $\gamma$-eigenvalue of $\El_\epsilon$ for every $\gamma \in S^1$ in the case $\mu = \mu_k$ for some $k \in \Z_{\geq 1}$.\\
\\
The transition of $\mu$ through a point $\mu_k$ may seem like a blue sky catastrophe, which makes the pulse solution $\pih_{\p,\epsilon}$ `suddenly' spectrally unstable. However, such a transition from cancelation to non-cancelation is caused by unstable spectrum moving through the point $\lambda_0$. This can be seen by taking $\mu$ sufficiently close to $\mu_k$ and comparing the values of the trace $\mathrm{Tr}(\Upsilon(\lambda)\T_s(2\check{L}_0,0,\lambda))$ at $\lambda = \mu - [k\pi/(2\check{L}_0(\mu))]^2$ and when $\lambda$ approaches $\lambda_0$. The intermediate value Theorem and Proposition \ref{singm1} imply that there must be a zero of $\E_{s,0}(\cdot,\gamma)$ close to $\lambda_0$ for some $\gamma \in S^1$.
\subsection{An instability criterion}
As a further application of our analysis, we show how to test for spectral instability using a parity argument. More precisely, one can gain insight in the values of $t(\lambda)$, defined in \eqref{traces}, at a pole using Proposition \ref{singm1}, at $\lambda = 0$ due to translational invariance and by taking the limit for $\lambda \in \R$ to infinity. Subsequently, one determines with the mean value theorem, if the graph of $t(\lambda)$ crosses $[-2,2]$. The use of such a parity argument is common in Evans function based stability analyses. See \cite[Chapter 4.2]{SAN2} for an overview of the literature on this topic.
\begin{cor}
Suppose $m = n = 1$. Let $\lambda_0$ be as in Proposition \ref{fastm1}. Denote by $u(\xx,\lambda_0)$ the solution to (\ref{initvalue}) at $\lambda = \lambda_0$ with initial values $u(0) = 0, u'(0) = 1$. Moreover, let $v_{\lambda_0}$ be the normalized bounded solution (having $L^2$-norm $1$) to (\ref{sechom}) at $\lambda = \lambda_0$. Finally, let $v_{\ho}(x,u)$ for $u \in U_\ho$ be as in Remark \ref{extE1}. Consider the quantities,
\begin{align*} \I_1 &:= u(2\check{L}_0,\lambda_0)\int_{-\infty}^\infty \frac{\partial H_2}{\partial v}(u_0,v_{\ho}(x)) v_{\lambda_0}(x)dx \int_{-\infty}^\infty \frac{\partial G}{\partial u} (u_0,v_{\ho}(x),0) v_{\lambda_0}(x)dx,\\
\I_2 &:= u_\s'(0)(\G_0 u_\s'(0) - 2H_1(u_\s(0),0,0)) \int_{0}^{\check{L}_0} \frac{(\partial_u H_1 (u_\s(\xx),0,0) + 1)[(u_\s'(\xx))^2 - (H_1(u_\s(\xx),0,0))^2]}{[(u_\s'(\xx))^2 + (H_1(u_\s(\xx),0,0))^2]^2}d\xx \\
& \ \ \ \ \ \ \ \ \ \ \ \ + \frac{(u_\s'(0))^2 + \G_0 H_1(u_\s(0),0,0)u_\s'(0) - (H_1(u_\s(0),0,0))^2}{(u_\s'(0))^2 + (H_1(u_\s(0),0,0))^2},
\end{align*}
where
\begin{align*}
 \G_0 = \int_{-\infty}^\infty \left.\frac{\partial H_2}{\partial u} (u,v_{\ho}(x,u))\right|_{u = u_0} dx.
\end{align*}
Then, the periodic pulse solution $\pih_{\p,\epsilon}$ is spectrally unstable, for $\epsilon > 0$ sufficiently small, if:
\begin{center}
(i.) $\I_1 \leq 0$ or (ii.) $u_\s$ is symmetric about $\check{L}_0$ and it holds $\I_2 > -1$.
\end{center}
\end{cor}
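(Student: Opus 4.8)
The plan is to run a parity argument for the single real--analytic function $t(\lambda):=\mathrm{Tr}(\Upsilon(\lambda)\T_s(2\check{L}_0,0,\lambda))$ from \eqref{traces}. By Propositions~\ref{fastm1} and~\ref{slowm1}, when $m=n=1$ a point $\lambda\in\Ce_\Lambda\setminus\N(\E_{f,0})$ is a zero of $\E_{s,0}(\cdot,\gamma)$ for some $\gamma\in S^1$ precisely when $t(\lambda)\in[-2,2]$, and $\E_{s,0}(\lambda,\gamma)=\gamma^2-t(\lambda)\gamma+1$. Hence, by Corollary~\ref{fixgammacor}, it suffices either (a) to produce a real $\lambda_*>0$ with $t(\lambda_*)\in[-2,2]$ --- then $\lambda_*$ is a zero of $\E_{s,0}(\cdot,\gamma_*)$ with $\mathrm{Re}(\lambda_*)>0$, taking $\gamma_*=1$ if $t(\lambda_*)=2$ and $\gamma_*=-1$ if $t(\lambda_*)=-2$ --- or (b) to exhibit a zero $\lambda_\diamond$ of $\E_{f,0}$ with $\mathrm{Re}(\lambda_\diamond)>0$ at which $\E_{s,0}(\cdot,\gamma)$ has no pole, so that $\E_0(\cdot,\gamma)=(-\gamma)\E_{f,0}\E_{s,0}(\cdot,\gamma)$ has a zero of positive order at $\lambda_\diamond$ for every $\gamma$.

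First I would record the behaviour of $t$ on $[0,\infty)$. By Proposition~\ref{fastm1}, $0$ and a unique $\lambda_0>0$ are the only zeros of $\E_{f,0}$ in $[0,\infty)$, and both are simple; since $\T_s(\cdot,\cdot,\lambda)$ is entire in $\lambda$ while $\Upsilon$ is meromorphic with poles only at $\N(\E_{f,0})$, the only candidate poles of $t$ in $[0,\infty)$ are $0$ and $\lambda_0$. At $0$ the principal part of $\E_{s,0}(\cdot,\gamma)$ vanishes by Proposition~\ref{singm1}, because its coefficient contains the factor $\int_{-\infty}^\infty\partial_v H_2(u_0,v_\ho(z))v_\ho'(z)\,dz=0$ (the identity of Section~\ref{secsmallspectrum}), so $t$ is regular at $0$; at $\lambda_0$ the same proposition, read against $\E_{s,0}(\lambda,\gamma)=\gamma^2-t(\lambda)\gamma+1$, shows that the principal part of $t$ is $\I_1/(\lambda-\lambda_0)$, with $v_{\lambda_0}$ taken real. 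Finally, as $\lambda\to+\infty$ along $\R$ the evolution $\T_s(2\check{L}_0,0,\lambda)$ of \eqref{slowintr} grows like $2\cosh(2\check{L}_0\sqrt\lambda)$, whereas $\G(\lambda)$ stays bounded and the solution $u(\cdot,\lambda)$ of \eqref{initvalue} with $u(0)=0$, $u'(0)=1$ satisfies $u(2\check{L}_0,\lambda)\sim\sinh(2\check{L}_0\sqrt\lambda)/\sqrt\lambda$; hence $t(\lambda)\to+\infty$.

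For part~(i) with $\I_1<0$: then $t(\lambda)\to-\infty$ as $\lambda\downarrow\lambda_0$, while $t(\lambda)\to+\infty$ as $\lambda\to+\infty$ and $t$ is continuous on $(\lambda_0,\infty)$, so the intermediate value theorem yields $\lambda_*\in(\lambda_0,\infty)$ with $t(\lambda_*)=2$: this is option~(a). For part~(i) with $\I_1=0$: Proposition~\ref{singm1} shows the principal part of $\E_{s,0}(\cdot,\gamma)$ at $\lambda_0$ vanishes for every $\gamma\in S^1$, so $\mathrm{ord}_{\lambda_0}(\E_{s,0}(\cdot,\gamma))+\mathrm{ord}_{\lambda_0}(\E_{f,0})\ge 1>0$ and, as $\mathrm{Re}(\lambda_0)>0$, this is option~(b) via Corollary~\ref{fixgammacor}-2. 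Together these dispose of the case $\I_1\le0$.

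For part~(ii), assume $u_\s$ is symmetric about $\check{L}_0$ and $\I_2>-1$. If $\I_1\le0$ we are done by part~(i), so we may assume $\I_1>0$; then $t(\lambda)\to-\infty$ as $\lambda\uparrow\lambda_0$. It remains to show $t(0)>-2$, after which the intermediate value theorem on $(0,\lambda_0)$ gives $\lambda_*\in(0,\lambda_0)$ with $t(\lambda_*)=-2$, which is option~(a) with $\gamma_*=-1$. To compute $t(0)$ I would use Proposition~\ref{slowm1reversible}: differentiating the slow reduced equation $u_\s''=H_1(u_\s,0,0)$ shows that $u_-(\cdot,0)=u_\s'$ is the antisymmetric solution of \eqref{initvalue} at $\lambda=0$, so $u_\s'(0)=p_\s(0)$, $u_\s''(0)=H_1(u_\s(0),0,0)$ and $u_\s'''=\partial_u H_1(u_\s,0,0)\,u_\s'$; one then constructs the symmetric companion $u_+(\cdot,0)$ by reduction of order, and --- using conservation of the slow energy of \eqref{slowp} along $u_\s$, the identity $\tfrac{d}{d\xx}[(u_\s')^2+(u_\s'')^2]=2\,u_\s'u_\s''\,(1+\partial_u H_1(u_\s,0,0))$, and the evaluation $\G(0)=\G_0$ (finite, the singular part of $v_{in}$ again dropping out by the same orthogonality) --- a direct but lengthy computation turns $t(0)=\tfrac{2}{\W(0)}[\tfrac{d}{d\xx}(u_+u_-)(0,0)-\G(0)u_+(0,0)u_-(0,0)]$ into the displayed expression and yields $\I_2>-1\iff t(0)>-2$. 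The main obstacle is precisely this computation: carrying the reduction-of-order formula for $u_+$ across the zero of $u_\s'$ at $\check{L}_0$, pinning down the contribution of $\G(0)$, and tracking all signs --- in particular verifying that $\mathrm{Res}_{\lambda_0}t=+\I_1$ rather than $-\I_1$, since that sign determines whether the forced crossing of $[-2,2]$ lies on $(0,\lambda_0)$ or on $(\lambda_0,\infty)$.
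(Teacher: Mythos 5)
Your proof is correct and follows the paper's own argument almost step for step: the parity argument on the trace $t(\lambda)$, with residue $\I_1$ at $\lambda_0$ (combining Propositions~\ref{slowm1} and~\ref{singm1}), regularity of $t$ at $0$ via the translation-invariance identity, divergence $t(\lambda)\to+\infty$ (this is the paper's Lemma~\ref{C2}), the handling of $\I_1=0$ via Corollary~\ref{fixgammacor}-2, and the intermediate-value theorem applied on $(\lambda_0,\infty)$ when $\I_1<0$ and on $(0,\lambda_0)$ when $\I_1>0$. The one place you flag as unfinished --- carrying the reduction-of-order construction of $u_+$ through to $t(0)=2\I_2$ --- is precisely the step the paper itself compresses to ``substituting the obtained identities,'' though the paper does first record the explicit Rofe--Beketov companion $z(\xx)$ (normalized so its Wronskian with $u_\s'$ is $1$) and the identity $v_{in}(x,0)=\left.\partial_u v_\ho(x,u)\right|_{u=u_0}+Cv_\ho'(x)$, which are the two concrete ingredients your sketch would need to make that substitution routine.
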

\begin{proof} By Proposition \ref{slowm1} the slow reduced Evans function $\E_{s,0}(\lambda,\gamma)$ has a zero for some $\gamma \in S^1$ if and only if \eqref{traces} holds true. Our approach is to gain information about the trace $t(\lambda)$ at $\lambda = 0$, $\lambda = \lambda_0 > 0$ and when $\lambda \in \R$ tends to infinity. An application of the intermediate value Theorem will eventually yield the result. We distinguish three cases: (1.) $\I_1 < 0$, (2.) $\I_1 = 0$ and (3.) $\I_0 > 0, \I_2 > - 1$ and $u_\s$ symmetric about $\check{L}_0$.
\newpage
\textit{The case $\I_1 < 0$}\\
By combining Propositions \ref{slowm1} and \ref{singm1} the singular part of the trace $t(\lambda)$ at $\lambda = \lambda_0 \in \N(\E_{f,0})$ equals $\I_1$. On the other hand, it is shown in Lemma \ref{C2} that $t(\lambda)$ tends to infinity as $\lambda \to \infty$. Therefore, Proposition \ref{fastm1} and the intermediate value Theorem imply that there exists $\lambda_* \in (\lambda_0,\infty)$ such that $t(\lambda_*) \in [-2,2]$, if $\I_1 < 0$ holds true. Hence, the periodic pulse solution $\pih_{\p,\epsilon}$ is, provided $\epsilon > 0$ is sufficiently small, spectrally unstable by Proposition \ref{slowm1} and Corollary \ref{fixgammacor}-1.\\
\\
\textit{The case $\I_1 = 0$}\\
If we have $\I_1 = 0$, then $\E_{s,0}(\cdot,\gamma)$ has no pole at $\lambda_0 \in \N(\E_{f,0})$ for each $\gamma \in S^1$ by Proposition \ref{singm1}, which implies $\pih_{\p,\epsilon}$ is spectrally unstable, for $\epsilon > 0$ sufficiently small, by Corollary \ref{fixgammacor}-2.\\
\\
\textit{The case $\I_1 > 0$, $\I_2 > -1$ and $u_\s$ symmetric about $\check{L}_0$}\\
At $\lambda = 0$ the derivative $u_\s'(x)$ is due to translational invariance a non-trivial solution to (\ref{initvalue}), which is anti-symmetric about $\check{L}_0$. By Rofe-Beketov's formula \cite[Chapter 1.9]{BES} a linear independent solution to (\ref{initvalue}) is given by
  \begin{align*} z(\xx) &:= u_\s'(\xx)\int_{\check{L}_0}^\xx \frac{(\partial_u H_1 (u_\s(\yy),0,0) + 1)[(u_\s'(\yy))^2 - (H_1(u_\s(\yy),0,0))^2]}{[(u_\s'(\yy))^2 + (H_1(u_\s(\yy),0,0))^2]^2}d\yy\\
   &\ \ \ \ \ \ \ \ \ \ \ \ -\frac{H_1(u_\s(\xx),0,0)}{(u_\s'(\xx))^2 + (H_1(u_\s(\xx),0,0))^2}.\end{align*}
Note that $z(x)$ is symmetric about $\check{L}_0$ and that the Wronskian of $z$ and $u_\s'$ is constant with value $1$. Furthermore, there exists $C \in \R$ such that the solution $v_{in}(x,0)$ to (\ref{fastihom3}) is given by
\begin{align*} v_{in}(x,0) = \left.\frac{\partial v_{\ho}}{\partial u}(x,u)\right|_{u = u_0} + Cv_{\ho}'(x).\end{align*}
Substituting the obtained identities in the expression for the trace in Proposition \ref{slowm1reversible} yields $t(0) = 2\I_2$. Again, the intermediate value Theorem implies that there exists a $\lambda_* \in (0,\lambda_0)$ such that $t(\lambda_*) \in [-2,2]$ if $\I_1 > 0$ and $\I_2 > -1$ holds true. Hence, the periodic pulse solution $\pih_{\p,\epsilon}$ is, provided $\epsilon > 0$ is sufficiently small, spectrally unstable by combining Proposition \ref{slowm1} and Corollary \ref{fixgammacor}-1.
\end{proof}
\section{The Riccati transformation} \label{sec5}
The Riccati transformation provides a tool for the decoupling of singularly perturbed linear systems by block diagonalization. The decoupling takes place via a linear non-autonomous transformation, which is determined by two matrix Riccati equations. Specifically, for bounded matrix valued $A_{ij}(x,\eps)$ the coupled system,
\begin{align}
\left\{\!\begin{array}{rcl}
 \partial_x \phi &=& \eps (A_{11}(x,\eps) \phi + A_{12}(x,\eps)\psi)\\
 \partial_x \psi &=& A_{21}(x,\eps)\phi + A_{22}(x,\eps)\psi
\end{array}\right., \ \ \ (\phi,\psi) \in \C^{n_1+n_2} \label{ric1}
\end{align}
with $0 < \eps \ll 1$, decouples via a coordinate change (the Riccati transformation)  to
\begin{align}
\left\{\!\begin{array}{rcl}
\partial_x \chi &=& \eps\left[A_{11}(x,\eps) + A_{12}(x,\eps)U_\eps(x)\right]\chi\\
\partial_x \omega &=& \left[A_{22}(x,\eps) - \eps U_\eps(x)A_{12}(x,\eps)\right]\omega
\end{array}\right., \ \ \ (\chi,\omega) \in \C^{n_1+n_2}, \label{ric8}
\end{align}
where  $U_\eps(x)$ is a family of matrices satisfying a certain matrix Riccati equation as detailed below. Applying this decoupling to the linear stability problem (\ref{fullstab}) is a major step in reducing our stability problem to a `fast' and a `slow' stability problem yielding the factorization \eqref{factorintr} of the Evans function.\\
\\
Although the construction of the transformation is based on two results of Chang \cite[Theorem 1]{CHA} and \cite[Lemma 1]{CH2}, the assumptions on the coefficient matrices in \cite{CH2} are too restrictive. Therefore, we need a refinement of his statements. For this reason and the fact that the Riccati transformation lies at the core of our analytic factorization method, we present the full construction of the transformation in the next theorem. Moreover, we also prove that a periodic coefficient matrix implies periodicity of the Riccati transform, which appears to be a new result -- see Remark \ref{newperiod}.
\begin{theo} \label{ric}
Let $n_1,n_2 \in \Z_{> 0}, \eps_0 \in \R_{>0}$ and $A_{ij} \in C_b(\R \times (0,\eps_0),\mathrm{Mat}_{n_i \times n_j}(\C)), i,j = 1,2$. Assume that
\begin{align} \partial_x \psi = A_{22}(x,\eps)\psi, \ \ \ \psi \in \C^{n_2},\label{ric2} \end{align}
admits an exponential dichotomy on $\R$ with constants $K_0,\mu_0 > 0$, independent of $\eps$. Then, for $\eps > 0$ sufficiently small, there exists continuously differentiable matrix functions $U_\eps(x)$ and $S_\eps(x)$ satisfying the matrix Riccati equations,
\begin{align}
\left\{\!\begin{array}{rclr}
\partial_x U &=& A_{22}U - \eps U A_{11} - \eps U A_{12} U + A_{21}, \ \ \ &U \in \mathrm{Mat}_{n_2 \times n_1}(\C),\\
\partial_x S &=& \eps (A_{11} + A_{12}U)S - S(A_{22} - \eps U A_{12}) - A_{12}, \ \ \ &S \in \mathrm{Mat}_{n_1 \times n_2}(\C),
\end{array}\right.  \label{ric5}
\end{align}
with the following properties:
\begin{enumerate}
 \item[1.] $U_\eps$ and $S_\eps$ have $\eps$-independent bounds:
 \begin{align} \|U_\eps\| \leq 8K_0 \mu_0^{-1}\|A_{21}\|, \ \ \ \|S_\eps\| \leq 8C\mu_0^{-1}\|A_{12}\|, \label{ric12} \end{align}
 where $C > 0$ is a constant depending on $K_0$ only.
 \item[2.] The coordinate transform,
\begin{align}
\left(\begin{array}{c} \phi \\ \psi\end{array}\right) = H_\eps(x) \left(\begin{array}{c} \chi \\ \omega\end{array}\right), \ \ \ H_\eps(x) := \left(\begin{array}{cc} I & -\eps S_\eps(x)\\ U_\eps(x) & I - \eps U_\eps(x)S_\eps(x)\end{array}\right), \label{ric7}
\end{align}
diagonalizes system (\ref{ric1}) into (\ref{ric8}).
\item[3.] Let $\theta \in (0,1)$. The unique bounded solution $\Omega_\eps$ to the inhomogeneous matrix problem,
\begin{align}\partial_x \Omega = A_{22}(x,\eps)\Omega + A_{21}(x,\eps), \ \ \ \Omega \in \mathrm{Mat}_{n_2 \times n_1}(\R,\C), \label{ricinhom}\end{align}
satisfies for $x \in \R$
\begin{align*}\|U_\eps(x) - \Omega_\eps(x)\| \leq C\eps^{1-\theta},\end{align*}
where $C > 0$ is a constant depending on $K_0,\mu_0,\|A_{11}\|,\|A_{12}\|$ and $\|A_{21}\|$ only.
\item[4.] Let $a > 0$. For each $x \in \R$, we have the approximation,
\begin{align}
\|U_\eps(x)\| \leq \frac{4K_0}{\mu_0}\left[\sup_{y \in [x-a,x+a]}\left(\eps\|A_{12}\| \|U_\eps(y)\|^2 + \|A_{21}(y,\eps)\|\right) + C e^{-a\mu_0/2}\right], \label{ric10}
\end{align}
where $C > 0$ is a constant depending on $K_0,\mu_0,\|A_{12}\|$ and $\|A_{21}\|$ only.
\item[5.] If the matrices $A_{ij}(\cdot,\eps)$ are $L$-periodic for $1 \leq i,j \leq 2$, then the coordinate transform $H_\eps$ is also $L$-periodic.
\end{enumerate}
\end{theo}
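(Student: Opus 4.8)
The construction has two layers: first build the matrix $U_\eps$ solving the first Riccati equation in \eqref{ric5}, then build $S_\eps$ solving the second. Both are obtained by a fixed-point argument using the exponential dichotomy of \eqref{ric2}. For $U$, I would rewrite the Riccati equation as the fixed-point problem
\begin{align*}
U(x) = (\mathcal{T}U)(x) := \int_{-\infty}^{\infty} \Gamma(x,y)\bigl[A_{21}(y,\eps) - \eps U(y)A_{11}(y,\eps) - \eps U(y)A_{12}(y,\eps)U(y)\bigr]\,dy,
\end{align*}
where $\Gamma(x,y)$ is the Green's function built from the dichotomy projections of \eqref{ric2} (so that $\|\Gamma(x,y)\| \le K_0 e^{-\mu_0|x-y|}$). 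On the ball $\{\|U\|_\infty \le 8K_0\mu_0^{-1}\|A_{21}\|\}$ in $C_b(\R,\mathrm{Mat}_{n_2\times n_1})$, the linear-in-$U$ terms carry a factor $\eps$ and the quadratic term a factor $\eps\|U\|_\infty$, so for $\eps$ small $\mathcal{T}$ maps the ball into itself and is a contraction; the factor $8$ (rather than $2$) in the bound absorbs the $\eps$-corrections, giving property 1 for $U$. One checks by differentiating under the integral that the fixed point is $C^1$ and solves \eqref{ric5}. The argument for $S_\eps$ is analogous once $U_\eps$ is in hand: the $S$-equation is \emph{linear} in $S$ with coefficient matrices built from $A_{22} - \eps U_\eps A_{12}$ and forcing $A_{12}$; here I must first note that $A_{22} - \eps U_\eps A_{12}$ still has an exponential dichotomy on $\R$ for $\eps$ small (roughness of dichotomies, since $\|\eps U_\eps A_{12}\|$ is $\ord(\eps)$), with constants $K,\mu$ depending only on $K_0,\mu_0$ — this yields the constant $C$ in the $S$-bound — and then the same contraction-mapping scheme applies, with the ball radius $8C\mu_0^{-1}\|A_{12}\|$.

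**Properties 2–4.** Property 2 is a direct computation: write $(\phi,\psi)^\top = H_\eps(\chi,\omega)^\top$, differentiate, use that $U_\eps,S_\eps$ solve \eqref{ric5}, and verify that the off-diagonal blocks of the transformed coefficient matrix vanish identically — this is exactly why the particular combination $H_\eps$ in \eqref{ric7} is chosen, and no smallness is needed here beyond invertibility of $H_\eps$ (which holds since $H_\eps = \mathrm{Id} + \ord(\eps)$ in the relevant blocks after using property 1). For property 3, the unique bounded solution of \eqref{ricinhom} is $\Omega_\eps(x) = \int \Gamma(x,y)A_{21}(y,\eps)\,dy$, i.e. $\Omega_\eps = \mathcal{T}_0 \Omega_\eps$ with $\mathcal{T}_0$ the $\eps=0$ part of $\mathcal{T}$; then $U_\eps - \Omega_\eps$ satisfies a fixed-point equation whose right-hand side is $\ord(\eps)$ in sup-norm, but to get the sharper $\ord(\eps^{1-\theta})$ one localizes the Green's-function integral to a window of width $\sim \log(1/\eps)$ and estimates the tail by $e^{-\mu_0 \cdot \mathrm{const}\log(1/\eps)} = \ord(\eps^{\theta})$-type decay against the linear growth, a standard trade-off; I would carry this out by splitting $\int_{|x-y|\le a} + \int_{|x-y|>a}$ and optimizing $a$. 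Property 4 follows from the same split applied directly to the fixed-point identity $U_\eps = \mathcal{T}U_\eps$: on $[x-a,x+a]$ bound the integrand by its supremum times $\int \Gamma \le 2K_0/\mu_0$ (the factor $4K_0/\mu_0$ in \eqref{ric10} again leaves room), and on the complement bound $\|U_\eps\|$ by its global bound from property 1 and integrate $K_0 e^{-\mu_0|x-y|}$ to get the $C e^{-a\mu_0/2}$ tail.

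**Property 5 (periodicity).** This is the genuinely new point. Suppose all $A_{ij}(\cdot,\eps)$ are $L$-periodic. Then the dichotomy data of \eqref{ric2} can be chosen so that the Green's function satisfies $\Gamma(x+L,y+L) = \Gamma(x,y)$ (the dichotomy projections of a periodic equation are $L$-periodic, by uniqueness of the stable/unstable subspaces of the period map). Consequently the operator $\mathcal{T}$ commutes with the shift $(\mathcal{S}_L U)(x) := U(x+L)$: changing variables $y \mapsto y+L$ in the defining integral shows $\mathcal{T}(\mathcal{S}_L U) = \mathcal{S}_L(\mathcal{T}U)$. Since $\mathcal{T}$ is a contraction on a shift-invariant ball with a \emph{unique} fixed point, that fixed point must be shift-invariant, i.e. $U_\eps$ is $L$-periodic. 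The same reasoning applied to the (now also periodic) $S$-equation gives $S_\eps$ $L$-periodic, and hence $H_\eps$ is $L$-periodic.

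**Main obstacle.** The routine contraction arguments are not where the difficulty lies; the delicate part is the bookkeeping in the \emph{roughness} step for the $S$-equation — one must verify that the perturbed equation $\partial_x\omega = (A_{22} - \eps U_\eps A_{12})\omega$ inherits a dichotomy with constants depending only on $K_0,\mu_0$ (not on further $\eps$-dependent quantities), so that the constant $C$ in \eqref{ric12} and in properties 3–4 is genuinely uniform. This is exactly the point where Chang's hypotheses in \cite{CH2} are "too restrictive," so the refinement amounts to redoing the roughness estimate under the weaker hypothesis that only \eqref{ric2} (not a whole family) has a uniform dichotomy, tracking constants carefully through the perturbation series.
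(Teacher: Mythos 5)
Your construction of $U_\eps$ and $S_\eps$ via contraction on a ball built from the dichotomy Green's function, and the verification of properties 1 and 2, follow the paper's argument closely, as does the roughness step for the $S$-equation. The one structural difference is that the paper's contraction operator replaces the identity on the right of the integrand by the slow evolution $T_{1,\eps}(y,x)$ of $\partial_x\phi=\eps A_{11}\phi$, thereby absorbing the linear term $-\eps U A_{11}$ into the kernel; you instead keep that term explicitly in the forcing against the bare Green's function $\Gamma$. Both work, but they yield slightly different forms of properties 3 and 4. With your formulation, $U_\eps(x)-\Omega_\eps(x)=\int_\R\Gamma(x,y)\bigl[-\eps U_\eps A_{11}-\eps U_\eps A_{12}U_\eps\bigr]\,dy$ is immediately $\ord(\eps)$ in sup norm, which is \emph{stronger} than the stated $\ord(\eps^{1-\theta})$; no window-splitting is needed, and your remark that one localizes ``to get the sharper $\ord(\eps^{1-\theta})$'' has it backwards, since $\eps^{1-\theta}>\eps$. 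The exponent $1-\theta$ in the paper is an artifact of their $T_{1,\eps}$ trick: comparing $T_{1,\eps}(y,x)$ to $I$ on a window of width $\eps^{-\theta}$ costs $\ord(\eps^{1-\theta})$, against a tail $e^{-\mu_0\eps^{-\theta}}$. Similarly, your version of \eqref{ric10} would carry an extra $\eps\|A_{11}\|\,\|U_\eps(y)\|$ inside the supremum; this is harmless for the later bootstrap (the $\ord(\eps)$ piece can be absorbed to the left for $\eps$ small) but does not reproduce the stated inequality verbatim.

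The genuine divergence is property 5, where your argument is a real simplification of what the paper does. The paper first shows that systems (\ref{ric8}) and (\ref{ric1}) are exponentially separated, then invokes an external result to conclude that $X_\eps(\cdot)Q_{2,\eps}X_\eps(\cdot)^{-1}$ is $L$-periodic, and finally reads off the periodicity of $U_\eps$, $S_\eps$ from the block entries of $H_\eps P_{2,\eps}H_\eps^{-1}$. Your route is more elementary: for an $L$-periodic system the exponential dichotomy projections on $\R$ are unique and hence $L$-periodic, so $\Gamma(x+L,y+L)=\Gamma(x,y)$; the contraction therefore commutes with the shift $\mathcal{S}_L$ on a shift-invariant ball, and by uniqueness of the fixed point $U_\eps$ is $L$-periodic; the variation-of-constants formula for $S_\eps$ is then built entirely from $L$-periodic ingredients, giving $L$-periodicity of $S_\eps$ and hence of $H_\eps$. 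This bypasses exponential separation and the external reference, and works equally well for the paper's operator since $T_{1,\eps}(y+L,x+L)=T_{1,\eps}(y,x)$. As for the ``main obstacle'' you flag -- uniformity of the roughness constants for the $S$-equation -- this is immediate from the roughness result the paper records in its appendix: the perturbation is $\ord(\eps)$, the new dichotomy constant depends only on $K_0$, and the new rate may be taken as $\mu_0/2$, so uniformity in $\eps$ is built in and requires no further ``delicate bookkeeping.''
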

\begin{proof} First, we set up an integral equation for $U_\eps$ and prove global existence via a contraction argument. Since $U_\eps$ triangulizes the system, an integral equation for $S_\eps$ can be derived from the variation of constants formula. The first four properties of $U_\eps$ and $S_\eps$ follow readily from the integral equations they satisfy. Finally, periodicity of the transform is proven by exponential separation.\\
\\
\textit{Existence of $U_\eps(x)$.}\\
Since $A_{11}$ is uniformly bounded in $\eps$ on $\R$, system,
\begin{align}
 \partial_x \phi = \eps A_{11}(x,\eps)\phi, \ \ \ \phi \in \C^{n_1},\label{ric4}
\end{align}
has bounded growth on $\R$ with constants $K_1,\eps \mu_1 > 0$, where $K_1 = 1$ and $\mu_1$ is independent of $\eps$. Denote by $T_{1,\eps}(x,y)$ and $T_{2,\eps}(x,y)$ the evolution operators of systems (\ref{ric4}) and (\ref{ric2}), respectively. Take $\rho = 8K_0\mu_0^{-1}\|A_{21}\|$. The ball $B(0,\rho) \subset C_b(\R,\mathrm{Mat}_{n_2 \times n_1}(\C))$ is a metric space endowed with the supremum norm. We want to show that the map $\A_\eps \colon B(0,\rho) \to B(0,\rho)$ given by
\begin{align*}
(\A_\eps U)(x) =& \int_{-\infty}^x T_{2,\eps}^s(x,y)\left[-\eps U(y)A_{12}(y,\eps)U(y) + A_{21}(y,\eps)\right]T_{1,\eps}(y,x)dy\\ & - \int_x^\infty T_{2,\eps}^u(x,y)\left[-\eps U(y)A_{12}(y,\eps)U(y) + A_{21}(y,\eps)\right]T_{1,\eps}(y,x)dy,
\end{align*}
is a well-defined contraction. If $\eps > 0$ is sufficiently small, it holds for all $U \in B(0,\rho)$
\begin{align*}\|\A_\eps U\| \leq \frac{2K_0}{\mu_0 - \eps \mu_1}\left[\eps \rho^2 \|A_{12}\| + \|A_{21}\|\right] < \rho.\end{align*}
Therefore, $\A_\eps$ is well-defined. Similarly, provided $\eps > 0$ is sufficiently small, we estimate for $U_1,U_2 \in B(0,\rho)$
\begin{align*} \|\A_\eps U_1 - \A_\eps U_2\| \leq \frac{4\eps K_0\rho \|A_{12}\|}{\mu_2 - \eps \mu_1}\|U_1 - U_2\| < \|U_1 - U_2\|.\end{align*}
Hence, $\A_\eps$ is a contraction mapping. By the Banach fixed point Theorem the integral equation $\A_\eps U = U$ has a unique solution $U_\eps(x)$ in $B(0,\rho)$. It is readily seen by differentiating this integral equation that $U_\eps$ satisfies the matrix Riccati equation (\ref{ric5}). Moreover, we derive the bound on $U_\eps$ in (\ref{ric12}). Since (\ref{ric2}) has an exponential dichotomy on $\R$, (\ref{ricinhom}) admits a unique bounded solution $\Omega_\eps$ by Proposition \ref{inhomexpdi}. By Proposition \ref{Palmer}, it holds for $|x-y| \leq \eps^{-\theta}$
\begin{align}\|T_{1,\eps}(x,y) - I\| \leq C' \eps^{1-\theta}, \label{ric9}\end{align}
where $C' > 0$ depends on $\|A_{11}\|$ only. Using (\ref{ric9}) and the bound on $U_\eps$ in (\ref{ric12}), we estimate for $x \in \R$
\begin{align*} \|U_\eps(x) - \Omega_\eps(x)\| &= \left\|(\A_\eps U_\eps)(x)
- \int_{-\infty}^x T_{2,\eps}^s(x,y)A_{21}(y,\eps)dy + \int_x^\infty T_{2,\eps}^u(x,y)A_{21}(y,\eps)dy\right\| \\
&\leq 4K_0\mu_0^{-1}\left(\eps \rho^2\|A_{12}\| +  \eps^{1-\theta} C'\|A_{21}\| + e^{-\mu_0\eps^{-\theta}}\|A_{21}\|\right).
\end{align*}
This shows the third property. The fourth property follows by splitting the interval of integration of the two integrals in the right hand side of the identity $U_\eps(x) = (\A_\eps U_\eps)(x)$. We obtain four integrals over $(-\infty,x-a)$, $(x-a,x)$, $(x,x+a)$ and $(x+a,\infty)$, respectively. We estimate each integral separately and obtain approximation (\ref{ric10}) with $C = \rho^2\|A_{12}\| + \|A_{21}\|$.\\
\\
\textit{Existence of $S_\eps(x)$.}\\
Since $A_{11}, A_{12}$ and $U_\eps$ are uniformly bounded in $\eps$ on $\R$, system,
\begin{align} \partial_x \chi = \eps\left[A_{11}(x,\eps) + A_{12}(x,\eps)U_\eps(x)\right]\chi, \ \ \ \chi \in \C^{n_1}, \label{ric6}\end{align} has bounded growth on $\R$ with constants $K_3,\eps\mu_3 > 0$, where $K_3 = 1$ and $\mu_3$ is independent of $\eps$. On the other hand, equation,
\begin{align}\partial_x \omega = (A_{22}(x,\eps) - \eps U_\eps(x)A_{12}(x,\eps))\omega, \ \ \ \omega \in \C^{n_2}, \label{ric3} \end{align}
can be seen as a perturbation of (\ref{ric2}). By roughness (Proposition \ref{RoughnessR}) it therefore possesses an exponential dichotomy on $\R$ with constants $K_4,\mu_4 > 0$. Here, $K_4$ depends on $K_0$ only and we choose $\mu_4 = \mu_0/2$. Denote by $T_{3,\eps}(x,y)$ and $T_{4,\eps}(x,y)$ the evolution operators of systems (\ref{ric6}) and (\ref{ric3}), respectively. We define $S_\eps(x)$ via the variation of constants formula,
\begin{align*} S_\eps(x) = -\int_{-\infty}^x T_{3,\eps}(x,y)A_{12}(y,\eps)T_{4,\eps}^u(y,x)dy + \int_x^\infty T_{3,\eps}(x,y)A_{12}(y,\eps)T_{4,\eps}^s(y,x)dy, \end{align*}
We readily derive the bound on $S_\eps$ in (\ref{ric12}). This proves the first property. It is easily verified by differentiation that $S_\eps$ satisfies the matrix Riccati equation (\ref{ric5}). Finally, using $S_\eps$ and $U_\eps$ satisfy equations (\ref{ric5}), it is a straightforward calculation to see the change of variables (\ref{ric7}) transforms system (\ref{ric1}) into (\ref{ric8}). This proves the second property.\\
\\
\textit{Exponential separation and periodicity}\\
Only the fifth property remains to be proven. Our plan is to show that system (\ref{ric1}) is exponentially separated in the sense of \cite{PALE}. Subsequently, we make use of the fact that exponential separation preserves periodicity. Therefore, denote by $P_\eps(x),x \in \R$ the projections corresponding to the exponential dichotomy of system (\ref{ric3}) on $\R$, established in the latter paragraph. We define the following projections,
\begin{align*}
P_{1,\eps} = \left(\begin{array}{cc} 0 & 0 \\ 0 & P_\eps(0)\end{array}\right), \ \ \ P_{2,\eps} = \left(\begin{array}{cc} I & 0 \\ 0 & 0\end{array}\right), \ \ \ P_{3,\eps} = \left(\begin{array}{cc} 0 & 0 \\ 0 & I-P_\eps(0)\end{array}\right).
\end{align*}
Denote by $V_{i,\eps} \subset \C^{n_1+n_2}$ the range of the projection $P_{i,\eps}$ for $i = 1,2,3$. Let $m_2$ be the rank of $P_\eps(0)$. Using both the bounded growth of (\ref{ric6}) and the exponential dichotomy of (\ref{ric3}), we conclude system that (\ref{ric8}) is, for $\eps > 0$ sufficiently small, $(m_2,n_1,n_2-m_2)$-exponentially separated with respect to the decomposition $V_{1,\eps} \oplus V_{2,\eps} \oplus V_{3,\eps}$. As a result, system (\ref{ric1}) is also $(m_2,n_1,n_2-m_2)$-exponentially separated with respect to the decomposition $W_{1,\eps} \oplus W_{2,\eps} \oplus W_{3,\eps}$, where $W_{i,\eps}$ is the range of the projection $Q_{i,\eps} := H_\eps(0)P_{i,\eps}H_\eps^{-1}(0)$ for $i = 1,2,3$.\\
\\
Now, suppose $A_{ij}(\cdot,\eps)$ are $L$-periodic for $1 \leq i,j \leq 2$. Let $X_\eps(x)$ be the principle fundamental matrix of system (\ref{ric1}) with $X_\eps(0) = I$. Invoking \cite[Corollary 4]{BYV} gives that $X_\eps(\cdot)Q_{2,\eps}X_\eps(\cdot)^{-1}$ is $L$-periodic. Denote by $T_\eps(x,y)$ the evolution of the diagonal system (\ref{ric8}). We calculate for $x \in \R$
\begin{align*}
X_\eps(x)Q_{2,\eps}X_\eps(x)^{-1} &= H_\eps(x)T_\eps(x,0)P_{2,\eps}T_\eps(0,x)H_\eps(x)^{-1} = H_\eps(x)P_{2,\eps}H_\eps(x)^{-1}\\
&= \left(\begin{array}{cc} I - \eps S_\eps(x) U_\eps(x) & \eps S_\eps(x) \\ U_\eps(x) + \eps U_\eps(x)S_\eps(x)U_\eps(x) & \eps U_\eps(x)S_\eps(x)\end{array}\right).
\end{align*}
Hence, $S_\eps, U_\eps S_\eps, S_\eps  U_\eps$ and $U_\eps + \eps U_\eps S_\eps U_\eps$ are $L$-periodic.
So, $U_\eps S_\eps U_\eps$ is also $L$-periodic. Combining this with the $L$-periodicity of $U_\eps + \eps U_\eps S_\eps U_\eps$, we conclude that $U_\eps$ is $L$-periodic. This implies that $H_\eps$ is $L$-periodic, which concludes the proof of the fifth statement.
\end{proof}
\begin{rem}[\textit{Periodicity of the transform}] \label{newperiod}
The periodicity of the transform in Theorem \ref{ric}-5 is a new discovery to the authors' knowledge. It is natural to ask whether there always exists a periodic choice for a coordinate change, which transforms a periodic system into diagonal form. However, it is shown in \cite[Chapter 5]{PALO} that this is not the case. It seems that the periodicity of the coordinate change $H_\eps$ is due to the special (slow-fast) structure of system (\ref{ric1}) used in the (non-generic) proof of Theorem \ref{ric}-5. $\hfill \blacksquare$
\end{rem}
\begin{rem}[\textit{Exponential trichotomies}]
The concept of exponential dichotomies can be generalized to exponential trichotomies \cite{SEL}. In addition to a stable and unstable direction, a center direction is considered. In fact, the $(m_2,n_1,n_2-m_2)$-exponential separation of systems (\ref{ric1}) and (\ref{ric8}) obtained in Theorem \ref{ric} shows that these systems admit an exponential trichotomy on $\R$. Here, the stable and unstable directions are related to the fast subsystem and the center direction is related to the slow subsystem. $\hfill \blacksquare$
\end{rem}
\begin{rem}[\textit{The Riccati transform in a broader setting}] Formally, the Riccati transform can be employed to diagonalize general linear equations as pointed out in \cite[Remark 4.7]{BEM}. However, the Riccati solutions can become singular in finite time. We use both the slow-fast structure of (\ref{ric1}) and the exponential dichotomy of (\ref{ric2}) to achieve global boundedness of the transformation functions $U_\eps$ and $S_\eps$. $\hfill \blacksquare$
\end{rem}
\section{Proofs of the main results via the analytic factorization method} \label{sec6}
\subsection{Approach} \label{Approach}
As mentioned in the introduction we employ an analytic factorization method to prove our main Theorems \ref{mainresult}, \ref{fixgamma} and \ref{zeropole}. We start by showing that the spectrum of $\El_\epsilon$ is contained in an $\epsilon$-independent sector. This provides an important a priori bound on the magnitude of the spectrum. The construction of the $\epsilon$-independent sector is carried out in Section \ref{4.1}.\\
\\
Subsequently, we start constructing the reduced Evans function $\E_0 \colon \Ce_\Lambda \times \C \to \C$, whose zero set approximates the zero set of the Evans function $\E_\epsilon$. As stated in section \ref{mainresult3}, $\E_0$ is defined in terms of the three singular limit problems (\ref{redstab}), (\ref{fastinhom}) and (\ref{slowintr}) of the linear stability problem (\ref{fullstab}). We study these three singular limit problems first separately in Section \ref{4.2} before drawing the connection between $\E_0$ and the Evans function $\E_\epsilon$. \\
\\
The reduced Evans function $\E_0$ is a product (\ref{e:evprod}) of an analytic fast reduced Evans function $\E_{f,0}$ and a meromorphic slow reduced Evans function $\E_{s,0}$. This factorization of $\E_0$ corresponds to the fact that system (\ref{fullstab}) can be diagonalized using the Riccati transform. Indeed system (\ref{fullstab}) is clearly of the form (\ref{ric1}). However, the application of the Riccati transformation is only legitimate in the case system,
\begin{align}
\partial_x \psi = \A_{22,\epsilon}(x,\lambda)\psi, \ \ \ \psi \in \C^{2n}, \label{faststab}
\end{align}
has an exponential dichotomy on $\R$. Using roughness techniques we can show that the latter is the case, whenever $\lambda$ is away from the discrete set of eigenvalues of the homogeneous fast limit problem (\ref{redstab}), which correspond to the zeros of the fast reduced Evans function $\E_{f,0}$. This result is the content of Section \ref{4.3}. Consequently, using the periodicity of the coefficient matrices of system (\ref{fullstab}), it is possible to factorize the Evans function $\E_\epsilon$ into two factor $\E_{f,\epsilon}$ and $\E_{s,\epsilon}$. This is performed in Section \ref{4.4}. The two factors $\E_{f,\epsilon}$ and $\E_{s,\epsilon}$ can be linked to $\E_{f,0}$ and $\E_{s,0}$, respectively. The latter is the content of Section \ref{4.rela}. Finally, an application of Rouch\'e's Theorem yields the proofs of Theorems  \ref{mainresult}, \ref{fixgamma} and \ref{zeropole}. This can be found in Section \ref{4.5}.
\subsection{A priori bounds on the spectrum} \label{4.1}
In this section we show that the spectrum of $\El_\epsilon$ is contained in an $\epsilon$-independent sector.
\begin{prop} \label{structurespectrum}
The operator $\El_\epsilon \colon C_{ub}^2(\R,\R^{m+n}) \subset C_{ub}(\R,\R^{m+n}) \to C_{ub}(\R,\R^{m+n})$ given by (\ref{defL}) is sectorial and densely defined. For $\epsilon > 0$ sufficiently small, there exists constants $\omega \in \R_{> 0}$ and $\varpi \in (\pi/2,\pi)$, both independent of $\epsilon$, such that the sector $\Sigma := \{\lambda \in \C : \lambda \neq \omega, |\mathrm{arg}(\lambda - \omega)| \leq \varpi\} \cup \{\omega\}$ is contained in the resolvent set $\rho(\El_\epsilon)$.
\end{prop}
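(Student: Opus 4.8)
The plan is to treat the two assertions separately. Sectoriality and dense definition of $\El_\epsilon$, for each fixed small $\epsilon$, follow from standard parabolic theory, while the $\epsilon$-independence of the sector $\Sigma$ requires a dedicated energy argument that exploits the localized nature of the singular part of $\B_\epsilon$.

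First I would write $\El_\epsilon = \El_\epsilon^0 - \B_\epsilon(\cdot)$ with $\El_\epsilon^0 := \mathrm{diag}(D_1,\epsilon^2 D_2)\partial_{\xx\xx}$. Each block of $\El_\epsilon^0$ is a constant-coefficient, strongly elliptic operator on $C_{ub}(\R,\R^{m+n})$, hence sectorial with dense domain $C_{ub}^2(\R,\R^{m+n})$. Since $\check{\pih}_{\p,\epsilon}(\xx)$ takes values in the compact set $\K$ for all $\xx$ (Remark \ref{S3.4}) and $H_1,H_2,G$ are $C^1$, the matrix $\B_\epsilon(\cdot)$ defines, for each fixed $\epsilon > 0$, a bounded multiplication operator on $C_{ub}(\R,\R^{m+n})$; a bounded perturbation of a sectorial operator is sectorial with unchanged domain, which settles the first two claims. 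However $\|\B_\epsilon\| = \ord(\epsilon^{-1})$, so the sector obtained this way shrinks as $\epsilon \downarrow 0$, and the $\epsilon$-uniform statement calls for a separate argument.

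For the uniform sector I would use the characterization: $\lambda \in \sigma(\El_\epsilon)$ if and only if $\El_{\C,\epsilon}\phi = \lambda\phi$ admits a nontrivial bounded solution $\phi = (u,v) \in C_{ub}^2$ (cf.\ \cite[Proposition 2.1]{GAR}). By Floquet theory such a solution, restricted to the interval $[-\check{L}_\epsilon,\check{L}_\epsilon]$ in $\xx$ (i.e.\ $[-L_\epsilon,L_\epsilon]$ in $x$), obeys $\gamma$-twisted boundary conditions $\phi(-\check{L}_\epsilon) = \gamma\phi(\check{L}_\epsilon)$, $\phi_\xx(-\check{L}_\epsilon) = \gamma\phi_\xx(\check{L}_\epsilon)$ for some $\gamma \in S^1$; since $|\gamma| = 1$, all boundary terms produced by integrating $D_1 u_{\xx\xx}$ and $\epsilon^2 D_2 v_{\xx\xx}$ against $\overline u$, $\overline v$ over this interval vanish. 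Pairing the two components of $\El_{\C,\epsilon}\phi = \lambda\phi$ with $\overline u$ and $\overline v$ and integrating then produces two identities governing $\mathrm{Re}(\lambda)\int|u|^2$ and $\mathrm{Re}(\lambda)\int|v|^2$ (and likewise for $\mathrm{Im}(\lambda)$), whose good-sign terms are $\int|D_1^{1/2}u_\xx|^2$ and $\epsilon^2\int|D_2^{1/2}v_\xx|^2$. The key structural point is that, since $H_2(u,0) = 0$ and $G(u,0,\epsilon) = 0$ by assumption \ref{assS1}, the matrices $\partial_u H_2(\check{\pih}_{\p,\epsilon})$, $\partial_u G(\check{\pih}_{\p,\epsilon})$ and $\partial_v G(\check{\pih}_{\p,\epsilon}) - \G(\check{u}_{\p,\epsilon})$ are bounded by $C|\check{v}_{\p,\epsilon}(\xx)| \le C'e^{-c|\xx|/\epsilon}$ near each pulse (assumption \ref{assS3}-3), i.e.\ the genuinely singular contributions to $\B_\epsilon$ are exponentially localized on the fast scale, on a set of $\xx$-length $\ord(\epsilon)$. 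This lets me estimate $\epsilon^{-1}\int e^{-c|\xx|/\epsilon}|u|^2\,d\xx \le C\|u\|_{L^\infty(|\xx| < \delta)}^2$ and then invoke an interpolation (Agmon-type) inequality, with suitably $\epsilon$-scaled parameters, to absorb the result into $\int|D_1^{1/2}u_\xx|^2$ and $\mathrm{Re}(\lambda)\int|u|^2$. The one non-localized singular term, $\epsilon^{-1}\int\overline u\,\partial_v H_2(\check{\pih}_{\p,\epsilon})v$ in the $u$-identity, has to be controlled in tandem with the $v$-identity, where the positive definiteness of $\mathrm{Re}(\G(u))$ off the pulse (assumption \ref{assS2}) supplies a coercive term $\ge c\int|v|^2$. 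Combining the two identities, together with their imaginary-part analogues to obtain an opening angle $\varpi \in (\pi/2,\pi)$, yields $\epsilon$-independent $\omega > 0$ and $\varpi$ such that for $\lambda \in \Sigma$ the only bounded solution is $\phi \equiv 0$, i.e.\ $\Sigma \subset \rho(\El_\epsilon)$.

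I expect the main obstacle to be precisely the $\ord(\epsilon^{-1})$ magnitude of $\B_\epsilon$: a naive Neumann-series argument would only place a sector of the form $\{\lambda \in \C : \mathrm{Re}(\lambda) > c/\epsilon\}$ in the resolvent set. Overcoming this rests on the observation that the dangerous part of $\B_\epsilon$ is supported on an $\ord(\epsilon)$-set in $\xx$, combined with $\epsilon$-uniform localized Sobolev/interpolation estimates that trade it against the diffusive terms; choosing the interpolation parameters with the correct powers of $\epsilon$ and keeping every constant independent of $\epsilon$, while simultaneously controlling the slow-fast coupling through the $\epsilon^{-1}\partial_v H_2$ entry with the help of the coercivity from assumption \ref{assS2}, is the delicate part of the bookkeeping.
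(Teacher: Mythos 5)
Your reduction to $\gamma$-twisted boundary-value problems and the Agmon/interpolation treatment of the exponentially localized blocks ($\epsilon^{-1}\partial_u H_2$, $\partial_u G$, $\partial_v G-\G$) are sound, and this part parallels what the paper does via the Gr\"onwall/evolution-operator estimate (Lemma \ref{Palmer} plus Proposition \ref{roughnessPALI}) for its building block $\El_{1,\epsilon}$. The gap is in the claim that the remaining coupling $\epsilon^{-1}\int\overline u\,\partial_v H_2(\check\pih_{\p,\epsilon})v$ can be ``controlled in tandem'' with the coercive $c\int|v|^2$ from \ref{assS2}. This term is \emph{not} localized -- $\partial_v H_2(u,0)$ is generically nonzero -- and any Cauchy--Schwarz split $\epsilon^{-1}\int|u||v|\le \epsilon^{-1}(\eta\int|u|^2+\eta^{-1}\int|v|^2)$ forces $\eta\gtrsim\epsilon^{-1}$ to be absorbable into $c\int|v|^2$, leaving a term $\gtrsim\epsilon^{-2}\int|u|^2$ that must be beaten by $\mathrm{Re}(\lambda)\int|u|^2$. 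Weighting the two identities by a constant $\alpha$ (even $\alpha=\epsilon^{-1}$ or $\epsilon^{-2}$), or rescaling $v=\epsilon\tilde v$, merely transfers the bad factor to the other identity (into the $\epsilon^{-1}\partial_u G$ coupling), where the $\ord(\epsilon^2)$ scaling of the $v$-diffusion again prevents balancing. The energy estimate therefore only yields a sector of height $\ord(\epsilon^{-1})$ or worse, not the required $\epsilon$-independent one.

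What the paper uses -- and what your argument does not exploit -- is that the $\epsilon^{-1}\partial_v H_2$ coupling is \emph{one-directional}: it feeds $v$ into the $u$-equation, while the feedback $\partial_u G$ is localized. The paper therefore builds the triangular operator $\widehat\El_\epsilon$, inverts the decoupled $v$-block $(\epsilon^2\El_2-\lambda)^{-1}$ and the scalar block $(\El_{1,\epsilon}-\lambda)^{-1}$ separately, and writes the resolvent of $\widehat\El_\epsilon$ explicitly; the $\epsilon^{-1}$ then enters only the resolvent \emph{norm}, not the location of $\rho(\widehat\El_\epsilon)$, after which the bounded $\ord(1)$ remainder $\El_{b,\epsilon}$ is absorbed by \cite[Theorem 1.3.2]{HEN}. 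To repair your argument you would need to incorporate this cascade structure -- e.g.\ first derive an a priori estimate for $v$ from the (essentially decoupled) $v$-equation alone and then feed it as data into the $u$-identity -- rather than treating the two energy identities symmetrically.
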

\begin{proof} It follows by \cite[Corollary 3.1.9.ii]{LUN} that $\El_\epsilon$ is densely defined. An application of \cite[Theorem 1.3.2]{HEN} shows that $\El_\epsilon$ is sectorial as sum of a sectorial and a bounded operator. At first sight this seems to be sufficient. However, the matrix $\B_\epsilon$ defined in (\ref{defB}) will in general have a norm of order $\ord(\epsilon^{-1})$. Therefore, we need to follow a different path to prove that the sector corresponding to $\El_\epsilon$ may be chosen independent of $\epsilon$. \\
\\
Our approach is to decompose $\El_\epsilon$ in more elementary building blocks to control the $\epsilon^{-1}$-terms in $\B_\epsilon$. First, we show that the operator $\El_{1,\epsilon} \colon C_{ub}^2(\R,\R^m) \subset C_{ub}(\R,\R^m) \to C_{ub}(\R,\R^m)$ given by
\begin{align*} \El_{1,\epsilon}(u) = (D_1 \partial_{\xx\xx} + \epsilon^{-1} \partial_u H_2 (\check{u}_{\p,\epsilon}(\cdot),\check{v}_{\p,\epsilon}(\cdot)))u,\end{align*}
is sectorial with an $\epsilon$-independent sector. Subsequently, we prove this for $\widehat{\El}_\epsilon \colon C_{ub}^2(\R,\R^{m+n}) \subset C_{ub}(\R,\R^{m+n}) \to C_{ub}(\R,\R^{m+n})$ given by
\begin{align*}\left(\begin{array}{c} u \\ v \end{array}\right) \mapsto \left(\begin{array}{c} \El_{1,\epsilon}(u) + \epsilon^{-1}\partial_v H_2 (\check{u}_{\p,\epsilon}(\cdot),\check{v}_{\p,\epsilon}(\cdot)) v \\ \epsilon^2 D_2 \partial_{\xx\xx} v\end{array}\right).\end{align*}
Finally, $\El_\epsilon$ can be seen as a perturbation of $\widehat{\El}_\epsilon$ by a bounded operator with $\ord(1)$-norm.\\
\\
\textit{The operator $\El_{1,\epsilon}$}\\
Our goal is to show that the spectrum of $\El_{1,\epsilon}$ is contained in an $\epsilon$-independent sector. By \cite[Proposition 2.1]{GAR} it is sufficient to show that the associated eigenvalue problem,
\begin{align}
\left\{\!
\begin{array}{rcl} \sqrt{D_1} \partial_\xx u &=& \sqrt{\lambda} p \\ \sqrt{D_1} \partial_\xx p &=& \left(\sqrt{\lambda} + \frac{\partial_u H_2 (\check{u}_{\p,\epsilon}(\xx),\check{v}_{\p,\epsilon}(\xx))}{\sqrt{\lambda}\epsilon}\right) u \end{array}\right., \ \ \ u,p \in \C^m, \label{linsys7}
\end{align}
has no non-trivial bounded solutions for $\lambda$ in some $\epsilon$-independent sector. Denote by $\T_{1,\epsilon}(\xx,\yy,\lambda)$ the evolution of system (\ref{linsys7}). Let $\T_1(\xx,\yy,\lambda)$ be the evolution of
\begin{align}
\left\{\!
\begin{array}{rcl} \sqrt{D_1} \partial_\xx u &=& \sqrt{\lambda} p \\ \sqrt{D_1} \partial_\xx p &=& \sqrt{\lambda} u \end{array}\right., \ \ \ u,p \in \C^m. \label{linsys6}
\end{align}
By solving system (\ref{linsys6}) explicitly, one observes that, whenever $\lambda \in \C \!\setminus\! \R_{\leq 0}$, system (\ref{linsys6}) has both bounded growth with constants $K, \mu_{\lambda,+} > 0$ and an exponential dichotomy with constants $K, \mu_{\lambda,-} > 0$. Here, we have $\mu_{\lambda,+} = \|D_1^{-1/2}\| |\mathrm{Re}(\sqrt{\lambda})|$,  $\mu_{\lambda,-} = \|D_1\|^{-1/2} |\mathrm{Re}(\sqrt{\lambda})|$ and $K = 1$. Since we have $\lim_{\epsilon \downarrow 0} \check{L}_\epsilon = \check{L}_0 > 0$, as stated in \ref{assS3}-1, there exists a constant $C_1 > 0$, independent of $\epsilon$, such that for all $\lambda \in S_1 := \{\mu \in \C : |\mathrm{Re}(\!\sqrt{\mu})| \geq C_1\}$ it holds $h_\lambda := \mu_{\lambda,-}^{-1}\sinh^{-1}(4) \leq \check{L}_\epsilon$.\\
\\
Let $\lambda \in S_1$. Note that $H_2$ vanishes at $v = 0$ by \ref{assS1}. Combining this with \ref{assS3}-3 yields $K_1,\mu_1 > 0$ such that
\begin{align*}\|\partial_u H_2 (\check{u}_{\p,\epsilon}(\xx),\check{v}_{\p,\epsilon}(\xx))\| \leq K_1e^{-\epsilon^{-1}\mu_1 |\xx|},\end{align*}
for $x \in [-\check{L}_\epsilon,\check{L}_\epsilon]$. Let $\ww,\zz \in \R$ such that $0 \leq \zz-\ww \leq 2h_\lambda \leq 2\check{L}_\epsilon$. Taking into account the $2\check{L}_\epsilon$-periodicity of $\check{u}_{\p,\epsilon}$ and $\check{v}_{\p,\epsilon}$, we have
\begin{align*} \int_\ww^\zz \frac{\|\partial_u H_2 (\check{u}_{\p,\epsilon}(\xx),\check{v}_{\p,\epsilon}(\xx))\|}{\sqrt{|\lambda|}\epsilon} d\xx \leq \frac{2K_1}{\mu_1\sqrt{|\lambda|}}.\end{align*}
By Lemma \ref{Palmer}, we conclude for $|\ww-\zz| \leq 2h_\lambda$
\begin{align*}\|\T_1(\zz,\ww,\lambda) - \T_{1,\epsilon}(\zz,\ww,\lambda)\| &\leq \frac{2K_1}{\mu_1\sqrt{|\lambda|}}\exp\left(2\mu_{\lambda,+} h_\lambda + \frac{2K_1}{\mu_1\sqrt{|\lambda|}}\right).
\end{align*}
Since $\mu_{\lambda,+}h_\lambda$ is independent of $\lambda$, we derive that there exists a constant $C_2 > 0$ such that, whenever $\lambda \in S_1$ satisfies $|\lambda| > C_2$, then it holds for all $|\ww-\zz| \leq 2h_\lambda$
\begin{align}\|\T_1(\zz,\ww,\lambda) - \T_{1,\epsilon}(\zz,\ww,\lambda)\| < 1. \label{evolest}\end{align}
Now let $\Sigma_1$ be an $\epsilon$-independent sector disjoint from $B(0,C_2) \cup [\C \! \setminus \! S_1]$ -- see Figure \ref{fig4:sub1}. For all $\lambda \in \Sigma_1$, there are no non-trivial bounded solutions to (\ref{linsys7}) by combining (\ref{evolest}) with Proposition \ref{roughnessPALI}. So, by \cite[Proposition 2.1]{GAR} the resolvent set $\rho(\El_{1,\epsilon})$ contains the $\epsilon$-independent sector $\Sigma_1$.\\
\\
\textit{The operator $\widehat{\El}_\epsilon$}\\
Consider the elliptic operator $\El_2 \colon C_{ub}^2(\R,\R^n) \subset C_{ub}(\R,\R^n) \to C_{ub}(\R,\R^n)$ given by $\El_2(v) = D_2 \partial_{\xx\xx} v$. Clearly, we have $\rho(\El_2) = \C \!\setminus\! \R_{\leq 0} \supset \Sigma_1$. For $\lambda \in \Sigma_1$ the operator on $C_{ub}(\R,\R^{m+n})$ defined by
\begin{align*} \left(\begin{array}{c} u \\ v \end{array}\right) \mapsto \left(\begin{array}{c} (\El_{1,\epsilon} - \lambda)^{-1}(u - \epsilon^{-1} \partial_v H_2 (\check{u}_{\p,\epsilon}(\cdot),\check{v}_{\p,\epsilon}(\cdot)) (\epsilon^2{\El}_2 - \lambda)^{-1}(v)) \\ (\epsilon^2{\El}_2 - \lambda)^{-1}(v) \end{array}\right),\end{align*}
is an inverse of $\widehat{\El}_\epsilon - \lambda$. Therefore, the sector $\Sigma_1$ is contained in the resolvent set $\rho(\widehat{\El}_\epsilon)$. \\
 \\
\textit{Conclusion}\\
Define
\begin{align*} \B_{b,\epsilon}(\xx) = \left(\begin{array}{cc} \partial_u H_1 (\check{u}_{\p,\epsilon}(\xx),\check{v}_{\p,\epsilon}(\xx),\epsilon) & \partial_v H_1 (\check{u}_{\p,\epsilon}(\xx),\check{v}_{\p,\epsilon}(\xx),\epsilon) \\  \partial_u G (\check{u}_{\p,\epsilon}(\xx),\check{v}_{\p,\epsilon}(\xx),\epsilon) & \partial_v G (\check{u}_{\p,\epsilon}(\xx),\check{v}_{\p,\epsilon}(\xx),\epsilon)\end{array}\right),
\end{align*}
Let $\El_{b,\epsilon} \colon C_{ub}(\R,\R^{m+n}) \to C_{ub}(\R,\R^{m+n})$ be the multiplication operator $[\El_{b,\epsilon} \phi](\xx) = \B_{b,\epsilon}(\xx)\phi$. By Remark \ref{S3.4} the norm of $\El_{b,\epsilon}$ satisfies $\|\El_{b,\epsilon}\| = \ord(1)$.\\
\\
Invoking \cite[Theorem 1.3.2]{HEN} and its proof yields the conclusion: the sum $\El_\epsilon = \widehat{\El}_\epsilon + \El_{b,\epsilon}$ with domain $C_{ub}^2(\R,\R^{m+n})$ is densely defined and sectorial with an $\epsilon$-independent sector $\Sigma \subset \rho(\El_\epsilon)$. Here, we have used $\|\El_{b,\epsilon}\| = \ord(1)$ and that $\Sigma_1$ is independent of $\epsilon$.
\end{proof}
Proposition \ref{structurespectrum} shows that the spectral stability of the periodic pulse solution $\pih_{\p,\epsilon}$ is fully determined by the $\gamma$-eigenvalues of $\El_\epsilon$ in one of the following family of regions in $\C$ -- see Figure \ref{fig4:sub2}.
\begin{nota}
Let $\omega\in\R_{> 0}$ and $\varpi \in (\pi/2,\pi)$ be as in Proposition \ref{structurespectrum}. For $\Lambda \in \R_{< 0}$ we denote
\begin{align*}\Sigma_{\Lambda,0} &:= \{\lambda \in \C : \mathrm{Re}(\lambda) \in (\Lambda,\omega), |\mathrm{arg}(\lambda - \omega)| < \pi - \varpi\}.\end{align*}
\end{nota}
\begin{figure}[t]
\centering
\begin{subfigure}[b]{.43\textwidth}
  \centering
  \includegraphics[width=\linewidth]{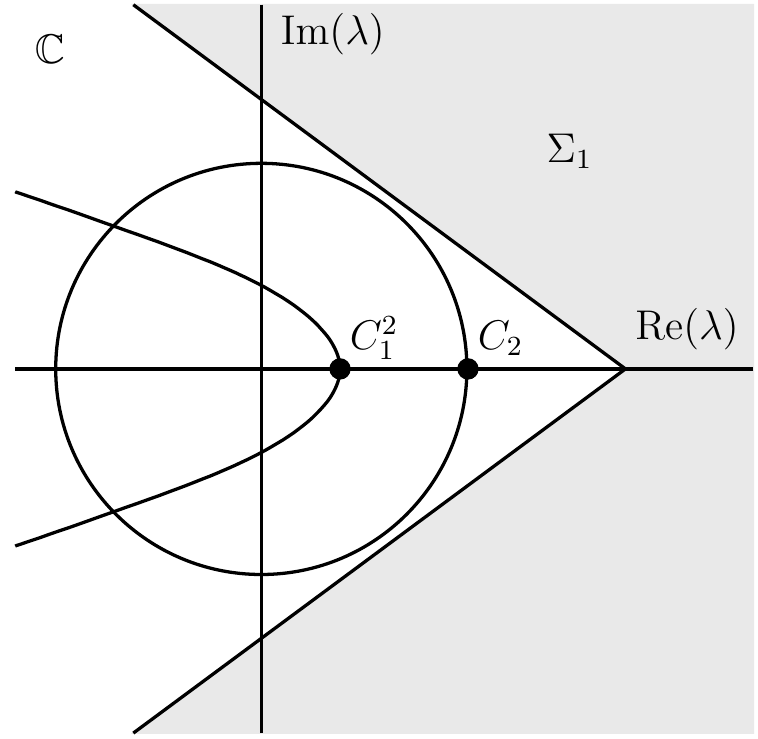}
  \caption{Construction of the sector $\Sigma_1$ in the proof of Proposition \ref{structurespectrum}.}
  \label{fig4:sub1}
\end{subfigure}%
\hspace{.04\textwidth}
\begin{subfigure}[b]{.43\textwidth}
  \centering
  \includegraphics[width=0.9\linewidth]{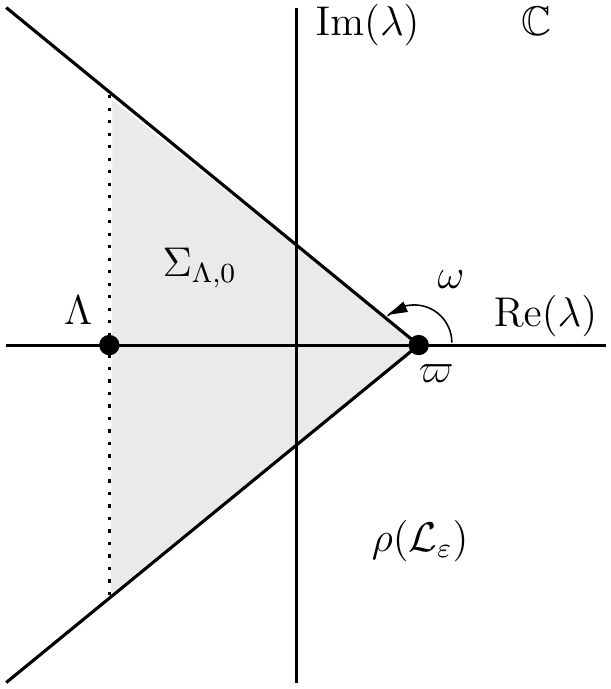}
  \caption{The region $\Sigma_{\Lambda,0}$ contains the spectrum $\sigma(\El_\epsilon)$ in $\Ce_\Lambda$.}
  \label{fig4:sub2}
\end{subfigure}
\caption{a priori bounds on the critical spectrum.}
\end{figure}
\subsection{The three singular limit problems} \label{4.2}
As a prerequisite to properly define and analyze the reduced Evans function $\E_0$ and its factors $\E_{f,0}$, $\E_{s,0}$, we treat the homogeneous fast limit problem (\ref{redstab}), the inhomogeneous fast limit problem (\ref{fastinhom}) and the slow limit problem (\ref{slowintr}), consecutively.
\subsubsection{Homogeneous fast limit problem}
The homogeneous fast limit problem (\ref{redstab}) can be seen as the singular limit of our linear stability problem (\ref{fullstab}), where the $u$-component is taken constantly $0$. To track solutions to (\ref{redstab}) properly, we establish an exponential dichotomies of system (\ref{redstab}) on both half lines. A sufficient condition for this is that the coefficient matrix of (\ref{redstab}) is asymptotically hyperbolic. With the tools that have become available when (\ref{redstab}) has exponential dichotomies on both half-lines, we are able to construct an analytic function $\E_{f,0}$ that detects the values of $\lambda \in \C$ for which (\ref{redstab}) has bounded solutions. This will be the fast reduced Evans function. The above is the content of the following lemma and theorem.
\begin{lem} \label{lemlambda0}
There exists $\Lambda_0 > 0$ such that for $\Lambda \in (-\Lambda_0,0)$ the spectrum of the matrix,
\begin{align*} A(u,\lambda) := \left(\begin{array}{cc} 0 & D_2^{-1} \\ \partial_v G(u,0,0) + \lambda & 0\end{array}\right),\end{align*}
is uniformly bounded away from the imaginary axis on $\K_U \times \Ce_\Lambda$, where $\K_U$ is as in Remark \ref{S3.4} and $\Ce_\Lambda$ as in Notation \ref{notCEL}.
\end{lem}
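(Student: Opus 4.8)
The plan is to analyze the eigenvalues of $A(u,\lambda)$ directly from its block structure and then use compactness of $\K_U$ together with the normal hyperbolicity assumption \ref{assS2} to obtain uniformity. First I would observe that $\nu \in \C$ is an eigenvalue of $A(u,\lambda)$ precisely when there is a nonzero $w \in \C^n$ with $D_2^{-1}(\partial_v G(u,0,0) + \lambda)w = \nu^2 w$, i.e. when $\nu^2$ is an eigenvalue of the matrix $M(u,\lambda) := D_2^{-1}(\G(u) + \lambda I)$, where $\G(u) = \partial_v G(u,0,0)$ as in \ref{assS2}. Thus the spectrum of $A(u,\lambda)$ is $\{\pm\sqrt{\mu} : \mu \in \sigma(M(u,\lambda))\}$ for any fixed choice of square root. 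The matrix $A(u,\lambda)$ has an eigenvalue on the imaginary axis if and only if $M(u,\lambda)$ has an eigenvalue $\mu$ on the negative real half-line $(-\infty,0]$. So it suffices to bound $\sigma(M(u,\lambda))$ away from $(-\infty,0]$, uniformly in $(u,\lambda) \in \K_U \times \Ce_\Lambda$ for $\Lambda$ close enough to $0$, and then conclude that $\sqrt{\mu}$ stays a definite distance from $i\R$.

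The key estimate comes from assumption \ref{assS2}: for each $u \in U$ the symmetric part $\mathrm{Re}(\G(u))$ is positive definite. Since $\K_U$ is compact and $u \mapsto \G(u)$ is continuous, there is a uniform constant $c_0 > 0$ with $\langle \G(u)w,w\rangle_{\mathrm{Re}} \geq c_0\|w\|^2$ for all $u \in \K_U$, $w \in \C^n$ (here $\langle\cdot,\cdot\rangle_{\mathrm{Re}}$ denotes the real part of the Hermitian inner product, equivalently the field of values of $\G(u)$ lies in $\{\mathrm{Re}(z) \geq c_0\}$). Hence for $\lambda \in \Ce_\Lambda$ the field of values of $\G(u) + \lambda I$ lies in $\{\mathrm{Re}(z) \geq c_0 + \Lambda\}$, which is contained in the open right half-plane once $\Lambda > -c_0$, so in particular $\G(u)+\lambda I$ is invertible. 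Now if $\mu$ is an eigenvalue of $M(u,\lambda) = D_2^{-1}(\G(u)+\lambda I)$ with eigenvector $w$, then $(\G(u)+\lambda I)w = \mu D_2 w$, so $\mathrm{Re}\big(\langle(\G(u)+\lambda I)w,D_2 w\rangle\big) = \mathrm{Re}(\mu)\langle D_2 w, D_2 w\rangle$ after pairing suitably — more cleanly, pair $(\G(u)+\lambda I)w = \mu D_2 w$ with $w$ in the inner product weighted by $D_2^{-1}$: $\langle D_2^{-1}(\G(u)+\lambda I)w, w\rangle = \mu\|w\|^2$, whose real part is $\geq (c_0+\Lambda)\|D_2^{-1}\|^{-1}\|w\|^2 > 0$ using positivity of $D_2$. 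Therefore $\mathrm{Re}(\mu) \geq \delta_0 > 0$ for a uniform $\delta_0$, so in particular $\sigma(M(u,\lambda)) \cap (-\infty,0] = \emptyset$ with a definite gap, and $\pm\sqrt{\mu}$ then satisfies $|\mathrm{Re}(\pm\sqrt{\mu})| \geq \sqrt{\delta_0/2}$ (the square root of a number with real part $\geq \delta_0$ has real part bounded below). Taking $\Lambda_0 := c_0$ (or any smaller positive number for which the weighted estimate is clean) finishes the argument.

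The main obstacle — and it is a mild one — is bookkeeping the precise constants through the $D_2$-weighting and the square-root step, and making sure the "definite gap from $i\R$" is genuinely uniform over the \emph{unbounded} set $\Ce_\Lambda$ rather than just over compact subsets. The resolution is that the lower bound $\mathrm{Re}(\mu) \geq \delta_0$ does not deteriorate as $|\lambda| \to \infty$ inside $\Ce_\Lambda$ (it only improves, since $\mathrm{Re}(\lambda) > \Lambda$ is all that was used), and $\sqrt{\mu}$ with $\mathrm{Re}(\mu)\geq\delta_0$ stays off the imaginary axis regardless of $|\mu|$; so uniformity over the half-plane is automatic once uniformity over $\K_U$ is secured, which is exactly where compactness of $\K_U$ and continuity of $\G$ are invoked.
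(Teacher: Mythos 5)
Your strategy is essentially the one the paper uses: reduce the spectrum of $A(u,\lambda)$ to the square roots of the spectrum of $D_2^{-1}(\partial_v G(u,0,0)+\lambda I)$ (the paper works with the similar matrix $(\partial_v G(u,0,0)+\lambda I)D_2^{-1}$), then control the latter via the numerical range of $\partial_v G(u,0,0)$, compactness of $\K_U$, and finally the square-root step. The paper packages the eigenvector pairing into the Horn--Johnson inclusion $\sigma(AB)\subset\F(A)\F(B)$ for positive definite Hermitian $B$; you do the pairing by hand, which is fine in principle.

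However, the ``more cleanly'' pairing contains a genuine error. From $(\partial_v G(u,0,0)+\lambda I)w=\mu D_2 w$ you form $\langle D_2^{-1}(\partial_v G(u,0,0)+\lambda I)w,w\rangle=\mu\|w\|^2$ and claim the real part of the left-hand side is bounded below by $(c_0+\Lambda)\|D_2^{-1}\|^{-1}\|w\|^2$. This does not follow: the Hermitian part of $D_2^{-1}A$ need not be positive definite even when the Hermitian part of $A$ is. For instance, $A=\bigl(\begin{smallmatrix}1&-2\\2&1\end{smallmatrix}\bigr)$ has $\mathrm{Re}(A)=I$, yet with $D_2^{-1}=\mathrm{diag}(1,10)$ one finds $\mathrm{Re}(D_2^{-1}A)=\bigl(\begin{smallmatrix}1&9\\9&10\end{smallmatrix}\bigr)$, which has a negative eigenvalue, so $\mathrm{Re}\langle D_2^{-1}Aw,w\rangle$ takes negative values. (The spectrum of $D_2^{-1}A$ itself still lies in the open right half-plane here, which is exactly why the conclusion survives even though the intermediate field-of-values bound fails.) The same objection applies to your first pairing $\langle(\partial_v G(u,0,0)+\lambda I)w,D_2w\rangle$, which equals $\langle D_2(\partial_v G(u,0,0)+\lambda I)w,w\rangle$ and runs into the identical issue.

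The repair is to pair the generalized eigenvalue relation with $w$ in the \emph{unweighted} inner product: $\langle(\partial_v G(u,0,0)+\lambda I)w,w\rangle=\mu\langle D_2 w,w\rangle$. The right-hand side involves the real, positive scalar $\langle D_2 w,w\rangle\leq d_{\max}\|w\|^2$, and the left-hand side has real part $\geq(c_0+\Lambda)\|w\|^2$, whence $\mathrm{Re}(\mu)\geq(c_0+\Lambda)/d_{\max}$ -- precisely the constant $d_{\max}^{-1}(\Lambda_0+\Lambda)$ the paper obtains. With this correction, the remainder of your argument (taking square roots of numbers with $\mathrm{Re}\geq\delta_0$ and observing that uniformity over the unbounded half-plane $\Ce_\Lambda$ is automatic) is sound.
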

\begin{proof}
For a matrix $A \in \mathrm{Mat}_{n \times n}(\C)$ denote by $\F(A) = \{v^*Av : v \in \C^n, \|v\| = 1\}$ its field of values. Since $\partial_v G(u,0,0)$ has positive definite real part by \ref{assS2}, the field of values $\F(\partial_v G(u,0,0))$ is for every $u \in \K_U$ contained in the positive half plane by \cite[Property 1.2.5a]{HOR}. In fact, by compactness of $\K_U$ there exists $\Lambda_0 > 0$ such that we have $\F(\partial_v G(u,0,0)) \subset \Ce_{-\Lambda_0}$ for every $u \in \K_U$. Let $\Lambda \in (-\Lambda_0,0)$. For $u \in \K_U$ and $\lambda \in \Ce_\Lambda$ we establish using \cite[Property 1.2.3]{HOR} and \cite[Corollary 1.7.7]{HOR}
\begin{align*}\sigma((\partial_v G(u,0,0) + \lambda)D_2^{-1}) \subset (\F(\partial_v G(u,0,0)) + \lambda)\F(D_2^{-1}) \subset \left\{\mu \in \C : \mathrm{Re}(\mu) \geq d_{\max}^{-1}(\Lambda_0 + \Lambda)\right\},\end{align*}
where $d_{\max}$ is the largest diagonal value of $D_2$. The eigenvalues of $A(u,\lambda)$ are given by the square roots of the eigenvalues of $(\partial_v G(u,0,0) + \lambda)D_2^{-1}$. Therefore, we obtain for $u \in \K_U$ and $\lambda \in \Ce_\Lambda$ that any eigenvalue $\mu \in \sigma(A(u,\lambda))$ satisfies $|\mathrm{Re}(\mu)| \geq \cos(\pi/4)\sqrt{(\Lambda_0 + \Lambda)/d_{\max}}$, which concludes the proof.
\end{proof}
\begin{theo}\label{Sturm}
Let $\Lambda \in (-\Lambda_0,0)$ with $\Lambda_0 > 0$ as in Lemma \ref{lemlambda0}. System (\ref{redstab}) has exponential dichotomies on $[0,\infty)$ and $(-\infty,0]$ for every $\lambda \in \Ce_\Lambda$ with constants $K_r(\lambda),\mu_r(\lambda) > 0$, chosen continuously dependent on $\lambda$. Moreover, one can choose the corresponding rank $n$ projections $\Proj_{r\pm}(\pm x,\lambda), x \in [0,\infty)$ such that $\Proj_{r\pm}(x,\cdot)$ is analytic on $\Ce_\Lambda$.\\
\\
Let $B_r^{u,s} \colon \Ce_\Lambda \to \mathrm{Mat}_{2n \times n}(\C)$ be analytic bases of $\Proj_{r+}(0,\lambda)[\C^{2n}] = B_r^u(\lambda)[\C^n]$ and $\ker(\Proj_{r-}(0,\lambda)) = B_r^s(\lambda)[\C^n]$ for $\lambda \in \Ce_\Lambda$. The analytic function $\E_{f,0} \colon \Ce_\Lambda \to \C$ given by $\E_{f,0}(\lambda) = \det(B_r^u(\lambda),B_r^s(\lambda))$ has the following properties:
\begin{enumerate}
\item $\E_{f,0}(\lambda) = 0$ if and only if (\ref{redstab}) has a non-trivial (exponentially localized) solution in $C_b(\R,\C^{2n})$.
\item $\E_{f,0}(\lambda) \neq 0$ if and only if (\ref{redstab}) has an exponential dichotomy on $\R$.
\item The zero set of $\N(\E_{f,0}) \subset \Ce_\Lambda$ is discrete and independent of the choice of bases $B_r^{u,s}$.
\item A zero $\lambda$ of $\E_{f,0}$ is simple if and only if (\ref{redstab}) admits a unique non-trivial solution in $C_b(\R,\C^{2n})$ up to scalar multiples.
\end{enumerate}
\end{theo}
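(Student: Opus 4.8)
The plan is to build everything on Lemma \ref{lemlambda0}, which tells us that the coefficient matrix $A_{22,0}(x,\lambda)$ of \eqref{redstab} is asymptotically hyperbolic: since $v_\ho(x)\to 0$ as $x\to\pm\infty$ (Remark \ref{homoclin}), we have $A_{22,0}(x,\lambda)\to A(u_0,0,\lambda)$ uniformly in $\lambda$ on $\Ce_\Lambda$, and by the lemma the limit matrix has spectrum uniformly bounded away from $i\R$ on $\K_U\times\Ce_\Lambda$ (note $u_0\in\K_U$ by Remark \ref{S3.4}). First I would invoke the standard result that an asymptotically hyperbolic linear system has exponential dichotomies on both half-lines (this is a prerequisite collected in Appendix \ref{A1}); the exponential localization of $v_\ho$ gives the convergence at an exponential rate, which yields dichotomy constants $K_r(\lambda),\mu_r(\lambda)$ that can be chosen continuous in $\lambda$ because the limiting spectral gap is locally uniform. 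Analyticity of the projections $\Proj_{r\pm}(x,\cdot)$ on $\Ce_\Lambda$ follows from the analytic dependence of the dichotomy on parameters, which is again a standard fact from the appendix: the stable/unstable subspaces at $x=0$ depend analytically on $\lambda$ because they are obtained as ranges of spectral (Riesz) projections built from analytic data, and one transports them by the (analytic-in-$\lambda$) evolution.

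Next I would define $\E_{f,0}(\lambda)=\det(B_r^u(\lambda),B_r^s(\lambda))$, where $B_r^u(\lambda)$ spans the unstable subspace at $0$ of the dichotomy on $[0,\infty)$ — i.e.\ the space of initial data whose forward orbit decays — and $B_r^s(\lambda)$ spans the stable subspace at $0$ of the dichotomy on $(-\infty,0]$, i.e.\ the data whose backward orbit decays. This is analytic in $\lambda$ as a determinant of analytic bases. For property 1: a nontrivial bounded solution on $\R$ is one that decays in both directions, hence lies in $\mathrm{range}\,B_r^u(\lambda)\cap\mathrm{range}\,B_r^s(\lambda)$; this intersection is nontrivial iff the $2n$ columns $(B_r^u,B_r^s)$ are linearly dependent, i.e.\ iff $\E_{f,0}(\lambda)=0$; exponential localization is then automatic from the half-line dichotomy estimates. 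For property 2: $\E_{f,0}(\lambda)\neq 0$ means $\C^{2n}=\mathrm{range}\,B_r^u(\lambda)\oplus\mathrm{range}\,B_r^s(\lambda)$ is a direct sum, which is exactly the condition for pasting the two half-line dichotomies into a dichotomy on all of $\R$ (the standard gluing lemma). For property 3: $\E_{f,0}$ is analytic and, by the a priori sector bound (Proposition \ref{structurespectrum} applied to the fast limit, or directly from Lemma \ref{lemlambda0} for $\mathrm{Re}\,\lambda$ large, where \eqref{redstab} is a small perturbation of a hyperbolic constant-coefficient system and admits a dichotomy on $\R$), it is not identically zero; hence $\N(\E_{f,0})$ is discrete; independence of the choice of bases is clear because changing bases multiplies $\E_{f,0}$ by a nowhere-vanishing analytic function $\det M^u(\lambda)\det M^s(\lambda)$.

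Property 4 is the delicate point, and I expect it to be the main obstacle. One direction is easy: if $\lambda$ is a zero and the solution space is more than one-dimensional, then $\dim(\mathrm{range}\,B_r^u\cap\mathrm{range}\,B_r^s)\geq 2$, forcing the corank of $(B_r^u,B_r^s)$ to be $\geq 2$, so the determinant vanishes to order $\geq 2$ and $\lambda$ is not simple. The converse requires showing that a one-dimensional intersection forces a simple zero, i.e.\ that the derivative $\E_{f,0}'(\lambda_\diamond)\neq 0$. The clean way is to relate $\E_{f,0}$ to a genuine Evans function/Wronskian-type object: pick analytic sections $\xi^u(\lambda)\in\mathrm{range}\,B_r^u(\lambda)$ and $\xi^s(\lambda)\in\mathrm{range}\,B_r^s(\lambda)$ spanning the one-dimensional kernels at $\lambda_\diamond$, complete them to bases, and reduce to computing the derivative of a $2\times2$-type determinant in the relevant two-dimensional quotient. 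The derivative of $\E_{f,0}$ at a simple intersection can then be expressed, via the standard variation-of-constants / Lyapunov–Schmidt computation, as a nonzero multiple of $\int_{-\infty}^\infty \langle\psi_\diamond(x),\partial_\lambda A_{22,0}(x,\lambda_\diamond)\phi_\diamond(x)\rangle\,dx$, where $\phi_\diamond$ is the bounded solution and $\psi_\diamond$ the bounded solution of the adjoint equation (which exists and is one-dimensional by the same dichotomy argument, since $\E_{f,0}$ and its adjoint analogue vanish together). Because $\partial_\lambda A_{22,0}(x,\lambda)=\mathrm{diag}(0,\dots,0;I)$ in the $(v,q)$ block structure, this integral reduces to $\int \psi_{\diamond,2}(x)^\top\phi_{\diamond,1}(x)\,dx$ up to sign, and nondegeneracy of this pairing is precisely the statement that the geometric multiplicity-one situation is algebraically simple — which I would establish by noting that vanishing of this integral would, by a Fredholm-alternative argument, allow solving the next-order equation and produce a Jordan chain, contradicting that $\mathrm{range}\,B_r^u\cap\mathrm{range}\,B_r^s$ stays one-dimensional. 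I would package the needed derivative formula as a lemma (likely already available among the prerequisites in Appendix \ref{A1}), cite it, and thereby close property 4.
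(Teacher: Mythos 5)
Your treatment of the half-line dichotomies and properties 1--3 lines up with the paper's proof. The paper likewise derives the dichotomies from the exponential decay of $\A_{22,0}(\cdot,\lambda)$ to the hyperbolic constant matrix $A(u_0,\lambda)$ of Lemma \ref{lemlambda0} (citing \cite[Lemma 3.4]{PAL} and \cite[Theorem 1]{SAN}), and uses \cite[Proposition 2.1]{PAL} for the pasting characterisation in property 2; for property 3 the paper identifies the zeros of $\E_{f,0}$ with the eigenvalues of the sectorial operator $\El_r = D_2\partial_{xx}-\partial_v G(u_0,v_\ho(\cdot),0)$ on $C_b$ rather than using your large-$\lambda$ nondegeneracy argument, but both routes give discreteness.

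The gap is in your proof of the converse direction of property 4 (geometric simplicity implies simple zero). You claim that if the pairing $\int_{-\infty}^{\infty}\psi_{\diamond,2}\,\phi_{\diamond,1}\,dx$ vanished, a Fredholm-alternative argument would produce a Jordan chain ``contradicting that $\mathrm{range}\,B_r^u\cap\mathrm{range}\,B_r^s$ stays one-dimensional.'' That step is a non sequitur: the generalised eigenvector $\phi_1$ produced this way solves the \emph{inhomogeneous} equation $\partial_x\phi_1-\A_{22,0}(\cdot,\lambda_\diamond)\phi_1=(\partial_\lambda\A_{22,0})\phi_\diamond$, not the homogeneous system (\ref{redstab}), so it does not lie in the kernel, and its existence is perfectly compatible with the intersection $\mathrm{range}\,B_r^u(\lambda_\diamond)\cap\mathrm{range}\,B_r^s(\lambda_\diamond)$ being one-dimensional. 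What you have actually shown is that vanishing of the pairing forces a Jordan chain and hence algebraic multiplicity at least two; you have not shown that the pairing is automatically nonzero when the kernel is one-dimensional, which is the assertion at stake. The paper circumvents this by citing \cite[Theorem 3.19]{BEY}, which identifies the order of the zero of the Evans function with the (algebraic) eigenvalue multiplicity of the Fredholm pencil $\lambda\mapsto\partial_x-\A_{22,0}(\cdot,\lambda)$ --- the same reference and the Keldysh machinery of \cite{MEN} recur in the proof of Theorem \ref{Sturminhom}. A self-contained argument would need to reproduce that multiplicity identification (a Keldysh or Lyapunov--Schmidt computation), not the Fredholm-alternative step as written.
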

\begin{proof}
Let $\lambda \in \Ce_\Lambda$. The matrix $\A_{22,0}^\infty(\lambda) := A(u_0,\lambda)$ is hyperbolic by Lemma \ref{lemlambda0}. Recall that $\A_{22,0}(x,\lambda)$ is the coefficient matrix of (\ref{redstab}). By Remark \ref{homoclin} there exists $K_1,\mu_1 > 0$ such that $\|\A_{22,0}(x,\lambda) - \A_{22,0}^\infty(\lambda)\| \leq K_1e^{-\mu_1|x|}$ for $x \in \R$. Following \cite[Lemma 3.4]{PAL} and \cite[Theorem 1]{SAN} system (\ref{redstab}) has exponential dichotomies on both $[0,\infty)$ and $(-\infty,0]$ with the desired properties.\\
\\
By \cite[Proposition 2.1]{PAL} we have $\E_{f,0}(\lambda) \neq 0$ if and only if (\ref{redstab}) has an exponential dichotomy on $\R$. On the other hand, every solution $\phi \in C_b(\R,\C^{2n})$ to (\ref{redstab}) must satisfy $\phi(0) \in B_r^u(\lambda)[\C^n] \cap B_r^s(\lambda)[\C^n]$. This settles the first two properties. To prove the third property, we consider the operator $\El_r \colon C_b^2(\R,\R^n) \subset C_b(\R,\R^n) \to C_b(\R,\R^n)$ given by
\begin{align*}\El_r(v) = (D_2\partial_{xx}  - \partial_v G(u_0,v_{\ho}(\cdot),0))v.\end{align*}
By \cite[Theorem 1.3.2]{HEN} and \cite[Corollary 3.1.9.i]{LUN} $\El_r$ is sectorial. The eigenvalues of the sectorial operator $\El_r$ correspond to the zeros of the analytic map $\E_{f,0}$. We deduce that $\E_{f,0}$ has a finite number of zeros in $\Ce_\Lambda$. The third property has been proven. The fourth property follows immediately by \cite[Theorem 3.19]{BEY}.
\end{proof}
Although the following corollary is an easy consequence of the boundedness of $\Sigma_{\Lambda,0}$, it will be of great importance to obtain $\lambda$-uniform estimates later.
\begin{cor}\label{Sturmcor}
Let $\Lambda \in (-\Lambda_0,0)$ with $\Lambda_0 > 0$ as in Lemma \ref{lemlambda0}. System (\ref{redstab}) has exponential dichotomies on $[0,\infty)$ and $(-\infty,0]$ for every $\lambda \in \Sigma_{\Lambda,0}$ with constants $K_r,\mu_r > 0$, independent of $\lambda$.
\end{cor}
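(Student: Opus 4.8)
The plan is to upgrade the $\lambda$-dependent dichotomy constants from Theorem \ref{Sturm} to $\lambda$-uniform ones by exploiting the boundedness of $\Sigma_{\Lambda,0}$ together with the continuous dependence of the constants on $\lambda$. First I would recall that by Theorem \ref{Sturm}, for each $\lambda \in \Ce_\Lambda \supset \Sigma_{\Lambda,0}$, system (\ref{redstab}) has exponential dichotomies on $[0,\infty)$ and $(-\infty,0]$ with constants $K_r(\lambda), \mu_r(\lambda) > 0$ chosen to depend continuously on $\lambda$. The closure $\overline{\Sigma_{\Lambda,0}}$ is compact and, since $\Lambda \in (-\Lambda_0,0)$, it is contained in $\Ce_\Lambda$ (here one uses that $\Sigma_{\Lambda,0}$ is bounded away from $\mathrm{Re}(\lambda) = \Lambda$ by construction — it is defined with $\mathrm{Re}(\lambda) \in (\Lambda, \omega)$, but the sector opening $|\mathrm{arg}(\lambda - \omega)| < \pi - \varpi$ with $\varpi \in (\pi/2,\pi)$ forces the real part to stay strictly above $\Lambda$ on the closure except possibly at the corner, so one may shrink slightly or simply note $\overline{\Sigma_{\Lambda,0}} \subset \Ce_{\Lambda'}$ for some $\Lambda' \in (\Lambda,0)$, which is still $> -\Lambda_0$).

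Next I would apply a standard compactness argument: the functions $\lambda \mapsto K_r(\lambda)$ and $\lambda \mapsto \mu_r(\lambda)$ are continuous and strictly positive on the compact set $\overline{\Sigma_{\Lambda,0}}$, hence attain a finite maximum $K_r := \max_{\lambda} K_r(\lambda) < \infty$ and a strictly positive minimum $\mu_r := \min_\lambda \mu_r(\lambda) > 0$. One then checks that the dichotomy estimates with the pointwise constants $(K_r(\lambda),\mu_r(\lambda))$ remain valid with the uniform constants $(K_r, \mu_r)$: if $\|T(x,y)P(y)\| \leq K_r(\lambda) e^{-\mu_r(\lambda)(x-y)}$ for $x \geq y$ then a fortiori $\|T(x,y)P(y)\| \leq K_r e^{-\mu_r (x-y)}$ since $K_r(\lambda) \leq K_r$ and $e^{-\mu_r(\lambda)(x-y)} \leq e^{-\mu_r(x-y)}$ for $x \geq y$, and symmetrically on the unstable subspace and on the left half-line. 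The projections themselves need not be changed. This yields dichotomies on $[0,\infty)$ and $(-\infty,0]$ with the $\lambda$-independent constants $K_r, \mu_r$ for all $\lambda \in \Sigma_{\Lambda,0}$.

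The only point requiring a little care — and the main (mild) obstacle — is justifying the continuous dependence of the dichotomy constants on $\lambda$ as asserted in Theorem \ref{Sturm}, and ensuring $\overline{\Sigma_{\Lambda,0}}$ stays inside the region where Theorem \ref{Sturm} applies. For the latter, one notes that the corner point $\omega$ lies in $\Ce_\Lambda$ and the rest of $\overline{\Sigma_{\Lambda,0}}$ has real part bounded below by some $\Lambda' > \Lambda \geq -\Lambda_0$; thus $\overline{\Sigma_{\Lambda,0}} \subset \Ce_{\Lambda'}$ with $\Lambda' \in (-\Lambda_0, 0)$, and Theorem \ref{Sturm} (applied with $\Lambda'$ in place of $\Lambda$) furnishes continuously varying constants on this larger-parameter but still admissible half-plane. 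For the former, the continuous dependence follows from the construction via roughness in \cite{PAL,SAN}: the perturbation bound $\|\A_{22,0}(x,\lambda) - \A_{22,0}^\infty(\lambda)\| \leq K_1 e^{-\mu_1|x|}$ and the spectral gap of $\A_{22,0}^\infty(\lambda) = A(u_0,\lambda)$ both depend continuously (indeed analytically) on $\lambda$, and the resulting dichotomy constants produced by the roughness perturbation theorem are explicit continuous functions of these data. With these observations in place the compactness argument closes the proof.

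\begin{proof}
Since $\varpi \in (\pi/2,\pi)$, the sector condition $|\mathrm{arg}(\lambda - \omega)| < \pi - \varpi$ forces $\mathrm{Re}(\lambda - \omega) \geq |\lambda-\omega|\cos(\pi-\varpi)$, so on $\overline{\Sigma_{\Lambda,0}}$ the real part is bounded below by some $\Lambda' \in (\Lambda, 0) \subset (-\Lambda_0,0)$; in particular $\overline{\Sigma_{\Lambda,0}} \subset \Ce_{\Lambda'}$ is compact. By Theorem \ref{Sturm} (applied with $\Lambda'$), system (\ref{redstab}) has exponential dichotomies on $[0,\infty)$ and $(-\infty,0]$ for every $\lambda \in \Ce_{\Lambda'}$ with constants $K_r(\lambda), \mu_r(\lambda) > 0$ depending continuously on $\lambda$. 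By compactness, $K_r := \sup_{\lambda \in \overline{\Sigma_{\Lambda,0}}} K_r(\lambda) < \infty$ and $\mu_r := \inf_{\lambda \in \overline{\Sigma_{\Lambda,0}}} \mu_r(\lambda) > 0$. For $x \geq y$ one has $K_r(\lambda) e^{-\mu_r(\lambda)(x-y)} \leq K_r e^{-\mu_r (x-y)}$, and likewise for the unstable estimate and on $(-\infty,0]$; hence the dichotomies hold with the $\lambda$-independent constants $K_r, \mu_r$ for all $\lambda \in \Sigma_{\Lambda,0}$.
\end{proof}
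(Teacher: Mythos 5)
Your approach --- combining boundedness of $\Sigma_{\Lambda,0}$ with the continuous $\lambda$-dependence of the dichotomy constants supplied by Theorem \ref{Sturm}, then extremizing over a compact closure --- is exactly the argument the paper has in mind; the corollary is explicitly advertised as ``an easy consequence of the boundedness of $\Sigma_{\Lambda,0}$'' and is given no separate proof. The core compactness step (take $K_r := \sup K_r(\lambda)$ and $\mu_r := \inf \mu_r(\lambda)$ over a compact set on which both are continuous and positive, and observe that the dichotomy estimates only improve when the constant is enlarged and the rate is shrunk) is correct.

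There is, however, a concrete bug in how you ensure that $\overline{\Sigma_{\Lambda,0}}$ stays inside the domain on which Theorem \ref{Sturm} applies, and the sign of your shift is reversed. The set $\Sigma_{\Lambda,0}$ is, by design, a bounded wedge with vertex at $\omega$ whose left face lies on the line $\mathrm{Re}(\lambda)=\Lambda$ (see Figure \ref{fig4:sub2}); its closure therefore meets $\{\mathrm{Re}(\lambda)=\Lambda\}$ and is \emph{not} contained in $\Ce_{\Lambda'}$ for any $\Lambda' > \Lambda$. Your deduction that the sector constraint forces $\mathrm{Re}(\lambda-\omega) \geq |\lambda-\omega|\cos(\pi-\varpi) \geq 0$, and hence a strict lower bound on $\mathrm{Re}(\lambda)$ above $\Lambda$, would if taken at face value yield $\mathrm{Re}(\lambda)\geq \omega$, which combined with the other defining condition $\mathrm{Re}(\lambda)<\omega$ makes $\Sigma_{\Lambda,0}$ empty --- a sign that the condition cannot be read this way (the intended angular constraint bounds $\lambda$ in the cone opening \emph{leftward} from $\omega$). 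The repair is simple and entirely in the spirit of your argument: since $\Lambda > -\Lambda_0$, choose $\Lambda'' \in (-\Lambda_0,\Lambda)$ and invoke Theorem \ref{Sturm} with $\Lambda''$ in place of $\Lambda$. Then $\overline{\Sigma_{\Lambda,0}} \subset \{\mathrm{Re}(\lambda)\geq\Lambda\} \subset \Ce_{\Lambda''}$ is a compact subset of the half-plane on which $K_r(\cdot),\mu_r(\cdot)$ are defined and continuous, and your sup/inf argument then closes the proof verbatim.
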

Another important consequence of the exponential dichotomies established in Theorem \ref{Sturm} is that the differential operator associated with (\ref{redstab}) is Fredholm.
\begin{cor} \label{Fredholmcor}
Let $\Lambda \in (-\Lambda_0,0)$ with $\Lambda_0 > 0$ as in Lemma \ref{lemlambda0}. For each $\lambda \in \Ce_{\Lambda}$ the bounded operator $\El_\lambda \colon C_b^1(\R,\C^{2n}) \to C_b(\R,\C^{2n})$ given by $\El_\lambda(\phi) = (\partial_x - \A_{22,0}(\cdot,\lambda))\phi$ is Fredholm of index 0. Moreover, $\El_\Lambda$ is invertible if and only if $\lambda \in \Ce_\Lambda \!\setminus\! \N(\E_{f,0})$.
\end{cor}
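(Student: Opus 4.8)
The plan is to read the Fredholm properties of $\El_\lambda$ straight off the half-line exponential dichotomies of Theorem \ref{Sturm} by invoking Palmer's Fredholm theory for non-autonomous linear operators on the line \cite{PAL}. Fix $\lambda \in \Ce_\Lambda$. Since $v_{\ho}$ is bounded, the coefficient matrix $\A_{22,0}(\cdot,\lambda)$ of (\ref{redstab}) is bounded, so $\El_\lambda = \partial_x - \A_{22,0}(\cdot,\lambda)$ is genuinely a bounded operator $C_b^1(\R,\C^{2n}) \to C_b(\R,\C^{2n})$, and a bounded solution of (\ref{redstab}) is automatically $C_b^1$, so $\ker\El_\lambda$ is exactly the space of bounded solutions of (\ref{redstab}). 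By Theorem \ref{Sturm}, (\ref{redstab}) has exponential dichotomies on $[0,\infty)$ and $(-\infty,0]$ with rank-$n$ projections $\Proj_{r\pm}$, and Palmer's theory then yields at once that $\El_\lambda$ is Fredholm, with $\ker\El_\lambda$ isomorphic (via evaluation at $x=0$) to $B_r^u(\lambda)[\C^n] \cap B_r^s(\lambda)[\C^n]$ and with index determined by the ranks of the two half-line projections.

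Next I would check that this index equals $0$. The asymptotic matrix of (\ref{redstab}) at both $x = \pm\infty$ is $A(u_0,\lambda)$; the homoclinic $v_{\ho}$ decays exponentially by Remark \ref{homoclin}, hence so does $\A_{22,0}(\cdot,\lambda) - A(u_0,\lambda)$ (as in the proof of Theorem \ref{Sturm}), and $A(u_0,\lambda)$ is hyperbolic by Lemma \ref{lemlambda0}, with exactly $n$ eigenvalues in each open half-plane because of its $2\times 2$ block form. Thus the stable and unstable subspaces of the two half-line dichotomies all have dimension $n$, so the contributions of the two ends to Palmer's index formula cancel and $\mathrm{ind}(\El_\lambda) = 0$. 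A conveniently sign-convention-free alternative is to write $\El_\lambda = \El_\lambda^\infty + M_\lambda$ with $\El_\lambda^\infty := \partial_x - A(u_0,\lambda)$, which is invertible since $A(u_0,\lambda)$ is hyperbolic, and $M_\lambda$ the operator of multiplication by $\A_{22,0}(\cdot,\lambda) - A(u_0,\lambda)$; since this difference decays exponentially, $M_\lambda(\El_\lambda^\infty)^{-1}$ is a compact operator on $C_b(\R,\C^{2n})$ by the Arzel\`a--Ascoli theorem, whence $\El_\lambda = \bigl(I + M_\lambda(\El_\lambda^\infty)^{-1}\bigr)\El_\lambda^\infty$ is Fredholm of index $0$.

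Finally, being Fredholm of index $0$, $\El_\lambda$ is invertible if and only if it is injective, i.e. if and only if $\ker\El_\lambda = \{0\}$. By the first paragraph this holds exactly when (\ref{redstab}) has no non-trivial bounded solution, which by Theorem \ref{Sturm}-1 is equivalent to $\E_{f,0}(\lambda) \neq 0$, that is, to $\lambda \in \Ce_\Lambda \setminus \N(\E_{f,0})$; and in that case one may even exhibit the inverse explicitly, since Theorem \ref{Sturm}-2 then furnishes an exponential dichotomy of (\ref{redstab}) on all of $\R$ whose Green's function defines a bounded operator $C_b(\R,\C^{2n}) \to C_b^1(\R,\C^{2n})$ inverting $\El_\lambda$. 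The argument is essentially routine once Theorem \ref{Sturm} is available; the only point requiring care is fixing the sign conventions in Palmer's index formula so that the two half-line contributions truly cancel, which is precisely what the compact-perturbation argument sidesteps, so that is the route I would actually write up.
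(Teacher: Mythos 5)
Your proof is correct and follows the same route the paper takes: the paper's proof is the one-line citation ``This is the content of \cite[Lemma 4.2]{PAL},'' and your first paragraph is a faithful unpacking of what that lemma delivers from the half-line dichotomies of Theorem \ref{Sturm}, with the index computation (rank $n$ on each side, since $A(u_0,\lambda)$ is hyperbolic with $n$ eigenvalues in each open half-plane) and the equivalence ``$\ker\El_\lambda = \{0\} \iff \E_{f,0}(\lambda) \neq 0$'' exactly as one would expect. The compact-perturbation factorization $\El_\lambda = (I + M_\lambda(\El_\lambda^\infty)^{-1})\El_\lambda^\infty$ is a clean alternative that sidesteps Palmer's sign conventions, but the paper itself just cites Palmer, so your first route is the one that matches.
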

\begin{proof}
This is the content of \cite[Lemma 4.2]{PAL}.
\end{proof}
Finally, Theorem \ref{Sturm} provides us with the fast reduced Evans function and thereby proves Proposition \ref{fastredEv}.
\begin{defi}
Let $\Lambda \in (-\Lambda_0,0)$ with $\Lambda_0 > 0$ as in Lemma \ref{lemlambda0}. The map $\E_{f,0} \colon \Ce_\Lambda \to \C$ given by $\E_{f,0}(\lambda) = \det(B_r^u(\lambda),B_r^s(\lambda))$ obtained in Theorem \ref{Sturm} is called the \emph{fast reduced Evans function}.
\end{defi}
\subsubsection{Inhomogeneous fast limit problem}
The inhomogeneous fast limit problem (\ref{fastinhom}) can be seen as a family of layer problems associated with the singular limit of the linear stability problem (\ref{fullstab}). The matrix solution $\X_{in}(x,\lambda)$ to the inhomogeneous problem (\ref{fastinhom}) forms one of the key ingredients of the slow reduced Evans function $\E_{s,0}$. We will prove that $\X_{in}(x,\cdot)$ is meromorphic for each $x \in \R$. Singularities of $\X_{in}(x,\cdot)$ can only occur when $\lambda$ is one of the zeros of $\E_{f,0}$, which are obtained in Theorem \ref{Sturm}.
\begin{theo} \label{Sturminhom}
Let $\Lambda \in (-\Lambda_0,0)$ with $\Lambda_0 > 0$ as in Lemma \ref{lemlambda0}. There exists a unique continuous solution $\X_{in} \colon \R \times \Ce_\Lambda \!\setminus\! \N(\E_{f,0}) \to \mathrm{Mat}_{2n \times 2m}(\C)$ to equation (\ref{fastinhom}) with the following properties:
\begin{enumerate}
 \item $\X_{in}(x,\cdot)$ is meromorphic on $\Ce_{\Lambda}$ for all $x \in \R$.
 \item $\X_{in}(x,\cdot)$ is analytic on $\Ce_{\Lambda} \!\setminus\! \N(\E_{f,0})$ for all $x \in \R$.
 \item $\X_{in}(\cdot,\lambda)$ is exponentially localized for each $\lambda \in \Ce_{\Lambda} \!\setminus\! \N(\E_{f,0})$, i.e. there exists constants $K_{in}(\lambda),\mu_{in}(\lambda) > 0$, chosen continuously dependent on $\lambda$, such that for all $x \in \R$
 \begin{align*} \|\X_{in}(x,\lambda)\| \leq K_{in}(\lambda)e^{-\mu_{in}(\lambda)|x|}.\end{align*}
 \item Let $\lambda_\diamond \in \Ce_{\Lambda}$ be a simple zero of $\E_{f,0}$. Denote by $\phi_{\lambda_\diamond}(x)$ and $\psi_{\lambda_\diamond}(x)$ non-trivial bounded (exponentially localized) solutions to (\ref{redstab}) and its adjoint equation,
\begin{align}\partial_x \psi = -\psi\A_{22,0}(x,\lambda)^*, \ \ \  \psi \in \mathrm{Mat}_{1 \times n}(\C), \label{adjoint}\end{align}
 respectively, such that
\begin{align*} \int_{-\infty}^\infty \psi_{\lambda_\diamond}(z) \left(\begin{array}{cc} 0 & 0 \\ I & 0\end{array}\right) \phi_{\lambda_\diamond}(z)dz = 1.\end{align*}
There exists a neighborhood $B_{\lambda_\diamond} \subset \Ce_\Lambda$ of $\lambda_\diamond$ and a mapping $\X_{\lambda_\diamond,in} \colon \R \times B_{\lambda_\diamond} \to \mathrm{Mat}_{2n \times 2m}(\C)$, such that on $\R \times B_{\lambda_\diamond}$ we expand
 \begin{align*} \X_{in}(x,\lambda) = \frac{\phi_{\lambda_\diamond}(x)}{\lambda - \lambda_\diamond} \int_{-\infty}^\infty \psi_{\lambda_\diamond}(z)\A_{21,0}(z)dz + \X_{\lambda_\diamond,in}(x,\lambda).\end{align*}
Here, $\X_{\lambda_\diamond,in}(x,\cdot)$ is analytic on $B_{\lambda_\diamond}$ for every $x \in \R$. Moreover, $\X_{\lambda_\diamond,in}(\cdot,\lambda)$ is exponentially localized for every $\lambda \in B_{\lambda_\diamond}$.
\end{enumerate}
\end{theo}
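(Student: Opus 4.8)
The plan is to construct $\X_{in}(\cdot,\lambda)$, for $\lambda\notin\N(\E_{f,0})$, as the unique \emph{bounded} solution of (\ref{fastinhom}) via a Green's-function (variation-of-constants) formula based on the exponential dichotomy of (\ref{redstab}) on $\R$ provided by Theorem \ref{Sturm}, and then to read off its $\lambda$-dependence — analyticity, meromorphic extension past $\N(\E_{f,0})$, and the residue at simple zeros — from the \emph{analytic} half-line data supplied by that same theorem.

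\textbf{Step 1 (construction, uniqueness, properties 2--3).} Fix $\lambda\in\Ce_\Lambda\setminus\N(\E_{f,0})$. By Theorem \ref{Sturm}-2 system (\ref{redstab}) has an exponential dichotomy on $\R$; write $\T_0(x,y,\lambda)$ for its evolution and $\Proj^{s}_\lambda(x)$, $\Proj^{u}_\lambda(x)=I-\Proj^{s}_\lambda(x)$ for the dichotomy projections. First I would note that the inhomogeneity is itself exponentially localized: since $G(u,0,\epsilon)\equiv 0$ in $u$ by \ref{assS1} we have $\partial_uG(u_0,0,0)=0$, hence $\partial_uG(u_0,v_{\ho}(x),0)=\ord(|v_{\ho}(x)|)$, which decays exponentially by Remark \ref{homoclin}. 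Then the formula
\begin{align*}
\X_{in}(x,\lambda):=\int_{-\infty}^{x}\T_0(x,y,\lambda)\Proj^{s}_\lambda(y)\A_{21,0}(y)\,dy-\int_{x}^{\infty}\T_0(x,y,\lambda)\Proj^{u}_\lambda(y)\A_{21,0}(y)\,dy
\end{align*}
converges, solves (\ref{fastinhom}), and is exponentially localized with a rate $\mu_{in}(\lambda)$ chosen continuously in $\lambda$ (smaller than both the dichotomy rate and the decay rate of $\A_{21,0}$, absorbing any polynomial prefactor arising when they coincide); this is property 3. Uniqueness among bounded solutions is immediate from Corollary \ref{Fredholmcor}, which gives injectivity of $\El_\lambda=\partial_x-\A_{22,0}(\cdot,\lambda)$ on $C^1_b$ precisely for $\lambda\notin\N(\E_{f,0})$. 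Property 2 — analyticity of $\X_{in}(x,\cdot)$ on $\Ce_\Lambda\setminus\N(\E_{f,0})$ and joint continuity in $(x,\lambda)$ — follows because $\T_0$ and, by Theorem \ref{Sturm}, the projections depend analytically on $\lambda$, while the two integrals converge locally uniformly in $\lambda$ (the dichotomy constants being locally bounded).

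\textbf{Step 2 (meromorphic extension, property 1).} The key point is that the half-line dichotomies of Theorem \ref{Sturm} exist for \emph{every} $\lambda\in\Ce_\Lambda$, and only their transversality degenerates on $\N(\E_{f,0})$. Recall from Theorem \ref{Sturm} the analytic bases $B_r^{u}(\lambda)$ and $B_r^{s}(\lambda)$ of $\Proj_{r+}(0,\lambda)[\C^{2n}]$ and $\ker\Proj_{r-}(0,\lambda)$, with $\E_{f,0}(\lambda)=\det(B_r^{u}(\lambda),B_r^{s}(\lambda))$. Rewriting the $\R$-Green's function of Step 1 in terms of $B_r^{u}$ and $B_r^{s}$, every $\lambda$-dependent factor is analytic on all of $\Ce_\Lambda$ except for a single occurrence of $(B_r^{u}(\lambda),B_r^{s}(\lambda))^{-1}=\E_{f,0}(\lambda)^{-1}\,\mathrm{adj}(B_r^{u}(\lambda),B_r^{s}(\lambda))$, whose adjugate is polynomial in analytic entries. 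Hence $\X_{in}(x,\cdot)$ extends meromorphically to $\Ce_\Lambda$ with poles contained in $\N(\E_{f,0})$, which is property 1.

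\textbf{Step 3 (residue at a simple zero, property 4 — and the main obstacle).} Let $\lambda_\diamond\in\N(\E_{f,0})$ be simple, so $\E_{f,0}^{-1}$ has a simple pole there; by Theorem \ref{Sturm}-4 the bounded solutions of (\ref{redstab}) at $\lambda_\diamond$ form a line spanned by $\phi_{\lambda_\diamond}$, i.e. $\ker(B_r^{u}(\lambda_\diamond),B_r^{s}(\lambda_\diamond))$ is one-dimensional, and dually its left null space is a line whose generator propagates under the adjoint flow to the bounded solution $\psi_{\lambda_\diamond}$ of (\ref{adjoint}). I would then compute the residue of the $\R$-Green's function at $\lambda_\diamond$ as the standard residue of a resolvent at a simple eigenvalue of a first-order system: with the normalization
\begin{align*}
\int_{-\infty}^\infty\psi_{\lambda_\diamond}(z)\left(\begin{array}{cc}0&0\\I&0\end{array}\right)\phi_{\lambda_\diamond}(z)\,dz=1,
\end{align*}
in which $\left(\begin{array}{cc}0&0\\I&0\end{array}\right)=\partial_\lambda\A_{22,0}$ enters precisely because $\A_{22,0}$ is affine in $\lambda$, this residue equals the rank-one kernel $\phi_{\lambda_\diamond}(x)\psi_{\lambda_\diamond}(y)$. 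Integrating against $\A_{21,0}(y)$ over $\R$ then gives the claimed singular part $\tfrac{\phi_{\lambda_\diamond}(x)}{\lambda-\lambda_\diamond}\int_{-\infty}^\infty\psi_{\lambda_\diamond}(z)\A_{21,0}(z)\,dz$, and the remainder $\X_{\lambda_\diamond,in}$ is analytic on a neighborhood $B_{\lambda_\diamond}$ of $\lambda_\diamond$ and exponentially localized there with rate uniform on $B_{\lambda_\diamond}$, by the continuous $\lambda$-dependence of the half-line dichotomy rates (Theorem \ref{Sturm}) and the $\lambda$-independent decay of $\A_{21,0}$. I expect this last step to be the real work: pinning down that the generator of the left null space of $(B_r^{u}(\lambda_\diamond),B_r^{s}(\lambda_\diamond))$ really corresponds to the adjoint bounded solution, and that the scalar normalization factor produced by the residue computation is exactly $\int\psi_{\lambda_\diamond}\,\partial_\lambda\A_{22,0}\,\phi_{\lambda_\diamond}$. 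The remaining items are routine bookkeeping with exponential dichotomies, variation of constants, and locally uniform convergence of the defining integrals.
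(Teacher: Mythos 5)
Your construction is correct in outline, but it takes a genuinely different route from the paper's argument, which is shorter and more abstract. For properties 1--2 the paper never writes down a Green's function: it invokes Corollary \ref{Fredholmcor} to see that $\lambda\mapsto\El_\lambda$ is a Fredholm pencil of index zero, invertible precisely off $\N(\E_{f,0})$, and then applies the analytic Fredholm theorem (\cite[Theorem 1.3.1]{MEN}) to conclude that $\El_\lambda^{-1}$ is analytic off $\N(\E_{f,0})$ and meromorphic on all of $\Ce_\Lambda$. Your Step 2 --- rewriting the $\R$-Green's function in terms of the analytic half-line bases $B_r^{u,s}(\lambda)$ from Theorem \ref{Sturm} and isolating the $\E_{f,0}(\lambda)^{-1}$ factor in the inverse of $\bigl(B_r^u(\lambda),B_r^s(\lambda)\bigr)$ --- is a more hands-on route to the same conclusion. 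For property 3 the two arguments essentially coincide: both reduce to Proposition \ref{inhomexpdi} combined with the continuously $\lambda$-dependent dichotomy constants of Theorem \ref{Sturm} and the exponential decay of $\A_{21,0}$. The real divergence is property 4. You propose to expand the Green's function directly and recognize the residue at $\lambda_\diamond$ as the rank-one kernel $\phi_{\lambda_\diamond}(x)\psi_{\lambda_\diamond}(y)$ with the normalization governed by $\partial_\lambda\A_{22,0}$, and you correctly flag that the hard part is matching the left and right null directions of $\bigl(B_r^u(\lambda_\diamond),B_r^s(\lambda_\diamond)\bigr)$ to $\phi_{\lambda_\diamond}(0)$ and the adjoint bounded solution, and verifying the normalizing scalar. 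The paper outsources exactly this to operator-pencil theory: it first uses \cite[Theorem 3.19]{BEY} to translate ``simple zero of $\E_{f,0}$'' into ``simple eigenvalue of the pencil $\lambda\mapsto\El_\lambda$,'' then applies the Keldysh formula \cite[Theorem 1.6.5]{MEN} for the principal part of $\El_\lambda^{-1}$, which delivers the rank-one residue and its normalization in one stroke. What your approach buys is self-containedness and explicitness about where each ingredient enters; what the paper's buys is brevity, and it extends cleanly to zeros of higher multiplicity (as needed in Proposition \ref{Sturminhomspecial}), where a bare-hands Laurent expansion of the Green's function would become unwieldy.
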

\begin{proof}
For $\lambda \in \Ce_\Lambda$ the operator $\El_\lambda$ defined in Corollary \ref{Fredholmcor} is Fredholm of index 0 and $\El_\lambda$ is invertible if and only if $\lambda \in \Ce_\Lambda \!\setminus\! \N(\E_{f,0})$. Therefore, $\El_\lambda^{-1}$ is analytic on $\Ce_\Lambda \!\setminus\! \N(\E_{f,0})$ and meromorphic on $\Ce_{\Lambda}$ by \cite[Theorem 1.3.1]{MEN}. This settles the first two properties. \\
\\
First, (\ref{redstab}) has by Theorem \ref{Sturm} an exponential dichotomy on $\R$ for $\lambda \in \Ce_{\Lambda} \!\setminus\! \N(\E_{f,0})$ with constants chosen continuously dependent on $\lambda$. Second, recall that $G$ vanishes at $v = 0$ by \ref{assS1}. So, by Remark \ref{homoclin} $\A_{21,0}$ is exponentially localized. Combining these items with Proposition \ref{inhomexpdi} establishes the third property. By \cite[Theorem 3.19]{BEY} $\lambda_\diamond$ is a simple eigenvalue of the operator pencil $\lambda \mapsto \El_{\lambda}$ if and only if $\lambda_\diamond$ is a simple zero of $\E_{f,0}$. Hence, the fourth property follows by an application of Keldysh formula for $\El_\lambda^{-1}$ given in \cite[Theorem 1.6.5]{MEN}.
\end{proof}
\begin{rem}[\textit{Connection with Fredholm alternative}]
Let $f \in C_b(\R,\C^{2n})$. The Fredholm alternative in \cite[Lemma 4.2]{PAL} states that the inhomogeneous equation,
\begin{align*} \partial_x \phi = \A_{22,0}(x,\lambda)\phi + f(x), \ \ \ \phi \in \C^{2n},\end{align*}
has a bounded solution if and only if the solvability condition,
\begin{align*} \int_{-\infty}^\infty \psi(x)f(x) dx = 0,\end{align*}
is satisfied for all bounded solutions $\psi$ to the adjoint equation (\ref{adjoint}). This corresponds directly with the fact that $\X_{in}(x,\cdot)$ has a removable singularity at a simple zero $\lambda_\diamond$ of $\E_{f,0}$ if and only if we have
\begin{align*} \int_{-\infty}^\infty \psi_{\lambda_\diamond}(z)\A_{21,0}(z)dz = 0,\end{align*}
by Theorem \ref{Sturminhom}-4. $\hfill \blacksquare$
\end{rem}
It is possible to obtain expressions for the singular part of the Laurent series of $\X_{in}$ at a zero of $\E_{f,0}$ of higher multiplicity by looking at a canonical system of generalized eigenvectors. However, for simplicity of exposition we treat only one special case in the Proposition below, which requires significantly less notation. The interested reader is referred to \cite[Chapter 1]{MEN} for the general set-up.
\begin{prop} \label{Sturminhomspecial}
Let $\Lambda \in (-\Lambda_0,0)$ with $\Lambda_0 > 0$ as in Lemma \ref{lemlambda0}. Suppose $\partial_v G(u_0,v,0)$ is symmetric for all $v \in V$. Let $\lambda_\diamond \in \Ce_{\Lambda}$ be a zero of $\E_{f,0}$ of multiplicity $k_{\lambda_\diamond}$ and let $\phi_{1,\lambda_\diamond}(x),$ $\ldots,\phi_{k_{\lambda_\diamond},\lambda_\diamond}(x)$ form a basis of the space of bounded (exponentially localized) solutions to (\ref{redstab}) satisfying the orthogonality relations,
\begin{align*} \int_{-\infty}^\infty \phi_{i,\lambda_\diamond}(z)^*\left(\begin{array}{cc} I & 0 \\ 0 & 0\end{array}\right)\phi_{j,\lambda_\diamond}(z)dz = \delta_{ij}, \ \ \ 1 \leq i,j \leq k_{\lambda_\diamond}. \end{align*}
There exists a neighborhood $B_{\lambda_\diamond} \subset \Ce_\Lambda$ of $\lambda_\diamond$ and a mapping $\X_{\lambda_\diamond,in} \colon \R \times B_{\lambda_\diamond} \to \mathrm{Mat}_{2n \times 2m}(\C)$, such that for each $\lambda \in B_{\lambda_\diamond}$ we expand
\begin{align*} \X_{in}(x,\lambda) = \X_{\lambda_\diamond,in}(x,\lambda) + \frac{1}{\lambda - \lambda_\diamond} \sum_{j = 1}^{k_{\lambda_\diamond}} \phi_{j,\lambda_\diamond}(x) \int_{-\infty}^\infty \phi_{j,\lambda_\diamond}(z)^{*}\left(\begin{array}{cc} 0 & -I \\ I & 0\end{array}\right)\A_{21,0}(z)dz.\end{align*}
Here, $\X_{\lambda_\diamond,in}(x,\cdot)$ is analytic on $B_{\lambda_\diamond}$ for every $x \in \R$. Moreover, $\X_{\lambda_\diamond,in}(\cdot,\lambda)$ is exponentially localized.
\end{prop}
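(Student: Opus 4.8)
The plan is to mimic the structure of the proof of Theorem \ref{Sturminhom}-4, but to track a full Jordan chain at $\lambda_\diamond$ rather than a single bounded solution, exploiting the extra self-adjointness hypothesis to identify the residue cleanly. First I would recall from Corollary \ref{Fredholmcor} that for $\lambda \in \Ce_\Lambda$ the operator $\El_\lambda = \partial_x - \A_{22,0}(\cdot,\lambda)$ is Fredholm of index $0$ on $C_b^1 \to C_b$ and invertible precisely off $\N(\E_{f,0})$, so $\El_\lambda^{-1}$ is meromorphic on $\Ce_\Lambda$ with poles only in $\N(\E_{f,0})$. Since $\X_{in}(\cdot,\lambda) = \El_\lambda^{-1}(-\A_{21,0})$, the analytic part $\X_{\lambda_\diamond,in}$ and the exponential localization of both the full and the analytic parts follow exactly as in Theorem \ref{Sturminhom}; the only new content is the explicit form of the principal part at $\lambda_\diamond$, which by \cite[Theorem 3.19]{BEY} has a pole of order at most one since the zero $\lambda_\diamond$ of $\E_{f,0}$ of multiplicity $k_{\lambda_\diamond}$ corresponds to a semisimple eigenvalue of the pencil $\lambda \mapsto \El_\lambda$ in the symmetric case. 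Thus I only need the $(\lambda-\lambda_\diamond)^{-1}$ coefficient.

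The second step is to identify that coefficient via a Keldysh-type residue formula, as in \cite[Theorem 1.6.5]{MEN}. In a neighborhood $B_{\lambda_\diamond}$ one writes
\begin{align*}
\El_\lambda^{-1} = \frac{1}{\lambda-\lambda_\diamond}\sum_{j=1}^{k_{\lambda_\diamond}} \langle \,\cdot\,,\eta_j\rangle\, \phi_{j,\lambda_\diamond} + (\text{analytic}),
\end{align*}
where $\phi_{j,\lambda_\diamond}$ span the kernel of $\El_{\lambda_\diamond}$ and $\eta_j$ span the kernel of the adjoint operator, normalized by a biorthogonality pairing involving $\partial_\lambda \A_{22,0} = \begin{psmallmatrix} 0 & 0 \\ I & 0\end{psmallmatrix}$ (this is where the derivative of the coefficient matrix with respect to $\lambda$ enters the Keldysh normalization). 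The key simplification is that when $\partial_v G(u_0,v,0)$ is symmetric, the adjoint equation \eqref{adjoint} is conjugate to \eqref{redstab} itself via the symplectic form $J = \begin{psmallmatrix} 0 & -I \\ I & 0\end{psmallmatrix}$: if $\phi$ solves \eqref{redstab} then $\phi^* J$ solves \eqref{adjoint}. Hence the adjoint bounded solutions can be taken to be $\eta_j = \phi_{j,\lambda_\diamond}^* J$, and the biorthogonality normalization $\langle \partial_\lambda \A_{22,0}\,\phi_{i},\eta_j\rangle = \delta_{ij}$ becomes precisely
\begin{align*}
\int_{-\infty}^\infty \phi_{i,\lambda_\diamond}(z)^* \left(\begin{array}{cc} I & 0 \\ 0 & 0\end{array}\right)\phi_{j,\lambda_\diamond}(z)\,dz = \delta_{ij},
\end{align*}
which is exactly the orthogonality relation assumed in the statement. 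Applying $\El_\lambda^{-1}$ to $-\A_{21,0}$ and reading off the residue then yields
\begin{align*}
\X_{in}(x,\lambda) = \X_{\lambda_\diamond,in}(x,\lambda) + \frac{1}{\lambda-\lambda_\diamond}\sum_{j=1}^{k_{\lambda_\diamond}} \phi_{j,\lambda_\diamond}(x) \int_{-\infty}^\infty \phi_{j,\lambda_\diamond}(z)^* J\, \A_{21,0}(z)\,dz,
\end{align*}
which is the asserted formula since $J = \begin{psmallmatrix} 0 & -I \\ I & 0\end{psmallmatrix}$.

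The main obstacle, and the step needing the most care, is the semisimplicity claim: I must verify that under the symmetry hypothesis every generalized eigenvector of the pencil $\lambda \mapsto \El_\lambda$ at $\lambda_\diamond$ is already a genuine eigenvector, so that the pole is simple and the $k_{\lambda_\diamond}$ bounded solutions $\phi_{j,\lambda_\diamond}$ account for the full algebraic multiplicity. This is where one uses that a would-be Jordan chain $(\phi^{(0)},\phi^{(1)})$ would require solvability of $\El_{\lambda_\diamond}\phi^{(1)} = \partial_\lambda\A_{22,0}\,\phi^{(0)} = \begin{psmallmatrix} 0 & 0 \\ I & 0\end{psmallmatrix}\phi^{(0)}$, whose Fredholm solvability condition (Corollary \ref{Fredholmcor} / \cite[Lemma 4.2]{PAL}) reads $\int \phi^{(0)*}J \begin{psmallmatrix} 0 & 0 \\ I & 0\end{psmallmatrix}\phi^{(0)} = \int |\phi^{(0)}_1|^2 = 0$ against the adjoint solution $\phi^{(0)*}J$ — forcing $\phi^{(0)}_1 \equiv 0$ and hence, from \eqref{redstab}, $\phi^{(0)} \equiv 0$. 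This rules out nontrivial Jordan chains and pins down the order of the pole. One secondary technical point is checking that the first component of a nontrivial bounded solution of \eqref{redstab} cannot vanish identically, which is immediate from the top block $D_2\partial_x \phi_2 = \partial_x(\cdots)$ structure of $\A_{22,0}$. With semisimplicity in hand, the Keldysh expansion and the symmetry-driven identification of the adjoint data complete the argument; exponential localization of $\X_{\lambda_\diamond,in}(\cdot,\lambda)$ is inherited from that of $\X_{in}$ and the $\phi_{j,\lambda_\diamond}$ (Theorem \ref{Sturm}-1), finishing the proof.
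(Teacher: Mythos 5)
Your proposal is correct and takes essentially the same route as the paper's proof: the symmetry of $\partial_v G(u_0,\cdot,0)$ is used to identify bounded adjoint solutions of \eqref{adjoint} with bounded solutions of \eqref{redstab} via the symplectic matrix $J=\left(\begin{smallmatrix}0 & -I\\ I & 0\end{smallmatrix}\right)$, the Fredholm solvability condition from \cite[Lemma 4.2]{PAL} is paired against $\partial_\lambda \A_{22,0}$ to force $\int\|v\|^2=0$ and hence rule out nontrivial Jordan chains (semisimplicity), and the Keldysh formula \cite[Theorem 1.6.5]{MEN} then yields the residue. Your explicit unwinding of the Keldysh biorthogonality normalization into the stated orthogonality relation is a helpful detail the paper leaves implicit.
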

\begin{proof}
Let $\El_\lambda$ be as defined in Corollary \ref{Fredholmcor}. Since $\partial_v G(u_0,v_{\ho}(x),0)$ is symmetric for all $x \in \R$, $\phi(x,\lambda)$ is a (bounded) solution to (\ref{redstab}) if and only if $\phi(x,\overline{\lambda})^{\mathrm{T}}\left(\begin{smallmatrix} 0 & -I \\ I & 0\end{smallmatrix}\right)$ is a (bounded) solution to (\ref{adjoint}). Moreover, because all solutions $\phi(x,\lambda)$ of (\ref{redstab}) are real for $\lambda \in \R$, it follows $\phi(x,\overline{\lambda})^{\mathrm{T}} = \phi(x,\lambda)^*$ for each $x \in \R$ and $\lambda \in \C$. On the other hand, we have by \cite[Lemma 4.2]{PAL} that $f \in \mathrm{ran}(\El_\lambda)$ if and only if
\begin{align*} \int_{-\infty}^\infty \psi(z)f(z)dz = 0,\end{align*}
for all bounded solutions $\psi(x)$ to (\ref{adjoint}). So, for a bounded solution $\phi(x,\lambda) = (v(x,\lambda),q(x,\lambda))$ to (\ref{redstab}) we have the implication,
\begin{align*} \frac{\partial \El_\lambda}{\partial \lambda}\phi(x,\lambda) = \left(\begin{array}{c}0\\ v(x,\lambda)\end{array}\right) \in \mathrm{ran}(\El_\lambda) \Longrightarrow \int_{-\infty}^\infty \|v(z,\lambda)\|^2 dz = 0\end{align*}
by pairing $\frac{\partial}{\partial \lambda}\El_\lambda \phi(x,\lambda)$ with the solution $\phi(x,\lambda)^* \left(\begin{smallmatrix} 0 & -I \\ I & 0\end{smallmatrix}\right)$ to the adjoint equation (\ref{adjoint}). Hence, $\frac{\partial \El_\lambda}{\partial \lambda} \phi(x,\lambda)$ is contained in $\mathrm{ran}(\El_\lambda)$ if and only if $\phi(x,\lambda)$ is trivial. So, the generalized eigenspace of $\El_{\lambda}$ equals the eigenspace $\ker(\El_{\lambda})$. With this knowledge the result follows from an application of Keldysh formula for $\El_\lambda^{-1}$ given in \cite[Theorem 1.6.5]{MEN}.
\end{proof}
\subsubsection{Slow limit problem} \label{slows}
Consider the slow reduced Evans function $\E_{s,0}$ -- see Definition \ref{slowEv}. In the next result we establish the properties mentioned in Proposition \ref{slowredEv}. In addition, we obtain explicit expressions of the singular part of the Laurent expansion of $\E_{s,0}$ close to a pole using Theorem \ref{Sturminhom}.
\begin{prop}\label{slowEvans}
Let $\Lambda \in (-\Lambda_0,0)$ with $\Lambda_0 > 0$ as in Lemma \ref{lemlambda0}. The slow reduced Evans function $\E_{s,0}$ is well-defined and has the following properties:
\begin{enumerate}
 \item $\E_{s,0}$ is analytic in both $\lambda$ and $\gamma$ on $\Ce_\Lambda \!\setminus\! \N(\E_{f,0}) \times \C$ and $\E_{s,0}(\cdot,\gamma)$ is meromorphic on $\Ce_\Lambda$ for each $\gamma \in \C$.
 \item For every $\gamma \in S^1$ the map $\E_{s,0}(\cdot,\gamma)$ is non-trivial.
 \item Suppose $\lambda_\diamond$ is a simple zero of $\E_{f,0}$. Let $\phi_{\lambda_\diamond} = (\phi_{\lambda_\diamond,1},\phi_{\lambda_\diamond,2}), \psi_{\lambda_\diamond} = (\psi_{\lambda_\diamond,1},\psi_{\lambda_\diamond,2}), B_{\lambda_\diamond}$ and $\X_{\lambda_\diamond,in}$ as in Theorem \ref{Sturminhom}-4. Define for $\lambda \in B_{\lambda_\diamond}$
     \begin{align} \phi &:= \int_{-\infty}^\infty \partial_v H_2(u_0,v_{\ho}(z))\phi_{\lambda_\diamond,1}(z)dz \in \C^m, \nonumber \\
     \psi &:= \int_{-\infty}^\infty \psi_{\lambda_\diamond,2}(z)\partial_u G(u_0,v_{\ho}(z),0)dz \in \mathrm{Mat}_{1 \times m}(\C), \label{slowpole}\\
     \G_a(\lambda) &:= \int_{-\infty}^\infty \left[\partial_u H_2 (u_0,v_{\ho}(z)) + \partial_v H_2(u_0,v_{\ho}(z))\V_{\lambda_\diamond,in}(z,\lambda)\right]dz \in \mathrm{Mat}_{m \times m}(\C)\nonumber,\end{align}
     where $\V_{\lambda_\diamond,in}$ denotes the upper-left $(n \times m)$-block of the analytic $(2n \times 2m)$-matrix $\X_{\lambda_\diamond,in}$. Moreover, let $(u_i(x,\lambda),p_i(x,\lambda))$, $i = 1,\ldots,2m$ be a fundamental set of solutions to the slow limit system (\ref{slowintr}). Finally, let $\Ce(\lambda,\gamma)$ be the cofactor matrix of
     \begin{align} \U_a(\lambda,\gamma) := \left(\begin{array}{cc} I & 0 \\ \G_a(\lambda) & I\end{array}\right)\T_s(2\check{L}_0,0,\lambda) - \gamma I \in \mathrm{Mat}_{2m \times 2m}(\C). \label{Laurentmatrix}\end{align}
     For all $\gamma \in S^1$, the singular part of the Laurent series of $\E_{s,0}(\cdot,\gamma)$ at $\lambda_\diamond$ is given by
     \begin{align*} \frac{1}{\lambda - \lambda_\diamond} \sum_{i = 1}^{2m} \left( \psi u_i(2\check{L}_0,\lambda_\diamond)\right) \left( \phi^\mathrm{T} \left[\Ce_{ji}(\lambda_\diamond,\gamma)\right]_{j = m + 1}^{2m} \right). \end{align*}
     \end{enumerate}
\end{prop}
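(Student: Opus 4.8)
The plan is to dispatch item 1 by bookkeeping, item 2 by an asymptotic argument, and to put the computational weight on item 3. For item 1, first note that $\T_s(2\check{L}_0,0,\cdot)$ is entire, being the period map over the fixed interval $[0,2\check{L}_0]$ of the linear system \eqref{slowintr} with $\lambda$-analytic coefficients. For $\G(\lambda)$, recall $H_2(u,0)=0$ by \ref{assS1}, so $\partial_u H_2(u_0,v_{\ho}(x))=\partial_u H_2(u_0,v_{\ho}(x))-\partial_u H_2(u_0,0)$ is exponentially localized by Remark \ref{homoclin}, while $\partial_v H_2(u_0,v_{\ho}(x))\V_{in}(x,\lambda)$ is exponentially localized because $\V_{in}(\cdot,\lambda)$ is, by Theorem \ref{Sturminhom}-3, with constants depending continuously on $\lambda$. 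Hence the integral defining $\G$ converges absolutely and uniformly on compact subsets of $\Ce_\Lambda\setminus\N(\E_{f,0})$; with Theorem \ref{Sturminhom}-1,2 and Morera's theorem, $\G$, and hence $\Upsilon$, is analytic there and meromorphic on $\Ce_\Lambda$ with poles only in $\N(\E_{f,0})$. Then $\E_{s,0}(\lambda,\gamma)=\det(\Upsilon(\lambda)\T_s(2\check{L}_0,0,\lambda)-\gamma I)$ is a degree-$2m$ polynomial in $\gamma$ with coefficients of this type, which gives joint analyticity in $(\lambda,\gamma)$ on $[\Ce_\Lambda\setminus\N(\E_{f,0})]\times\C$ and meromorphy in $\lambda$.

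For item 2, fix $\gamma\in S^1$; I would preclude $\E_{s,0}(\cdot,\gamma)\equiv0$ by letting $\lambda\to+\infty$ along the positive real axis, where $\E_{s,0}(\cdot,\gamma)$ has no poles since $\N(\E_{f,0})$ is finite. Under the stiff rescaling $\xx\mapsto\sqrt{\lambda}\,\xx$, $p\mapsto\sqrt{\lambda}\,\tilde p$, system \eqref{slowintr} becomes on $[0,2\check{L}_0\sqrt{\lambda}]$ an $\ord(\lambda^{-1})$ perturbation of the $\lambda$-independent hyperbolic system with matrix $\left(\begin{smallmatrix}0 & D_1^{-1}\\ I & 0\end{smallmatrix}\right)$; roughness then yields that $\T_s(2\check{L}_0,0,\lambda)$ is conjugate, via $\operatorname{diag}(I,\sqrt{\lambda}\,I)$, to a matrix whose spectrum splits into $\{|z|\ge e^{c\sqrt{\lambda}}\}$ and $\{|z|\le e^{-c\sqrt{\lambda}}\}$ with spectral projections converging as $\lambda\to\infty$. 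Since $\Upsilon(\lambda)$ stays bounded for large real $\lambda$ (indeed $\V_{in}(x,\lambda)=\ord(\lambda^{-1})$ there), a Schur-complement estimate in this splitting gives $\det(\Upsilon(\lambda)\T_s(2\check{L}_0,0,\lambda)-\gamma I)\neq0$ for large real $\lambda$, so $\E_{s,0}(\cdot,\gamma)$ is non-trivial.

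For item 3, I would substitute the expansion of $\X_{in}$ from Theorem \ref{Sturminhom}-4 into $\V_{in}$ (its upper-left block) and then into $\G$. Because $\A_{21,0}(z)=\left(\begin{smallmatrix}0 & 0\\ \partial_u G(u_0,v_{\ho}(z),0) & 0\end{smallmatrix}\right)$, one gets $\int_{-\infty}^{\infty}\psi_{\lambda_\diamond}(z)\A_{21,0}(z)\,dz=(\psi,0)$ with $\psi$ as in \eqref{slowpole}, and the upper-left $(n\times m)$-block of the rank-one matrix $\phi_{\lambda_\diamond}(x)(\psi,0)$ is $\phi_{\lambda_\diamond,1}(x)\psi$; integrating against $\partial_v H_2(u_0,v_{\ho}(x))$ yields the residue $\phi\,\psi$ with $\phi$ as in \eqref{slowpole}, so $\G(\lambda)=(\lambda-\lambda_\diamond)^{-1}\phi\,\psi+\G_a(\lambda)$ with $\G_a$ the analytic quantity of \eqref{slowpole}. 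Consequently $\Upsilon(\lambda)=\left(\begin{smallmatrix}I & 0\\ \G_a(\lambda) & I\end{smallmatrix}\right)+(\lambda-\lambda_\diamond)^{-1}\,w\,(\psi,0)$ with $w=\left(\begin{smallmatrix}0\\ \phi\end{smallmatrix}\right)\in\C^{2m}$, whence
\[
\Upsilon(\lambda)\T_s(2\check{L}_0,0,\lambda)-\gamma I=\U_a(\lambda,\gamma)+\frac{1}{\lambda-\lambda_\diamond}\,w\,r(\lambda)^{\mathrm{T}},\qquad r(\lambda)^{\mathrm{T}}:=(\psi,0)\,\T_s(2\check{L}_0,0,\lambda),
\]
a rank-one perturbation of the matrix $\U_a(\lambda,\gamma)$ of \eqref{Laurentmatrix}, which is analytic near $\lambda_\diamond$. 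The matrix determinant lemma $\det(A+uv^{\mathrm{T}})=\det A+v^{\mathrm{T}}\operatorname{adj}(A)\,u$ then gives
\[
\E_{s,0}(\lambda,\gamma)=\det\U_a(\lambda,\gamma)+\frac{1}{\lambda-\lambda_\diamond}\,r(\lambda)^{\mathrm{T}}\operatorname{adj}(\U_a(\lambda,\gamma))\,w,
\]
and since $\lambda\mapsto\det\U_a(\lambda,\gamma)$, $r(\cdot)$ and $\operatorname{adj}(\U_a(\cdot,\gamma))$ are analytic near $\lambda_\diamond$, the singular part equals $(\lambda-\lambda_\diamond)^{-1}r(\lambda_\diamond)^{\mathrm{T}}\operatorname{adj}(\U_a(\lambda_\diamond,\gamma))\,w$. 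Finally I would translate this into the stated data using $\operatorname{adj}(\U_a)_{ij}=\Ce_{ji}$, the fact that the $i$-th entry of $r(\lambda_\diamond)^{\mathrm{T}}$ is $(\psi,0)$ applied to the $i$-th column $(u_i(2\check{L}_0,\lambda_\diamond),p_i(2\check{L}_0,\lambda_\diamond))$ of $\T_s(2\check{L}_0,0,\lambda_\diamond)$, i.e. $\psi\,u_i(2\check{L}_0,\lambda_\diamond)$, and that $w_j=0$ for $j\le m$ while $w_{m+k}=\phi_k$; collecting these reproduces $\frac{1}{\lambda-\lambda_\diamond}\sum_{i=1}^{2m}\big(\psi\,u_i(2\check{L}_0,\lambda_\diamond)\big)\big(\phi^{\mathrm{T}}[\Ce_{ji}(\lambda_\diamond,\gamma)]_{j=m+1}^{2m}\big)$.

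A byproduct is that $\E_{s,0}(\cdot,\gamma)$ has at most a simple pole at a simple zero of $\E_{f,0}$; together with the higher-multiplicity analogue obtained from the general Keldysh expansion of $\X_{in}$ (cf. Proposition \ref{Sturminhomspecial} for the symmetric case), this yields that $\E_0(\cdot,\gamma)=(-\gamma)^n\E_{f,0}\E_{s,0}$ is analytic, i.e. Proposition \ref{slowredEv}. The hard part will be the index bookkeeping in item 3 — tracking the block splitting, the transpose relating $\operatorname{adj}$ to the cofactor matrix $\Ce$, and the row ranges $\{m+1,\dots,2m\}$ — although this is mechanical once the rank-one structure is exposed; in item 2 the genuine content is the stiff-limit estimate keeping the spectrum of the slow period map off $S^1$, and item 1 is routine.
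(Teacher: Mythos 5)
Items 1 and 3 of your proposal are correct and follow essentially the same route as the paper: item 1 via exponential localization of the integrand of $\G$ (using $H_2(u,0)=0$ from \ref{assS1}, Remark~\ref{homoclin}, and Theorem~\ref{Sturminhom}-3) together with analyticity of $\T_s$ and of $\X_{in}$, and item 3 via the Keldysh expansion of $\X_{in}$ from Theorem~\ref{Sturminhom}-4, which exhibits $\G(\lambda)-\G_a(\lambda)$ as the rank-one matrix $(\lambda-\lambda_\diamond)^{-1}\phi\psi$. The paper expands the determinant by multilinearity where you invoke the matrix determinant lemma, but these are the same computation, and your identification $\int_{-\infty}^\infty\psi_{\lambda_\diamond}\A_{21,0}\,dz=(\psi,0)$ and the final index bookkeeping match the paper's formula.

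Item 2, however, has a genuine gap. You assert that after the rescaling $\xx\mapsto\sqrt{\lambda}\xx$, roughness yields that the conjugated period map has spectrum splitting into $\{|z|\ge e^{c\sqrt{\lambda}}\}$ and $\{|z|\le e^{-c\sqrt{\lambda}}\}$ with converging spectral projections, and you then run a Schur-complement argument in this splitting. But Proposition~\ref{roughnessintervals} only produces an exponential dichotomy on the stretched interval $[0,2\sqrt{\lambda}\check{L}_0]$ with projections satisfying $\|P_2(x,\lambda)-P_1\|=\ord(\lambda^{-1})$. This does not control the spectrum of the period map: the dichotomy subspaces at the two endpoints need not coincide, and the polynomial closeness $\ord(\lambda^{-1})$ of the projections is overwhelmed by the dichotomy growth factor $e^{c\sqrt{\lambda}}$ over the stretched interval, so in the approximate $P_1$-basis the off-diagonal blocks of the period map can be of size $\sim\lambda^{-1}e^{c\sqrt{\lambda}}\to\infty$. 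A Schur complement built from those blocks is not controllable, and the claimed eigenvalue splitting does not follow from the tools cited. The paper's Appendix~\ref{A2} circumvents this entirely: it forms the invertible matrix $\h(\lambda):=\bigl(\T_{s1}(0,2\sqrt{\lambda}\check{L}_0,\lambda)B_2^u(\lambda),\,B_2^s(\lambda)\bigr)$ from the stable basis at $0$ and the \emph{backward-flowed} unstable basis at the right endpoint, so that multiplying $\Upsilon_1\T_{s1}(2\sqrt{\lambda}\check{L}_0,0,\lambda)-\gamma$ on the right by $\h(\lambda)$ kills both potentially large terms by the dichotomy estimates and yields $(B_1^u,-\gamma B_1^s)+\ord(\lambda^{-1/2})$, a fixed invertible matrix. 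Non-vanishing of $\E_{s,0}(\cdot,\gamma)$ then follows from $\det\h(\lambda)\neq0$, without ever locating the spectrum of the period map. To close the gap you would either need this $\h(\lambda)$ device or a genuine adiabatic-type theorem on eigenvalue splitting for the monodromy of a slowly varying hyperbolic system, neither of which your sketch supplies.
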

\begin{proof} First we show that $\E_{s,0}$ is well-defined. Note that $H_2$ vanishes at $v = 0$ by \ref{assS1}. So, Remark \ref{homoclin} yields that $\partial_u H_2 (u_0,v_{\ho}(x))$ is exponentially localized. Combining the latter fact with Theorem \ref{Sturminhom}-3 implies that the integral $\G(\lambda)$ converges and thus $\E_{s,0}$ is well-defined. It is well-known \cite[Lemma 2.1.4]{KAP} that, when the coefficient matrix depends analytically on a parameter, then the evolution is analytic in this parameter too. Combining this with Theorem \ref{Sturminhom}-1,2 yields the first property.
\newpage
\textit{A sketch of the proof of the second property}\\
Proving the second property is quite laborious, but not conceptually difficult. Therefore, we choose to give a sketch of the proof. The full proof can be found in the Appendix \ref{A2}. It is sufficient to establish that, for $\gamma \in S^1$, $\E_{s,0}(\lambda,\gamma)$ is bounded away from $0$ for $\lambda > 0$ is sufficiently large. First, we prove that $\G(\lambda)$ remains bounded as $\lambda \to \infty$. Second, we show that a rescaled version of (\ref{slowintr}) has an exponential dichotomy on $\R$ for $\lambda > 0$ sufficiently large. This enables us to construct suitable bases of the stable and unstable subspaces at $0$, which will form the column vectors of an invertible matrix $\h(\lambda)$. Eventually, we estimate the product $\E_{s,0}(\lambda,\gamma)\det(\h(\lambda))$ by a $\lambda$-independent non-zero determinant for large $\lambda > 0$.\\
\\
\textit{The third property}\\
Assume $\lambda_\diamond$ is a simple zero of $\E_{f,0}$. Theorem \ref{Sturminhom}-4 allows us to split off the singular part of $\G(\lambda)$ at $\lambda_\diamond$. Indeed, we have for $\lambda \in B_{\lambda_\diamond}$
\begin{align*} \G(\lambda) = \frac{1}{\lambda - \lambda_\diamond} \phi \psi + \G_a(\lambda),\end{align*}
with $\phi,\psi$ and $\G_a(\lambda)$ as in (\ref{slowpole}). Using the multi-linearity of the determinant, we expand for $\lambda \in B_{\lambda_\diamond}$ and $\gamma \in S^1$
\begin{align*} \E_{s,0}(\lambda,\gamma) &= \det\left[\U_a(\lambda,\gamma) + \frac{1}{\lambda - \lambda_\diamond} \left(\begin{array}{cc} 0 & 0 \\ \phi\psi & 0\end{array}\right)\T_s(2\check{L}_0,0,\lambda)\right]\\
&= \det(\U_a(\lambda,\gamma)) + \frac{1}{\lambda - \lambda_\diamond} \sum_{i = 1}^{2m} \left( \psi u_i(2\check{L}_0,\lambda_\diamond)\right) \left( \phi^\mathrm{T} \left[\Ce_{ji}(\lambda_\diamond,\gamma)\right]_{j = m + 1}^{2m} \right).
\end{align*}
This concludes the proof of the third property.
\end{proof}
\begin{rem}[\textit{Appearance of $\gamma$ in the singular part}] In the case $m = 1$, Proposition \ref{singm1} shows that $\gamma$ appears as a factor in the singular part of the Laurent expansion of $\E_{s,0}(\cdot,\gamma)$ at a zero $\lambda_\diamond$ of $\E_{f,0}$. Therefore, $\E_{s,0}(\cdot,\gamma)$ has a pole at $\lambda_\diamond$ for some $\gamma \in S^1$ if and only if $\E_{s,0}(\cdot,\gamma)$ has a pole at $\lambda_\diamond$ for all $\gamma \in S^1$. As a consequence, it is sufficient to check condition 2 in Proposition \ref{stability} only for some $\gamma \in S^1$.\\
\\
However, in the general setting of Proposition \ref{slowEvans}-3 the principal part of the Laurent expansion of $\E_{s,0}(\cdot,\gamma)$ is polynomial in $\gamma$. So, it could happen that $\E_{s,0}(\cdot,\gamma)$ has a pole at $\lambda_\diamond$ for all but a discrete set of $\gamma \in S^1$. We expect that such a (non-generic) situation occurs precisely when $\lambda_\diamond$ is a limit point of the slow spectrum $\bigcup_{\gamma \in S^1} \N(\E_{s,0}(\cdot,\gamma))$. In this case, it is again sufficient to check condition 2 in Proposition \ref{stability} only for some $\gamma \in S^1$. $\hfill \blacksquare$
\end{rem}
It is possible to obtain expressions for the singular part of the Laurent series of $\E_{s,0}(\cdot,\gamma)$ at a zero of $\E_{f,0}$ of higher multiplicity. Again, for simplicity of exposition, we treat only the case where $\partial_v G(u_0,v,0)$ is symmetric for all $v \in V$.
\begin{prop}
Suppose $\partial_v G(u_0,v,0)$ is symmetric for all $v \in V$. Let $\lambda_\diamond$ be a zero of $\E_{f,0}$ of multiplicity $k_{\lambda_\diamond}$. Let $\phi_{i,\lambda_\diamond} = (v_{i,\lambda_\diamond},q_{i,\lambda_\diamond}), i = 1,\ldots,k_{\lambda_\diamond}, B_{\lambda_\diamond}$ and $\X_{\lambda_\diamond,in}$ as in Proposition \ref{Sturminhomspecial}. Define for $i = 1,\ldots,k_{\lambda_\diamond}$ and $\lambda \in B_{\lambda_\diamond}$
\begin{align*} \phi_i &:= \int_{-\infty}^\infty \partial_v H_2(u_0,v_{\ho}(z)) v_{i,\lambda_\diamond}(z)dz \in \C^m, \ \ \ \psi_i := \int_{-\infty}^\infty v_{i,\lambda_\diamond}(z)^*\partial_u G(u_0,v_{\ho}(z),0) dz \in \mathrm{Mat}_{1 \times m}(\C),\\
\end{align*}
and
\begin{align*}
\G_a(\lambda) &:= \int_{-\infty}^\infty \left[\partial_u H_2 (u_0,v_{\ho}(z)) + \partial_v H_2(u_0,v_{\ho}(z))\V_{\lambda_\diamond,in}(z,\lambda)\right]dz \in \mathrm{Mat}_{m \times m}(\C),
\end{align*}
where $\V_{\lambda_\diamond,in}$ denotes the upper-left $(n \times m)$-block of the analytic $(2n \times 2m)$-matrix $\X_{\lambda_\diamond,in}$. Let $(u_i(x,\lambda),p_i(x,\lambda))$, $i = 1,\ldots,2m$ be a fundamental set of solutions to the slow limit system (\ref{slowintr}). For each subset $\tau \subset \{1,\ldots,k_{\lambda_\diamond}\}$ and each injective map $\sigma \colon \tau \to \{1,\ldots,2m\}$ denote by $\Ce_{\tau,\sigma}(\lambda,\gamma)$ the matrix obtained by replacing each $\sigma(j)$-th column of the matrix $\U_a(\lambda,\gamma)$ in (\ref{Laurentmatrix}) by $(0,\phi_j)$ for all $j \in \tau$. For all $\gamma \in S^1$, the principle part of the Laurent series of $\E_{s,0}(\cdot,\gamma)$ at $\lambda_\diamond$ is given by
\begin{align*} \sum_{i = 1}^{k_{\lambda_\diamond}} \frac{1}{(\lambda - \lambda_\diamond)^i} \sum_{\begin{smallmatrix}\tau \subset \{1,\ldots,k_{\lambda_\diamond}\}, \\ \#\tau = i\end{smallmatrix}} \sum_{\begin{smallmatrix}\sigma \colon \tau \to \{1,\ldots,2m\} \\ \mathrm{injective}\end{smallmatrix}} \left[\prod_{j = 1}^i \psi_j u_{\sigma(j)}(2\check{L}_0,\lambda_\diamond)\right] \det(\Ce_{\tau,\sigma}(\lambda_\diamond,\gamma)). \end{align*}
\end{prop}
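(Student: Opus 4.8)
The plan is to feed the Laurent expansion of the inhomogeneous fast limit problem furnished by Proposition~\ref{Sturminhomspecial} into the determinant defining $\E_{s,0}$, and then to expand that determinant by multilinearity in its columns. By Proposition~\ref{slowEvans}-1 the map $\E_{s,0}(\cdot,\gamma)$ is meromorphic on $\Ce_\Lambda$ with poles confined to $\N(\E_{f,0})$, so near $\lambda_\diamond$ it does have a genuine Laurent series with finite principal part, and the whole task is to compute it explicitly.

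First I would substitute the expansion of $\X_{in}(x,\lambda)$ near $\lambda_\diamond$ from Proposition~\ref{Sturminhomspecial} into the integral defining $\G(\lambda)$ in \eqref{defupsilon}. Passing to the upper-left $(n\times m)$-block $\V_{in}$ of $\X_{in}$, using that $H_2$ vanishes at $v=0$ (so that by Remark~\ref{homoclin} and Theorem~\ref{Sturminhom}-3 all integrands are exponentially localized and the integrals converge), and recalling the block structure of $\A_{21,0}$, one obtains on a neighbourhood $B_{\lambda_\diamond}$ of $\lambda_\diamond$
\[
\G(\lambda) = \G_a(\lambda) - \frac{1}{\lambda-\lambda_\diamond}\sum_{j=1}^{k_{\lambda_\diamond}} \phi_j\psi_j ,
\]
with $\G_a$ analytic at $\lambda_\diamond$ and $\phi_j\in\C^m$, $\psi_j\in\mathrm{Mat}_{1\times m}(\C)$ exactly the vectors in the statement. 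The decisive point — and this is precisely where the symmetry of $\partial_v G(u_0,v,0)$ enters — is that this pole is \emph{simple}: by the argument in Proposition~\ref{Sturminhomspecial} the generalized eigenspace of the pencil $\lambda\mapsto\El_\lambda$ at $\lambda_\diamond$ coincides with $\ker(\El_{\lambda_\diamond})$, whence $\X_{in}(x,\cdot)$, and therefore $\V_{in}$ and $\G$, carry only a first-order pole, with residue the matrix $-\sum_j\phi_j\psi_j$ of rank at most $k_{\lambda_\diamond}$. Consequently $\Upsilon(\lambda)\T_s(2\check{L}_0,0,\lambda)-\gamma I$ splits as the analytic matrix $\U_a(\lambda,\gamma)$ of \eqref{Laurentmatrix} minus $(\lambda-\lambda_\diamond)^{-1}$ times a sum of at most $k_{\lambda_\diamond}$ rank-one matrices, the $j$-th being the constant column vector $(0,\phi_j)\in\C^{2m}$ times the $\lambda$-analytic row vector $(\psi_j,0)\,\T_s(2\check{L}_0,0,\lambda)$.

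Then I would expand $\E_{s,0}(\lambda,\gamma)=\det(\Upsilon(\lambda)\T_s(2\check{L}_0,0,\lambda)-\gamma I)$ by multilinearity in the columns: each of the $k_{\lambda_\diamond}$ rank-one updates can be absorbed into at most one column before the determinant vanishes, so the expansion runs over subsets $\tau\subseteq\{1,\dots,k_{\lambda_\diamond}\}$ together with injections $\sigma\colon\tau\hookrightarrow\{1,\dots,2m\}$, the term indexed by $(\tau,\sigma)$ carrying the factor $(\lambda-\lambda_\diamond)^{-\#\tau}$, the product over $j\in\tau$ of the $\sigma(j)$-th entry of the row $(\psi_j,0)\,\T_s(2\check{L}_0,0,\lambda)$, and the determinant of $\U_a(\lambda,\gamma)$ with the columns $\sigma(\tau)$ replaced by the $(0,\phi_j)$ — that is, $\det\Ce_{\tau,\sigma}(\lambda,\gamma)$, up to the overall sign $(-1)^{\#\tau}$ which I would absorb into the replacement vectors. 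Here I would use that, with the $i$-th solution normalized by $(u_i(0,\lambda),p_i(0,\lambda))=e_i$, the $\ell$-th column of $\T_s(2\check{L}_0,0,\lambda)$ is $(u_\ell(2\check{L}_0,\lambda),p_\ell(2\check{L}_0,\lambda))$, so the $\ell$-th entry of $(\psi_j,0)\,\T_s(2\check{L}_0,0,\lambda)$ equals $\psi_j u_\ell(2\check{L}_0,\lambda)$, together with analytic dependence of $\T_s$ on $\lambda$ \cite[Lemma 2.1.4]{KAP}. (Alternatively one can bypass the explicit expansion and apply Keldysh's formula \cite[Theorem 1.6.5]{MEN} to the meromorphic matrix $\Upsilon(\cdot)\T_s(2\check{L}_0,0,\cdot)-\gamma I$, as in the proof of Proposition~\ref{Sturminhomspecial}.)

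Finally, I would read off the principal part by Taylor-expanding the $\lambda$-analytic factors about $\lambda_\diamond$: a subset $\tau$ of size $i$ produces a pole of order $i$, and evaluating $\psi_j u_{\sigma(j)}(2\check{L}_0,\cdot)$ and $\det\Ce_{\tau,\sigma}(\cdot,\gamma)$ at $\lambda_\diamond$ — where $\U_a(\lambda_\diamond,\gamma)$ is exactly \eqref{Laurentmatrix} at $\lambda_\diamond$ — yields the displayed sum. I expect the main obstacle to be precisely this last accounting step: organizing the Taylor expansions so that the principal part comes out in the stated closed form, and correctly fixing the signs and normalization constants coming from the symplectic pairing $\left(\begin{smallmatrix}0&-I\\I&0\end{smallmatrix}\right)$ in Proposition~\ref{Sturminhomspecial} and from the column replacements. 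No analytic input beyond Proposition~\ref{Sturminhomspecial} is needed; the simplicity of the pole of $\G$ (which is the content of the symmetry hypothesis) is what keeps the expansion finite and of the displayed shape, and the rank-$\le k_{\lambda_\diamond}$ residue is what bounds the pole order of $\E_{s,0}(\cdot,\gamma)$ at $\lambda_\diamond$ by $k_{\lambda_\diamond}$.
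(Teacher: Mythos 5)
Your proposal reproduces the paper's own proof, which is literally a one-line appeal to the analogy with Proposition~\ref{slowEvans}-3 with Proposition~\ref{Sturminhomspecial} supplying the semisimple pole expansion of $\X_{in}$ in place of Theorem~\ref{Sturminhom}-4. You identify the structure correctly: the symmetry of $\partial_v G(u_0,\cdot,0)$ keeps the pole of $\G$ simple, with residue $-\sum_j \phi_j\psi_j$ of rank at most $k_{\lambda_\diamond}$; one then expands $\det\bigl(\Upsilon(\lambda)\T_s(2\check{L}_0,0,\lambda)-\gamma I\bigr)$ by multilinearity over subsets $\tau\subseteq\{1,\dots,k_{\lambda_\diamond}\}$ and injections $\sigma\colon\tau\to\{1,\dots,2m\}$, the replaced column $\sigma(j)$ carrying $\psi_j u_{\sigma(j)}(2\check{L}_0,\lambda)(0,\phi_j)$; and the pole order of $\E_{s,0}(\cdot,\gamma)$ is thereby bounded by $k_{\lambda_\diamond}$. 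You also rightly flag the overall sign $(-1)^{\#\tau}$ that the column replacements generate, and that the displayed formula carries no such sign; this bookkeeping does need to be resolved, but it is a convention issue rather than a conceptual one.

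The "last accounting step" that you flag as the main obstacle is, however, a genuine gap that neither your write-up nor the paper's proof-by-analogy resolves. After the multilinear expansion you have $\E_{s,0}(\lambda,\gamma)=\sum_{\tau,\sigma}(\lambda-\lambda_\diamond)^{-\#\tau}\,g_{\tau,\sigma}(\lambda)$ with $g_{\tau,\sigma}$ analytic near $\lambda_\diamond$. The principal part of a summand with $\#\tau=i\geq 2$ is \emph{not} just $g_{\tau,\sigma}(\lambda_\diamond)(\lambda-\lambda_\diamond)^{-i}$; it also contributes $g_{\tau,\sigma}^{(\ell)}(\lambda_\diamond)/\ell!$ to the coefficient of $(\lambda-\lambda_\diamond)^{-(i-\ell)}$ for $\ell=1,\dots,i-1$. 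Since $g_{\tau,\sigma}$ varies with $\lambda$ through $u_{\sigma(j)}(2\check{L}_0,\cdot)$ and through the entries of $\U_a(\cdot,\gamma)$, these derivative contributions are in general nonzero when $m\geq 2$ and $\#\tau\geq 2$, and they are absent from the displayed closed form, which evaluates every factor at $\lambda_\diamond$. For $m=1$ the issue is vacuous, because $\det\Ce_{\tau,\sigma}$ vanishes identically once $\#\tau\geq 2$, and for $k_{\lambda_\diamond}=1$ the formula collapses to Proposition~\ref{slowEvans}-3 where the accounting is trivial. So this is a gap in the stated closed form itself (and hence in the paper's own terse proof), not something you introduced by reconstructing it; if you carry out the Taylor expansion honestly for $k_{\lambda_\diamond},m\geq 2$ you will find extra terms not appearing in the displayed sum.
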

\begin{proof}
This is analogous to the proof of Proposition \ref{slowEvans}-3 using Proposition \ref{Sturminhomspecial} instead.
\end{proof}
\subsection{An exponential dichotomy capturing the fast dynamics} \label{4.3}
As mentioned in Section \ref{Approach}, our goal is to apply the Riccati transformation on the linearized equations (\ref{fullstab}) in order to factorize $\E_\epsilon$ into a fast and a slow part as in \eqref{factorintr}. However, according to Theorem \ref{ric} the Riccati transformation is only legitimate, when system (\ref{faststab}) has an exponential dichotomy on $\R$. We can show that this is the case, whenever $\lambda$ is away from the zero set $\N(\E_{f,0})$. Therefore, we introduce the following notation, before stating the result.
\begin{nota}
Let $\Lambda \in (-\Lambda_0,0)$ with $\Lambda_0 > 0$ as in Lemma \ref{lemlambda0}. For $\delta > 0$ we denote
\begin{align*}\Sigma_{\Lambda,\delta} := \Sigma_{\Lambda,0} \!\setminus\! \bigcup_{\lambda \in \N(\E_{f,0})} B(\lambda,\delta).\end{align*}
\end{nota}
\begin{theo} \label{fastexpdi}
Let $\Lambda \in (-\Lambda_0,0)$ with $\Lambda_0 > 0$ as in Lemma \ref{lemlambda0}. Take $\delta > 0$. For $\epsilon > 0$ sufficiently small, system (\ref{faststab}) has for all $\lambda \in \Sigma_{\Lambda,\delta}$ an exponential dichotomy on $\R$ with constants $K_f,\mu_f > 0$, which are independent of $\epsilon$ and $\lambda$.
\end{theo}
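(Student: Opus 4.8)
The plan is to exploit that \eqref{faststab} is $2L_\epsilon$-periodic in $x$, so it admits an exponential dichotomy on $\R$ with $\epsilon,\lambda$-uniform constants precisely when its monodromy $M_\epsilon(\lambda):=\Phi_\epsilon(L_\epsilon,-L_\epsilon,\lambda)$ — with $\Phi_\epsilon(x,y,\lambda)$ the transition matrix of \eqref{faststab} — is hyperbolic with spectrum uniformly bounded away from the unit circle and uniformly conditioned spectral projections; the dichotomy rate is then $\sim\tfrac{1}{2L_\epsilon}\min_{\mu\in\sigma(M_\epsilon(\lambda))}\bigl|\log|\mu|\bigr|$. I would obtain this by splitting one period $[-L_\epsilon,L_\epsilon]$, with the pulse in the centre, into the pulse region $I_\epsilon=[-\epsilon^{-\rho},\epsilon^{-\rho}]$ of Remark~\ref{introbetaf} and the two slow half-regions $[\epsilon^{-\rho},L_\epsilon]$, $[-L_\epsilon,-\epsilon^{-\rho}]$ (nonempty for small $\epsilon$), analysing the transition matrix of \eqref{faststab} over each, and composing.

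First, since $\N(\E_{f,0})$ is discrete (Theorem~\ref{Sturm}) and $\overline{\Sigma_{\Lambda,\delta}}$ is compact, $\E_{f,0}$ is continuous and non-vanishing there, so $|\E_{f,0}(\lambda)|\ge c_\delta>0$ on $\Sigma_{\Lambda,\delta}$; by Theorem~\ref{Sturm}-2 and Corollary~\ref{Sturmcor} the limit problem \eqref{redstab} then has an exponential dichotomy on $\R$ with constants $K_r,\mu_r$ independent of $\lambda\in\Sigma_{\Lambda,\delta}$, and, being asymptotically autonomous, its stable and unstable subspaces at $\pm\epsilon^{-\rho}$ converge exponentially fast in $\epsilon^{-\rho}$ to the hyperbolic eigenspaces $E^{u/s}(A(u_0,\lambda))$ of the asymptotic matrix $A(u_0,\lambda)$ from Lemma~\ref{lemlambda0}. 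On $I_\epsilon$ the $+\lambda$ entries of $\A_{22,\epsilon}$ and $\A_{22,0}$ cancel and $\partial_v G$ is $C^1$, so Remark~\ref{introbetaf} gives $\|\A_{22,\epsilon}(\cdot,\lambda)-\A_{22,0}(\cdot,\lambda)\|_{L^\infty(I_\epsilon)}=\ord(\epsilon^{\beta_f})$ uniformly in $\lambda\in\Sigma_{\Lambda,\delta}$; restricting the $\R$-dichotomy of \eqref{redstab} to $I_\epsilon$ and invoking roughness there (cf.\ Proposition~\ref{RoughnessR}) shows that, for $\epsilon$ small, \eqref{faststab} has an exponential dichotomy on $I_\epsilon$ with $\epsilon,\lambda$-uniform constants and subspaces $\ord(\epsilon^{\beta_f})$-close to those of \eqref{redstab}; in particular $\Phi_\epsilon(\epsilon^{-\rho},-\epsilon^{-\rho},\lambda)$ maps an $n$-dimensional subspace close to $E^u(A(u_0,\lambda))$ to another such subspace, amplified by $\gtrsim e^{2\mu_r\epsilon^{-\rho}}$, and a complementary subspace close to $E^s(A(u_0,\lambda))$ to another, contracted by $\lesssim e^{-2\mu_r\epsilon^{-\rho}}$. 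This is where the bound $|\E_{f,0}(\lambda)|\ge c_\delta$ is essential: it forces \eqref{redstab}, hence \eqref{faststab} on $I_\epsilon$, to have this clean stable/unstable splitting across the pulse with $\lambda$-uniform conditioning.

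On the slow half-region $[\epsilon^{-\rho},L_\epsilon]$ (and symmetrically on $[-L_\epsilon,-\epsilon^{-\rho}]$), assumption \ref{assS3}-3 makes $v_{\p,\epsilon}(x)$ super-exponentially small, so by \ref{assS1} and smoothness in $\epsilon$ one has $\A_{22,\epsilon}(x,\lambda)=A(u_{\p,\epsilon}(x),\lambda)+\ord(\epsilon)$, where $x\mapsto A(u_{\p,\epsilon}(x),\lambda)$ is pointwise hyperbolic with a spectral gap $\nu>0$ bounded below uniformly for $u_{\p,\epsilon}(x)\in\K_U$, $\lambda\in\Ce_\Lambda$ (Lemma~\ref{lemlambda0}), and varies slowly since $\partial_x u_{\p,\epsilon}=D_1^{-1}\epsilon p_{\p,\epsilon}=\ord(\epsilon)$ by \eqref{ODE} and Remark~\ref{S3.4}. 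The standard theory of slowly varying hyperbolic linear systems then yields a uniform exponential dichotomy of \eqref{faststab} on this region whose stable/unstable subspaces at each $x$ are $\ord(\epsilon)$-close to $E^{s/u}(A(u_{\p,\epsilon}(x),\lambda))$; in particular $\Phi_\epsilon(L_\epsilon,\epsilon^{-\rho},\lambda)$ amplifies a subspace close to $E^u(A(u_{\p,\epsilon}(\epsilon^{-\rho}),\lambda))\approx E^u(A(u_0,\lambda))$ by $\gtrsim e^{\nu(L_\epsilon-\epsilon^{-\rho})}$ and contracts the complementary one by $\lesssim e^{-\nu(L_\epsilon-\epsilon^{-\rho})}$, landing near $E^{u/s}(A(u_{\p,\epsilon}(L_\epsilon),\lambda))$. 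Composing the three factors and using $u_{\p,\epsilon}(L_\epsilon)=u_{\p,\epsilon}(-L_\epsilon)$ (periodicity), so the hyperbolic splittings at the two ends coincide, $M_\epsilon(\lambda)$ leaves invariant (up to exponentially small error) an $n$-dimensional subspace close to $E^u(A(u_{\p,\epsilon}(-L_\epsilon),\lambda))$ on which it amplifies by $\gtrsim e^{\nu L_\epsilon}$, and a complementary one close to $E^s(A(u_{\p,\epsilon}(-L_\epsilon),\lambda))$ on which it contracts by $\lesssim e^{-\nu L_\epsilon}$; these are uniformly transverse by Lemma~\ref{lemlambda0}. Hence $M_\epsilon(\lambda)$ is hyperbolic with $n$ eigenvalues of modulus $\gtrsim e^{\nu L_\epsilon}$ and $n$ of modulus $\lesssim e^{-\nu L_\epsilon}$ and uniformly conditioned eigenprojections, so \eqref{faststab} has an exponential dichotomy on $\R$ for all $\lambda\in\Sigma_{\Lambda,\delta}$ with rate $\gtrsim\nu/2$ and bound $\ord(1)$, both independent of $\epsilon$ and $\lambda$. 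I expect the main obstacle to be the slow half-region analysis: making the slowly varying hyperbolic theory genuinely uniform over intervals of length $\sim\epsilon^{-1}\to\infty$, and verifying that the subspaces it produces match up with the homoclinic dichotomy subspaces on $I_\epsilon$ so that the composition indeed has the claimed clean hyperbolic structure with $\epsilon,\lambda$-uniform conditioning.
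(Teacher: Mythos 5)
Your proposal takes a genuinely different route to the final assembly, and the comparison is instructive. You and the paper agree on the two core ingredients: (i) on the pulse region $I_\epsilon$ the comparison of \eqref{faststab} with the homogeneous fast limit \eqref{redstab} via Remark~\ref{introbetaf} and roughness, with the $\lambda$-uniform bound on the minimal opening/transversality of the \eqref{redstab}-dichotomy subspaces on $\Sigma_{\Lambda,\delta}$ being exactly where the restriction $\lambda\notin\N(\E_{f,0})$ enters; and (ii) away from the pulse, the hyperbolicity of $A(u,\lambda)$ (Lemma~\ref{lemlambda0}) together with the slow $\ord(\epsilon)$ variation of $u_{\p,\epsilon}$ giving a dichotomy via the slowly-varying-coefficient criterion of Proposition~\ref{suffcritexpodi}. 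Where you diverge is in the decomposition and final step. The paper does not cut the period at $\pm\epsilon^{-\rho}$: it first establishes a dichotomy on all of $\R$ for the reduced system \eqref{redfast} (with $v\equiv 0$), then perturbs to \eqref{faststab} on the \emph{shifted} half-periods $[0,2L_\epsilon]$ and $[-2L_\epsilon,0]$ using roughness on $[x_0,2L_\epsilon-x_0]$ and the Extension Lemma~\ref{extensionexpdi}. The two dichotomies are then compared at $x=0$ to \eqref{redstab} to verify a lower bound on the minimal opening (Proposition~\ref{opening}), which is precisely the hypothesis of the Pasting Lemma~\ref{pastingexpdi}; finally, the Periodic Extension Lemma~\ref{periodextensionexpdi} upgrades the dichotomy on $[-2L_\epsilon,2L_\epsilon]$ to a dichotomy on all of $\R$ with $\epsilon,\lambda$-uniform constants. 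Your Floquet route — show the monodromy $M_\epsilon(\lambda)$ is hyperbolic — is morally equivalent but, as written, leaves the hardest step open, and you yourself flag it: you never argue that the block-dominant structures of the three factors $\Phi_\epsilon(L_\epsilon,\epsilon^{-\rho}),\Phi_\epsilon(\epsilon^{-\rho},-\epsilon^{-\rho}),\Phi_\epsilon(-\epsilon^{-\rho},-L_\epsilon)$ compose to a block-dominant product. Since each factor is (approximately) block diagonal with respect to a \emph{slightly different} splitting, composing requires exactly the quantitative transversality at the cut-points that the paper extracts from the minimal opening (and that your ``up to exponentially small error'' elides); an invariant-cone or explicit error-propagation argument would be needed here. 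A second, subtler gap is that hyperbolicity of $M_\epsilon(\lambda)$ with well-conditioned spectral projections is not by itself enough to conclude $\epsilon$-independent dichotomy constants $K_f$: the constant $K_f$ also controls $\sup_{x\in[0,2L_\epsilon]}\|\Phi_\epsilon(x,0,\lambda)P^{s}_\epsilon\|e^{\mu_f x}$, i.e. the intra-period transient, so you must additionally rule out that the solution norms spike by an $\epsilon$-dependent amount inside a period. The Periodic Extension Lemma~\ref{periodextensionexpdi} handles this automatically (it only requires a dichotomy on a window of length $2T$ with uniform constants plus a uniform $\sup_x\|A\|$), which is exactly why the paper prefers that route over Floquet. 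Finally, a small citation point: roughness on a bounded interval is Proposition~\ref{roughnessintervals}, not Proposition~\ref{RoughnessR} (which is the whole-line version).
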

\begin{proof} Our approach is as follows. First, we show that system (\ref{faststab}) has exponential dichotomies on the intervals $[0,2L_\epsilon]$ and $[-2L_\epsilon,0]$. To establish an exponential dichotomy of (\ref{faststab}) on an interval $[a,2L_\epsilon-a]$ for $a > 0$, we regard (\ref{faststab}) as a perturbation of
\begin{align}
\left\{
\begin{array}{rcl} D_2 \partial_x v &=& q \\ \partial_x q &=& \left(\partial_v G (u_{\p,\epsilon}(x),0,\epsilon) + \lambda\right)v \end{array}\right., \ \ \ v,q \in \C^n.\label{redfast}
\end{align}
and apply Proposition \ref{roughnessintervals}. By choosing $a > 0$ independent of $\epsilon$, we can extend the exponential dichotomy to $[0,2L_\epsilon]$ by the extension Lemma \ref{extensionexpdi}. Subsequently, we calculate the minimal opening between the stable and unstable subspaces at $0$ of the two exponential dichotomies of (\ref{faststab}) by comparing system (\ref{faststab}) with the fast homogeneous limit system (\ref{redstab}). We will show that, whenever $\lambda$ is contained in $\Sigma_{\Lambda,\delta}$, this minimal opening is substantial. Therefore, an application of the pasting Lemma \ref{pastingexpdi} gives an exponential dichotomy of (\ref{faststab}) on $[-2L_\epsilon,2L_\epsilon]$. Finally, we apply the periodic extension Lemma \ref{periodextensionexpdi} to yield the result.\\
\\
\textit{An exponential dichotomy for system (\ref{redfast}) on $\R$}\\
We show that the requirements in Proposition \ref{suffcritexpodi} are satisfied. First, by Lemma \ref{lemlambda0} the coefficient matrix $\A_{f,\epsilon}(x,\lambda)$ of system (\ref{redfast}) is hyperbolic on $\R \times \Ce_\Lambda$, with eigenvalues bounded away from the imaginary axis by some constant, which is independent of $\epsilon,x$ and $\lambda$. Second, Remark \ref{S3.4} implies that $\A_{f,\epsilon}$ is uniformly bounded on $\R \times \Sigma_{\Lambda,0}$ and that we have $\frac{\partial }{\partial x}\A_{f,\epsilon}(x,\lambda) = \ord(\sqrt{\epsilon})$ uniformly on $\R \times \Sigma_{\Lambda,0}$. So, by Proposition \ref{suffcritexpodi} system (\ref{redfast}) admits, provided $\epsilon > 0$ is sufficiently small, for $\lambda \in \Sigma_{\Lambda,0}$ an exponential dichotomy on $\R$ with constants $K_{fr},\mu_{fr} > 0$, independent of $\epsilon$ and $\lambda$.\\
\\
\textit{Exponential dichotomies for system (\ref{faststab}) on $[0,2L_\epsilon]$ and on $[-2L_\epsilon,0]$}\\
We want to apply Proposition \ref{roughnessintervals} to system (\ref{redfast}). By assumption \ref{assS3}-3 there exists $K_1,\mu_1 > 0$, independent of $\epsilon$, such that
\begin{align} \|\partial_v G (u_{\p,\epsilon}(x),v_{\p,\epsilon}(x),\epsilon) - \partial_v G (u_{\p,\epsilon}(x),0,\epsilon)\| \leq K_1e^{-\mu_1 \min\{ x,2L_\epsilon - x\}},\label{estimatefast} \end{align}
for $x \in [0,2L_\epsilon]$. Now take $x_0 > 0$, independent of $\epsilon$, such that
\begin{align*}K_1e^{-\mu_1 x_0} < \frac{\mu_{fr}}{36K_{fr}^5}.\end{align*}
Estimate (\ref{estimatefast}) yields by roughness (Proposition \ref{roughnessintervals}) that system (\ref{faststab}) has for every $\lambda \in \Sigma_{\Lambda,0}$ an exponential dichotomy on $[x_0,2L_\epsilon-x_0]$ with constants independent of $\epsilon$ and $\lambda$. By the extension Lemma \ref{extensionexpdi} we can extend the exponential dichotomy of (\ref{faststab}) on $[x_0,2L_\epsilon - x_0]$ to $[0,2L_\epsilon]$ with constants independent of $\epsilon$ and $\lambda$. An analogous treatment provides an exponential dichotomy of (\ref{faststab}) on $[-2L_\epsilon,0]$ with constants independent of $\epsilon$ and $\lambda$. We conclude that (\ref{faststab}) has exponential dichotomies on both $[0,2L_\epsilon]$ and $[-2L_\epsilon,0]$ for every $\lambda \in \Sigma_{\Lambda,0}$ with constants $K_{f0}, \mu_{f0} > 0$, independent of $\epsilon$ and $\lambda$.\\
\\
\textit{Comparison of system (\ref{faststab}) with the homogeneous fast limit system (\ref{redstab})}\\
Take $\lambda \in \Sigma_{\Lambda,0}$. Following Remark \ref{introbetaf} we have for $x \in [-\epsilon^{-\rho},\epsilon^{-\rho}]$
\begin{align} \|\partial_v G (u_{\p,\epsilon}(x),v_{\p,\epsilon}(x),\epsilon) - \partial_v G(u_0,v_{\ho}(x),0)\| = \ord(\epsilon^{\beta_f}). \label{fastest3} \end{align}
Denote by $\T_r(x,y,\lambda)$ and $\T_{f,\epsilon}(x,y,\lambda)$ the evolution operators of (\ref{redstab}) and (\ref{faststab}), respectively. By Remark \ref{S3.4} the coefficient matrix of (\ref{faststab}) can be bounded on $\R$ by some constant $M > 0$, independent of $\epsilon$ and $\lambda$. So, system (\ref{faststab}) has bounded growth with constants $K_3 = 1$ and $\mu_3 = M$. Now choose $\chi = \beta_f/(4M) > 0$. By Lemma \ref{Palmer} and (\ref{fastest3}) we estimate, provided $\epsilon > 0$ is sufficiently small, for all $x,y \in [\chi \log(\epsilon),-\chi\log(\epsilon)]$
\begin{align} \|\T_r(x,y,\lambda) - \T_{f,\epsilon}(x,y,\lambda)\| < 1. \label{fastest4}\end{align}\\
\textit{The minimal opening for system (\ref{redstab})}\\
We recall some facts from Theorem \ref{Sturm} and Corollary \ref{Sturmcor}. First, system (\ref{redstab}) admits for $\lambda \in \Ce_\Lambda$ exponential dichotomies on both half-lines with constants $K_r,\mu_r > 0$, independent of $\lambda$. Second, the corresponding projections $\Proj_{r\pm}(x,\lambda)$ are analytic in $\lambda$. Third, the stable and unstable subspaces $E^s_{r+}(0,\lambda) := \Proj_{r+}(0,\lambda)[\C^{2n}]$ and $E^u_{r-}(0,\lambda) := \ker(\Proj_{r-}(0,\lambda))$ are complementary on $\Sigma_{\Lambda,\delta}$. Therefore, Proposition \ref{opening} implies that the continuous map $\eta_r \colon \Ce_\Lambda \to [0,\infty)$ given by the minimal opening $\eta_r(\lambda) = \eta(E^s_{r+}(0,\lambda),E^u_{r-}(0,\lambda))$ is bounded away from $0$ on $\Sigma_{\Lambda,\delta}$ by some constant $c_{\Lambda,\delta} > 0$.\\
\\
\textit{The minimal opening for system (\ref{faststab})}\\
Take $\lambda \in \Sigma_{\Lambda,0}$. Denote by $\Proj_{f\pm,\epsilon}(x,\lambda)$ the projections corresponding to the exponential dichotomies of (\ref{faststab}) on $[0,2L_\epsilon]$ and on $[-2L_\epsilon,0]$, respectively. By combining estimate (\ref{fastest4}) with Lemma \ref{compdich}, there exists for each $w \in E_{f+,\epsilon}^s(0,\lambda) := \Proj_{f+,\epsilon}(0,\lambda)[\C^{2n}]$ an element $v \in E^s_{r+}(0,\lambda)$ such that
\begin{align} \|v - w\| \leq  (1+K_r\epsilon^{\chi \mu_r})K_{f0}\epsilon^{\chi \mu_{f0}}\|w\|. \label{faststableest}\end{align}
Similarly, there exists for each $w \in E_{f-,\epsilon}^u(0,\lambda) := \ker(\Proj_{f-,\epsilon}(0,\lambda))$ a vector $v \in E^u_{r-}(0,\lambda)$ such that (\ref{faststableest}) holds true. Therefore, Proposition \ref{opening}-3 yields the estimate for $\lambda \in \Sigma_{\Lambda,\delta}$
\begin{align} 0 < c_{\Lambda,\delta} \leq \eta_r(\lambda) \leq \eta(E^s_{f+,\epsilon}(0,\lambda),E^u_{f-,\epsilon}(0,\lambda)) + 4(1+K_r\epsilon^{\chi \mu_r})K_{f0}\epsilon^{\chi \mu_{f0}}. \label{openingest}\end{align}\\
\textit{Application of the Pasting Lemma}\\
By estimate (\ref{openingest}) and Proposition \ref{opening}-2 one deduces that, for $\epsilon > 0$ sufficiently small, $E^s_{f+,\epsilon}(0,\lambda)$ and $E^u_{f-,\epsilon}(0,\lambda)$ are complementary on $\Sigma_{\Lambda,\delta}$. So, the projection $\Proj_{f,\epsilon}(\lambda)$ onto $E_{f+,\epsilon}^s(0,\lambda)$ along $E_{f-,\epsilon}^u(0,\lambda)$ is well-defined on $\Sigma_{\Lambda,\delta}$. Moreover, by Proposition \ref{opening}-1 and (\ref{openingest}) we obtain, provided $\epsilon > 0$ is sufficiently small, the bound,
\begin{align*} \|\Proj_{f,\epsilon}(\lambda)\| \leq \frac{1}{\eta(E^s_{f+,\epsilon}(0,\lambda),E^u_{f-,\epsilon}(0,\lambda))} \leq \frac{2}{c_{\Lambda,\delta}},\end{align*}
for $\lambda \in \Sigma_{\Lambda,\delta}$. Now, by the Pasting Lemma \ref{pastingexpdi} equation (\ref{faststab}) has an exponential dichotomy on $[-2L_\epsilon,2L_\epsilon]$ with constants depending only on $c_{\Lambda,\delta},\mu_{f0}$ and $K_{f0}$. Subsequently, by the periodic extension Lemma \ref{periodextensionexpdi} system (\ref{faststab}) admits an exponential dichotomy on $\R$ with constants $K_f,\mu_f > 0$ depending only on $c_{\Lambda,\delta},\mu_{f0},K_{f0}$ and $M$. Recall that $M > 0$ is a constant, independent of $\epsilon$ and $\lambda$, which bounds the coefficient matrix of (\ref{faststab}) on $\R \times \Sigma_{\Lambda,0}$. We conclude that $\mu_f$ and $K_f$ are independent of $\epsilon$ and $\lambda$.
\end{proof}
For later purposes we treat the following corollary of Theorem \ref{fastexpdi}.
\begin{cor} \label{corinhom}
Let $\Lambda \in (-\Lambda_0,0)$ with $\Lambda_0 > 0$ as in Lemma \ref{lemlambda0}. Take $\delta > 0$. For $\lambda \in \Sigma_{\Lambda,\delta}$, there exists a unique bounded solution $\Psi_{in,\epsilon}(x,\lambda)$ to the inhomogeneous matrix equation,
\begin{align} \partial_x \Psi = \A_{22,\epsilon}(x,\lambda)\Psi + \A_{21,\epsilon}(x), \ \ \ \Psi \in \mathrm{Mat}_{2n \times 2m}(\C), \label{fastinhom2}\end{align}
satisfying for all $x \in [-L_\epsilon,L_\epsilon]$
\begin{align*} \|\Psi_{in,\epsilon}(x,\lambda) - \X_{in}(x,\lambda)\| = \ord(\epsilon^{\beta_f}),\end{align*}
uniformly in $\lambda \in \Sigma_{\Lambda,\delta}$. Here, $\beta_f > 0$ is as in Remark \ref{introbetaf} and $\X_{in}(x,\lambda)$ is the unique solution to (\ref{fastinhom}) established in Theorem \ref{Sturminhom}.
\end{cor}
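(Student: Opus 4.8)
The plan is to construct $\Psi_{in,\epsilon}$ directly from the exponential dichotomy of the fast subsystem furnished by Theorem \ref{fastexpdi}, and then to compare it with $\X_{in}$ by estimating the inhomogeneity of the difference equation they jointly satisfy. First I would settle existence and uniqueness. By Theorem \ref{fastexpdi}, for $\epsilon>0$ small and every $\lambda\in\Sigma_{\Lambda,\delta}$ the fast system (\ref{faststab}) has an exponential dichotomy on $\R$ with $\epsilon$- and $\lambda$-independent constants $K_f,\mu_f>0$; write $G_\epsilon(x,y,\lambda)$ for the associated Green's function, so that $\|G_\epsilon(x,y,\lambda)\|\le K_f e^{-\mu_f|x-y|}$. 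Since $\pih_{\p,\epsilon}$ stays in an $\epsilon$-independent compactum (Remark \ref{S3.4}), $\A_{21,\epsilon}$ is bounded on $\R$ uniformly in $\epsilon$, so Proposition \ref{inhomexpdi} yields a unique bounded solution $\Psi_{in,\epsilon}(\cdot,\lambda)=\int_\R G_\epsilon(\cdot,y,\lambda)\A_{21,\epsilon}(y)\,dy$ of (\ref{fastinhom2}), with $\|\Psi_{in,\epsilon}\|_\infty\le 2K_f\mu_f^{-1}\|\A_{21,\epsilon}\|_\infty$ uniformly in $\epsilon$ and $\lambda$. Likewise $\X_{in}(\cdot,\lambda)$ from Theorem \ref{Sturminhom} is the unique bounded solution of (\ref{fastinhom}), exponentially localized with constants $K_{in}(\lambda),\mu_{in}(\lambda)$ depending continuously on $\lambda$; as $\Sigma_{\Lambda,\delta}$ is bounded and stays at positive distance from the poles $\N(\E_{f,0})$ of $\X_{in}$, these constants may be taken uniform over $\Sigma_{\Lambda,\delta}$.

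Next I would set $\Delta:=\Psi_{in,\epsilon}-\X_{in}$. It is bounded on $\R$ and solves $\partial_x\Delta=\A_{22,\epsilon}(x,\lambda)\Delta+R_\epsilon(x,\lambda)$ with the bounded source $R_\epsilon:=\bigl(\A_{22,\epsilon}(\cdot,\lambda)-\A_{22,0}(\cdot,\lambda)\bigr)\X_{in}(\cdot,\lambda)+\bigl(\A_{21,\epsilon}-\A_{21,0}\bigr)$. Because (\ref{faststab}) has a dichotomy on $\R$, the homogeneous problem has no nonzero bounded solution, hence $\Delta(x)=\int_\R G_\epsilon(x,y,\lambda)R_\epsilon(y,\lambda)\,dy$ and $\|\Delta(x)\|\le K_f\int_\R e^{-\mu_f|x-y|}\|R_\epsilon(y,\lambda)\|\,dy$. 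The heart of the matter is the size of $R_\epsilon$. On the pulse region $I_\epsilon=[-\epsilon^{-\rho},\epsilon^{-\rho}]$ of Remark \ref{introbetaf} one has $\|(u_{\p,\epsilon},v_{\p,\epsilon})-(u_0,v_{\ho})\|=\ord(\epsilon^{\beta_f})$, so the $C^2$-regularity in \ref{assS1} gives $\|\A_{22,\epsilon}(\cdot,\lambda)-\A_{22,0}(\cdot,\lambda)\|,\,\|\A_{21,\epsilon}-\A_{21,0}\|=\ord(\epsilon^{\beta_f})$ (the $\lambda$-term cancels, exactly as in estimate (\ref{fastest3})); together with the uniform bound on $\|\X_{in}\|$ this yields $\|R_\epsilon(x,\lambda)\|=\ord(\epsilon^{\beta_f})$ on $I_\epsilon$, uniformly in $\lambda$. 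Off $I_\epsilon$ I would use exponential localization: $G(u,0,\epsilon)\equiv 0$ and $G(u_0,0,0)=0$ (\ref{assS1}) force $\|\A_{21,\epsilon}(x)\|\le C\,d(\pih_{\p,\epsilon}(x),\M)$ and $\|\A_{21,0}(x)\|\le C\|v_{\ho}(x)\|$, which by (\ref{slowestS31}) together with $2L_\epsilon$-periodicity and by Remark \ref{homoclin} are bounded by $Ce^{-\beta_s\,\mathrm{dist}(x,2L_\epsilon\Z)}$ and $Ce^{-\mu|x|}$; combined with $\|\X_{in}(x,\lambda)\|\le K_{in}e^{-\mu_{in}|x|}$ this shows $\|R_\epsilon(x,\lambda)\|\le Ce^{-c\,\mathrm{dist}(x,2L_\epsilon\Z)}$ on $\R\setminus I_\epsilon$ with $C,c>0$ uniform — exponentially small in $\epsilon$ throughout $[-L_\epsilon,L_\epsilon]\setminus I_\epsilon$, and of order one only in narrow windows around the translated pulses $2kL_\epsilon$, $k\in\Z\setminus\{0\}$.

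Finally, for $x\in[-L_\epsilon,L_\epsilon]$ I would split $\int_\R e^{-\mu_f|x-y|}\|R_\epsilon(y,\lambda)\|\,dy$ according to $y\in I_\epsilon$, $y\in[-L_\epsilon,L_\epsilon]\setminus I_\epsilon$, and $|y|>L_\epsilon$. The first piece is $\le 2\mu_f^{-1}\cdot\ord(\epsilon^{\beta_f})$; the second is $\le 2\mu_f^{-1}Ce^{-c\epsilon^{-\rho}}$ because there $\mathrm{dist}(y,2L_\epsilon\Z)=|y|>\epsilon^{-\rho}$; the third decomposes over the windows near $2kL_\epsilon$, each weighted by $e^{-\mu_f(|y|-|x|)}\le e^{-\mu_f(2|k|-1)L_\epsilon}$, summing to a convergent geometric series of size $\ord(e^{-\mu_f L_\epsilon})$. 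Since $L_\epsilon=\epsilon^{-1}\check L_\epsilon\gg\epsilon^{-\rho}$, all of these error terms are $o(\epsilon^N)$ for every $N$, hence $\ord(\epsilon^{\beta_f})$, and the estimate is uniform in $\lambda\in\Sigma_{\Lambda,\delta}$ by the uniformity of $K_f,\mu_f$ and of the localization constants of $\X_{in}$. I expect the main obstacle to be precisely this final bookkeeping: controlling the Green's-function contributions coming from the far-away pulses of the $2L_\epsilon$-periodic pattern, where $R_\epsilon$ is merely $\ord(1)$, while keeping every constant independent of $\lambda$ and $\epsilon$; the leverage throughout is the scale separation $L_\epsilon\gg\epsilon^{-\rho}$, which lets either the dichotomy weight or the exponential smallness of $R_\epsilon$ absorb the polynomial-in-$\epsilon^{-1}$ length of the integration domain.
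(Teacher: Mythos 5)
Your argument is correct and rests on the same three pillars as the paper's: the $\epsilon$- and $\lambda$-uniform exponential dichotomy of \eqref{faststab} on $\R$ from Theorem \ref{fastexpdi}, the $\ord(\epsilon^{\beta_f})$-closeness of the coefficients to their singular limits on the pulse region $I_\epsilon$ from Remark \ref{introbetaf}, and exponential localization of $\X_{in}$ from Theorem \ref{Sturminhom}-3. The difference is organizational. You estimate $\Delta=\Psi_{in,\epsilon}-\X_{in}$ in one shot, writing it as the Green's-function integral $\int_\R G_\epsilon(x,y,\lambda)R_\epsilon(y,\lambda)\,dy$ and splitting the integration domain into the pulse region, the remainder of $[-L_\epsilon,L_\epsilon]$, and the neighbourhoods of the shifted pulses $2kL_\epsilon$, $k\neq 0$, absorbing the latter by the exponential decay of $G_\epsilon$. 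The paper instead applies the pre-packaged comparison estimate of Proposition \ref{inhomexpdi} on the finite window $[-\epsilon^{-\rho},\epsilon^{-\rho}]$ to get the bound on the interior $[-\tfrac12\epsilon^{-\rho},\tfrac12\epsilon^{-\rho}]$, then separately establishes exponential localization of $\Psi_{in,\epsilon}$ itself on $[-L_\epsilon,L_\epsilon]$ (using $\|\A_{21,\epsilon}(x)\|\le K_1 e^{-\mu_1|x|}$), and finally extends the comparison to all of $[-L_\epsilon,L_\epsilon]$ by noting that both $\Psi_{in,\epsilon}$ and $\X_{in}$ are exponentially small outside the inner window. Your direct route requires you to handle the far-field contributions from the shifted pulses explicitly (the geometric-series estimate), whereas the paper's modular route hides this in the localization bound for $\Psi_{in,\epsilon}$; both are clean and arrive at the same conclusion with the same rate $\epsilon^{\beta_f}$.
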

\begin{proof}
Let $\lambda \in \Sigma_{\Lambda,\delta}$. First, system (\ref{faststab}) has by Theorem \ref{fastexpdi} an exponential dichotomy on $\R$ with constants $K_f,\mu_f > 0$, independent of $\epsilon$ and $\lambda$. Second, by Remark \ref{introbetaf} we have for $x \in [-\epsilon^{-\rho},\epsilon^{-\rho}]$
\begin{align*} \|\partial_v G (u_{\p,\epsilon}(x),v_{\p,\epsilon}(x),\epsilon) - \partial_v G(u_0,v_{\ho}(x),0)\|, \|\partial_u G (u_{\p,\epsilon}(x),v_{\p,\epsilon}(x),\epsilon) - \partial_u G(u_0,v_{\ho}(x),0)\| = \ord(\epsilon^{\beta_f}).\end{align*}
Now we apply Proposition \ref{inhomexpdi} to the inhomogeneous equations (\ref{fastinhom}) and (\ref{fastinhom2}): there exists a unique bounded solution $\Psi_{in,\epsilon}(x,\lambda)$ of equation (\ref{fastinhom2}) satisfying for all $x \in [-\tfrac{1}{2}\epsilon^{-\rho},\tfrac{1}{2}\epsilon^{-\rho}]$
\begin{align}
\|\Psi_{in,\epsilon}(x,\lambda) - \X_{in}(x,\lambda)\| = \ord(\epsilon^{\beta_f}), \label{inhomest3}
\end{align}
uniformly in $\lambda \in \Sigma_{\Lambda,\delta}$. Here, we have used $K_f$ and $\mu_f$ do not depend on $\epsilon$ and $\lambda$ and $\X_{in}(\cdot,\lambda)$ is bounded on $\R$ uniformly in $\lambda \in \Sigma_{\Lambda,\delta}$ by Theorem \ref{Sturminhom}-3. Note that $G$ vanishes at $v = 0$ by \ref{assS1}. Hence, by \ref{assS3}-3 there exists $K_1,\mu_1 > 0$, independent of $\epsilon$, such that for $x \in [-L_\epsilon,L_\epsilon]$ it holds
\begin{align} \|\partial_u G(u_{\p,\epsilon}(x),v_{\p,\epsilon}(x),\epsilon)\| \leq K_1e^{-\mu_1|x|}. \label{inhomest5}\end{align}
Combing estimate (\ref{inhomest5}) with Proposition \ref{inhomexpdi} implies that there exists $K_2,\mu_2 > 0$, independent of $\epsilon$ and $\lambda$, such that \begin{align} \|\Psi_{in,\epsilon}(x,\lambda)\| \leq K_2e^{-\mu_2|x|}, \label{inhomest4}\end{align}
for all $x \in [-L_\epsilon,L_\epsilon]$. Estimate (\ref{inhomest4}) together with the estimate in Theorem \ref{Sturminhom}-3 show that (\ref{inhomest3}) actually holds for all $x \in [-L_\epsilon,L_\epsilon]$ uniformly in $\lambda \in \Sigma_{\Lambda,\delta}$.
\end{proof}
\subsection{Factorization of the Evans function via the Riccati transform}\label{4.4}
As mentioned in Section \ref{Approach}, an application of the Riccati transform enables us to reduce our linear stability problem (\ref{fullstab}) into two subproblems. This yields the factorization of the Evans function $\E_\epsilon$ into two factors $\E_{s,\epsilon}$ and $\E_{f,\epsilon}$, which can be linked to $\E_{f,0}$ and $\E_{s,0}$, respectively. The Riccati transformation is employed in the following theorem.
\begin{theo} \label{ricfac}
Let $\Lambda \in (-\Lambda_0,0)$ with $\Lambda_0 > 0$ as in Lemma \ref{lemlambda0}. Take $\delta > 0$. For $\epsilon > 0$ sufficiently small, there exists a function $U_\epsilon \colon \R \times \Sigma_{\Lambda,\delta} \to \mathrm{Mat}_{2m \times 2n}(\C)$ such that for all $\lambda \in \Sigma_{\Lambda,\delta}$ and $\gamma \in \C$ we have the factorization,
\begin{align*}
\E_\epsilon(\lambda,\gamma) = \E_{s,\epsilon}(\lambda,\gamma)\E_{f,\epsilon}(\lambda,\gamma),
\end{align*}
with $\E_{s,\epsilon}, \E_{f,\epsilon} \colon \Sigma_{\Lambda,\delta} \times \C \to \C$ given by
\begin{align*}
\E_{s,\epsilon}(\lambda,\gamma) &:= \det(\T_{sd,\epsilon}(0,-L_\epsilon,\lambda) - \gamma \T_{sd,\epsilon}(0,L_\epsilon,\lambda)),\\
\E_{f,\epsilon}(\lambda,\gamma) &:= \det(\T_{fd,\epsilon}(0,-L_\epsilon,\lambda) - \gamma \T_{fd,\epsilon}(0,L_\epsilon,\lambda)).
\end{align*}
Here, $\T_{sd,\epsilon}(x,y,\lambda)$ is the evolution of system,
\begin{align} \partial_x \chi = \sqrt{\epsilon}(\A_{11,\epsilon}(x,\lambda) + \A_{12,\epsilon}(x)U_\epsilon(x,\lambda))\chi,  \ \ \ \chi \in \C^{2m}, \label{slowdiag}\end{align}
and $\T_{fd,\epsilon}(x,y,\lambda)$ is the evolution of system,
\begin{align}
\partial_x \omega = (\A_{22,\epsilon}(x,\lambda) - \sqrt{\epsilon} U_\epsilon(x,\lambda)\A_{12,\epsilon}(x))\omega, \ \ \ \omega \in \C^{2n}. \label{fastdiag}
\end{align}
Let $\beta_f,\rho > 0$ as in Remark \ref{introbetaf}. $U_\epsilon$ has the following properties:
\begin{enumerate}
 \item $U_\epsilon(\cdot,\lambda)$ is bounded uniformly in $\epsilon$ and $\lambda$ on $\R$;
 \item $U_\epsilon(\cdot,\lambda)$ is $2L_\epsilon$-periodic for each $\lambda \in \Sigma_{\Lambda,\delta}$;
 \item For all $x \in [-L_\epsilon,L_\epsilon]$ it holds uniformly in $\lambda \in \Sigma_{\Lambda,\delta}$
 \begin{align*} \|U_\epsilon(x,\lambda) - \X_{in}(x,\lambda)\| = \ord(\epsilon^{\beta_f}).\end{align*}
 Here, $\X_{in}(x,\lambda)$ is the unique solution to (\ref{fastinhom}) obtained in Theorem \ref{Sturminhom};
 \item For all $x \in [-L_\epsilon,L_\epsilon] \!\setminus\! I_\epsilon$ with $I_\epsilon := [-\epsilon^{-\rho},\epsilon^{-\rho}]$ we have $\|U_\epsilon(x,\lambda)\| = \ord(\epsilon^3)$, uniformly in $\lambda \in \Sigma_{\Lambda,\delta}$.
\end{enumerate}
\end{theo}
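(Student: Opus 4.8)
The plan is to apply the Riccati transformation of Theorem~\ref{ric} to the linear stability problem~(\ref{fullstab}) for each fixed $\lambda\in\Sigma_{\Lambda,\delta}$, and then to read off the factorization and the four properties of $U_\epsilon$ from the conclusions of that theorem, combined with Theorem~\ref{fastexpdi} and Corollary~\ref{corinhom}. First I would note that~(\ref{fullstab}) is exactly of the slow--fast form~(\ref{ric1}) with $\eps=\sqrt{\epsilon}$, $n_1=2m$, $n_2=2n$ and coefficient matrices $A_{11}=\A_{11,\epsilon}(\cdot,\lambda)$, $A_{12}=\A_{12,\epsilon}$, $A_{21}=\A_{21,\epsilon}$, $A_{22}=\A_{22,\epsilon}(\cdot,\lambda)$. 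By Remark~\ref{S3.4} and boundedness of $\Sigma_{\Lambda,\delta}$ these matrices lie in $C_b$ with bounds uniform in $\epsilon$ and $\lambda$, and Theorem~\ref{fastexpdi} provides, for $\epsilon>0$ small and every $\lambda\in\Sigma_{\Lambda,\delta}$, an exponential dichotomy of~(\ref{faststab}) on $\R$ with constants $K_f,\mu_f$ independent of $\epsilon$ and $\lambda$ --- precisely the hypothesis of Theorem~\ref{ric}. Applying Theorem~\ref{ric} produces $U_\epsilon(\cdot,\lambda)$ and $S_\epsilon(\cdot,\lambda)$ solving the Riccati equations~(\ref{ric5}) so that the transform $H_\epsilon$ of~(\ref{ric7}) conjugates~(\ref{fullstab}) into the block-diagonal system made up of~(\ref{slowdiag}) and~(\ref{fastdiag}); moreover, property~1 is immediate from the bound $\|U_\epsilon\|\le 8K_f\mu_f^{-1}\|\A_{21,\epsilon}\|$ of Theorem~\ref{ric}-1 (with $\|\A_{21,\epsilon}\|$ uniformly bounded and $\lambda$-independent), and property~2 follows from Theorem~\ref{ric}-5, since $u_{\p,\epsilon},v_{\p,\epsilon}$ --- hence all $\A_{ij,\epsilon}(\cdot,\lambda)$ --- are $2L_\epsilon$-periodic.

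For the factorization I would write $\T_\epsilon(x,y,\lambda)=H_\epsilon(x)\,\T_{d,\epsilon}(x,y,\lambda)\,H_\epsilon(y)^{-1}$, where $\T_{d,\epsilon}:=\mathrm{diag}(\T_{sd,\epsilon},\T_{fd,\epsilon})$ is the evolution of the block-diagonal system, use the $2L_\epsilon$-periodicity of $H_\epsilon$ to get $H_\epsilon(-L_\epsilon)=H_\epsilon(L_\epsilon)$, and observe that $\det H_\epsilon(x)\equiv1$ by a direct block computation. Substituting into the definition of $\E_\epsilon$ then gives
\begin{align*}
\E_\epsilon(\lambda,\gamma)=\det\bigl(\T_{d,\epsilon}(0,-L_\epsilon,\lambda)-\gamma\,\T_{d,\epsilon}(0,L_\epsilon,\lambda)\bigr),
\end{align*}
which factorizes along the block-diagonal structure into $\E_{s,\epsilon}(\lambda,\gamma)\,\E_{f,\epsilon}(\lambda,\gamma)$.

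Property~3 I would obtain by identifying the unique bounded solution $\Omega_\epsilon$ of the inhomogeneous equation in Theorem~\ref{ric}-3 with the solution $\Psi_{in,\epsilon}(\cdot,\lambda)$ of Corollary~\ref{corinhom}. Then $\|U_\epsilon-\Psi_{in,\epsilon}\|\le C\epsilon^{(1-\theta)/2}$ on $\R$ by Theorem~\ref{ric}-3 and $\|\Psi_{in,\epsilon}-\X_{in}\|=\ord(\epsilon^{\beta_f})$ on $[-L_\epsilon,L_\epsilon]$ by Corollary~\ref{corinhom}; choosing $\theta\in(0,1)$ with $(1-\theta)/2\ge\beta_f$ (which may be arranged after replacing $\beta_f$ by $\min\{\beta_f,\tfrac12\}$) yields $\|U_\epsilon(\cdot,\lambda)-\X_{in}(\cdot,\lambda)\|=\ord(\epsilon^{\beta_f})$ there, uniformly in $\lambda$ since all the relevant constants are $\lambda$-uniform on the bounded set $\Sigma_{\Lambda,\delta}$.

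The real work, and the step I expect to be the main obstacle, is property~4. Since $G(u,0,\epsilon)=0$ by~\ref{assS1} we have $\partial_u G(u,0,\epsilon)=0$, so assumption~\ref{assS3}-3 together with Remark~\ref{S3.4} gives $\|\A_{21,\epsilon}(y)\|=\ord(e^{-\beta_s\,\mathrm{dist}(y,2L_\epsilon\Z)})$; in particular $\A_{21,\epsilon}$ is super-polynomially small in $\epsilon$ on $\{y:\mathrm{dist}(y,2L_\epsilon\Z)\ge\tfrac12\epsilon^{-\rho}\}$, a set which contains $[-L_\epsilon,L_\epsilon]\setminus I_\epsilon$ together with a neighborhood of width $\sim|\log\epsilon|$. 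I would then bootstrap the self-improving estimate~(\ref{ric10}) of Theorem~\ref{ric}-4: taking the window $a=a(\epsilon)\sim|\log\epsilon|$ so that $Ce^{-a\mu_f/2}=\ord(\epsilon^{4})$, (\ref{ric10}) reads $\|U_\epsilon(x,\lambda)\|\lesssim\sqrt{\epsilon}\,\|\A_{12,\epsilon}\|\sup_{[x-a,x+a]}\|U_\epsilon(\cdot,\lambda)\|^2+\ord(\epsilon^{4})$ for $x$ at distance $\ge\tfrac12\epsilon^{-\rho}$ from $2L_\epsilon\Z$. Starting from the $\ord(1)$ bound of property~1 and iterating this inequality over the decreasing family of sets $\{\mathrm{dist}(\cdot,2L_\epsilon\Z)\ge\tfrac12\epsilon^{-\rho}+ka\}$, $k=0,1,2,3$, improves the exponent $0\to\tfrac12\to\tfrac32\to\tfrac72$, so that after three iterations $\|U_\epsilon(x,\lambda)\|=\ord(\epsilon^{7/2})=\ord(\epsilon^{3})$; since $a=\ord(|\log\epsilon|)\ll\epsilon^{-\rho}$, these three iterations already cover all of $[-L_\epsilon,L_\epsilon]\setminus I_\epsilon$, with all constants $\lambda$-uniform. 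The delicate points are the bookkeeping of the shrinking sets and checking that each invocation of~(\ref{ric10}) remains inside the region on which $\A_{21,\epsilon}$ is negligibly small.
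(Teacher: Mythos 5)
Your proposal is correct and follows essentially the same route as the paper's proof: apply Theorem~\ref{ric} (with $\eps=\sqrt{\epsilon}$, $n_1=2m$, $n_2=2n$) after invoking Theorem~\ref{fastexpdi} for the required exponential dichotomy, obtain the factorization from $\det H_\epsilon\equiv 1$ and the periodicity of $H_\epsilon$, and deduce properties~1--4 from Theorem~\ref{ric}-1,5,3,4 together with Corollary~\ref{corinhom} and the exponential localization of $\A_{21,\epsilon}$. Your bootstrap ($0\to\tfrac12\to\tfrac32\to\tfrac72$) for property~4 is exactly the ``three successive approximations with Theorem~\ref{ric}-4'' that the paper asserts but leaves to the reader, so you have filled in a detail the paper compresses. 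One small slip: since $(1-\theta)/2<\tfrac12$ strictly for every $\theta\in(0,1)$, replacing $\beta_f$ by $\min\{\beta_f,\tfrac12\}$ does not guarantee $(1-\theta)/2\ge\beta_f$ is achievable; you need a threshold strictly below $\tfrac12$, e.g. $\min\{\beta_f,\tfrac14\}$ as the paper uses (taking $\theta=\tfrac12$).
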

\begin{proof} Fix $\lambda \in \Sigma_{\Lambda,\delta}$. System (\ref{fullstab}) is clearly of the form (\ref{ric1}) with coefficient matrices that are uniformly bounded in $\epsilon$ on $\R$ by Remark \ref{S3.4}. Furthermore, by Theorem \ref{fastexpdi} system (\ref{faststab}) has an exponential dichotomy on $\R$ with constants $K_f,\mu_f > 0$, independent of $\epsilon$ and $\lambda$. Hence, we can apply the Riccati transform from Theorem \ref{ric} to (\ref{fullstab}). Let $H_\epsilon(x,\lambda) \in \mathrm{Mat}_{2(m+n) \times 2(m+n)}(\C)$ and $U_\epsilon(x,\lambda) \in \mathrm{Mat}_{2m \times 2n}(\C)$ be as in Theorem \ref{ric}. By Theorem \ref{ric}-2 the change of variables $\phi(x) = H_\epsilon(x,\lambda)\psi(x)$ transforms (\ref{fullstab}) into the diagonal system,
\begin{align} \partial_x \psi = \left(\begin{array}{cc}  \sqrt{\epsilon}(\A_{11,\epsilon}(x,\lambda) + \A_{12,\epsilon}(x)U_\epsilon(x,\lambda)) & 0 \\ 0 & \A_{22,\epsilon}(x,\lambda) - \sqrt{\epsilon} U_\epsilon(x,\lambda)\A_{12,\epsilon}(x)\end{array}\right)\psi, \label{diagonal}\end{align}
with $\psi \in \C^{2(m+n)}$. The evolution $\T_{d,\epsilon}(x,y,\lambda)$ of system (\ref{diagonal}) is a block diagonal matrix with consecutively $\T_{sd,\epsilon}(x,y,\lambda)$ and $\T_{fd,\epsilon}(x,y,\lambda)$
on the diagonal. Furthermore, $H_\epsilon(\cdot,\lambda)$ and $U_\epsilon(\cdot,\lambda)$ are $2L_\epsilon$-periodic by Theorem \ref{ric}-5. Finally, as a product of two triangular matrices with only ones on the diagonal, the determinant of $H_\epsilon(x,\lambda)$ equals $1$ for every $x \in \R$. This enables us to factorize the Evans functions $\E_\epsilon$ as follows
\begin{align*} \E_\epsilon(\lambda,\gamma) &= \det\left(H_\epsilon(0,\lambda)\left[\T_{d,\epsilon}(0,-L_\epsilon,\lambda) - \gamma \T_{d,\epsilon}(0,L_\epsilon,\lambda)\right]H_\epsilon(L_\epsilon,\lambda)^{-1}\right) = \E_{s,\epsilon}(\lambda,\gamma)\E_{f,\epsilon}(\lambda,\gamma).
\end{align*}\\
\textit{Properties of $U_\epsilon$}\\
The first two properties are immediate by Theorem \ref{ric}-1 and Theorem \ref{ric}-5. Let $\Psi_{in,\epsilon}(x,\lambda)$ be the unique solution to (\ref{fastinhom2}). By Theorem \ref{ric}-3 and Corollary \ref{corinhom} it holds for $x \in [-L_\epsilon,L_\epsilon]$
\begin{align*}\begin{split}\|U_\epsilon(x,\lambda) - \X_{in}(x,\lambda)\| &\leq \|U_\epsilon(x,\lambda) - \Psi_{in,\epsilon}(x,\lambda)\| + \|\Psi_{in,\epsilon}(x,\lambda) - \X_{in}(x,\lambda)\| \\
&= \ord(\epsilon^{\min\{1/4,\beta_f\}}),\end{split}\end{align*}
uniformly in $\lambda \in \Sigma_{\Lambda,\delta}$. Here, we have used that $K_f$ and $\mu_f$ are independent of $\epsilon$ and $\lambda$. This settles the third property, since we may without loss of generality assume $\beta_f \leq 1/4$. For the fourth property we use the method of successive approximation. Note that $G$ vanishes at $v = 0$ by \ref{assS1}. Hence, by \ref{assS3}-3 there exists $K_1,\mu_1 > 0$, independent of $\epsilon$, such that $\|\A_{21,\epsilon}(x)\| \leq K_1e^{-\mu_1 |x|}$ for $x \in [-L_\epsilon,L_\epsilon]$. Using the latter two lines, we approximate $U_\epsilon(x,\lambda)$ successively for three times with Theorem \ref{ric}-4. This yields $\|U_\epsilon(x,\lambda)\| = \ord(\epsilon^3)$ for $x \in [-L_\epsilon,L_\epsilon] \!\setminus\! I_\epsilon$ uniformly in $\lambda \in \Sigma_{\Lambda,\delta}$. Here, we have used again that $K_f$ and $\mu_f$ are independent of $\epsilon$ and $\lambda$.
\end{proof}
\subsection{The factors of the Evans function and the three singular limit problems} \label{4.rela}
We derive from Theorem \ref{ricfac} that our linear stability problem (\ref{fullstab}) diagonalizes into two subproblems (\ref{slowdiag}) and (\ref{fastdiag}). This diagonalization yields the splitting of the Evans function $\E_\epsilon$ into two factors $\E_{s,\epsilon}$ and $\E_{f,\epsilon}$. By relating (\ref{slowdiag}) to the slow limit problem (\ref{slowintr}) and (\ref{fastdiag}) to the homogeneous fast limit problem (\ref{redstab}), we can link $\E_{s,\epsilon}$ to the slow reduced Evans function $\E_{s,0}$ and $\E_{f,\epsilon}$ to the fast reduced Evans function $\E_{f,0}$.\\
\\
We start with the relation between (\ref{slowintr}) and (\ref{slowdiag}). Note that the transformation matrix $U_\epsilon$ is related to the fast inhomogeneous limit problem (\ref{fastinhom}) by Theorem \ref{ricfac}-3. This is how problem (\ref{fastinhom}) merges into the slow reduced Evans function $\E_{s,0}$ via system (\ref{slowdiag}). This is reflected in the proof of the following result, which links $\E_{s,\epsilon}$ to the slow reduced Evans function $\E_{s,0}$.
\begin{lem} \label{Evansslowestimate}
Let $\Lambda \in (-\Lambda_0,0)$ with $\Lambda_0 > 0$ as in Lemma \ref{lemlambda0}. Take $\delta > 0$. Let $\E_{s,\epsilon}$ be as in Theorem \ref{ricfac}. For $\epsilon > 0$ sufficiently small, there exists $\mu_s > 0$, independent of $\epsilon$ and $\lambda$, such that we have the following estimate,
\begin{align*} \E_{s,\epsilon}(\lambda,\gamma) = \E_{s,0}(\lambda,\gamma) + \ord(\epsilon^{\mu_s}),\end{align*}
uniformly in $\lambda \in \Sigma_{\Lambda,\delta}$ and $\gamma \in S^1$.
\end{lem}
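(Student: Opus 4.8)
Throughout, all estimates are meant uniformly in $\lambda\in\Sigma_{\Lambda,\delta}$ and $\gamma\in S^1$, which is legitimate since $\Sigma_{\Lambda,\delta}$ is bounded, the properties in Theorem~\ref{ricfac} are $\lambda$-uniform, and $\V_{in}(\cdot,\lambda)$ decays exponentially with $\lambda$-uniform rate on $\Sigma_{\Lambda,\delta}$ by Theorem~\ref{Sturminhom}-3. The plan is to use the $2L_\epsilon$-periodicity of $U_\epsilon$ (hence of the coefficient matrix of \eqref{slowdiag}) to express $\E_{s,\epsilon}(\lambda,\gamma)$ through a period monodromy, and then to pass to the singular limit by separating a pulse layer from a slow region inside one period.

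First I would rewrite the factor. By Theorem~\ref{ricfac}-2 and the cocycle property, $\T_{sd,\epsilon}(0,-L_\epsilon,\lambda)=\T_{sd,\epsilon}(2L_\epsilon,0,\lambda)\T_{sd,\epsilon}(0,L_\epsilon,\lambda)$, so
\[ \E_{s,\epsilon}(\lambda,\gamma)=\det\!\big(N_\epsilon(\lambda)-\gamma I\big)\det\!\big(\T_{sd,\epsilon}(0,L_\epsilon,\lambda)\big),\qquad N_\epsilon(\lambda):=\T_{sd,\epsilon}(2L_\epsilon,0,\lambda). \]
Since $\A_{11,\epsilon}$ is block-antidiagonal, $\mathrm{tr}\,\A_{11,\epsilon}\equiv0$, and Liouville's formula gives $\det(\T_{sd,\epsilon}(0,L_\epsilon,\lambda))=\exp(-\sqrt{\epsilon}\int_0^{L_\epsilon}\mathrm{tr}(\A_{12,\epsilon}U_\epsilon)\,dx)$. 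As $\A_{12,\epsilon}$ is uniformly bounded, and $U_\epsilon=\X_{in}+\ord(\epsilon^{\beta_f})$ on $[-L_\epsilon,L_\epsilon]$ with $\X_{in}$ exponentially localized (Theorems~\ref{ricfac}-3, \ref{Sturminhom}-3) while $U_\epsilon=\ord(\epsilon^3)$ off the pulse region $I_\epsilon$ (Theorem~\ref{ricfac}-4), the integral is $\ord(1)$, so this determinant equals $1+\ord(\sqrt{\epsilon})$. It remains to prove $\det(N_\epsilon(\lambda)-\gamma I)=\E_{s,0}(\lambda,\gamma)+\ord(\epsilon^\mu)$.

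Next I would conjugate by the rescaling $S_\epsilon:=\mathrm{diag}(I_m,\sqrt{\epsilon}\,I_m)$, which leaves $\det(\,\cdot\,-\gamma I)$ invariant, and pass to the slow variable $\xx=\epsilon x$. A block computation shows $S_\epsilon^{-1}N_\epsilon(\lambda)S_\epsilon=\widetilde{\T}_\epsilon(2\check{L}_\epsilon,0,\lambda)$, where $\widetilde{\T}_\epsilon$ is the evolution of $\partial_\xx\chi=\widetilde{\A}_\epsilon(\xx,\lambda)\chi$ with (arguments evaluated at $\xx/\epsilon$)
\[ \widetilde{\A}_\epsilon(\xx,\lambda)=\begin{pmatrix} 0 & D_1^{-1}\\[2pt] \partial_u H_1(u_{\p,\epsilon},v_{\p,\epsilon},\epsilon)+\lambda+\epsilon^{-1}\K_\epsilon & \epsilon^{-1/2}B_\epsilon\,(U_\epsilon)_{12}\end{pmatrix},\qquad B_\epsilon:=\partial_v H_2+\epsilon\,\partial_v H_1, \]
$\K_\epsilon:=\partial_u H_2(u_{\p,\epsilon},v_{\p,\epsilon})+B_\epsilon(U_\epsilon)_{11}$, and $(U_\epsilon)_{ij}$ the $(n\times m)$-blocks of $U_\epsilon$. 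I would split $[0,2\check{L}_\epsilon]$ into the slow region $[\epsilon^{1-\rho},2\check{L}_\epsilon-\epsilon^{1-\rho}]$ and the two pulse halves at the endpoints, the pulse sitting at $\xx\in\{0,2\check{L}_\epsilon\}$. On the slow region estimate \eqref{slowestS31} (together with $\M=\{v=q=0\}$, which holds because $G$ vanishes at $v=0$, so that $d(\pih_{\p,\epsilon}(x),\M)=\|(v_{\p,\epsilon}(x),q_{\p,\epsilon}(x))\|$) forces $v_{\p,\epsilon}(\xx/\epsilon)$, hence $\partial_u H_2(u_{\p,\epsilon},v_{\p,\epsilon})$, to be exponentially small, while \eqref{slowestS32} gives $u_{\p,\epsilon}(\xx/\epsilon)=u_\s(\xx)+\ord(\epsilon)$ and Theorem~\ref{ricfac}-4 gives $U_\epsilon=\ord(\epsilon^3)$; hence $\widetilde{\A}_\epsilon(\xx,\lambda)=\A_s(\xx,\lambda)+\ord(\epsilon)$ uniformly, where $\A_s$ is the coefficient matrix of the slow limit problem \eqref{slowintr}. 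Gronwall's inequality then yields $\widetilde{\T}_\epsilon(2\check{L}_\epsilon-\epsilon^{1-\rho},\epsilon^{1-\rho},\lambda)=\T_s(2\check{L}_0,0,\lambda)+\ord(\epsilon^{\mu''})$, using $\rho<1$ and that $\check{L}_\epsilon=\check{L}_0+\ord(\epsilon)$ (which is available from the existence theory underlying \ref{assS3}).

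The hard part will be the pulse layer: showing $P_\epsilon(\lambda):=\widetilde{\T}_\epsilon(\epsilon^{1-\rho},-\epsilon^{1-\rho},\lambda)\to\Upsilon(\lambda)=\left(\begin{smallmatrix} I & 0\\ \G(\lambda) & I\end{smallmatrix}\right)$ uniformly in $\lambda$, despite the $\epsilon^{-1}$-spike in $\widetilde{\A}_\epsilon$. I would compare $\widetilde{\T}_\epsilon$ on this layer with the flow of the nilpotent system $\partial_\xx\chi=\left(\begin{smallmatrix}0&0\\ \epsilon^{-1}\K_\epsilon&0\end{smallmatrix}\right)\chi$, whose monodromy is exactly $\left(\begin{smallmatrix}I&0\\ \int_{-\epsilon^{1-\rho}}^{\epsilon^{1-\rho}}\epsilon^{-1}\K_\epsilon(s,\lambda)\,ds&I\end{smallmatrix}\right)$. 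The key preliminary bound is that $\int\epsilon^{-1}\|\K_\epsilon\|$ over the pulse layer is uniformly bounded — after substituting $\xx=\epsilon x$ this is $\int_{-\epsilon^{-\rho}}^{\epsilon^{-\rho}}\|\partial_u H_2(u_{\p,\epsilon},v_{\p,\epsilon})+B_\epsilon(U_\epsilon)_{11}\|\,dx$, controlled by Remark~\ref{introbetaf}, Theorem~\ref{ricfac}-3, and the exponential localization of $v_\ho$ and $\V_{in}$ — so both monodromies are uniformly bounded and a variation-of-constants/Gronwall estimate (the remaining, off-spike entries of $\widetilde{\A}_\epsilon$ have $\ord(\epsilon^{1-\rho})$-small integral over the length-$\ord(\epsilon^{1-\rho})$ layer) gives $P_\epsilon(\lambda)=\left(\begin{smallmatrix}I&0\\ \int_{-\epsilon^{-\rho}}^{\epsilon^{-\rho}}\K_\epsilon(\epsilon x,\lambda)\,dx&I\end{smallmatrix}\right)+\ord(\epsilon^{1-\rho})$. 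Replacing $u_{\p,\epsilon}(x),v_{\p,\epsilon}(x),(U_\epsilon)_{11}(x,\lambda)$ by $u_0,v_\ho(x),\V_{in}(x,\lambda)$ (errors $\ord(\epsilon^{\beta_f})$ by Remark~\ref{introbetaf} and Theorem~\ref{ricfac}-3, integrating to $\ord(\epsilon^{\beta_f-\rho})$, where $\rho$ is taken small enough that $\rho<\beta_f$) and extending the integral to $\R$ (exponentially small tail) identifies it with $\G(\lambda)$, so $P_\epsilon(\lambda)=\Upsilon(\lambda)+\ord(\epsilon^{\beta_f-\rho})$. Finally, using the $2\check{L}_\epsilon$-periodicity of $\widetilde{\A}_\epsilon$ and the cyclic identity $\det(AB-\gamma I)=\det(BA-\gamma I)$ one gets $\det(N_\epsilon(\lambda)-\gamma I)=\det\!\big(P_\epsilon(\lambda)\,\widetilde{\T}_\epsilon(2\check{L}_\epsilon-\epsilon^{1-\rho},\epsilon^{1-\rho},\lambda)-\gamma I\big)$, and since $\Upsilon(\lambda)$, $\T_s(2\check{L}_0,0,\lambda)$ and $\E_{s,0}(\lambda,\gamma)$ are bounded on the bounded set $\Sigma_{\Lambda,\delta}\times S^1$ (using Theorem~\ref{Sturminhom}-3 for $\G(\lambda)$), continuity of the determinant gives $\det(N_\epsilon(\lambda)-\gamma I)=\det(\Upsilon(\lambda)\T_s(2\check{L}_0,0,\lambda)-\gamma I)+\ord(\epsilon^{\mu})=\E_{s,0}(\lambda,\gamma)+\ord(\epsilon^\mu)$. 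Combining with the first paragraph yields the claim with $\mu_s=\min\{\mu,\tfrac12\}$.
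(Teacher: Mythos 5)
Your proof is correct, but it takes a genuinely different route from the paper's. The paper estimates the two matrices $\T_{sd,\epsilon}(0,\pm L_\epsilon,\lambda)$ in the definition of $\E_{s,\epsilon}$ separately, by splitting the coefficient matrix of \eqref{slowdiag} into a ``slow'' part $\B_{1,\epsilon}$ and a localized part $\B_{2,\epsilon}$, applying variation of constants to extract correction factors $\Ef_{\pm,\epsilon}(\lambda)$, rescaling to compare the intermediate flow with $\T_s$, and finally exploiting $\Upsilon_+^{-1}\Upsilon_-=\Upsilon$ together with $\det\T_s=1$ to recover $\E_{s,0}$. You instead (i) use $2L_\epsilon$-periodicity to write $\E_{s,\epsilon}=\det(N_\epsilon-\gamma I)\det\T_{sd,\epsilon}(0,L_\epsilon,\lambda)$ and dispose of the second factor by Abel/Liouville (a neat observation that the paper does not make explicit), (ii) rescale $N_\epsilon$ into the slow variable and split the \emph{domain} $[0,2\check{L}_\epsilon]$ into a pulse layer and a slow region, and (iii) recombine the two half-pulses into a single layer transfer matrix $P_\epsilon(\lambda)$ via the cyclic identity $\det(XY-\gamma I)=\det(YX-\gamma I)$, so that $\Upsilon(\lambda)$ appears in one piece rather than as $\Upsilon_+^{-1}\Upsilon_-$. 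Both arguments are of comparable length; yours trades the paper's coefficient-splitting plus variation of constants for a pulse-layer comparison with a nilpotent system, which is arguably more transparent but does require the boundedness of $\int_{I_\epsilon}\|\K_\epsilon\|$ to control both monodromies. Two small imprecisions, neither fatal: your parenthetical $\ord(\epsilon^{1-\rho})$ for the off-spike contribution does not quite cover the $\epsilon^{-1/2}B_\epsilon (U_\epsilon)_{12}$ entry, whose $\xx$-integral over the layer is $\ord(\epsilon^{1/2+\beta_f-\rho})$ once one notes that $(\X_{in})_{12}\equiv 0$ (the second column block of $\A_{21,0}$ vanishes, so the corresponding columns of $\X_{in}$ solve the homogeneous dichotomy equation and must be zero); and the substitution $\check{L}_\epsilon\mapsto\check{L}_0$ needs a rate beyond the bare $\check{L}_\epsilon\to\check{L}_0$ of \ref{assS3}-1 — but the paper's own estimate \eqref{slowest8} makes the same implicit assumption, so this is not a gap specific to your argument.
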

\begin{proof} Let $\lambda \in \Sigma_{\Lambda,\delta}$ and $\gamma \in S^1$. Our approach is as follows. We split the coefficient matrix in system (\ref{slowdiag}) corresponding to their decay behavior outside the pulse region, i.e. we write
\begin{align}
\A_{11,\epsilon}(x,\lambda) + \A_{12,\epsilon}(x)U_\epsilon(x,\lambda) = \B_{1,\epsilon}(x,\lambda) + \B_{2,\epsilon}(x,\lambda), \label{slowest3}
\end{align}
with
\begin{align*}
\B_{1,\epsilon}(x,\lambda) &:=  \left(\begin{array}{cc} 0 & D_1^{-1} \\ \epsilon \left(\partial_u H_1 (u_{\p,\epsilon}(x),0,\epsilon) + \lambda\right) & 0\end{array}\right),\\
\B_{2,\epsilon}(x,\lambda) &:=  \left(\begin{array}{cc} 0 & 0 \\ B_{2,\epsilon}(x) & 0\end{array}\right) + \A_{12,\epsilon}(x)U_\epsilon(x,\lambda),\\
B_{2,\epsilon}(x) &:= \partial_u H_2 (u_{\p,\epsilon}(x),v_{\p,\epsilon}(x)) + \epsilon\left(\partial_u H_1 (u_{\p,\epsilon}(x),v_{\p,\epsilon}(x),\epsilon) - \partial_u H_1 (u_{\p,\epsilon}(x),0,\epsilon)\right).
\end{align*}
Note that $\B_{1,\epsilon}(\cdot,\lambda)$ and $\B_{2,\epsilon}(\cdot,\lambda)$ are bounded on $\R$ uniformly in $\epsilon$ and $\lambda \in \Sigma_{\Lambda,\delta}$ by Theorem \ref{ricfac}-1 and Remark \ref{S3.4}. The splitting gives rise to an intermediate system,
\begin{align}
\partial_x \chi = \sqrt{\epsilon} \B_{1,\epsilon}(x,\lambda)\chi,  \ \ \ \chi \in \C^{2m}. \label{redslow}\end{align}
The intermediate system (\ref{redslow}) helps us to draw the connection between system (\ref{slowdiag}) and the slow limit system (\ref{slowintr}). On the one hand, we will show that (\ref{slowdiag}) and (\ref{redslow}) are closely related by using variation of constants. On the other hand, by performing a suitable coordinate change the evolutions of systems (\ref{slowintr}) and (\ref{redslow}) are close to each other. This enables us to approximate $\E_{s,\epsilon}$ with $\E_{s,0}$ on $\Sigma_{\Lambda,\delta}$.\\
\\
\textit{Some estimates}\\
Denote by $\T_{is,\epsilon}(x,y,\lambda)$ the evolution of system (\ref{redslow}). Let $\rho > 0$ be as in Remark \ref{introbetaf}. By Lemma \ref{Palmer} we have for each $x, y \in I_\epsilon = [-\epsilon^{-\rho},\epsilon^{-\rho}]$
\begin{align}\|\T_{sd,\epsilon}(x,y,\lambda) - I\|, \|\T_{is,\epsilon}(x,y,\lambda) - I\| = \ord(\epsilon^{1/2-\rho}), \label{slowest1}\end{align}
uniformly in $\lambda \in \Sigma_{\Lambda,\delta}$. Next, we will pay attention to the term $\B_{2,\epsilon}$ in the expansion (\ref{slowest3}). First, note that $H_2$ vanishes at $v = 0$ by \ref{assS1}. Hence, assumption \ref{assS3}-3 implies that there exists $K_1,\mu_1 > 0$, independent of $\epsilon$, such that $\|B_{2,\epsilon}(x)\| \leq K_1e^{-\mu_1 |x|}$ for $x \in [-L_\epsilon,L_\epsilon]$. Putting the latter and Theorem \ref{ricfac}-4 together, we approximate for $x \in [-L_\epsilon,L_\epsilon] \!\setminus\! I_\epsilon$
\begin{align}
\|\B_{2,\epsilon}(x,\lambda)\| = \ord(\epsilon^3), \label{slowest7}
\end{align}
uniformly in $\lambda \in \Sigma_{\Lambda,\delta}$. On the other hand, one readily establishes that (\ref{redslow}) has bounded growth on $[-L_\epsilon,L_\epsilon]$ with constants $K_1\epsilon^{-1/2},\epsilon\mu_1 > 0$, where $K_1,\mu_1 > 0$ are independent of $\lambda$ and $\epsilon$. Therefore, another application of Lemma \ref{Palmer} gives for $x,y \in [0,L_\epsilon] \!\setminus\! I_\epsilon$
\begin{align}\begin{split} \|\T_{sd,\epsilon}(x,y,\lambda) - \T_{is,\epsilon}(x,y,\lambda)\| &\leq K_1\epsilon^{-1/2}L_\epsilon \sup_{x \in [0,L_\epsilon] \!\setminus\! I_\epsilon} \sqrt{\epsilon}\|\B_{2,\epsilon}(x,\lambda)\|e^{\mu_1\epsilon L_\epsilon} =  \ord(\epsilon^2), \end{split}\label{slowest2}\end{align}
uniformly in $\lambda \in \Sigma_{\Lambda,\delta}$. \\
\\
\textit{Connection between (\ref{slowdiag}) and (\ref{redslow}): a variation of constants approach}\\
The variation of constants formula gives
\begin{align*} \T_{sd,\epsilon}(0,L_\epsilon,\lambda) = \T_{is,\epsilon}(0,L_\epsilon,\lambda) - \sqrt{\epsilon} \int_0^{L_\epsilon} \T_{is,\epsilon}(0,z,\lambda)\B_{2,\epsilon}(z,\lambda)\T_{sd,\epsilon}(z,L_\epsilon,\lambda)dz.\end{align*}
Using the bounded growth of (\ref{redslow}), we have by estimates (\ref{slowest7}) and (\ref{slowest2})
\begin{align} \begin{split}\T_{sd,\epsilon}(0,L_\epsilon,\lambda) &= \T_{is,\epsilon}(0,L_\epsilon,\lambda) - \sqrt{\epsilon} \int_0^{\epsilon^{-\rho}} \T_{is,\epsilon}(0,z,\lambda)\B_{2,\epsilon}(z,\lambda)\T_{sd,\epsilon}(z,L_\epsilon,\lambda)dz + \ord(\epsilon) \\ &= \Ef_{+,\epsilon}(\lambda)\T_{is,\epsilon}(0,L_\epsilon,\lambda) + \ord(\epsilon),\end{split} \label{slowest10}
\end{align}
uniformly in $\lambda \in \Sigma_{\Lambda,\delta}$, with
\begin{align*} \Ef_{+,\epsilon}(\lambda) &:= I - \sqrt{\epsilon} \int_0^{\epsilon^{-\rho}}\T_{is,\epsilon}(0,z,\lambda) \B_{2,\epsilon}(z,\lambda)\T_{sd,\epsilon}(z,\epsilon^{-\rho},\lambda)dz\T_{is,\epsilon}(\epsilon^{-\rho},0,\lambda).
\end{align*}
Using (\ref{slowest1}), we derive
\begin{align} \Ef_{+,\epsilon}(\lambda) &=  I - \sqrt{\epsilon} \int_0^{\epsilon^{-\rho}} \B_{2,\epsilon}(z,\lambda)dz + \ord(\epsilon^{1 - 2\rho}) \label{slowest5}
\end{align}
uniformly in $\lambda \in \Sigma_{\Lambda,\delta}$. Write the $(2n\times 2m)$-matrix $\X_{in}(x,\lambda)$ as a composition of four block matrices, where $\V_{in}(x,\lambda)$ is the upper-left $n \times m$-block. Using Remark \ref{introbetaf}, Theorem \ref{Sturminhom}-3 and Theorem \ref{ricfac}-3, we approximate (\ref{slowest5}) as
\begin{align} \Ef_{+,\epsilon}(\lambda) &= I + \sqrt{\epsilon} \int_0^{\epsilon^{-\rho}} \left[\left(\begin{array}{cc} 0 & 0 \\ \partial_u H_2 (u_0,v_{\ho}(x)) & 0\end{array}\right) + \right.\nonumber \\
& \ \ \ \ \ \ \ \ \ \ \ \ \ \ \ \ \ \ \ \ \ \ \ \ \ \ \ \ \ \ \ \ \ \ \ \left.\left(\begin{array}{cc} 0 & 0 \\ \partial_v H_2 (u_0,v_{\ho}(x)) & 0\end{array}\right)\left(\begin{array}{cc} \V_{in}(x,\lambda) & 0 \\ * & 0\end{array}\right)\right]dx + \ord(\epsilon^{1/2+\mu_s}) \label{slowest9} \\
&=  \left(\begin{array}{cc} I & 0 \\ -\sqrt{\epsilon} \int_0^\infty \left[\partial_u H_2 (u_0,v_{\ho}(x)) + \partial_v H_2 (u_0,v_{\ho}(x)) \V_{in}(x,\lambda)\right]dx & I\end{array}\right) + \ord(\epsilon^{1/2+\mu_s}), \nonumber \end{align}
uniformly in $\lambda \in \Sigma_{\Lambda,\delta}$, where $\mu_s = \beta_f-\rho$. Without loss of generality we may assume $0 < \rho < \beta_f$ so that $\mu_s > 0$. \\
\\
\textit{Connection between system (\ref{slowintr}) and (\ref{redslow})}\\
We apply two operations on system (\ref{redslow}). First, we perform the coordinate change $\chi = C_\epsilon \tilde{\chi}$, where $C_\epsilon := \left(\begin{smallmatrix} I & 0 \\ 0 & \sqrt{\epsilon} \end{smallmatrix}\right) \in \mathrm{Mat}_{2m \times 2m}(\C)$. Second, we switch to the large spatial scale $\xx = \epsilon x$. The evolution of the resulting system from $0$ to $\check{L}_\epsilon$ can be approximated by $\T_s(0,\check{L}_0,\lambda)$ by combining Lemma \ref{Palmer} with \ref{assS3}-1,3. Thus, we obtain the following estimate,
\begin{align} \|C_\epsilon^{-1} \T_{is,\epsilon}(0,L_\epsilon,\lambda)C_\epsilon - \T_s(0,\check{L}_0,\lambda)\| = \ord(\sqrt{\epsilon}), \label{slowest8}\end{align}
uniformly in $\lambda \in \Sigma_{\Lambda,\delta}$. \\
\\
\textit{Comparing $\E_{s,\epsilon}$ to the slow reduced Evans function $\E_{s,0}$}\\
Plugging (\ref{slowest9}) and (\ref{slowest8}) into (\ref{slowest10}) yields
\begin{align}
\T_{sd,\epsilon}(0,L_\epsilon,\lambda) = C_\epsilon\left[\Upsilon_+(\lambda)\T_{s}(0,\check{L}_\epsilon,\lambda) + \ord(\epsilon^{\mu_s})\right] C_\epsilon^{-1},\label{slowest11}
\end{align}
uniformly in $\lambda \in \Sigma_{\Lambda,\delta}$, with
\begin{align*}
\Upsilon_+(\lambda) &:= \left(\begin{array}{cc} I & 0 \\ -\int_0^\infty \left[\partial_u H_2 (u_0,v_{\ho}(x)) + \partial_v H_2 (u_0,v_{\ho}(x)) \V_{in}(x,\lambda)\right]dx & I\end{array}\right).
\end{align*}
Similarly, we derive
\begin{align}
\T_{sd,\epsilon}(0,-L_\epsilon,\lambda) = C_\epsilon\left[\Upsilon_-(\lambda)\T_{s}(0,-\check{L}_\epsilon,\lambda) + \ord(\epsilon^{\mu_s})\right] C_\epsilon^{-1},\label{slowest12}
\end{align}
uniformly in $\lambda \in \Sigma_{\Lambda,\delta}$, with
\begin{align*}
\Upsilon_-(\lambda) &:= \left(\begin{array}{cc} I & 0 \\ \int_{-\infty}^0 \left[\partial_u H_2 (u_0,v_{\ho}(x)) + \partial_v H_2 (u_0,v_{\ho}(x)) \V_{in}(x,\lambda)\right]dx & I\end{array}\right).
\end{align*}
First, by Liouville's Theorem we have $\det(\T_{s}(0,\check{L}_0,\lambda)) = 1$. Second, the identity $\Upsilon_+(\lambda)^{-1}\Upsilon_-(\lambda) = \Upsilon(\lambda)$ holds true, where $\Upsilon(\lambda)$ is defined in (\ref{defupsilon}). Putting these two items together, we estimate $\E_{s,\epsilon}$ using (\ref{slowest11}) and (\ref{slowest12})
\begin{align*}
\begin{split}\E_{s,\epsilon}(\lambda,\gamma) &= \det(\T_{sd,\epsilon}(0,-L_\epsilon,\lambda) - \gamma \T_{sd,\epsilon}(0,L_\epsilon,\lambda))\\
&= \det\left(\Upsilon_-(\lambda)\T_{s}(2\check{L}_0,\check{L}_0,\lambda) - \gamma \Upsilon_+(\lambda)\T_{s}(0,\check{L}_0,\lambda)\right) + \ord(\epsilon^{\mu_s})\\
&= \E_{s,0}(\lambda,\gamma) + \ord(\epsilon^{\mu_s}),\end{split}
\end{align*}
uniformly in $\lambda \in \Sigma_{\Lambda,\delta}$ and $\gamma \in S^1$. This estimate concludes the proof. \end{proof}
It remains to link $\E_{f,\epsilon}$ to the fast reduced Evans function $\E_{f,0}$.
\begin{lem} \label{Evansfastestimate}
Let $\Lambda \in (-\Lambda_0,0)$ with $\Lambda_0 > 0$ as in Lemma \ref{lemlambda0}. Take $\delta > 0$. There exists $\mu_p > 0$ such that, for $\epsilon > 0$ sufficiently small, there is a map $h_{\epsilon} \colon \Sigma_{\Lambda,\delta} \to \C$ satisfying
\begin{align*} 0 < |h_{\epsilon}(\lambda)| &= \ord(e^{-\mu_p L_\epsilon}),\\
 \E_{f,\epsilon}(\lambda,\gamma)h_{\epsilon}(\lambda) &= (-\gamma)^n\E_{f,0}(\lambda) + \ord(\epsilon^{\mu_p}),\end{align*}
uniformly in $\lambda \in \Sigma_{\Lambda,\delta}$ and $\gamma \in S^1$.
\end{lem}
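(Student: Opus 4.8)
The plan is to exploit that the diagonalized fast system (\ref{fastdiag}) produced by Theorem \ref{ricfac} inherits an exponential dichotomy on $\R$ and has $2L_\epsilon$-periodic coefficients, and thereby to reduce $\E_{f,\epsilon}(\lambda,\cdot)$ to the characteristic polynomial of the associated monodromy matrix, which splits along the (periodic) stable and unstable bundles into an exponentially contracting and an exponentially expanding $n\times n$ block. First I would record that (\ref{fastdiag}) possesses, for $\epsilon>0$ small and all $\lambda\in\Sigma_{\Lambda,\delta}$, an exponential dichotomy on $\R$ with rank-$n$ projections $\Qroj_{f,\epsilon}(x,\lambda)$ and constants $\tilde K_f,\tilde\mu_f>0$ that are independent of $\epsilon$ and $\lambda$: by Theorem \ref{ricfac}-1 and Remark \ref{S3.4} the perturbation $\sqrt{\epsilon}\,U_\epsilon(\cdot,\lambda)\A_{12,\epsilon}(\cdot)$ of (\ref{faststab}) has sup-norm $\ord(\sqrt{\epsilon})$ uniformly, while the dichotomy of (\ref{faststab}) from Theorem \ref{fastexpdi} has $\epsilon$- and $\lambda$-uniform constants $K_f,\mu_f$ and rank-$n$ projections (matching those of the half-line dichotomies of (\ref{redstab}) in Theorem \ref{Sturm}, by the comparison carried out in the proof of Theorem \ref{fastexpdi}), so roughness applies uniformly — exactly the step carried out for system (\ref{ric3}) inside the proof of Theorem \ref{ric}.

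Next I would use periodicity. Since $\A_{22,\epsilon}(\cdot,\lambda)$, $\A_{12,\epsilon}(\cdot)$ and $U_\epsilon(\cdot,\lambda)$ are $2L_\epsilon$-periodic (Theorem \ref{ricfac}-2), so is the coefficient matrix of (\ref{fastdiag}), and hence its stable and unstable bundles $E^{s}_\epsilon(x,\lambda):=\mathrm{Range}\,\Qroj_{f,\epsilon}(x,\lambda)$, $E^{u}_\epsilon(x,\lambda):=\ker\Qroj_{f,\epsilon}(x,\lambda)$ are $2L_\epsilon$-periodic in $x$; in particular $E^{s,u}_\epsilon(-L_\epsilon,\lambda)=E^{s,u}_\epsilon(L_\epsilon,\lambda)$, so the monodromy $M_\epsilon(\lambda):=\T_{fd,\epsilon}(L_\epsilon,-L_\epsilon,\lambda)$ maps each of these $n$-dimensional subspaces to itself. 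Writing $M^s_\epsilon(\lambda),M^u_\epsilon(\lambda)$ for the induced endomorphisms, the dichotomy estimates give $\|M^s_\epsilon(\lambda)\|\le\tilde K_f e^{-2\tilde\mu_f L_\epsilon}$ and $\|M^u_\epsilon(\lambda)^{-1}\|\le\tilde K_f e^{-2\tilde\mu_f L_\epsilon}$, whence $|\det M^u_\epsilon(\lambda)|\ge\tilde K_f^{-n}e^{2n\tilde\mu_f L_\epsilon}$. Factoring $\T_{fd,\epsilon}(0,L_\epsilon,\lambda)$ out of the definition of $\E_{f,\epsilon}$ and diagonalizing $M_\epsilon(\lambda)-\gamma I$ along $\C^{2n}=E^s_\epsilon(-L_\epsilon,\lambda)\oplus E^u_\epsilon(-L_\epsilon,\lambda)$,
\begin{align*}
\E_{f,\epsilon}(\lambda,\gamma)=\det\!\big(\T_{fd,\epsilon}(0,L_\epsilon,\lambda)\big)\,\det\!\big(M^s_\epsilon(\lambda)-\gamma I\big)\,\det\!\big(M^u_\epsilon(\lambda)-\gamma I\big).
\end{align*}
For $\gamma\in S^1$, expanding the two characteristic polynomials in the elementary symmetric functions of the eigenvalues of $M^s_\epsilon(\lambda)$ (resp.\ $M^u_\epsilon(\lambda)^{-1}$) yields $\det(M^s_\epsilon(\lambda)-\gamma I)=(-\gamma)^n+\ord(e^{-2\tilde\mu_f L_\epsilon})$ and $\det(M^u_\epsilon(\lambda)-\gamma I)=\det M^u_\epsilon(\lambda)\,(1+\ord(e^{-2\tilde\mu_f L_\epsilon}))$, uniformly in $\lambda\in\Sigma_{\Lambda,\delta}$ and $\gamma\in S^1$. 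Moreover, a Liouville/trace estimate — using $\mathrm{tr}\,\A_{22,\epsilon}\equiv0$, the off-pulse decay $\|U_\epsilon(x,\lambda)\|=\ord(\epsilon^3)$ (Theorem \ref{ricfac}-4), the exponential localization of $\X_{in}$ (Theorem \ref{Sturminhom}-3) together with Theorem \ref{ricfac}-3 and $\rho<\beta_f$, and the uniform bound on $\A_{12,\epsilon}$ (Remark \ref{S3.4}) — gives $\int_0^{L_\epsilon}\|U_\epsilon(x,\lambda)\A_{12,\epsilon}(x)\|\,dx=\ord(1)$ and hence $\det(\T_{fd,\epsilon}(0,L_\epsilon,\lambda))=1+\ord(\sqrt{\epsilon})$, uniformly.

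With these in hand I would set $h_\epsilon(\lambda):=\E_{f,0}(\lambda)\,\big[\det(\T_{fd,\epsilon}(0,L_\epsilon,\lambda))\,\det M^u_\epsilon(\lambda)\big]^{-1}$. On the compact set $\overline{\Sigma_{\Lambda,\delta}}$ — which is disjoint from $\N(\E_{f,0})$ and lies in $\Ce_{\Lambda''}$ for some $\Lambda''\in(-\Lambda_0,\Lambda)$, so that $\E_{f,0}$ is analytic on a neighborhood of it by Theorem \ref{Sturm} — one has $0<\inf|\E_{f,0}|\le\sup|\E_{f,0}|<\infty$; combined with $\det(\T_{fd,\epsilon}(0,L_\epsilon,\lambda))=1+\ord(\sqrt\epsilon)$ and the lower bound on $|\det M^u_\epsilon(\lambda)|$ this gives $0<|h_\epsilon(\lambda)|=\ord(e^{-2n\tilde\mu_f L_\epsilon})$. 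Substituting the expansions above,
\begin{align*}
\E_{f,\epsilon}(\lambda,\gamma)\,h_\epsilon(\lambda)=\E_{f,0}(\lambda)\,\det\!\big(M^s_\epsilon(\lambda)-\gamma I\big)\,\frac{\det\!\big(M^u_\epsilon(\lambda)-\gamma I\big)}{\det M^u_\epsilon(\lambda)}=(-\gamma)^n\E_{f,0}(\lambda)+\ord(e^{-2\tilde\mu_f L_\epsilon}),
\end{align*}
uniformly in $\lambda\in\Sigma_{\Lambda,\delta}$ and $\gamma\in S^1$. Since $L_\epsilon=\epsilon^{-1}\check L_\epsilon$ with $\check L_\epsilon\to\check L_0>0$ (assumption \ref{assS3}-1), the quantity $e^{-2\tilde\mu_f L_\epsilon}$ decays faster than every power of $\epsilon$, so $\mu_p:=2n\tilde\mu_f$ realizes both $|h_\epsilon(\lambda)|=\ord(e^{-\mu_p L_\epsilon})$ and the error bound $\ord(\epsilon^{\mu_p})$, which is the claim. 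I expect the main obstacle to lie in the two uniformity statements rather than in the algebra: securing an exponential dichotomy of (\ref{fastdiag}) on $\R$ with constants uniform in \emph{both} $\epsilon$ and $\lambda\in\Sigma_{\Lambda,\delta}$ — which rests on the uniform smallness of $U_\epsilon$ (Theorem \ref{ricfac}-1) and the $\lambda$-uniformity already built into Theorem \ref{fastexpdi} — and, more delicately, controlling $\det(\T_{fd,\epsilon}(0,L_\epsilon,\lambda))$ away from $0$ and $\infty$ over a period of length $L_\epsilon\sim\epsilon^{-1}$, where the crude bounded-growth bound is exponentially too weak and one genuinely needs Liouville's formula together with the off-pulse decay of $U_\epsilon$ and the exponential localization of $\X_{in}$; everything else is bookkeeping with characteristic polynomials of exponentially small and exponentially large $n\times n$ matrices.
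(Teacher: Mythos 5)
Your argument is correct, but it goes by a genuinely different route than the paper's, and the difference is worth spelling out. The paper never touches the monodromy of \eqref{fastdiag} directly: after establishing the uniform exponential dichotomy of \eqref{fastdiag} on $\R$ (exactly your first step), it compares \eqref{fastdiag} to the \emph{limit} problem \eqref{redstab} on a logarithmic interval via Lemma \ref{Palmer} and estimate \eqref{fullestimate2}, uses Lemma \ref{compdich} to construct bases $B^{u,s}_\epsilon(\lambda)$ of the (un)stable subspaces of \eqref{fastdiag} that are $\ord(\epsilon^{\chi\mu_r})$-close to the fixed bases $B^{u,s}_r(\lambda)$ defining $\E_{f,0}$, and then sets $h_\epsilon(\lambda)=\det\h_\epsilon(\lambda)$ with $\h_\epsilon$ built by propagating those bases to $\pm L_\epsilon$. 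The entire content of the paper's proof is thus that the perturbed problem's stable/unstable data converges to the singular-limit data encoded in $\E_{f,0}$, and the error $\ord(\epsilon^{\chi\mu_r})$ is precisely the cost of that comparison.

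You instead exploit the $2L_\epsilon$-periodicity of \eqref{fastdiag} to block-diagonalize the monodromy along the periodic dichotomy bundles, expand the two characteristic polynomials, control $\det\T_{fd,\epsilon}(0,L_\epsilon,\lambda)$ by Liouville's formula (using the off-pulse decay of $U_\epsilon$ from Theorem \ref{ricfac}-3,4 and the localization of $\X_{in}$, which is the one place you do need $\rho<\beta_f$), and then \emph{define} $h_\epsilon$ with $\E_{f,0}$ as an explicit factor. This is legitimate — the lemma only asserts the existence of such a map — and your error is $\ord(e^{-cL_\epsilon})$ rather than $\ord(\epsilon^{\chi\mu_r})$, because you never incur the comparison loss between \eqref{fastdiag} and \eqref{redstab}. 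The trade-off is conceptual: your $h_\epsilon$ is not intrinsic to the $\epsilon$-dependent problem, so your proof does not by itself show that the rescaled fast Evans function of \eqref{fastdiag} converges to $\E_{f,0}$; that convergence (which the paper's argument does establish, and which matters for interpreting $\E_{f,0}$ as the genuine singular limit of $\E_{f,\epsilon}$) is packaged into the definition of $h_\epsilon$. For the statement of Lemma \ref{Evansfastestimate} and its uses in Section \ref{4.5} this is immaterial, since $h_\epsilon$ is $\gamma$-independent and nonzero, which is all that Rouch\'e's theorem needs. Two small points you gloss over that are worth making explicit if you wrote this out in full: the projections of the dichotomy of \eqref{fastdiag} have rank $n$ because they are inherited through roughness from those of \eqref{redfast}, whose coefficient matrix has $n$ eigenvalues on each side of the imaginary axis by Lemma \ref{lemlambda0}; and the $2L_\epsilon$-periodicity of the projections (needed so that $E^{s,u}_\epsilon(-L_\epsilon,\lambda)=E^{s,u}_\epsilon(L_\epsilon,\lambda)$) follows from Theorem \ref{ricfac}-2 together with \cite[Proposition 8.4]{COP}, as the paper itself notes.
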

\begin{proof} Our approach is as follows. First, we show that system (\ref{fastdiag}) has an exponential dichotomy on $\R$, if $\lambda$ is in $\Sigma_{\Lambda,\delta}$. Recall that, the fast reduced Evans function $\E_{f,0}$ is defined in terms of bases $B_r^{u,s}(\lambda)$ of the stable and unstable subspaces at $0$ of the homogeneous fast limit system (\ref{redstab}). By comparing system (\ref{fastdiag}) to (\ref{redstab}), we are able to construct bases $B_\epsilon^{u,s}(\lambda)$ of the (un)stable subspaces at $0$ of (\ref{fastdiag}), which are close to $B_r^{u,s}(\lambda)$. By tracking the bases $B_\epsilon^{u,s}(\lambda)$ either forward or backward, we construct bases of the (un)stable subspaces at $\pm L_\epsilon$ of (\ref{fastdiag}). These bases at $\pm L_\epsilon$ will form the column vectors of a matrix $\h_\epsilon(\lambda)$, which connects $\E_{f,\epsilon}$ to $\E_{f,0}$.\\
\\
\textit{An exponential dichotomy on $\R$ of system (\ref{fastdiag})}\\
Let $\lambda \in \Sigma_{\Lambda,\delta}$ and $\gamma \in S^1$. First, system (\ref{faststab}) has by Theorem \ref{fastexpdi} an exponential dichotomy on $\R$ with constants $K_f,\mu_f > 0$, independent of $\epsilon$ and $\lambda$. Second, $U_\epsilon(x,\lambda)$ is bounded on $\R$ uniformly in $\epsilon$ and $\lambda \in \Sigma_{\Lambda,\delta}$ by Theorem \ref{ricfac}-1. Therefore, Proposition \ref{RoughnessR} yields that (\ref{fastdiag}) has an exponential dichotomy on $\R$ with constants $K_{fd},\mu_{fd} > 0$, independent of $\epsilon$ and $\lambda$. Denote the corresponding projections by $\Proj_{fd,\epsilon}(x,\lambda), x \in \R$. Since the coefficient matrix of (\ref{fastdiag}) is $2L_\epsilon$-periodic by Theorem \ref{ricfac}-2, the projections $\Proj_{fd,\epsilon}(\cdot,\lambda)$ are also $2L_\epsilon$-periodic by \cite[Proposition 8.4]{COP}.\\
\\
\textit{Comparing system (\ref{fastdiag}) to the homogeneous fast limit system (\ref{redstab})}\\
Let $M > 0$ be a bound of the coefficient matrix $\A_{22,0}(x,\lambda)$ of equation (\ref{redstab}) on $\R \times \Sigma_{\Lambda,\delta}$. By Remark \ref{introbetaf} and Theorem \ref{ricfac}-1 we have for $x \in I_\epsilon = [-\epsilon^{-\rho},\epsilon^{-\rho}]$
\begin{align} \|\A_{22,\epsilon}(x,\lambda) - \sqrt{\epsilon} U_\epsilon(x,\lambda)\A_{12,\epsilon}(x) - \A_{22,0}(x,\lambda)\| = \ord(\epsilon^{\mu_0}).\label{fullestimate1}\end{align}
with $\mu_0 := \min\{\beta_f,1/2\}$. Denote by $\T_r(x,y,\lambda)$ the evolution of (\ref{redstab}) and take $\chi := \mu_0/(4M) > 0$. By Lemma \ref{Palmer} and estimate (\ref{fullestimate1}) it follows, provided that $\epsilon > 0$ is sufficiently small, for all $x,y \in [\chi \log(\epsilon),-\chi\log(\epsilon)]$
\begin{align} \|\T_r(x,y,\lambda) - \T_{fd,\epsilon}(x,y,\lambda)\| < 1. \label{fullestimate2}\end{align}
By Corollary \ref{Sturmcor} system (\ref{redstab}) has for $\lambda \in \Sigma_{\Lambda,0}$ exponential dichotomies on both half-lines with constants $K_r,\mu_r > 0$, independent of $\lambda$. Let $B_r^{u,s}(\lambda)$ be as in Theorem \ref{Sturm}. Since $\Sigma_{\Lambda,0}$ is bounded and $B_r^{u,s}(\cdot)$ is continuous, there exists a constant $C_{\Lambda} > 0$ such that $\|B_r^{u,s}(\lambda)\| < C_\Lambda$ for $\lambda \in \Sigma_{\Lambda,0}$. Now, combine estimate (\ref{fullestimate2}) and Lemma \ref{compdich}: there exists, for $\epsilon > 0$ sufficiently small, bases $B_{\epsilon}^{u,s} \colon \Sigma_{\Lambda,\delta} \to \mathrm{Mat}_{2n \times n}(\C)$ of $\Proj_{fd,\epsilon}(0,\lambda)[\C^{2n}] = B_\epsilon^s[\C^n]$ and $\ker(\Proj_{fd,\epsilon}(0,\lambda)) = B_\epsilon^u[\C^n]$, such that
\begin{align} \|B_{\epsilon}^{u,s}(\lambda) - B_r^{u,s}(\lambda)\| = \ord(\epsilon^{\chi \mu_r}),\label{faststableest2}\end{align}
uniformly in $\lambda \in \Sigma_{\Lambda,\delta}$. Via (\ref{faststableest2}) we establish, provided that $\epsilon > 0$ is sufficiently small, the bound $\|B_\epsilon^{u,s}(\lambda)\| < C_\Lambda$ for $\lambda \in \Sigma_{\Lambda,\delta}$.\\
\\
\textit{Comparing $\E_{f,\epsilon}$ with the fast reduced Evans function $\E_{f,0}$}\\
Define for $\lambda \in \Sigma_{\Lambda,\delta}$
\begin{align*}\h_{\epsilon}(\lambda) := \left(\T_{fd,\epsilon}(-L_\epsilon,0,\lambda)B_{\epsilon}^{u}(\lambda), \T_{fd,\epsilon}(L_\epsilon,0,\lambda)B_{\epsilon}^{s}(\lambda)\right).\end{align*}
Since $\Proj_{fd,\epsilon}(\cdot,\lambda)$ is $2L_\epsilon$-periodic, the first $n$ column vectors of $\h_\epsilon(\lambda)$ form a basis of $\ker(\Proj_{fd,\epsilon}(L_\epsilon,\lambda))$ and the last $n$ column vectors form a basis of $\Proj_{fd,\epsilon}(L_\epsilon,\lambda)[\C^{2n}]$. Thus, $\h_\epsilon(\lambda)$ is invertible. By Hadamard's inequality we have $\det(\h_\epsilon(\lambda)) = \ord(e^{-2n\mu_{fd}L_\epsilon})$, uniformly in $\lambda \in \Sigma_{\Lambda,\delta}$. Moreover, since $\Proj_{fd,\epsilon}(\cdot,\lambda)$ is $2L_\epsilon$-periodic, we estimate
\begin{align} \begin{split} \|\T_{fd,\epsilon}(0,L_\epsilon,\lambda)\T_{fd,\epsilon}(-L_\epsilon,0,\lambda)B_{\epsilon}^{u}(\lambda)\| &\leq  K_{fd}^2C_\Lambda e^{-2\mu_{fd}L_\epsilon}, \\ \|\T_{fd,\epsilon}(0,-L_\epsilon,\lambda)\T_{fd,\epsilon}(L_\epsilon,0,\lambda)B_{\epsilon}^{s}(\lambda)\| &\leq K_{fd}^2C_\Lambda e^{-2\mu_{fd}L_\epsilon},\end{split} \label{basesest}
\end{align}
for $\lambda \in \Sigma_{\Lambda,\delta}$. We combine estimates (\ref{faststableest2}) and (\ref{basesest}) and derive
\begin{align*} &\left(\T_{fd,\epsilon}(0,-L_\epsilon,\lambda) - \gamma \T_{fd,\epsilon}(0,L_\epsilon,\lambda)\right)\h_{\epsilon}(\lambda) = \left(B_r^{u}(\lambda),\gamma B_r^s(\lambda)\right) + \ord(\epsilon^{\chi\mu_r}), \end{align*}
 uniformly in $\lambda \in \Sigma_{\Lambda,\delta}$ and $\gamma \in S^1$. Taking determinants in the latter matrix equation yields
\begin{align*}  \E_{f,\epsilon}(\lambda,\gamma)\det(\h_{\epsilon}(\lambda)) &= (-\gamma)^n\E_{f,0}(\lambda) + \ord(\epsilon^{\chi\mu_r}),\end{align*}
uniformly in $\lambda \in \Sigma_{\Lambda,\delta}$ and $\gamma \in S^1$. Thus, defining $h_\epsilon(\lambda) := \det(\h_{\epsilon}(\lambda))$ concludes the proof.
\end{proof}
\begin{rem}[\textit{The connection between $\E_{f,\epsilon}$ and $\E_{f,0}$}]
In the proof of  Lemma \ref{Evansfastestimate} the connection between $\E_{f,\epsilon}$ and $\E_{f,0}$ is given by the matrix $\h_\epsilon$. This idea is taken from the proof of \cite[Theorem 2]{SAN}. However, the context in \cite{SAN} is different. Here, one shows that the eigenvalues of a periodic boundary value problem are exponentially close to the eigenvalues of the corresponding unbounded problem. $\hfill \blacksquare$
\end{rem}
\subsection{Application of Rouch\'e's Theorem} \label{4.5}
In contrast to the estimate achieved in Lemma \ref{Evansslowestimate}, we need to rescale $\E_{f,\epsilon}$ in Lemma \ref{Evansfastestimate} by an exponentially small quantity $h_{\epsilon}$ in order to relate it to the $\epsilon$-independent fast reduced Evans function $\E_{f,0}$. This quantity prevents us from directly estimating the Evans function $\E_\epsilon$ by the reduced Evans function $\E_0(\lambda,\gamma) = (-\gamma)^n\E_{s,0}(\lambda,\gamma)\E_{f,0}(\lambda)$, using the estimates in Lemmas \ref{Evansslowestimate} and \ref{Evansfastestimate}. Nevertheless, it is still possible to compare the zero sets of $\E_\epsilon$ and $\E_0$ using the classical symmetric version of Rouch\'e's Theorem due to Estermann. This will conclude the proof of our main results.
\begin{proof}[Proof of Theorem \ref{mainresult}]
Let $\Lambda \in (-\Lambda_0,0)$ with $\Lambda_0 > 0$ as in Lemma \ref{lemlambda0}. Take $\delta > 0$ sufficiently small such that $\Gamma \subset \Sigma_{\Lambda,\delta}$. By Proposition \ref{slowEvans}-1 the slow reduced Evans function $\E_{s,0}(\cdot,\gamma)$ is meromorphic in the interior of $\Gamma$ and analytic and non-zero on $\Gamma$. Moreover, by Theorem \ref{Sturm} the fast reduced Evans function $\E_{f,0}$ is analytic on $\Gamma$ and its interior and is non-zero on $\Gamma$. Hence, for $\epsilon > 0$ sufficiently small, we have for all $\lambda \in \Gamma$ and $\gamma \in S^1$
\begin{align} \begin{split}|\E_\epsilon&(\lambda,\gamma) - \E_0(\lambda,\gamma)| \\&\leq (1- h_{\epsilon}(\lambda))|\E_\epsilon(\lambda,\gamma)| + |\E_{s,\epsilon}(\lambda,\gamma)\E_{f,\epsilon}(\lambda,\gamma)h_{\epsilon}(\lambda) - (-\gamma)^n\E_{s,0}(\lambda,\gamma)\E_{f,0}(\lambda)|
\\& < |\E_\epsilon(\lambda,\gamma)| + |\E_0(\lambda,\gamma)|,\end{split} \label{Roucheineq}\end{align}
where we have used Theorem \ref{ricfac} and Lemmas \ref{Evansslowestimate} and \ref{Evansfastestimate}. The result follows by an application of the symmetric version of Rouch\'e's Theorem.
\end{proof}
\begin{proof}[Proof of Theorem \ref{fixgamma}]
Let $\Lambda \in (-\Lambda_0,0)$ with $\Lambda_0 > 0$ as in Lemma \ref{lemlambda0}. Let $\gamma \in S^1$. By Proposition \ref{slowEvans}-1,2 and Theorem \ref{Sturm} the map $\E_\gamma \colon \Sigma_{\Lambda,0} \to \C$ given by $\E_\gamma(\lambda) = (-\gamma)^n\E_{s,0}(\lambda,\gamma)\E_{f,0}(\lambda)$ is non-trivial and analytic. The zeros of $\E_\gamma$ are given by the discrete set $\N_\gamma := \N(\E_{f,0}) \cup \N(\E_{s,0}(\cdot,\gamma))$. Now, take $\delta_\gamma > 0$ sufficiently small such that for every $\lambda_\diamond \in \N_\gamma$ the disc $\overline{B}(\lambda_\diamond,\delta_\gamma)$ contains no $\lambda \in \N_\gamma$ with $\lambda \neq \lambda_\diamond$. Take $0 < \delta < \delta_\gamma$ and $\lambda_\diamond \in \N_\gamma$. For all $\lambda \in \partial B(\lambda_\diamond,\delta)$ we have by construction that $\E_\gamma$ is non-zero. Hence, for $\epsilon > 0$ sufficiently small, inequality (\ref{Roucheineq}) holds true for all $\lambda \in \partial B(\lambda_\diamond,\delta)$. The result follows by an application of Rouch\'e's Theorem.
\end{proof}
\begin{proof}[Proof of Proposition \ref{zeropole}]
Let $\Lambda \in (-\Lambda_0,0)$ with $\Lambda_0 > 0$ as in Lemma \ref{lemlambda0}. Let $\lambda_\diamond$ be a simple zero of $\E_{f,0}$. By hypothesis, there exists $\delta_1 > 0$ such that
\begin{align*}\overline{B}(\lambda_\diamond,\delta_1) \subset \{\lambda_\diamond\} \cup \left[\Sigma_{\Lambda,0} \!\setminus\! \left(\bigcup_{\gamma \in S^1} \N(\E_{s,0}(\cdot,\gamma)) \cup \N(\E_{f,0})\right)\right].\end{align*}
Take $0 < \delta < \delta_1$. The disc $\overline{B}(\lambda_\diamond,\delta)$ contains no zeros of $\E_{s,0}(\cdot,\gamma)$ for every $\gamma \in S^1$ and precisely one simple zero $\lambda_\diamond$ of $\E_{f,0}$. Hence, for $\epsilon > 0$ sufficiently small, inequality (\ref{Roucheineq}) holds true for all $\lambda \in \partial B(\lambda_\diamond,\delta)$ and $\gamma \in S^1$. The result follows by combining Rouch\'e's Theorem and Proposition \ref{slowEvans}-3.
\end{proof}
\begin{rem}[\textit{Rate of convergence}]
The technical results Lemmas \ref{Evansslowestimate} and \ref{Evansfastestimate} seem to provide a rate at which the spectrum $\sigma(\El_\epsilon)$ converges to the spectrum in the singular limit. However, the approximations in these lemmas are only valid away from the zeros of the fast reduced Evans function $\E_{f,0}$! So, one can only deduce that spectrum converging to
\begin{align*} \left[\bigcup_{\gamma \in S^1} \N(\E_{s,0}(\cdot,\gamma))\right] \setminus \N(\E_{f,0}),\end{align*}
does this at an algebraic rate of order $\ord(\epsilon^{\mu_s})$. By making the parameter $\delta$ appearing in the proof of Lemma \ref{Evansfastestimate} dependent on $\epsilon$, it may be possible to derive an overall rate at which the spectrum $\sigma(\El_\epsilon)$ converges to its singular limit spectrum. However, this is beyond the scope of this paper.$\hfill \blacksquare$
\end{rem}
\section{Concluding remarks} \label{sec7}
\subsection{Discussion}
As mentioned in the introduction, our factorization method via the Riccati transformation of the Evans function offers one unified analytic alternative to both the elephant trunk procedure developed by Alexander, Gardner and Jones \cite{AGJ,GJO} and the NLEP approach of \cite{DGK,DGK2} -- that both have a geometric nature. It is worthwhile to compare and discuss the links between these methods.\\
\\
Moreover, the present work can be seen as the natural generalization of the spectral analysis \cite{PLO} of periodic pulse solutions in the Gierer-Meinhardt equations to periodic pulse patterns in the general class of `slowly nonlinear', $(m+n)$-component, singularly perturbed reaction-diffusion equations \eqref{diff}. Recently, a similar generalization to slowly nonlinear $2$-component systems for homoclinic pulses has been developed in \cite{VEA}. In that sense, the present paper stands in the tradition of \cite{DGK2,VEA,PLO}. However, our spectral analysis differs fundamentally from the analyses in these works, which rely eventually on the geometric approaches developed in \cite{AGJ,DGK,GJO}.
\subsubsection{Relation to the elephant trunk procedure}
Consider a localized pulse solution to a $2$-component, singularly perturbed reaction-diffusion equation. When the linear stability problem (LSP) has a slow-fast structure, it is a general phenomenon that it decouples outside the pulse region due to exponential decay of the solution to the asymptotic background state. This yields a decomposition of the solution space into three subspaces $V_{s\pm} \oplus V_{c\pm} \oplus V_{u\pm}$ at both sides ($\pm$) of the pulse region. Here, $V_{s\pm}$ consists of fast exponentially decaying solutions. Similarly, $V_{u\pm}$ consists of fast exponentially increasing solutions. Lastly, $V_{c\pm}$ consists of solutions that evolve slowly. In the sense of \cite{PALE}, one could say (LSP) admits exponential separations with respect to the decompositions $V_{s\pm} \oplus V_{c\pm} \oplus V_{u\pm}$. The difficulty is to `glue' the subspaces $V_{\cdot+}$ and $V_{\cdot-}$ for $\cdot = u,s,c$ together, yielding an exponential separation of (LSP) on the whole line. Eventually, this
induces a factorization of the Evans function in a fast and slow component.\\
\\
Gardner and Jones achieved this in \cite{GJO} by considering (LSP) in projective space. When (LSP) is asymptotically of constant coefficients type, one can first obtain stable and unstable bundles. Subsequently, these bundles are split into fast and slow (un)stable subbundles. The elephant trunk lemma is used to track the fast (un)stable bundle through the pulse region. By the control on the fast subbundle, it is possible to approximate the dynamics of the slow (un)stable subbundles. Eventually, this yields a $(1,2,1)$-exponential separation of (LSP) on $\R$. Note that the $2$-dimensional center direction corresponds to the slow stable and unstable subbundles. In our stability analysis, the Riccati transformation plays the role of the elephant trunk lemma -- see Section \ref{4.4}. This transformation yields an $(n,2m,n)$-exponential separation on $\R$ of (LSP) as long as we are not close to the eigenvalues of the fast singular limit problem. \\
\\
Although the proof of the elephant trunk lemma has been worked out in full detail for some specific $2$-component models \cite{DGK,ESZ,GJO,RUB} only, it is widely accepted that the method can be followed for a larger class of systems. However, there are some limitations. For instance, the elephant trunk lemma is only suitable for linear stability problems that have an asymptotically constant coefficient matrix. This is neither a restriction for slowly linear systems as the classical Gray-Scott and Gierer-Meinhardt models nor for homoclinic pulses on $\R$. However, the linear stability problem associated with spatially periodic patterns in slowly nonlinear systems exhibits non-autonomous behavior in the background state on its domain of periodicity -- and thus does not approach a constant coefficient matrix. This prohibits the application of the elephant trunk procedure. Moreover, the elephant trunk lemma is only capable of tracking the `most unstable' fast solution, which corresponds to the (simple) eigenvalue of largest real part of the asymptotic coefficient matrix. Therefore, it is unclear how to obtain the exponential separation with the elephant trunk method in the multi-dimensional setting $n > 1$.\\
\\
Furthermore, there is a major difference in the mathematical framework used in \cite{AGJ,GJO} and our work. The framework in \cite{AGJ, GJO} has a highly geometrical character, whereas our method is of a more analytical nature. Alexander, Gardner and Jones track solutions via vector bundles formed from the projectivized (LSP). This has the advantage that the generated bundles have a clean and natural characterization as $\epsilon$ tends to zero, whereas the actual solutions of (LSP) become singular. On the other hand, one could argue that exponential dichotomies provide a natural framework to capture the dynamics of (LSP) being a non-autonomous linear system, which depends analytically on the parameter $\lambda$. The Riccati transformation is naturally formulated in terms of exponential dichotomies and is explicit in terms of the coefficient matrix of (LSP). Therefore, the exponential separation of the solution space is much more explicit than in \cite{AGJ, GJO}, which shortens proofs. Finally, it is interesting to remark that in both the approach initiated by Alexander, Gardner and Jones and our method we need an a-priori $\epsilon$-independent estimate on the sector containing the spectrum. Our proof of this fact in Section \ref{4.1} forms an analytical counterpart to the geometrical proof provided in \cite[Proposition 2.2]{AGJ} and \cite[Lemma 3.3]{GJO}.
\subsubsection{Relation to the NLEP approach}
Based on the geometric methods of Alexander, Gardner and Jones \cite{AGJ,GJO}, the NLEP approach was developed in the context of the stability of homoclinic $N$-pulse patterns in the Gray-Scott equation \cite{DGK} and Gierer-Meinhardt-type models \cite{DGK2}. This method established the approximation of the Evans function by the product \eqref{factorintrred} of an analytic fast reduced Evans function and a meromorphic slow reduced Evans function and provided explicit analytic expressions for both factors. The NLEP approach was extended to the spectral analysis of spatially periodic pulse patterns (in semi-strong interaction) in the generalized Gierer-Meinhardt equations in \cite{PLO} and to the stability of heteroclinic and homoclinic multi-front patterns in $2$- and $3$-component bistable systems of FitzHugh-Nagumo-type \cite{DIN, HEI}. Moreover, the method has recently been generalized to the stability of homoclinic pulses in slowly nonlinear systems in \cite{VEA,VEE}. In each of these works, the fast and slow reduced Evans functions are interpreted geometrically in terms of fast and slow transmission functions that encode the passage of specially selected fast and slow basis functions over the fast pulse regions. The expressions for the slow transmission functions include Melnikov-type components. The meromorphic character of the slow reduced Evans function generates the zero-pole cancelation mechanism -- also called NLEP paradox -- in each of these models. The spectral analysis for periodic pulse solutions developed here shows that these phenomena occur in a broad class of multi-component singularly perturbed reaction-diffusion systems. \\
\\
Although the present work stands in the tradition of \cite{DGK,DGK2,DIN,VEA,HEI,PLO}, the methods differ fundamentally. Unlike these works, our analysis is based on an intrinsically analytic reduction method. This has the advantage that our spectral analysis allows for non-autonomous behavior of the linear stability problem outside the pulse region -- a crucial extension in the case of spatially periodic patterns in slowly nonlinear systems. This extended applicability of the present method also plays a role in the spectral analysis of homoclinic patterns: for instance a $uv$-term in the $v$-component of the $n=m=1$ version of \eqref{reac} cannot be allowed in \cite{VEA}, whereas our stability analysis can indeed handle such terms. Moreover, unlike in the present work, the singular limit problems appearing in \cite{DGK,DGK2,DIN,VEA,HEI,PLO} are scalar, which significantly simplifies the analysis of these problems. In \cite{DGK,DGK2,PLO} the slow and fast reduced Evans functions can be explicitly computed in terms of hypergeometric functions, while in \cite{DIN,HEI} the stability of the (multi-)fronts is determined by spectrum near the origin, so that the relevant reductions can be determined in a relatively straightforward manner. An extensive analysis of the multi-dimensional singular limit problems, as we did in Section \ref{4.2}, is thus not necessary in these cases. Finally, there also is an important similarity between the geometric methods in the literature and the analytic approach developed here. Although the methods are most often applied to patterns that exhibit a reversibility symmetry, this symmetry is not essential for the application of the methods -- see \cite{HEI} for an example. Nevertheless, such a symmetry in general simplifies the analysis -- as mentioned in Proposition \ref{slowm1reversible}.
\subsection{Future directions} \label{future}
Perhaps the most pressing question is how the fine structure of the `small' spectrum around zero can be determined (Section \ref{secsmallspectrum}). This fine structure is crucial to decide upon spectral stability (Corollary \ref{stability}) and, eventually, nonlinear diffusive stability (Remark \ref{nonlinearstab}). In \cite{SAS} the fine structure is presented for nearly homoclinic wave trains and in \cite{PLO} for periodic pulse solutions of semi-strong interaction type in the setting of the slowly linear Gierer-Meinhardt equation. However, the methods in these two papers are not directly applicable to our situation -- see Remark \ref{finestructure}. Nevertheless, preliminary investigations indicate that the analytic methods of \cite{SAS} may be extended to the present semi-strong interaction regime by separating fast from slow dynamics via Riccati transformations. This is the subject of work in progress.\\
\\
Another direction is the nature of destabilization of spatially periodic pulse solutions. Recent research \cite{STE,STE2} shows that destabilization mechanisms can be rather complex when periodic patterns approach a homoclinic limit. While increasing the wavelength, the character of the destabilization alternates between two kinds of Hopf bifurcations. This phenomenon is called the `Hopf dance' \cite{STE,STE2}. It has been analytically established in (slowly linear) Gierer-Meinhardt models in \cite{STE} and recovered by numerical methods in the generalized Klausmeier-Gray-Scott model \cite{STE,STE2}. The latter observations suggest that it is a persistent mechanism -- at least in slowly linear models -- especially since the systems considered in \cite{STE2} include nonlinear diffusion terms. However, the analysis in \cite{STE} suggests that slowly nonlinear dynamics of the models may a priori have a decisive impact on the appearance of the Hopf dance. Both the Hopf dance as well as the `belly dance' \cite{STE} -- an associated higher order phenomenon -- can be analyzed in the general slowly nonlinear setting of \eqref{reac} by the methods developed here -- see \cite{RIJV}.\\
\\
Finally, we note that the analysis developed in this work may be used as the foundation for an analytic study of the bifurcations exhibited by spatially periodic patterns in semi-strong interaction -- patterns that thus are `far from equilibrium'. In other words, the developed explicit insights in the linear stability is a key to understanding the weakly nonlinear dynamics of patterns as they have become spectrally unstable. A first -- and fundamental -- step in this direction has been taken in \cite{VEE2}, in which a normal form approach associated with a Hopf destabilization of homoclinic pulses in the $n=m=1$ version of \eqref{reac} is developed. Unlike known classical slowly linear examples such as the Gray-Scott and Gierer-Meinhardt models, the Hopf bifurcation for homoclinic pulses can be supercritical. It can even be the first step in a sequence of further bifurcations that leads to complex (amplitude) dynamics of a standing solitary pulse -- as observed in the simulations of \cite{VEE}. It should be noted that the subcriticality of the Hopf bifurcation in Gierer-Meinhardt-type models was a `conjecture' based on numerical evidence until the work in \cite{VEE2}. Both the above described Hopf dance and the fact that the pulses that together form the spatially periodic patterns are in semi-strong interaction, indicate that the weakly nonlinear dynamics of these patterns beyond their destabilization may be very rich.
\appendix
\section{Proof of existence of stationary, spatially periodic pulse solutions} \label{A0}
\begin{proof}[Proof of Theorem \ref{maintheorem}] We adopt the notation of assumptions \ref{assE1} and \ref{assE2}. Recall that (\ref{ODE}) is $R$-reversible by Remark \ref{reversiblerem}. Our proof is based on the fact that every orbit that crosses the space $\ker(I-R)$ twice, must be a closed loop. Therefore, we start with a `good' set of initial conditions $\Ze \subset \ker(I-R)$ and track these conditions under the forward flow of (\ref{ODE}) with the aid of an appropriate Exchange Lemma. We will show that the tracked trajectories remain close to the singular orbit consisting of the segments $\psi_{\s}(\xx), \xx \in [0,2\check{L}_0]$ and $\pih_{\ho}(x,u_0), x \in \R$, where $\pih_\ho$ is defined in Remark \ref{TOTD}. In particular, we establish that the union of trajectories starting in $\Ze$ intersects $\ker(I-R)$ transversally in some point $P_{\p,\epsilon}$, which lies close to $\pih_{\ho}(0,u_0)$. Finally, the desired periodic solution is the one that starts in $P_{\p,\epsilon}$. \\
\\
\textit{A good set of initial conditions}\\
Denote by $e_i, i \in 1,\ldots,m$ the unit basis of $\R^m$. Let $\U$ be the $(m \times (m-1))$-matrix with column vectors $e_1,\ldots,e_{i_*-1},e_{i_*+1},\ldots,e_m$, where $i^*$ is as in \ref{assE2}. Consider the $(m+n-1)$-dimensional manifold,
\begin{align*} \Ze := \{(u_1 + u,0,v,0) : u \in \U[\R^{m-1}], v \in \R^n\} \subset \ker(I-R).\end{align*}
The intersection of $\Ze$ and $\M$ corresponds to,
\begin{align*}\Proj_0 := \{(u_1 + u,0) : u \in \U[\R^{m-1}]\} \subset \ker(I - R_s).\end{align*}
By assumption \ref{assE2} $\Proj_0$ becomes under the forward flow of the slow reduced system (\ref{slowp}) an $m$-dimensional manifold $\Proj_0^*$, which intersects $\T_-$ transversely at $(u_0,-\J(u_0))$. Indeed, we have
\begin{align*}
0 &\neq \det\left(\begin{array}{c|c} \Phi_s(0,\check{L}_0)\left[\begin{array}{c} I \\ -\partial_u \J(u_0)\end{array}\right] & \begin{array}{cc} 0 & \U \\ H_1(u_1,0) & 0 \end{array}\!\end{array}\right)\\
 &=\det\left(\begin{array}{c|c} \!\begin{array}{cc} I & -\J(u_0) \\ -\partial_u \J(u_0) & H_1(u_0,0)\end{array} & \Phi_\s(\check{L}_0,0) \left[\begin{array}{c} \U \\ 0 \end{array}\right]\end{array}\right).
\end{align*}
We have used here that $\Phi_s(\check{L}_0,0)$ induces an isomorphism between the tangent spaces of $\Proj_0^*$ at $(u_1,0)$ and at $(u_0,-\J(u_0))$ and that the determinant of $\Phi_s(\check{L}_0,0)$ equals $1$ by Liouville's Theorem.\\
\\
\textit{Putting system (\ref{ODE}) in Fenichel normal form}\\
Let $\M_0$ be a compact $2m$-dimensional submanifold of $\M$ so large that $\M_0$ serves as a neighborhood of $\psi_{\s}(\xx), \xx \in [0,\check{L}_0]$ and of the projection $(u_0,\int_0^x H_2(u_0,v_\ho(z,u_0))dz), x \in \R$ of $\pih_{\ho}(x,u_0)$ on $\M$. \\
\\
By assumption \ref{assS2} $\M_0$ is normally hyperbolic. So, according to Fenichel's theory \cite[Theorem 9.1]{FEN2}, $\M_0$ perturbs, for $\epsilon > 0$ sufficiently small, to a manifold $\M_\epsilon$, which is diffeomorphic to $\M_0$ and locally invariant for the dynamics of (\ref{ODE}). Since $\M_0$ is itself locally invariant for the dynamics of (\ref{ODE}), one readily establishes that there exists a constant $C_0 > 0$ such that $\M_\epsilon$ has Hausdorff distance $\ord(e^{-C_0/\epsilon})$ from $\M_0$ -- see also \cite[Theorem 2.1]{DES}.\\
\\
By \cite[Proposition 1]{JOT} there exists a $C^1$ coordinate change $(u,p,v,q) \to (a,b,c)$, which brings system (\ref{ODE}) into `Fenichel normal form',
\begin{align}
\left\{\!
\begin{array}{rcl}
\partial_x a &=& A(a,b,c,\epsilon)a\\
\partial_x b &=& B(a,b,c,\epsilon)b\\
\partial_x c &=& \epsilon K(c,\epsilon) + C(a,b,c,\epsilon)(a \otimes b)\end{array}\right., \ \ \ a,b \in \R^n, c \in \R^{2m}, \label{Fenichelnorm}
\end{align}
in an $\epsilon$-independent neighborhood $\D$ of $\M_0$, where $A,B,K$ and $C$ are continuous, $K$ is a vector in $\R^{2m}$, $A,B$ are square matrices of order $n$ and $C$ is a tensor of appropriate rank. In particular, $C(a \otimes b)$ is bilinear in $a$ and $b$. In `Fenichel coordinates' $\M_\epsilon$ is given by $a = b = 0$ and the local stable and unstable manifolds $W_\epsilon^{u,s}(\M_\epsilon) \cap \D$ of $\M_\epsilon$ are the spaces $b = 0$ and $a = 0$, respectively. Since $R$ maps $W_\epsilon^u(\M_\epsilon)$ to $W_\epsilon^s(\M_\epsilon)$, $\ker(I-R) \cap \D$ is contained in the space $a = b$. Finally, system $\partial_\xx c = K(c,0)$ corresponds to the slow reduced system (\ref{slowp}).\\
\\
\textit{Application of the Exchange Lemma}\\
By the latter paragraph $\Ze \subset \ker(I-R)$ intersects the local stable manifold $W_0^s(\M_0) \cap \D$ of the fast reduced system (\ref{relp}) transversally at $(u_1,0,0,0)$. Moreover, the reduced slow flow (\ref{slowp}) on $\M$ is not tangent to $\Proj_0$ at $(u_1,0)$. We conclude that the conditions for the Exchange Lemma \cite[Section 2.5]{SEC} are satisfied.\\
\\
Denote by $\Proj_\epsilon \subset \M_\epsilon$ the $(m-1)$-dimensional manifold, where $\Ze$ and the local stable manifold $W_{\epsilon}^s(\M_\epsilon) \cap \D$ meet transversally. Moreover, let $\Ze_\epsilon^*$ and $\Proj_\epsilon^* \subset \M_\epsilon$ be the $(m+n)$- and $m$-dimensional manifolds obtained by flowing initial conditions on $\Ze$ and $\Proj_\epsilon$ forward in (\ref{ODE}). Finally, we denote by
\begin{align*} \Y_\epsilon := \bigcup_{\phi \in \Proj_\epsilon^*} W_\epsilon^u(\phi) \subset W_\epsilon^u(\M_\epsilon),\end{align*}
the union of unstable fibers in (\ref{ODE}) with base points in $\Proj_\epsilon^*$. Note that $\Y_\epsilon$ is locally invariant by Fenichel's theory \cite[Theorem 9.1]{FEN2}. By combining \ref{assE2} with the Exchange Lemma \cite[Theorem 2.3]{SEC}, there exists an $(m+n)$-dimensional submanifold $\Ze_{1,\epsilon}$ of $\Ze_\epsilon^*$ and an $\epsilon$-independent neighborhood $\Delta \subset \D$ of $(u_0,-\J(u_0),0,0)$ such that the Hausdorff distance between $\Delta \cap \Y_\epsilon$ and $\Ze_{1,\epsilon}$ is $\ord(e^{-C_0/\epsilon})$. Moreover, trajectories crossing $\Ze_{1,\epsilon}$ remain in $\D$ during the excursion from $\Ze$ to $\Ze_{1,\epsilon}$.\\
\\
\textit{The singular limit of $\Y_\epsilon$}\\
First, recall that $\Proj_0^*$ intersects $\T_-$ transversely at $(u_0,-\J(u_0))$. Second, the unstable manifold $W_0^u(\M_0)$ in (\ref{relp}) intersects $\ker(I-R)$ transversely in an $m$-dimensional manifold $\Es_0 := \{\pih_\ho(0,u) : u \in U_\ho\}$ by assumption \ref{assE1}. The $\alpha$-limit set of $\Es_0$ equals the touch-down manifold $\T_-$ in $\M$ by Remark \ref{TOTD}. We now put these two items together and conclude that the $(m+n)$-dimensional union,
\begin{align*} \Y_0 := \bigcup_{\phi \in \Proj_0^*} W_0^u(\phi) \subset W_0^u(\M_0),\end{align*}
of unstable fibers in (\ref{relp}) with base points in $\Proj_0^*$ intersects the $(m+n)$-dimensional manifold $\ker(I-R)$ transversally in the point $\pih_\ho(0,u_0)$. \\
\\
\textit{Obtaining the periodic orbit}\\
By Fenichel theory \cite[Theorem 9.1]{FEN2} the manifolds $\Y_\epsilon$ and $\Y_0$ have Hausdorff distance $\ord(\epsilon)$ in a neighborhood of the intersection point $\pih_\ho(0,u_0)$. Therefore, provided $\epsilon > 0$ is sufficiently small, $\Y_\epsilon$ intersects $\ker(I-R)$ transversally in some point $P_{\ho,\epsilon}$, which lies $\ord(\epsilon)$-close to $\pih_\ho(0,u_0)$. Denote by $\pih_{\ho,\epsilon}(x)$ the solution to (\ref{ODE}) with initial condition $\pih_{\ho,\epsilon}(0) = P_{\ho,\epsilon}$.\\
 \\
Since $\pih_\ho(0,u_0)$ is homoclinic to $(u_0,-\J(u_0),0,0) \in \M$, there exists $x_0 < 0$ such that $\pih_\ho(x_0,u_0)$ is contained in the neighborhood $\Delta$ of $(u_0,-\J(u_0),0,0)$. Hence, since $\pih_{\ho,\epsilon}(0)$ is $\ord(\epsilon)$-close to $\pih_\ho(0,u_0)$ and $x_0$ is independent of $\epsilon$, one derives via Gr\"onwall type estimates that $\pih_{\ho,\epsilon}(x_0)$ is contained in $\Delta \cap \Y_\epsilon$. Recall that the outcome of the Exchange Lemma is that $\Y_\epsilon$ has Hausdorff distance $\ord(e^{-C_0/\epsilon})$ from $\Ze_{1,\epsilon}$ in the neighborhood $\Delta$ of $\pih_{\ho,\epsilon}(x_0)$.\\
 \\
Denote by $\Ze_{1,\epsilon}^*$ the $(m+n)$-dimensional manifold obtained by flowing $\Ze_{1,\epsilon}$ forward in (\ref{ODE}). Since $x_0$ is $\epsilon$-independent, we infer, again via Gr\"onwall type estimates, that the Hausdorff distance between $\Y_\epsilon$ and $\Ze_{1,\epsilon}^*$ is $\ord(e^{-C_0/\epsilon})$ in a neighborhood of $\pih_{\ho,\epsilon}(0)$. Therefore, $\Ze_{1,\epsilon}^*$ intersects $\ker(I-R)$ transversally in some point $P_{\p,\epsilon}$, which is $\ord(\epsilon)$-close to $\pih_\ho(0,u_0)$. The solution $\pih_{\p,\epsilon}(x)$ with initial condition $\pih_{\p,\epsilon}(0) = P_{\p,\epsilon}$ is the desired periodic orbit. Indeed, $\pih_{\p,\epsilon}(x)$ crosses $\ker(I-R)$ at $x = 0$ and at some point $x = -L_\epsilon < 0$, since $\pih_{\p,\epsilon}$ is contained in $\Ze_\epsilon^*$. In particular, we have the desired symmetry properties: $\pih_{\p,\epsilon}(x) = R\pih_{\p,\epsilon}(-x)$ and $\pih_{\p,\epsilon}(L_\epsilon - x) = R\pih_{\p,\epsilon}(L_\epsilon + x)$ for $x \in \R$. \\
\\
\textit{Checking conditions 1-3 of \ref{assS3}}\\
First, assertion \ref{assS3}-2 is immediate by the fact that $\pih_{\ho}(0,u_0)$ lies $\ord(\epsilon)$-close to $\pih_{\p,\epsilon}(0)$. Assertion \ref{assS3}-3 requires more work. By a Gr\"onwall type estimate, we establish
\begin{align} \|\pih_{\ho}(x,u_0) - \pih_{\p,\epsilon}(x)\| = \ord(\epsilon), \label{exest1}\end{align}
for $x \in [x_0,0]$. By the construction in the latter paragraph, we may without loss of generality assume that $\pih_{\p,\epsilon}(x_0)$ is contained in $\Ze_{1,\epsilon}$. So, $\pih_{\p,\epsilon}(x)$ is in $\D$ for $x \in [-L_\epsilon,x_0]$. Denote by $a_{\p,\epsilon}(x),b_{\p,\epsilon}(x)$ and $c_{\p,\epsilon}(x)$ the components of the solution $\pih_{\p,\epsilon}(x)$ in Fenichel coordinates (\ref{Fenichelnorm}). By \cite[Corollary 1]{JOT} there exists $\epsilon$-independent constants $C_a,C_b,\mu_0 > 0$ such that
\begin{align} \|b_{\p,\epsilon}(x)\| \leq C_be^{-\mu_0L_\epsilon}, \ \ \ \|a_{\p,\epsilon}(x)\| \leq C_ae^{\mu_0(x-x_0)}, \label{exest2}\end{align}
for $x \in [-L_\epsilon,x_0]$. Recall that $\M_\epsilon$ has Hausdorff distance $\ord(e^{-C_0/\epsilon})$ to $\M_0 \subset \M$. Hence, using estimates (\ref{exest1}) and (\ref{exest2}) and the symmetry of $\pih_{\p,\epsilon}$, estimate (\ref{slowestS31}) in assertion \ref{assS3}-3 follows.\\
\\
By (\ref{exest2}) the norm of the tensor $a_{\p,\epsilon}(x) \otimes b_{\p,\epsilon}(x)$ is $\ord(e^{-\mu_0L_\epsilon})$ on $[-L_\epsilon,x_0]$. Therefore, using Gr\"onwall type estimates once again, there exists a solution $(0,0,c_{\s,\epsilon}(x))$ in the invariant manifold $\M_\epsilon \subset \{a = b = 0\}$ satisfying $\partial_x c = \epsilon K(c,\epsilon)$, which is $\ord(e^{-\mu_0L_\epsilon})$-close to $c_{\p,\epsilon}(x)$ for $x \in [-L_\epsilon,x_0]$. Moreover, there exists a solution $c_{\s,0}(\xx)$ to $\partial_\xx c = K(c,0)$, which satisfies $\|c_{\p,\epsilon}(x) - c_{\s,0}(\epsilon^{-1} x)\| = \ord(\epsilon)$ for $x \in [-L_\epsilon,x_0]$. Recall that system $\partial_\xx c = K(c,0)$ corresponds to the slow reduced system (\ref{slowp}). Thus, we have established the last estimate (\ref{slowestS32}) in \ref{assS3}-3 and assertion \ref{assS3}-1. Note that the solution $c_{\s,0}(u)$ corresponds to the solution $\psi_\s(\xx)$ to (\ref{slowp}) in \ref{assE2}.
\end{proof}
\section{Prerequisites for the proofs of the main results}\label{A1}
In this section we treat some prerequisites needed for the proofs of our main results. We will provide the proofs of statements that we could not find in literature.
\subsection{Bounded growth estimate}
We start with the notion of bounded growth, which is due to Coppel \cite{COP}.
\begin{defi}
Let $n \in \Z_{> 0}$, $J \subset \R$ an interval and $A \in C(J,\mathrm{Mat}_{n \times n}(\C))$. Denote by $T(x,y)$ the evolution operator of system,
\begin{align}\partial_x \phi = A(x)\phi,\ \ \ \phi \in \C^n,\label{linsys}\end{align}
Equation (\ref{linsys}) has \emph{bounded growth on $J$ with constants $\mu,K > 0$}, if for all $x,y \in J$ it holds
\begin{align*} \|T(x,y)\| \leq Ke^{\mu|x-y|}.\end{align*}
\end{defi}
Clearly, if $A(x)$ is bounded on $\R$, system (\ref{linsys}) has bounded growth on $\R$ with constants $\mu = \|A\|$ and $K = 1$ due to Gr\"onwall's inequality. In our spectral analysis, we often want to compare a linear system with its perturbation. This requires the following consequence of  Gr\"onwall's inequality for linear systems.
\begin{lem} \label{Palmer}
Let $n \in \Z_{> 0}$, $a,b \in \R, a < b$ and $A, B \in C([a,b],\mathrm{Mat}_{n \times n}(\C))$. Denote by $T(x,y)$ the evolution operator of system (\ref{linsys}). Similarly, denote by $T'(x,y)$ the evolution of system,
\begin{align}\partial_x \phi = B(x)\phi,\ \ \ \phi \in \C^n.\label{linsys2}\end{align}
Suppose (\ref{linsys}) has bounded growth on $[a,b]$ with constants $\mu,K > 0$. It holds
\begin{align*}\|T(x,y) - T'(x,y)\|\leq K \int_a^b\|A(z) - B(z)\|dz \exp\left(\mu(b-a) + K\int_a^b\|A(z)-B(z)\|dz\right),\end{align*}
for all $x,y \in [a,b]$.
\end{lem}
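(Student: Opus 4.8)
The plan is to derive the estimate from a Duhamel (variation-of-constants) identity relating the two evolution operators, followed by Grönwall's inequality used twice. First I would fix $y \in [a,b]$ and note that, since $\partial_x T'(x,y) = B(x)T'(x,y) = A(x)T'(x,y) + \bigl(B(x)-A(x)\bigr)T'(x,y)$, the matrix $T'(\cdot,y)$ solves the $A$-system with inhomogeneity $(B-A)T'(\cdot,y)$. Differentiating $z \mapsto T(x,z)T'(z,y)$ and using $\partial_z T(x,z) = -T(x,z)A(z)$ together with $\partial_z T'(z,y) = B(z)T'(z,y)$ yields the identity
\begin{align*}
T'(x,y) - T(x,y) = \int_y^x T(x,z)\bigl(B(z)-A(z)\bigr)T'(z,y)\,dz ,
\end{align*}
valid for all $x,y \in [a,b]$ (with the usual orientation convention for the integral when $x<y$), which is the starting point.

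Next I would obtain an a priori bound on $\|T'(x,y)\|$ straight from this identity: moving $T(x,y)$ to the right-hand side, taking norms, inserting the bounded-growth estimate $\|T(x,z)\| \le K e^{\mu|x-z|}$, and multiplying through by $e^{-\mu|x-y|}$ produces a scalar integral inequality $h(x) \le K + K\int c(z)h(z)\,dz$ with $c(z) := \|A(z)-B(z)\|$ and $h(x) := e^{-\mu|x-y|}\|T'(x,y)\|$. Grönwall's inequality then gives $\|T'(x,y)\| \le K e^{\mu|x-y|}\exp\!\bigl(K\int_a^b \|A(z)-B(z)\|\,dz\bigr)$.

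Finally I would substitute this a priori bound back into the Duhamel identity, estimate $\|T(x,z)\| \le K e^{\mu|x-z|} \le K e^{\mu(b-a)}$, and integrate over $[a,b]$, collecting the bounded-growth constants into the factor $\exp\!\bigl(\mu(b-a) + K\int_a^b\|A-B\|\bigr)$; the case $x<y$ is handled by the same argument with the orientation of the interval reversed, and $x,y$ ranging over all of $[a,b]$ only loosens the $|x-z|,|x-y|$ exponents to $b-a$. There is no essential obstacle here — this is a standard perturbation lemma — but the one point requiring care is that bounded growth is hypothesized only for the \emph{unperturbed} system \eqref{linsys}, so every appearance of $T'$ must be controlled indirectly via the a priori bound of the second step rather than assumed; a minor bookkeeping nuisance is covering both orderings $x\ge y$ and $x\le y$ in one stroke.
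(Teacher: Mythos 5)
Your approach --- the Duhamel identity $T'(x,y)-T(x,y)=\int_y^x T(x,z)\bigl(B(z)-A(z)\bigr)T'(z,y)\,dz$, a Gr\"onwall step to obtain an a priori bound on $T'$, and then a direct estimate of the integral --- is the same as the paper's, which simply invokes the proof of Palmer's Lemma~1, and the point you flag (that bounded growth is hypothesized only for the unperturbed system, so $T'$ must be controlled via the a priori step rather than assumed) is precisely the one that requires care.

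Your final ``collecting the bounded-growth constants'' step is, however, too loose: it does not reproduce the lemma's stated constant and in fact cannot. Your a priori bound reads $\|T'(z,y)\|\le K e^{\mu|z-y|}\exp\bigl(K\int_a^b\|A-B\|\bigr)$, and inserting this together with $\|T(x,z)\|\le Ke^{\mu|x-z|}$ into the Duhamel integral, using $|x-z|+|z-y|=|x-y|\le b-a$ for $z$ between $y$ and $x$, gives $\|T'(x,y)-T(x,y)\|\le K^2\int_a^b\|A-B\|\,\exp\bigl(\mu(b-a)+K\int_a^b\|A-B\|\bigr)$, with a leading factor $K^2$ rather than $K$; neither $K$ can be absorbed into the exponential. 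This $K^2$ is not an artifact of the method: the bound with a single $K$ is actually false. Take, for instance, $A=\left(\begin{smallmatrix}0&1\\0&0\end{smallmatrix}\right)$ and $B=\left(\begin{smallmatrix}0&1\\\delta&0\end{smallmatrix}\right)$ on $[0,1/\sqrt{\delta}]$. With $\mu=\sqrt{\delta}$ one must take $K\asymp 1/\sqrt{\delta}$, while $\int_a^b\|A-B\|=\sqrt{\delta}$, so both $\mu(b-a)$ and $K\int_a^b\|A-B\|$ are $O(1)$ and the stated right-hand side with a single $K$ remains bounded as $\delta\downarrow 0$; yet the $(1,2)$-entry of $T'(1/\sqrt{\delta},0)-T(1/\sqrt{\delta},0)$ equals $(\sinh 1-1)/\sqrt{\delta}\to\infty$. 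So the lemma as printed should read $K^2$, which is exactly what your argument (if you track the constants) produces --- in the paper's applications the lemma is invoked with $K=1$, where the distinction is invisible, but you should carry the constants explicitly rather than gesture at them.
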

\begin{proof}
This follows from the proof of \cite[Lemma 1]{PALP}.
\end{proof}
\subsection{Exponential dichotomies}
Exponential dichotomies are an important tool in studying spectral properties of differential equations. They enable us to track solutions in the linear stability problem by separating the solution space in solutions that either decay exponentially in forward time or else in backward time.
\begin{defi}
Let $n \in \Z_{> 0}$, $J \subset \R$ an interval and $A \in C(J,\mathrm{Mat}_{n \times n}(\C))$. Denote by $T(x,y)$ the evolution operator of (\ref{linsys}). Equation (\ref{linsys}) has \emph{an exponential dichotomy on $J$ with constants $K,\mu > 0$ and projections $P(x) \colon \C^n \to \C^n, x \in J$} if for all $x,y \in J$ it holds
\begin{itemize}
 \item $P(x)T(x,y) = T(x,y) P(y)$;
 \item $\|T(x,y)P(y)\| \leq Ke^{-\mu(x-y)}$ for $x \geq y$;
 \item $\|T(x,y)(I-P(y))\| \leq Ke^{-\mu(y-x)}$ for $y \geq x$.
\end{itemize}
\end{defi}
Let $P(x),x \in J$ be the family of projections corresponding to an exponential dichotomy on $J$. For each $x \in J$, the range $P(x)[\C^n]$ and the kernel $\ker(P(x))$ are called the \emph{stable and unstable subspaces at $x$}, respectively. These spaces are often denoted by $E^s(x)$ and $E^u(x)$, leaving the projection $P(x)$ implicit. Similarly, we abbreviate $T^s(x,y) = T(x,y)P(y)$ and $T^u(x,y) = T(x,y)(I-P(y))$.\\
\\
Below we give a short overview of the properties of exponential dichotomies that we need for our spectral analysis. For an extensive introduction on dichotomies the reader is referred to the work of Coppel \cite{COP}. A generalization of the concept of exponential dichotomies is the notion of exponential separation, which is treated in a paper of Palmer \cite{PALE}.
\subsubsection{Sufficient criteria}
Clearly, an autonomous linear system with hyperbolic coefficient matrix has an exponential dichotomy on $\R$. This result can be extended to non-autonomous linear systems under the hypothesis that the coefficient matrix changes sufficiently slow and remains bounded.
\begin{prop} \label{suffcritexpodi}
Let $n \in \Z_{> 0}$, $J \subset \R$ an interval and $A \in C^1(J,\mathrm{Mat}_{n \times n}(\C))$ such that
\begin{enumerate}
\item There exists $\alpha > 0$ such that for each $x \in J$ the matrix $A(x)$ has (counted with algebraic multiplicity) $k$ eigenvalues with real part $\leq -\alpha$ and $n - k$ eigenvalues with real part $\geq \alpha$.
\item There exists $M > 0$, which bounds $A$ on $J$.
\end{enumerate}
There exists $\delta > 0$, depending only on $\alpha$ and $M$, such that, if for all $x \in J$ we have $\|A'(x)\| \leq \delta$, then (\ref{linsys}) has an exponential dichotomy with constants $K,\mu > 0$ and projections $P(x), x \in J$. Here, we have $\mu = \frac{1}{2}\alpha$ and $K$ depends only on $M$ and $\alpha$. Moreover, there exists a fundamental matrix solution $X(x)$ of (\ref{linsys}) such that
\begin{align*}P(x) = X(x)\left(\begin{array}{cc} I_k & 0 \\ 0 & 0\end{array}\right)X^{-1}(x), \ \ \ x \in J.\end{align*}
\end{prop}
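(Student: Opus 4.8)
The statement is a quantitative, uniform version of the classical principle that a slowly varying linear system with a uniform spectral gap possesses an exponential dichotomy (this is the subject of Coppel's treatment of slowly varying systems \cite{COP}); the plan is to carry out the freezing argument while tracking the fact that every constant produced depends only on $\alpha$ and $M$. A preliminary reduction is useful: replacing $A$ by a mollification $A*\rho_\eta$ does not increase $\|A\|$ or $\|A'\|$ and, for $\eta$ small, leaves both hypotheses intact with $\alpha$ replaced by $\tfrac12\alpha$; since $A*\rho_\eta\to A$ in $C^0$ and the dichotomy constants we obtain are uniform, roughness transfers the dichotomy back to the original $A$. So I would assume $A\in C^\infty(J)$. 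Then I would introduce the Riesz spectral projection $P_0(x)=\tfrac{1}{2\pi i}\oint_\Gamma(\zeta I-A(x))^{-1}\,d\zeta$ of $A(x)$ onto the span of the generalized eigenvectors belonging to the $k$ eigenvalues of real part $\le-\alpha$, with $\Gamma$ the \emph{fixed} contour $\partial\bigl(\{\zeta\in\C:|\zeta|<M+1,\ \mathrm{Re}(\zeta)<-\tfrac12\alpha\}\bigr)$. The set of matrices obeying the two hypotheses is compact and $\Gamma$ is uniformly separated from the spectrum of each such matrix, so $(\zeta I-A(x))^{-1}$ is bounded on $\Gamma$ by a constant depending only on $\alpha,M$; differentiating under the integral gives $\|P_0(x)\|\le C(\alpha,M)$ and $\|P_0'(x)\|\le C(\alpha,M)\|A'(x)\|\le C(\alpha,M)\delta$.

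The subspaces $\mathrm{ran}\,P_0(x)$ and $\ker P_0(x)$ are $A(x)$-invariant, of dimensions $k$ and $n-k$, and by the bound on $P_0'$ they vary slowly. I would choose a $C^\infty$ matrix $S(x)$, uniformly bounded and uniformly invertible with $\|S(x)\|,\|S(x)^{-1}\|\le C(\alpha,M)$ and $\|S'(x)\|=\ord(\delta)$, whose first $k$ columns span $\mathrm{ran}\,P_0(x)$ and whose last $n-k$ columns span $\ker P_0(x)$. In the coordinates $\phi=S(x)z$, equation \eqref{linsys} becomes $z'=B(x)z$ with $B=S^{-1}AS-S^{-1}S'$. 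By invariance, $S^{-1}AS=\mathrm{diag}(B_1(x),B_2(x))$ with $\sigma(B_1(x))\subset\{\mathrm{Re}\le-\alpha\}$, $\sigma(B_2(x))\subset\{\mathrm{Re}\ge\alpha\}$ and $\|B_i(x)\|\le C(\alpha,M)$, while the remaining term $-S^{-1}S'$ has norm $\ord(\delta)$; hence $B=\mathrm{diag}(B_1,B_2)+E$ with $\|E\|=\ord(\delta)$.

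Next I would produce the dichotomy for $z'=B(x)z$ via a signed Lyapunov form adapted to the exponent $\tfrac12\alpha$. Solve, for each $x$, the weighted Lyapunov equations $B_1(x)^{*}Q_1+Q_1B_1(x)+\alpha Q_1=-I$ and $B_2(x)^{*}Q_2+Q_2B_2(x)-\alpha Q_2=I$; since $B_1+\tfrac12\alpha I$ has spectrum in $\{\mathrm{Re}\le-\tfrac12\alpha\}$ and $B_2-\tfrac12\alpha I$ in $\{\mathrm{Re}\ge\tfrac12\alpha\}$, these are solvable with $Q_i(x)$ Hermitian positive definite, $c_1(\alpha,M)I\le Q_i(x)\le c_2(\alpha,M)I$ and $\|Q_i'(x)\|=\ord(\delta)$. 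Put $H(x):=\mathrm{diag}(-Q_1(x),Q_2(x))$. A direct computation gives $H'+B^{*}H+HB+\alpha H=\mathrm{diag}(I+2\alpha Q_1,I)+(\text{terms of norm }\ord(\delta))$, which is $\succeq\tfrac12 I$ once $\delta$ is small enough, the smallness depending only on $\alpha,M$. Since $H$ is Hermitian, uniformly bounded and uniformly invertible, with exactly $k$ negative eigenvalues, the Lyapunov-function criterion for exponential dichotomies (see \cite{COP}) then yields that $z'=B(x)z$ has an exponential dichotomy on $J$ with stable rank $k$, exponent $\ge\tfrac12\alpha$, and constant depending only on $\alpha,M$. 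Conjugating by $S$ (whose norm and inverse norm are bounded by $C(\alpha,M)$) transports this to an exponential dichotomy of \eqref{linsys} on $J$ with $\mu=\tfrac12\alpha$ and $K$ depending only on $\alpha,M$. Finally, letting $P(x)$ be the resulting projections and writing $P(x_0)=V\,\mathrm{diag}(I_k,0)\,V^{-1}$ at some base point $x_0\in J$, the cocycle relation $P(x)=T(x,x_0)P(x_0)T(x_0,x)$ together with $X(x):=T(x,x_0)V$ gives $P(x)=X(x)\,\mathrm{diag}(I_k,0)\,X^{-1}(x)$, as claimed.

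The genuinely delicate point is not any single estimate but the uniformity: one must check that the resolvent bound on $\Gamma$, the bounds $\|S^{\pm1}\|$, the spectral and norm bounds on the $B_i$, the ellipticity constants $c_1,c_2$, and hence the final $\mu,K$, depend \emph{only} on $\alpha$ and $M$ — this is what makes the proposition usable in the roughness and pasting arguments of the appendix — and in particular that the weighting in the Lyapunov equations really yields the sharp exponent $\tfrac12\alpha$ rather than some smaller positive number. A secondary technical point is the construction in the second step of the uniformly bounded adapted frame $S(x)$ from the slowly varying projection $P_0(x)$; if one prefers to sidestep it, the whole argument can instead be run by covering $J$ by intervals of a fixed (parameter-dependent) length, comparing \eqref{linsys} on each interval with the frozen system there, invoking roughness on the interval, and pasting the interval dichotomies together using the $\ord(\delta)$-closeness of neighbouring spectral projections — at the cost of somewhat heavier constant bookkeeping.
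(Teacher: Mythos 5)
The paper disposes of this proposition with a single line: ``This is the content of \cite[Proposition 6.1]{COP}.'' So there is no in-paper proof to compare against; what you have done is reconstruct the argument behind Coppel's result, and the overall architecture (Riesz spectral projection of the frozen matrix, reduction to a slowly varying block-diagonal system plus an $\ord(\delta)$ remainder, then a quadratic Lyapunov form adapted to the exponent $\tfrac12\alpha$, with all constants tracked through $\alpha,M$) is recognizably the Coppel-style argument and is sound in outline.

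The one point that is not merely ``secondary,'' however, is the one you flag at the end: the existence of a $C^1$ frame $S(x)$, adapted to $\mathrm{ran}\,P_0(x)\oplus\ker P_0(x)$, that is \emph{uniformly} bounded together with its inverse on the (possibly unbounded) interval $J$ and has $\|S'\|=\ord(\delta)$. This is precisely the step that does not come for free. The natural parallel-transport ODE $S'=[P_0',P_0]S$ does give $S^{-1}P_0S\equiv P_0(x_0)$, but the Gr\"onwall bound $\|S(x)\|\le e^{\int\|[P_0',P_0]\|}$ is only $e^{\ord(\delta)|x-x_0|}$, which is not uniform on an unbounded $J$; and a patchwork of local Kato transformations accumulates a factor per patch and has the same problem. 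Without a genuinely global construction, your first route is incomplete. (There is in fact a bounded global reducing transformation in Coppel's framework, but its construction uses more than slow variation of $P_0$; it is not something one can simply ``choose.'') The alternative you sketch in the last paragraph — freeze $A$ on overlapping intervals of fixed length $L(\alpha,M)$, obtain dichotomies there by Proposition~\ref{roughnessintervals}, compare neighbouring dichotomy projections to the respective Riesz projections to get an $\ord(\delta)$-small minimal opening defect, and paste using Lemmas~\ref{pastingexpdi} and~\ref{periodextensionexpdi}-type arguments — is the route that closes this gap cleanly, and it is also the one whose constants are visibly functions of $\alpha,M$ only. I would promote it from an aside to the actual proof.

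Two smaller remarks. The mollification preamble is unnecessary: the hypothesis already gives $A\in C^1$, which is all that every subsequent step uses (the Riesz projection, the Lyapunov solutions, and their derivatives are all $C^0$ expressions in $A$ and $A'$); moreover, mollifying shrinks the spectral gap, so to recover exactly $\mu=\tfrac12\alpha$ one would need a limiting argument that the direct $C^1$ proof avoids. Finally, there is a sign slip in the Lyapunov bookkeeping: with $H=\mathrm{diag}(-Q_1,Q_2)$ and the Lyapunov equations as you wrote them, one gets $H'+B^*H+HB-\alpha H=\mathrm{diag}(I+2\alpha Q_1,\,I)+\ord(\delta)$ (note the $-\alpha H$), or equivalently $H'+B^*H+HB+\alpha H=\mathrm{diag}(I,\,I+2\alpha Q_2)+\ord(\delta)$; the sign convention must match the particular Lyapunov-form dichotomy criterion you invoke. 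This is bookkeeping rather than a conceptual flaw, but it should be fixed so the criterion actually applies with the advertised exponent $\tfrac12\alpha$.
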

\begin{proof}
This is the content of \cite[Proposition 6.1]{COP}.
\end{proof}
\subsubsection{Extending and pasting}
Once one puts a linear stability problem in the framework of exponential dichotomies, a great technical toolbox becomes available. Namely, on the one hand, there is a scala of constructions available to extend the interval of the dichotomy. On the other hand, exponential dichotomies persist under small perturbations of the equation. Therefore, these techniques enable us to decompose our linear stability problem into simpler subproblems. In this section, we treat extension and `pasting' of exponential dichotomies. In the next section \ref{roughness} we consider the persistence of exponential dichotomies under small perturbations.
\begin{lem}[Extension Lemma] \label{extensionexpdi}
Let $n \in \Z_{> 0}$, $J_2 \subset J_1 \subset \R$ intervals and $A \in C(J_1,\mathrm{Mat}_{n \times n}(\C))$. Suppose equation (\ref{linsys}) has an exponential dichotomy on $J_2$ with constants $K_2,\mu > 0$ and projections $P_2(x), x \in J_2$. Then, if the length of $J_1 \!\setminus\! J_2$ is finite, system (\ref{linsys}) has an exponential dichotomy on $J_1$ with constants $K_1,\mu > 0$ and projections $P_1(x), x \in J_1$. The constant $K_1$ satisfies
\begin{align*} K_1 = K_2\exp\left(\int_{J_1 \!\setminus\! J_2}(2 \|A(x)\| + \mu)dx\right).\end{align*}
Moreover, we have $P_1(x) = P_2(x)$ for all $x \in J_2$.
\end{lem}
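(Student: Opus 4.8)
The plan is to extend the exponential dichotomy one bounded gap at a time, so it suffices to treat the case where $J_1 \setminus J_2$ consists of a single interval adjacent to $J_2$; the general case follows by finitely many iterations (the constant accumulates multiplicatively, and $\exp(\int_{J_1\setminus J_2}(2\|A(x)\|+\mu)dx)$ is exactly what one gets by combining the contributions of the pieces). So assume, say, $J_2 = [a,b]$ (or a half-line) and we adjoin $[a-\ell,a]$ on the left; the right-hand extension is completely symmetric. Define the candidate projections on the new piece by pushing $P_2(a)$ backward along the flow: for $x \in [a-\ell,a]$ set $P_1(x) := T(x,a)P_2(a)T(a,x)$, and $P_1(x) := P_2(x)$ on $J_2$. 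The commutation relation $P_1(x)T(x,y) = T(x,y)P_1(y)$ then holds on all of $J_1$ by construction and by the group property of the evolution operator, and $P_1$ is continuous since it agrees with $P_2$ at $x=a$.

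The main work is the pair of decay estimates. For $x,y$ both in $J_2$ they are given. For $x,y$ both in the new piece $[a-\ell,a]$, one uses the bounded-growth bound $\|T(x,y)\| \le \exp(\int \|A\|)$ (Grönwall) to transfer to the endpoint $a$: e.g. for the stable estimate with $x \ge y$ one writes $\|T(x,y)P_1(y)\| = \|T(x,a)\,T(a,y)P_1(y)\| \le \|T(x,a)\|\,\|T(a,y)P_1(y)\|$; since $\|T(a,y)P_1(y)\| = \|T(a,y)P_2(a)T(a,y)^{-1}\cdot T(a,y)\|$ — more simply, $T(a,y)P_1(y) = P_2(a)T(a,y)$, whose norm is controlled by bounded growth on the finite interval — and $\|T(x,a)\| \le e^{\mu(x-a)}\cdot e^{(\text{const})}$ after inserting the trivial factor $e^{-\mu(x-a)}e^{\mu(x-a)}$. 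The point is that on a \emph{finite} interval the $e^{-\mu(x-y)}$ factor costs at most $e^{\mu\ell}$, which is absorbed into $K_1$. For the mixed case, $x$ in the new piece and $y \in J_2$ (or vice versa), one splits at $a$: $T(x,y)P_1(y) = T(x,a)\,T(a,y)P_2(y)$, apply the given dichotomy estimate on $[a,\cdot]$ to the second factor and bounded growth on $[a-\ell,a]$ to the first, again inserting $e^{\pm\mu}$ factors to produce the correct exponential with a prefactor bounded by $K_2 e^{\int_{J_1\setminus J_2}(2\|A\|+\mu)}$. The unstable estimates are entirely analogous with the roles of forward and backward time interchanged; here one bounds $I - P_1$ rather than $P_1$, and the factor $2\|A\|$ (rather than $\|A\|$) in the exponent reflects that one must control \emph{both} $T$ and $T^{-1}$ (equivalently, $P_1$ and $I-P_1$) across the gap.

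The one place to be slightly careful is bookkeeping of the constant: each time one inserts a factor $e^{\mu|x-a|} e^{-\mu|x-a|}$ or estimates $\|T\|$ by Grönwall across the gap, one should track that the total overhead is no worse than $\exp(\int_{J_1\setminus J_2}(2\|A(x)\|+\mu)\,dx)$, matching the claimed formula for $K_1$. This is a routine but slightly fiddly accounting, and it is the only real obstacle; conceptually the lemma is immediate once one agrees that a dichotomy on a longer interval is obtained from one on a subinterval by transporting the projection along the (invertible) flow over a bounded extra stretch, at the cost of a bounded multiplicative constant. Finally, $P_1 = P_2$ on $J_2$ holds by definition, and the continuity together with the established estimates and commutation relation verify that $(K_1,\mu,P_1)$ is an exponential dichotomy on $J_1$, which completes the proof.
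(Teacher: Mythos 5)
Your plan is correct and is precisely the standard Coppel argument (transport the projection by conjugation along the flow across the finite gap, then verify the decay estimates by inserting $e^{\pm\mu}$ factors and using Gr\"onwall on the bounded piece). The paper itself gives no proof, only a citation to Coppel \cite[p.~13]{COP}, so your reconstruction matches the intended argument, including the bookkeeping that yields the factor $\exp(\int_{J_1\setminus J_2}(2\|A\|+\mu))$.
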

\begin{proof}
This is proven in \cite[p. 13]{COP}.
\end{proof}
The next result shows that, if a periodic equation admits an exponential dichotomy on a sufficiently large interval, then it has an exponential dichotomy on the whole line.
\begin{lem}[Periodic Extension Lemma] \label{periodextensionexpdi}
Let $n \in \Z_{> 0}$, $T > 0$ and $A \in C(\R,\mathrm{Mat}_{n \times n}(\C))$. Suppose that $A$ is $T$-periodic and that equation (\ref{linsys}) has an exponential dichotomy on an interval $J$ of length $2T$ with constants $K,\mu > 0$. Let $M \geq \sup_{x \in \R} \|A(x)\|$ and $h = \mu^{-1}(\sinh^{-1}(4) + \log(K))$.\\
\\
If $T > 0$ is so large that $T \geq 2h$, then equation (\ref{linsys}) has an exponential dichotomy on $\R$ with constants $K_1,\mu_1 > 0$. We have $\mu_1 = h^{-1}\log 3$ and $K_1$ depends only on $M, K$ and $\mu$.
\end{lem}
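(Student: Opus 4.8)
The plan is to exploit the $T$-periodicity of $A$ to propagate the given dichotomy on $J$ to every $T$-translate of $J$, then to show that the resulting projection families must agree on the (length-$T$) overlaps, glue them into one family on $\R$, and finally read off the global dichotomy constants by chaining the dichotomy inequalities over short subintervals. Write $J = [c - T, c + T]$ for its midpoint $c$, and let $P(x)$, $x \in J$, be the projections of the dichotomy on $J$. Since $A$ is $T$-periodic, the evolution $T(x,y)$ of (\ref{linsys}) satisfies $T(x + T, y + T) = T(x,y)$; hence for each $k \in \Z$ equation (\ref{linsys}) has an exponential dichotomy on $J_k := J + kT = [c + (k-1)T,\, c + (k+1)T]$ with the same constants $K,\mu$ and projections $P_k(x) := P(x - kT)$. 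The translates $J_k$ cover $\R$ and consecutive ones overlap in an interval of length $T$.

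First I would prove that $P_k$ and $P_{k+1}$ coincide on $J_k \cap J_{k+1} = [c + kT,\, c + (k+1)T]$. The midpoint $m_k := c + (k + \tfrac12)T$ of this overlap lies at distance $\tfrac12 T$ from the nearest endpoint of $J_k$ and of $J_{k+1}$, and by hypothesis $\tfrac12 T \ge h = \mu^{-1}(\sinh^{-1}(4) + \log K)$. Here I would invoke the standard fact (Coppel \cite{COP}) that a dichotomy with constants $K,\mu$ determines its stable subspace at any point with forward room at least $h$, and its unstable subspace at any point with backward room at least $h$; at $m_k$ all four relevant gaps are $\ge \tfrac12 T \ge h$, so $E^s_{J_k}(m_k) = E^s_{J_{k+1}}(m_k)$ and $E^u_{J_k}(m_k) = E^u_{J_{k+1}}(m_k)$, whence $P_k(m_k) = P_{k+1}(m_k)$. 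Because the stable and unstable subbundles are invariant under the common evolution $T(\cdot,\cdot)$, this single-point agreement propagates over the whole overlap, giving $P_k \equiv P_{k+1}$ on $[c + kT,\, c + (k+1)T]$. The $P_k$ therefore patch together to a continuous, flow-invariant projection family $P(x)$ on $\bigcup_k J_k = \R$, of constant rank.

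It then remains to extract the dichotomy estimates on $\R$. Since any interval of length at most $T$ is contained in some translate $J_k$ (and $P$ agrees with $P_k$ there), the inequality $\|T(b,a)P(a)\| \le K e^{-\mu(b-a)}$ holds for all $a \le b$ with $b - a \le T$; as $T \ge 2h > h$, I would chain this over steps of length $h$. For $x \ge y$ and $v \in P(y)[\C^n]$, invariance keeps $T(s,y)v$ in the stable fibre, so with $n = \lfloor (x-y)/h \rfloor$ one gets $|T(x,y)v| \le K\,(Ke^{-\mu h})^n |v|$; since $Ke^{-\mu h} = e^{-\sinh^{-1}(4)} \le e^{-\log 3} = e^{-\mu_1 h}$ with $\mu_1 := h^{-1}\log 3$, this bounds $|T(x,y)v|$ by $3K\,e^{-\mu_1 (x-y)}|v|$ (the case $x - y < h$ being immediate from a single translate), and the unstable estimate follows symmetrically in backward time. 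Hence (\ref{linsys}) has an exponential dichotomy on $\R$ with rate $\mu_1 = h^{-1}\log 3$ and constant $K_1 = 3K$, which depends only on $K$, a fortiori only on $M,K,\mu$.

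The only genuinely delicate point is the second step: the uniqueness of the dichotomy subspaces at the midpoints of the overlaps. This is precisely where the explicit threshold $h = \mu^{-1}(\sinh^{-1}(4) + \log K)$ and the standing hypothesis $T \ge 2h$ are used — on intervals shorter than $2h$ the stable and unstable subspaces are not uniquely pinned down, so the patched family $P$ could fail to be well defined, and the factor $\sinh^{-1}(4)$ is exactly the constant making the comparison of two dichotomies with constants $K,\mu$ succeed. Everything else (transporting the dichotomy by periodicity, propagating the point agreement along the flow, and the final chaining) is routine bookkeeping with the dichotomy inequalities and Grönwall's lemma.
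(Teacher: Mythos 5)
The paper's ``proof'' is a citation to \cite[Theorem 1]{PALP}, so your attempt to give an intrinsic argument is a genuinely different route, but it has a gap at the crucial gluing step. You assert that a dichotomy with constants $K,\mu$ \emph{uniquely determines} its stable subspace at any point with forward room at least $h$, and invoke this to conclude $E^s_{J_k}(m_k)=E^s_{J_{k+1}}(m_k)$ and hence $P_k(m_k)=P_{k+1}(m_k)$. This is not a fact in Coppel, and it is false in general: on a \emph{finite} interval an exponential dichotomy does not pin down its stable and unstable subspaces; uniqueness of the stable subspace holds on a half-line $[a,\infty)$ (and of the unstable subspace on $(-\infty,b]$), but on a compact interval there is genuine freedom. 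What \emph{is} true --- and is the content of Lemma~\ref{compdich} in the appendix --- is that stable subspaces of two dichotomies on a common finite interval with the same constants are close, with an error of order $Ke^{-\mu\ell}$ where $\ell$ is the remaining forward length. So $P_k$ and $P_{k+1}$ are only \emph{approximately} equal on the overlap, and they need not define a single-valued projection family on $\R$. Your patched $P$ is not well defined, and everything downstream (flow-invariance on $\R$, the chaining argument, the final constants) rests on that.

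To repair this you would have to replace exact agreement by approximate agreement and then invoke a roughness or perturbation mechanism (in the spirit of Propositions~\ref{roughnessintervals}, \ref{roughnessPALI}, and the pasting Lemma~\ref{pastingexpdi}), or, as Palmer does, pass to the monodromy $\Phi=T(c+T,c)$, show that the quantitative hypothesis $T\ge 2h$ forces $\Phi$ to be hyperbolic with an explicit spectral gap, and read off the dichotomy on $\R$ from Floquet theory. The constant $\sinh^{-1}(4)$ in $h$ is precisely the threshold that makes such a comparison-of-dichotomies or hyperbolicity-of-monodromy argument close. Your final answer is also a tell-tale: you arrive at $K_1=3K$, which does not involve $M$ or $\mu$, whereas the statement says $K_1$ depends on $M$, $K$ and $\mu$; the extra dependence comes exactly from the fact that the glued family is only approximately invariant, so the dichotomy constant deteriorates in a way controlled by a bounded-growth estimate (hence by $M$). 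The skeleton of your argument --- translate by periodicity, compare on overlaps, chain short-interval estimates --- is the right intuition, but the step you label ``delicate'' is indeed where the real work of \cite{PALP} lives, and the shortcut you take there does not hold.
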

\begin{proof}
This is the content of \cite[Theorem 1]{PALP}.
\end{proof}
Exponential dichotomies on two connected intervals can be pasted together as long as their stable and unstable spaces at the connection point are complementary.
\begin{lem}[Pasting Lemma] \label{pastingexpdi}
Let $n \in \Z_{> 0}$, $J \subset \R$ an interval and $A \in C(J,\mathrm{Mat}_{n \times n}(\C))$. Let $J_1,J_2$ be two intervals such that their union equals $J$ and $\max J_1 = b = \min J_2$ for some $b \in \R$. Suppose equation (\ref{linsys}) has exponential dichotomies on both $J_1$ and $J_2$ with constants $K,\mu > 0$ and projections $P_1(x), x \in J_1$ and $P_2(x), x \in J_2$, respectively.\\
\\
If $E_1^u(b) = \ker(P_1(b))$ and $E_2^s(b) = P_2(b)[\C^n]$ are complementary, then (\ref{linsys}) has an exponential dichotomy on $J$ with constants $K_1,\mu > 0$. Here, $K_1$ depends only on $K$ and $\|P\|$, where $P$ is the projection on $E_2^s(b)$ along $E_1^u(b)$.
\end{lem}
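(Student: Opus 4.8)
\emph{Proof proposal.} The plan is to produce a single family of projections $Q(x)$, $x\in J$, realizing an exponential dichotomy of (\ref{linsys}) on $J$, by prescribing the splitting at the matching point $b$ and transporting it by the flow. Since $E_1^u(b)=\ker(P_1(b))$ and $E_2^s(b)=P_2(b)[\C^n]$ are complementary by hypothesis, the projection $Q(b):=P$ onto $E_2^s(b)$ along $E_1^u(b)$ is well defined with $\|Q(b)\|=\|P\|$. I would set $Q(x):=T(x,b)Q(b)T(b,x)$ for $x\in J$, so that automatically $Q(x)^2=Q(x)$ and $Q(x)T(x,y)=T(x,y)Q(y)$ for all $x,y\in J$. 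Because the kernel family of $P_1$ and the range family of $P_2$ are themselves invariant under the respective flows, transporting $E_1^u(b)$ into $J_1$ and $E_2^s(b)$ into $J_2$ yields $\ker(Q(x))=\ker(P_1(x))$ for all $x\in J_1$ and $Q(x)[\C^n]=P_2(x)[\C^n]$ for all $x\in J_2$. Thus on $J_1$ the unstable subspace of $Q$ is exactly that of $P_1$, and on $J_2$ the stable subspace of $Q$ is exactly that of $P_2$.

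Granting a uniform bound $\|Q(x)\|\le C$ on $J$, with $C$ depending only on $K$ and $\|P\|$, the three defining estimates for $Q$ follow by a routine case analysis on whether the two time points lie in $J_1$, in $J_2$, or straddle $b$. In a ``same side'' case, say $y\le x\le b$, one writes $T(x,y)Q(y)v=Q(x)T(x,y)v$ and decomposes $T(x,y)v$ into its $P_1(x)$-stable and $P_1(x)$-unstable parts: the unstable part lies in $\ker(Q(x))$ and is annihilated, while the stable part obeys the $P_1$-estimate, giving $\|T(x,y)Q(y)v\|\le CKe^{-\mu(x-y)}\|v\|$; the unstable estimate of $Q$ on $J_1$, and both estimates on $J_2$ (using the matching of ranges there), are handled symmetrically. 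When the time points straddle $b$ one simply composes the two one-sided estimates through $b$; since both factors carry the rate $\mu$, so does the product, and the constant is only multiplied by a factor depending on $K$ and $C$. Hence everything reduces to the uniform boundedness of $\|Q(\cdot)\|$.

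The substantive step is therefore this bound. For $x\in J_1$, the identity $\ker(Q(x))=\ker(P_1(x))$ forces $Q(x)(I-P_1(x))=0$, hence $Q(x)-P_1(x)=-(I-Q(x))P_1(x)=-T(x,b)(I-Q(b))T(b,x)P_1(x)$. Reading this from right to left: $T(b,x)P_1(x)v=P_1(b)T(b,x)v$ is a stable vector at $b$ of norm $\le Ke^{-\mu(b-x)}\|v\|$; applying $I-Q(b)$ keeps its norm $\le(1+\|P\|)Ke^{-\mu(b-x)}\|v\|$ and lands it in $E_1^u(b)=\mathrm{ran}(I-P_1(b))$; finally $T(x,b)$ contracts this unstable vector by a further factor $Ke^{-\mu(b-x)}$. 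Multiplying, $\|Q(x)-P_1(x)\|\le K^2(1+\|P\|)e^{-2\mu(b-x)}\le K^2(1+\|P\|)$, so $\|Q(x)\|\le K+K^2(1+\|P\|)$ on $J_1$. The mirror computation on $J_2$ uses $Q(x)[\C^n]=P_2(x)[\C^n]$, hence $(I-P_2(x))Q(x)=0$ and $Q(x)-P_2(x)=Q(x)(I-P_2(x))=T(x,b)Q(b)(I-P_2(b))T(b,x)$, and the same two-sided decay argument (an unstable vector transported back to $b$, hit by $Q(b)$, then a stable vector transported forward) gives $\|Q(x)\|\le K+K^2(1+K)\|P\|$ on $J_2$. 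Taking $C:=K+K^2(1+K)\|P\|$ (assuming without loss of generality $K\ge1$) and tracing the constants through the case analysis above produces an exponential dichotomy on $J$ with rate $\mu$ and a constant $K_1$ that is a fixed polynomial in $K$ and $\|P\|$, as claimed. The only delicate aspect is the bookkeeping: making sure that in every straddling case the exponential rate is not degraded, and that in each case the ``wrong'' component is genuinely killed by the relevant projection.
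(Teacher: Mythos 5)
Your proof is correct and follows essentially the same route as the paper: both define the new projection family by transporting $P$ (the projection onto $E_2^s(b)$ along $E_1^u(b)$) by the flow, establish it as a valid dichotomy projection on each half-interval, and close via the straddling estimate through $b$. The only difference is that the paper cites Coppel (pp.~16--17) for the change-of-projection step on each $J_i$, whereas you prove the required uniform bound on $\|Q(\cdot)\|$ directly via the identities $Q(x)-P_1(x)=-T(x,b)(I-Q(b))T(b,x)P_1(x)$ and its mirror on $J_2$, which is a fine self-contained substitute.
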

\begin{proof}
Let $X(x)$ be the fundamental matrix of (\ref{linsys}) satisfying $X(b) = I$. Define $P(x) = X(x)PX^{-1}(x)$ for $x \in J$, where $P$ is the projection on $E_2^s(b)$ along $E_1^u(b)$. Observe that $P = P(b)$ has the same range as $P_2(b)$ and the same kernel as $P_1(b)$. Now, the exposition in \cite[pp. 16-17]{COP} shows that (\ref{linsys}) has exponential dichotomies on $J_1$ and on $J_2$ with constants $K_1,\mu > 0$ and projections $P(x)$ for $x \in J_1$ and $x \in J_2$, respectively. We have $K_1 = K + K^2\|P\| + K^3$. To conclude the proof we need to show that the dichotomy estimates remain true on the union $J = J_1 \cup J_2$. Indeed, take $x \in J_2$ and $y \in J_1$. We estimate
\begin{align*} \|T(x,y)P(y)\| &\leq \|T(x,b)P_2(b)\| \|P\| \|P_1(b)T(b,y)\| \leq K^2\|P\|e^{-\mu(x-y)},\end{align*}
where we have used $P_2(b)P = P$ and $PP_1(b)=P$. Similarly, one estimates $\|T(y,x)(I-P(x))\| \leq K^2\|P\|e^{-\mu(x-y)}$ for $x \in J_2$ and $y \in J_1$.
\end{proof}
\subsubsection{Roughness} \label{roughness}
Exponential dichotomies are in particular useful to study the spectral properties of perturbed differential equations, since they persist under small perturbations of the equation. This property is often referred to as roughness or robustness. We start with a general roughness result for exponential dichotomies on arbitrary intervals.
\begin{prop}[Roughness on arbitrary intervals] \label{roughnessintervals} Let $n \in \Z_{> 0}$,$\delta > 0$, $J \subset \R$ an interval and $A, B \in C(J,\mathrm{Mat}_{n \times n}(\C))$ such that
\begin{align*} \sup_{x \in J} \|A(x) - B(x)\| \leq \delta.\end{align*}
Suppose (\ref{linsys}) has an exponential dichotomy on $J$ with constants $K,\mu > 0$ and projections $P(x) := X(x)PX^{-1}(x), x \in J$, where $X(x)$ is a fundamental matrix of (\ref{linsys}) and $P$ an orthogonal projection. If we have
\begin{align*}\delta \leq \frac{\mu}{36K^5},\end{align*}
then equation (\ref{linsys2}) has an exponential dichotomy on $J$ with constants $K_1,\mu_1 > 0$ and projections $P_1(x),x \in J$
satisfying $\mu_1 = \mu - 6K^3\delta, K_1 = 12K^3$ and for all $x \in J$ \begin{align*} \|P(x) - P_1(x)\| \leq \frac{144K^6\delta}{\mu}.\end{align*}
\end{prop}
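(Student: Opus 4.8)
The plan is to follow the classical roughness argument for exponential dichotomies of Coppel \cite[Chapter 4]{COP}, but to keep track of the quantitative dependence of every constant on $K$, $\mu$ and $\delta$. Write the perturbed equation (\ref{linsys2}) as $\partial_x \phi = A(x)\phi + R(x)\phi$ with $R := B - A$, so that $\|R\| \le \delta$, and introduce the Green's function associated with the dichotomy of (\ref{linsys}),
\[
G(x,s) = \begin{cases} T(x,s)P(s), & x \ge s,\\ -T(x,s)(I - P(s)), & x < s,\end{cases}
\]
which satisfies $\|G(x,s)\| \le Ke^{-\mu|x-s|}$ for $x,s \in J$. Here $T(x,y)$ is the evolution of (\ref{linsys}) and $\widetilde T(x,y)$ that of (\ref{linsys2}). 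The hypothesis that $P$ is orthogonal (so $\|P\| = 1$) is only needed to make the later angle estimate between subspaces quantitative; the bounds $\|P(x)\|,\|I - P(x)\| \le K$ follow from the dichotomy alone.

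First I would construct the perturbed stable subspace. Fix $x_0 \in J$ and set $\nu := \mu - 6K^3\delta$. For $\xi \in E^s(x_0) := P(x_0)[\C^n]$, I look for a solution $y$ of (\ref{linsys2}) on $J \cap [x_0,\infty)$ with $\|y\|_\nu := \sup_{x} e^{\nu(x - x_0)}\|y(x)\| < \infty$ solving the fixed-point equation
\[
y(x) = T(x,x_0)\xi + \int_{x_0}^{x} T(x,s)P(s)R(s)y(s)\,ds - \int_{x}^{\sup J} T(x,s)(I - P(s))R(s)y(s)\,ds,
\]
whose right-hand side, when differentiated, reproduces (\ref{linsys2}). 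The Green's-function bound gives a contraction factor of order $K\delta/\mu$ on the weighted sup-norm ball, which is below $1$ once $\delta \le \mu/(36K^5)$; Banach's theorem then yields a unique such $y$, with $\|y(x)\| \le 2Ke^{-\nu(x - x_0)}\|\xi\|$ and $P(x_0)y(x_0) = \xi$. Hence $\xi \mapsto y(x_0)$ is a linear injection whose image $\widetilde E^s(x_0)$ has the same dimension as $E^s(x_0)$ and lies at distance of order $K^2\delta/\mu$ from it. Running the same construction backwards (swapping the roles of $P(s)$ and $I - P(s)$ and replacing $\sup J$ by $\inf J$) produces $\widetilde E^u(x_0)$, close to $E^u(x_0)$ and carrying solutions that decay like $e^{\nu(x_0 - x)}$ as $x\to\inf J$.

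Next I would assemble the dichotomy for (\ref{linsys2}). Since $\widetilde E^s(x_0)$ and $\widetilde E^u(x_0)$ are of order $K^2\delta/\mu$ close to the complementary subspaces $E^s(x_0)$, $E^u(x_0)$, whose angle is controlled using the orthogonality of $P$, for $\delta \le \mu/(36K^5)$ they are again complementary; I define $\widetilde P(x_0)$ to be the projection onto $\widetilde E^s(x_0)$ along $\widetilde E^u(x_0)$, with a norm bound of order $K$. Invariance, $\widetilde P(x)\widetilde T(x,y) = \widetilde T(x,y)\widetilde P(y)$, follows because the fixed-point problem based at a point $x_1$ differs from the one based at $x_0$ only through the finite-time perturbed evolution $\widetilde T(x_1,x_0)$, so the constructed solution spaces at different base points are carried into one another by the perturbed flow. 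The dichotomy estimates $\|\widetilde T(x,y)\widetilde P(y)\| \le K_1 e^{-\mu_1(x - y)}$ and $\|\widetilde T(x,y)(I - \widetilde P(y))\| \le K_1 e^{-\mu_1(y - x)}$ then come from the weighted bounds on the constructed solutions together with the norm bound on $\widetilde P$, yielding $\mu_1 = \mu - 6K^3\delta$ and $K_1 = 12K^3$. Finally, subtracting the variation-of-constants identities defining $P(x)$ and $\widetilde P(x)$ and invoking the Gr\"onwall-type comparison of Lemma \ref{Palmer} gives $\|P(x) - \widetilde P(x)\| \le 144K^6\delta/\mu$.

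The only genuine difficulty is bookkeeping: arranging the contraction estimate, the angle/complementarity estimate and the Gr\"onwall comparison so that precisely the constants $6K^3\delta$, $12K^3$, $36K^5$ and $144K^6\delta/\mu$ emerge, and doing this uniformly over an \emph{arbitrary} interval $J$ — which is why the improper integrals above are taken over $J$ only and the weighted function spaces are set up separately on $J \cap [x_0,\infty)$ and $J \cap (-\infty,x_0]$. Conceptually nothing new is needed beyond \cite[Chapter 4]{COP}, and in the write-up one could alternatively cite that reference together with an explicit-constants remark; the proof I would record simply makes that dependence transparent, in the same spirit as the related sufficient criterion of Proposition \ref{suffcritexpodi}.
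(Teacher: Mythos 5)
The paper proves this by a bare citation to \cite[Proposition~5.1]{COP}, and your sketch reconstructs Coppel's argument via the standard Green's-function fixed-point setup in weighted sup-norms on $J\cap[x_0,\sup J]$ and $J\cap[\inf J,x_0]$ together with an angle estimate that exploits orthogonality of $P$ — so you are taking the same route, just filling in what the citation leaves out. The structure is correct and the indicated bookkeeping does yield the stated constants; as you note yourself, one could alternatively simply cite Coppel, which is exactly what the paper does.
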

\begin{proof}
This is the content of \cite[Proposition 5.1]{COP}.
\end{proof}
The latter persistence result can be simplified significantly in the case $J = \R$.
\begin{prop}[Roughness on $\R$] \label{RoughnessR}
Let $n \in \Z_{> 0}$,$\delta > 0$ and $A, B \in C(\R,\mathrm{Mat}_{n \times n}(\C))$ such that
\begin{align*} \sup_{x \in \R} \|A(x) - B(x)\| \leq \delta.\end{align*}
Suppose equation (\ref{linsys}) has an exponential dichotomy on $\R$ with constants $K,\mu > 0$. If we have
\begin{align*}\delta \leq \frac{\mu}{4K^2},\end{align*}
then equation (\ref{linsys2}) has an exponential dichotomy on $\R$ with constants $K_1,\mu_1 > 0$. Here, we have $\mu_1 = \mu - 2K\delta$ and $K_1$ depends on $K$ only.
\end{prop}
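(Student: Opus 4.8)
The plan is to exploit that on the whole line an exponential dichotomy is characterized by the admissibility of bounded inhomogeneities; this makes the perturbation argument substantially shorter than on a general interval, and in particular avoids the delicate endpoint bookkeeping behind Proposition \ref{roughnessintervals} (at the price of nothing, since the threshold on $\delta$ actually improves). Denote by $T(x,y)$ the evolution of (\ref{linsys}) and by $P(x)$ the associated dichotomy projections, and introduce the Green's function $G(x,z) := T(x,z)P(z)$ for $z \le x$ and $G(x,z) := -T(x,z)(I-P(z))$ for $z > x$, which satisfies $\|G(x,z)\| \le Ke^{-\mu|x-z|}$ for all $x,z \in \R$. Since (\ref{linsys}) has an exponential dichotomy on $\R$ it admits no nontrivial bounded solution, so for every $f \in C_b(\R,\C^n)$ the map $x \mapsto \int_\R G(x,z)f(z)\,dz$ is the \emph{unique} bounded solution of $\partial_x\phi = A(x)\phi + f(x)$.

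First I would set $R := B - A$, so $\sup_{x}\|R(x)\| \le \delta$, and rewrite (\ref{linsys2}) as $\partial_x\phi = A(x)\phi + R(x)\phi$. Defining the operator $\mathcal{K}$ on $C_b(\R,\C^n)$ by $(\mathcal{K}\phi)(x) := \int_\R G(x,z)R(z)\phi(z)\,dz$, the bound on $G$ gives $\|\mathcal{K}\| \le 2K\delta/\mu$, which by the hypothesis $\delta \le \mu/(4K^2)$ and $K \ge 1$ is at most $1/(2K) \le 1/2 < 1$. Hence $I-\mathcal{K}$ is boundedly invertible on $C_b(\R,\C^n)$, and for every $f \in C_b$ the function $\phi_f := (I-\mathcal{K})^{-1}\bigl(\int_\R G(\cdot,z)f(z)\,dz\bigr)$ is a bounded solution of $\partial_x\phi = B(x)\phi + f(x)$; conversely any bounded solution of the homogeneous perturbed equation is a fixed point of $\mathcal{K}$, hence zero, so the bounded solution is unique. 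Thus (\ref{linsys2}) has a unique bounded solution for each bounded forcing, and by the characterization of exponential dichotomies on the line (see \cite{COP}) it possesses an exponential dichotomy on $\R$.

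To pin down the constants I would track the perturbed Green's function $\widetilde G$, which by the resolvent identity satisfies $\widetilde G(x,z) = G(x,z) + \int_\R G(x,w)R(w)\widetilde G(w,z)\,dw$. Inserting the weighted ansatz $\|\widetilde G(w,z)\| \le K_1 e^{-\mu_1|w-z|}$ with $0 < \mu_1 < \mu$ and estimating the exponentially weighted convolution $\int_\R e^{-\mu|x-w|}e^{-\mu_1|w-z|}\,dw$ yields a self-consistency inequality that closes for $\mu_1 = \mu - 2K\delta$ and a constant $K_1$ depending only on $K$; the bound $\delta \le \mu/(4K^2)$ is exactly what keeps the relevant contraction factor below $1$ while leaving room for the inhomogeneous term. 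The dichotomy projections of (\ref{linsys2}) are then the jump-normalized diagonal values of $\widetilde G$, and the bound on $\widetilde G$ translates into the dichotomy estimates for the evolution of (\ref{linsys2}) with constants $K_1,\mu_1$ as claimed. Alternatively, once admissibility is established one can simply invoke the quantitative roughness statement of \cite{COP} on $\R$.

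The routine but delicate part is this last bookkeeping: extracting the \emph{explicit} rate $\mu_1 = \mu - 2K\delta$ together with a constant $K_1$ independent of $\delta$ from the weighted convolution estimate, and verifying that the perturbed stable and unstable subspaces built from $\widetilde G$ are genuinely complementary (equivalently, that the unique-bounded-solution property upgrades to an honest dichotomy with these constants). Everything else — the contraction estimate, the uniqueness argument, and the passage from (\ref{linsys}) to (\ref{linsys2}) via $R = B - A$ — is a short application of Gr\"onwall-type bounds of the kind recorded in Lemma \ref{Palmer}.
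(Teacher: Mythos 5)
The paper offers no proof of this proposition beyond the citation to \cite[pp.~34--35]{COP}, so you are in effect reconstructing Coppel's argument, and your reconstruction is the classical one: set $R=B-A$, use the unperturbed Green's function $G$ with $\|G(x,z)\|\le Ke^{-\mu|x-z|}$, observe that the integral operator $\mathcal K$ has norm $\le 2K\delta/\mu$, and then iterate the resolvent identity $\widetilde G = G + G R\widetilde G$ under an exponentially weighted ansatz. This is correct and is precisely the route Coppel takes. Two small calibrations are worth making. First, the admissibility detour in your second paragraph is logically superfluous here and, strictly read, relies on the Perron-type characterization of dichotomies on $\R$, which in its standard form requires a bounded coefficient matrix — a hypothesis the proposition does not state; since your third paragraph already constructs $\widetilde G$ by contraction and reads off the dichotomy and its constants from it, you should lean on that construction (or simply cite Coppel, as the paper does) rather than on admissibility. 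Second, the inequality $\delta\le\mu/(4K^2)$ is not what makes the iteration contract — contraction only needs $2K\delta/\mu<1$; rather, with $\mu_1=\mu-2K\delta$ the weighted convolution gives a recursion factor $\bigl(2-2K\delta/\mu\bigr)^{-1}$, and the hypothesis $\delta\le\mu/(4K^2)$ (together with $K\ge1$) pins this factor below $2/3$ \emph{uniformly in $\delta$}, which is exactly what lets you take $K_1$ depending on $K$ only (e.g.\ $K_1\le 3K$) rather than on $\delta$. With those two points tightened, your argument matches the intended one.
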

\begin{proof}
This can be found in \cite[pp. 34-35]{COP}.
\end{proof}
If an equation has an exponential dichotomy on $\R$, then it admits no non-trivial bounded solutions. It is possible to achieve persistence of the latter fact under milder conditions than those stated in Proposition \ref{RoughnessR}.
\begin{prop}[Persistence of no non-trivial bounded solutions] \label{roughnessPALI}
Let $n \in \Z_{> 0}$ and $A, B \in C(\R,\mathrm{Mat}_{n \times n}(\C))$. Suppose (\ref{linsys}) has both an exponential dichotomy on $\R$ with constants $K,\mu > 0$ and bounded growth on $\R$. Denote by $T_{1,2}(x,y)$ the evolution operators of systems (\ref{linsys}) and (\ref{linsys2}), respectively. If there exists $\tau \geq \mu^{-1}(\sinh^{-1}(4) + \log(K))$ such that for all $x,y \in \R$ with $|x-y| \leq 2\tau$ we have
\begin{align*} \|T_1(x,y) - T_2(x,y)\| < 1,\end{align*}
then (\ref{linsys2}) admits no non-trivial bounded solutions.
\end{prop}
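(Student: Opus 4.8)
The plan is to rule out a non-trivial bounded solution of (\ref{linsys2}) by a direct contradiction argument at a single near-maximal point of such a solution, rather than by trying to promote the exponential dichotomy of (\ref{linsys}) to (\ref{linsys2}) or by discretising and invoking a discrete roughness estimate. Those more standard routes are awkward here, since the perturbation is only controlled by $\|T_1(x,y)-T_2(x,y)\|<1$, an $O(1)$ bound rather than an asymptotically small one, so the usual roughness machinery does not apply; the point is that over a window of length $\tau$ the dichotomy of (\ref{linsys}) is so strong --- by the precise lower bound on $\tau$ --- that an $O(1)$ error in the flow is still too weak to sustain a bounded solution.

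Concretely, suppose $\phi$ is a bounded solution of (\ref{linsys2}) with $\phi\not\equiv 0$, and set $M:=\sup_{x\in\R}\|\phi(x)\|\in(0,\infty)$. First I would choose $x_0\in\R$ with $\|\phi(x_0)\|>M/2$, which is possible since $M>0$, and split $\phi(x_0)=\xi_s+\xi_u$ along the dichotomy of (\ref{linsys}), where $\xi_s:=P(x_0)\phi(x_0)$ lies in the stable subspace at $x_0$ and $\xi_u:=(I-P(x_0))\phi(x_0)$ in the unstable one. Using invariance of these subspaces under $T_1$ together with the dichotomy bounds and the invertibility of $T_1$, a short computation gives the forward estimates $\|T_1(x_0+\tau,x_0)\xi_u\|\ge K^{-1}e^{\mu\tau}\|\xi_u\|$ and $\|T_1(x_0+\tau,x_0)\xi_s\|\le Ke^{-\mu\tau}\|\xi_s\|$, and the mirror-image backward estimates $\|T_1(x_0-\tau,x_0)\xi_s\|\ge K^{-1}e^{\mu\tau}\|\xi_s\|$ and $\|T_1(x_0-\tau,x_0)\xi_u\|\le Ke^{-\mu\tau}\|\xi_u\|$.

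Next I would feed in the actual flow. Since $\phi$ solves (\ref{linsys2}) we have $\phi(x_0\pm\tau)=T_2(x_0\pm\tau,x_0)\phi(x_0)$, and because $|(x_0\pm\tau)-x_0|=\tau\le 2\tau$ the hypothesis gives $\|(T_1-T_2)(x_0\pm\tau,x_0)\|<1$, hence $\|T_1(x_0\pm\tau,x_0)\phi(x_0)\|\le\|\phi(x_0\pm\tau)\|+\|\phi(x_0)\|<2M$. Bounding $\|T_1(x_0\pm\tau,x_0)\phi(x_0)\|$ from below by the triangle inequality (growing part minus decaying part, in the forward resp.\ backward direction) and inserting the estimates above yields
\begin{align*}
2M &> K^{-1}e^{\mu\tau}\|\xi_u\| - Ke^{-\mu\tau}\|\xi_s\|,\\
2M &> K^{-1}e^{\mu\tau}\|\xi_s\| - Ke^{-\mu\tau}\|\xi_u\|.
\end{align*}
Adding the two inequalities and using $\|\xi_s\|+\|\xi_u\|\ge\|\phi(x_0)\|>M/2$ gives $K^{-1}e^{\mu\tau}-Ke^{-\mu\tau}<8$. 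On the other hand, with $s:=\mu\tau-\log K$ one has the identity $K^{-1}e^{\mu\tau}-Ke^{-\mu\tau}=e^{s}-e^{-s}=2\sinh(s)$, while the hypothesis $\tau\ge\mu^{-1}(\sinh^{-1}(4)+\log(K))$ is exactly $s\ge\sinh^{-1}(4)$, whence $2\sinh(s)\ge 8$. This contradiction finishes the argument.

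I expect the only genuinely delicate point to be the dichotomy bookkeeping in the last two steps: one has to keep straight which of $\xi_s,\xi_u$ grows and which decays under forward versus backward propagation, and pair the four estimates so that adding the two displayed inequalities collapses the left-hand data into the single quantity $2\sinh(\mu\tau-\log K)$ --- this is precisely what makes the stated lower bound on $\tau$ sharp, with the constant $8$ arising as $2\cdot 4$, the factor $2$ coming from replacing $\|\phi(x_0)\|$ by $M/2$ and the factor $4$ from the two ``$<2M$'' estimates. I note that the bounded-growth hypothesis on (\ref{linsys}) is not actually used in this argument; it is natural to retain it for consistency with the settings in which the lemma is applied, and it would be the relevant ingredient for the alternative discretisation approach.
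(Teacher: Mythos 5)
Your argument is correct. Since the paper disposes of this proposition by the single line ``This is the content of [Theorem 1]\cite{PALI}'' and gives no argument of its own, there is no internal proof to compare with; yours is therefore a genuine, self-contained proof where the paper supplies only a citation. I checked all four dichotomy estimates: the lower bound $\|T_1(x_0+\tau,x_0)\xi_u\|\ge K^{-1}e^{\mu\tau}\|\xi_u\|$ follows, as you indicate, by invariance of the unstable subspace, the bound $\|T_1(x_0,x_0+\tau)(I-P(x_0+\tau))\|\le Ke^{-\mu\tau}$, and invertibility of $T_1$, and the other three are the obvious variants. The combination of the forward and backward estimates via the triangle inequality, the use of the perturbation hypothesis at distance exactly $\tau$, the addition of the two inequalities to collapse everything onto $\|\xi_s\|+\|\xi_u\|\ge\|\phi(x_0)\|$, and the translation into $2\sinh(\mu\tau-\log K)$ are all correct, and the hypothesis $\tau\ge\mu^{-1}(\sinh^{-1}(4)+\log K)$ indeed ensures $\mu\tau-\log K>0$ so the coefficient $K^{-1}e^{\mu\tau}-Ke^{-\mu\tau}$ is positive where you need it. Your observation that the bounded-growth hypothesis on \eqref{linsys} is never invoked is also correct for this argument. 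Two small cosmetic remarks: the chain ``$\le\|\phi(x_0\pm\tau)\|+\|\phi(x_0)\|<2M$'' should really read ``$<\|\phi(x_0\pm\tau)\|+\|\phi(x_0)\|\le 2M$'' (the strictness is in the perturbation step, not in bounding by $M$), though the conclusion $\|T_1(x_0\pm\tau,x_0)\phi(x_0)\|<2M$ is correct either way; and your argument only exploits the hypothesis at $|x-y|=\tau$ rather than over the full window $|x-y|\le 2\tau$, so you are not using the assumed estimate at its full strength --- harmless, but worth noting since the window $2\tau$ in the statement comes straight from Palmer's formulation.
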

\begin{proof}
This is the content of \cite[Theorem 1]{PALI}.
\end{proof}
\subsubsection{Uniqueness}
We emphasize that exponential dichotomies on an interval $J \subset \R$ are in general not unique. For instance, if $J = \R_{\geq 0}$, then the stable subspace $E^s(0)$ is uniquely determined, whereas the unstable subspace $E^u(0)$ can be chosen to be any complement of $E^s(0)$. However, given two exponential dichotomies it is possible to estimate the `gap' between the stable subspaces and unstable subspaces.
\begin{lem} \label{compdich}
Let $n \in \Z_{> 0}$, $a,b \in \R$ with $a < b$ and $A,B \in C([a,b],\mathrm{Mat}_{n \times n}(\C))$. Suppose equations (\ref{linsys}) and (\ref{linsys2}) have exponential dichotomies on $[a,b]$ with constants $K_{1,2},\mu_{1,2} > 0$ and projections $P_{1,2}(x), x \in J$. Denote by $T_{1,2}(x,y)$ the evolution operators of systems (\ref{linsys}) and (\ref{linsys2}). Let $\delta \geq 0$ such that
\begin{align*}\|T_1(a,b) - T_2(a,b)\| \leq \delta.\end{align*}
Then, for every $v \in E_1^s(a) = P_1(a)[\C^n]$, there exists $w \in E_2^s(a) = P_2(a)[\C^n]$ such that
\begin{align}\|v-w\| \leq (\delta + K_2e^{-\mu_2(b-a)})K_1e^{-\mu_1 (b-a)}\|v\|. \label{compdicest}\end{align}
Similarly, for every $v \in E_1^u(b) = \ker(P_1(b))$, there exists $w \in E_2^u(b) = \ker(P_2(b))$ such that (\ref{compdicest}) holds true.
\end{lem}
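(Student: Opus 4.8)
The plan is to exploit the exponential dichotomy of \eqref{linsys} on $[a,b]$ to produce, from a given $v \in E_1^s(a)$, a nearby vector $w \in E_2^s(a)$ by transporting $v$ forward to $x=b$ under the flow of \eqref{linsys}, observing that it has become exponentially small there, and then reinterpreting that small vector through the dichotomy of \eqref{linsys2}. Concretely, first I would note that since $v \in E_1^s(a) = P_1(a)[\C^n]$, the dichotomy estimate for \eqref{linsys} gives $\|T_1(b,a)v\| \le K_1 e^{-\mu_1(b-a)}\|v\|$. The idea is then to apply $T_2(a,b)$ to the vector $T_1(b,a)v$ after first projecting it onto the stable subspace $E_2^s(b)$ of \eqref{linsys2}: define $w := T_2(a,b)P_2(b)T_1(b,a)v$. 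Because $T_2(a,b)$ commutes with $P_2(\cdot)$ and $P_2(a)$ is a projection onto $E_2^s(a)$, the vector $w$ indeed lies in $E_2^s(a)$, as required.

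The second step is to estimate $\|v - w\|$. Write $v - w = T_2(a,b)T_1(b,a)v - T_2(a,b)P_2(b)T_1(b,a)v + \big(v - T_2(a,b)T_1(b,a)v\big)$. The first difference equals $T_2(a,b)(I-P_2(b))T_1(b,a)v = T_2^u(a,b)T_1(b,a)v$, which by the unstable dichotomy estimate for \eqref{linsys2} is bounded by $K_2 e^{-\mu_2(b-a)}\|T_1(b,a)v\| \le K_2 e^{-\mu_2(b-a)} K_1 e^{-\mu_1(b-a)}\|v\|$. The remaining term is $v - T_2(a,b)T_1(b,a)v = (T_1(a,b) - T_2(a,b))T_1(b,a)v$ — here I use $T_1(a,b)T_1(b,a)v = v$ — so by hypothesis its norm is at most $\delta \|T_1(b,a)v\| \le \delta K_1 e^{-\mu_1(b-a)}\|v\|$. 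Adding the two bounds yields exactly \eqref{compdicest}. The argument for $v \in E_1^u(b)$ is entirely symmetric: transport $v$ backward to $x=a$ using $T_1(a,b)$, which makes it exponentially small by the unstable estimate for \eqref{linsys}, project onto $E_2^u(a)$ via $I - P_2(a)$, flow forward again with $T_2(b,a)$, and estimate the difference in the same way using the stable estimate for \eqref{linsys2} and $\|T_1(a,b)-T_2(a,b)\| \le \delta$.

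I do not anticipate a serious obstacle here; the only points requiring a little care are (i) verifying that the constructed $w$ genuinely lies in the stable (resp. unstable) subspace at the correct endpoint, which follows from the commutation relation $P_2(x)T_2(x,y)=T_2(x,y)P_2(y)$ and the idempotence of the projections, and (ii) keeping track of which evolution operator and which endpoint each estimate is applied at, so that every exponential factor carries the minus sign and decays in the length $b-a$. One should also double-check that the cited hypothesis is stated in the correct direction, i.e. that $\|T_1(a,b) - T_2(a,b)\| \le \delta$ controls the reverse-time evolution over the same interval — which it does, since $a$ and $b$ are fixed. No other subtleties arise, and the lemma follows directly.
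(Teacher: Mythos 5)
Your proof is correct and follows essentially the same argument as the paper: both take $w := T_2(a,b)P_2(b)T_1(b,a)v$ and split $v-w$ into the term $T_2(a,b)(I-P_2(b))T_1(b,a)v$ controlled by the unstable estimate for \eqref{linsys2}, plus the term $(T_1(a,b)-T_2(a,b))T_1(b,a)v$ controlled by $\delta$, with $\|T_1(b,a)v\|$ pulling out the common factor $K_1 e^{-\mu_1(b-a)}\|v\|$. Your treatment of the unstable case by the symmetric backward-in-time argument also matches the paper's stated analogy.
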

\begin{proof}
Let $v \in E_1^s(a)$ and consider $w = T_2(a,b)P_2(b)T_1(b,a)v \in E_2^s(a)$. We estimate
\begin{align*}
\|T_2(a,b)P_2(b)T_1(b,a)v - v\| &\leq \left[\|T_2(a,b) - T_1(a,b)\| + \|T_2(a,b)(I-P_2(b))\|\right]\|T_1(b,a)v\|\\
 &\leq (\delta + K_2e^{-\mu_2(b-a)})K_1e^{-\mu_1 (b-a)}\|v\|.
\end{align*}
The other statement is proven in an analogous way.
\end{proof}
\subsubsection{Inhomogeneous problems}
When comparing solutions to inhomogeneous problems using Gr\"onwall's inequality, one often obtains sharp estimates on finite intervals only, since the solutions to the corresponding homogeneous problems will grow in general exponentially. Exponential dichotomies prove to be important tools to compare bounded solutions to inhomogeneous problems on the whole real line. This is the content of the following technical result.
\begin{prop} \label{inhomexpdi}
Let $n \in \Z_{> 0}, f,g \in C(\R,\C^n)$ bounded and $A, B \in C(\R,\mathrm{Mat}_{n \times n}(\C))$. Suppose equation (\ref{linsys}) has an exponential dichotomy on $\R$ with constants $K,\mu > 0$. Then the inhomogeneous problem,
\begin{align} \partial_x \phi = A(x)\phi + f(x), \ \ \ \phi \in \C^n, \label{lininhom}\end{align}
has a unique bounded solution $\phi(x)$. Furthermore, suppose in addition that $A$ and $B$ are bounded. Let $a,b \in \R$ with $a < b$. Then, for any bounded solution $\psi(x)$ to the inhomogeneous problem,
\begin{align} \partial_x \omega = B(x)\omega + g(x), \ \ \ \omega \in \C^n, \label{lininhom2}\end{align}
we estimate for $x \in [a,b]$
\begin{align}\begin{split}
\|\phi(x) - \psi(x)\| &\leq \frac{K}{\mu} \left(e^{-\mu(x-a)} + e^{-\mu(b-x)}\right)\left(\|\psi\| \|A-B\| + \|f-g\|\right)\\
 &\ \ \ \ \ \ \ + \frac{2K}{\mu}\left(\|\psi\| \sup_{z \in [a,b]} \|A(z)-B(z)\| + \sup_{z \in [a,b]}\|f(z) - g(z)\|\right).\end{split} \label{inhomest}
\end{align}
\end{prop}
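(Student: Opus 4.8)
The plan is to construct the bounded solution of (\ref{lininhom}) from the Green's function of the exponential dichotomy of (\ref{linsys}), and then to derive the estimate (\ref{inhomest}) by writing an arbitrary bounded solution $\psi$ of (\ref{lininhom2}) through the same Green's function. Throughout, denote by $T(x,y)$ the evolution of (\ref{linsys}), by $P(x)$ the dichotomy projections, and set $T^s(x,y) := T(x,y)P(y)$ and $T^u(x,y) := T(x,y)(I-P(y))$, so that $\|T^s(x,y)\| \leq Ke^{-\mu(x-y)}$ for $x \geq y$ and $\|T^u(x,y)\| \leq Ke^{-\mu(y-x)}$ for $y \geq x$. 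Define
\[
\phi(x) = \int_{-\infty}^x T^s(x,y)f(y)\,dy - \int_x^\infty T^u(x,y)f(y)\,dy .
\]
The dichotomy bounds give absolute convergence of both integrals and the a priori bound $\|\phi\|_\infty \leq \tfrac{2K}{\mu}\|f\|$. Differentiating, using $\partial_x T(x,y) = A(x)T(x,y)$ together with the unit jump $T^s(x,x) + T^u(x,x) = I$ of the integrand at $y = x$, shows that $\phi$ solves (\ref{lininhom}). For uniqueness, let $v$ be a bounded solution of the homogeneous system $\partial_x v = A(x)v$; by invariance $P(0)v(0) = T(0,x)P(x)T(x,0)v(0)$, so the dichotomy estimate forces $\|v(x)\| \geq \tfrac1K e^{-\mu x}\|P(0)v(0)\|$ for $x \leq 0$, whence $P(0)v(0) = 0$, and symmetrically $(I-P(0))v(0) = 0$, so $v \equiv 0$. (Alternatively one may invoke \cite[Chapter 2]{COP}.)

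For the estimate, observe that since $A$, $B$ and $\psi$ are bounded, the function $h(x) := (B(x)-A(x))\psi(x) + g(x)$ is a bounded continuous map, and $\psi$ is a bounded solution of $\partial_x \omega = A(x)\omega + h(x)$. By the uniqueness just established, $\psi$ must coincide with the Green's function representation above with $h$ in place of $f$. Subtracting the two representations gives, for $x \in \R$,
\[
\phi(x) - \psi(x) = \int_{-\infty}^x T^s(x,y)r(y)\,dy - \int_x^\infty T^u(x,y)r(y)\,dy, \qquad r := (f-g) + (A-B)\psi .
\]
Hence $\|r(y)\| \leq \|A-B\|\,\|\psi\| + \|f-g\|$ for all $y \in \R$, and $\|r(y)\| \leq \sup_{z \in [a,b]}\|A(z)-B(z)\|\,\|\psi\| + \sup_{z \in [a,b]}\|f(z)-g(z)\|$ for $y \in [a,b]$.

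Now fix $x \in [a,b]$ and split each of the two integrals at $a$ and at $b$. On the outer pieces $(-\infty,a)$ and $(b,\infty)$ use the global bound on $\|r\|$ and the elementary evaluations $\int_{-\infty}^a Ke^{-\mu(x-y)}\,dy = \tfrac{K}{\mu}e^{-\mu(x-a)}$ and $\int_b^\infty Ke^{-\mu(y-x)}\,dy = \tfrac{K}{\mu}e^{-\mu(b-x)}$; this produces the first term on the right-hand side of (\ref{inhomest}). On the inner pieces $(a,x)$ and $(x,b)$ use the $[a,b]$-bound on $\|r\|$ together with $\int_a^x Ke^{-\mu(x-y)}\,dy \leq \tfrac{K}{\mu}$ and $\int_x^b Ke^{-\mu(y-x)}\,dy \leq \tfrac{K}{\mu}$; this produces the second term. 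Adding the four contributions yields exactly (\ref{inhomest}). I do not expect a genuine obstacle here: the computation is pure bookkeeping with the dichotomy estimates. The one point that requires attention is the identification of the arbitrary bounded solution $\psi$ with the dichotomy Green's function representation built from the \emph{unperturbed} equation (\ref{linsys}); this is precisely where the hypothesis that $A$ and $B$ are bounded enters, since it guarantees that $h$ is bounded so that the uniqueness statement of the first part applies.
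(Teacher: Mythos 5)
Your proof is correct and follows essentially the same strategy as the paper's: represent the difference $\phi - \psi$ via the dichotomy Green's function with inhomogeneity $r = (A-B)\psi + (f-g)$, then split the integrals at $a$ and $b$ into outer pieces (estimated with the global bound on $r$, giving the decaying prefactors) and inner pieces (estimated with the $[a,b]$-sup bound). The only cosmetic differences are that the paper applies the Green's function representation directly to $w = \phi - \psi$ whereas you first represent $\psi$ itself (after rewriting its equation with $A$ as coefficient matrix) and subtract, and that you supply a short self-contained derivation of the representation and uniqueness where the paper simply cites Coppel [Proposition~8.2].
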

\begin{proof}
Denote by $T(x,y)$ the evolution of system (\ref{linsys}). By \cite[Proposition 8.2]{COP} system (\ref{lininhom}) has a unique bounded solution given by
\begin{align*} \phi(x) = \int_{-\infty}^x T^s(x,z)f(z)dz - \int_x^\infty T^u(x,z)f(z)dz, \ \ \ x \in \R.\end{align*}
Now, let $A$ and $B$ be bounded and $\psi$ a bounded solution to (\ref{lininhom2}). Note that $w \colon \R \to \C^n$ defined by $w(x) = \phi(x) - \psi(x)$ is a bounded solution to the inhomogeneous equation,
\begin{align*} \partial_x w = A(x)w + h(x),\end{align*}
where the inhomogeneity $h \colon \R \to \C^n$ given by $h(x) = (A(x) - B(x))\psi(x) + f(x) - g(x)$ is bounded on $\R$ by hypothesis. By applying \cite[Proposition 8.2]{COP} once again we deduce that $w(x)$ is given by
\begin{align} w(x) =& \int_{-\infty}^x T^s(x,z)h(z)dz - \int_x^\infty T^u(x,z)h(z)dz, \ \ \ x \in \R.\label{inhomest2}\end{align}
Now, let $a,b \in \R$ with $a < b$. Estimate (\ref{inhomest}) for $x \in [a,b]$ is achieved by splitting both integrals in expression (\ref{inhomest2}) into two parts. The first integral is split in integrals over $(-\infty,a)$ and over $(a,x)$. Similarly, the second integral is split in integrals over $(x,b)$ and over $(b,\infty)$. This yields four integrals, which can be estimated separately in order to obtain estimate (\ref{inhomest}).
\end{proof}
\subsection{The minimal opening between subspaces}
The minimal opening \cite[Section 13.3]{GOH} is a quantity measuring the `gap' between two subspaces.
\begin{defi}
Let $n \in \Z_{> 0}$. The \emph{minimal opening} between two non-trivial subspaces $\M$ and $\N$ of $\C^n$ is given by
\begin{align*} \eta(\M,\N) = \inf\{\|x-y\| : x \in \M, y \in \N, \max(\|x\|,\|y\|) = 1\}.\end{align*}
\end{defi}
The minimal opening has the useful property that the norm of the projection on $\M$ along $\N$ can be bounded in terms of $\eta(\M,\N)$. This norm estimate is essential for the application of the pasting Lemma \ref{pastingexpdi} in our spectral analysis.
\begin{prop} \label{opening}
Let $n \in \Z_{> 0}$. The following assertions hold true.
\begin{enumerate}
 \item If $P$ is a non-trivial projection on $\C^n$, then it holds
 \begin{align*}\|P\| \leq \frac{1}{\eta(P[\C^n],\ker(P))}.\end{align*}
 \item For non-trivial subspaces $\M$ and $\N$ of $\C^n$ it holds $\eta(\M,\N) \neq 0$ if and only if $\M \cap \N = \{0\}$.
 \item Let $\M_{1,2}$ and $\N_{1,2}$ be non-trivial subspaces of $\C^n$. Suppose that there exists $0 < \delta < 1$ such that for each $v \in \M_i$ there exists a $w \in \N_i$ such that $\|v-w\| \leq \delta \|v\|$ for $i = 1,2$. Then, we have the estimate
 \begin{align*}\eta(\N_1,\N_2) \leq \eta(\M_1,\M_2) + 4\delta.\end{align*}
 \item Let $\Omega \subset \C$ be open and connected. Suppose $\M(\lambda)$ and $\N(\lambda)$ are continuous families of subspaces on $\Omega$, i.e. there exist continuous families of projections $P_\M, P_\N \colon \Omega \to \mathrm{Mat}_{n \times n}(\C)$ such that $P_\M(\lambda)[\C^n] = \M(\lambda)$ and $P_\N(\lambda)[\C^n] = \N(\lambda)$ for $\lambda \in \Omega$. Then, the map $\lambda \mapsto \eta(\M(\lambda),\N(\lambda))$ is also continuous on $\Omega$.
\end{enumerate}
\end{prop}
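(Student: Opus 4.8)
The plan is to establish the four assertions in order, each using its predecessors, relying only on finite-dimensionality (so the relevant unit sets are compact) and elementary norm estimates; throughout $\|\cdot\|$ denotes the Euclidean norm. For assertion 1, I would fix a unit vector $x\in\C^n$ and write $x=y_1-y_2$ with $y_1:=Px\in P[\C^n]$ and $y_2:=-(I-P)x\in\ker(P)$, so that $\|y_1-y_2\|=\|x\|=1$. If $Px=0$ there is nothing to prove; otherwise, with $M:=\max(\|y_1\|,\|y_2\|)\ge\|Px\|$, the rescaled pair $(M^{-1}y_1,M^{-1}y_2)$ is admissible in the infimum defining $\eta(P[\C^n],\ker(P))$, whence $\eta\le M^{-1}\le\|Px\|^{-1}$; taking the supremum over unit $x$ gives $\|P\|\le1/\eta$.

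For assertion 2, the implication $\M\cap\N\neq\{0\}\Rightarrow\eta(\M,\N)=0$ is immediate by normalizing a nonzero common vector. For the converse I would use compactness: the set $\{(x,y):x\in\M,\ y\in\N,\ \max(\|x\|,\|y\|)=1\}$ is compact and $(x,y)\mapsto\|x-y\|$ continuous, so the infimum is attained; were it $0$, the minimizing pair would satisfy $x=y\neq0$, contradicting $\M\cap\N=\{0\}$. In particular, the infimum defining $\eta$ is always attained, a fact used below.

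Assertion 3 is the technical heart. I would first note that the bound is vacuous when $\delta\ge1/4$, since $\eta(\N_1,\N_2)\le1$ always (pair a unit vector of $\N_1$ with $0\in\N_2$), so I may assume $\delta<1/4$. By assertion 2 pick $v_i\in\M_i$ attaining $\|v_1-v_2\|=\eta(\M_1,\M_2)=:\eta_M$ with, say, $\|v_1\|=1$; if $\eta_M=1$ the conclusion is again trivial, so assume $\eta_M<1$ (in particular $v_2\neq0$). Choosing $w_i\in\N_i$ with $\|v_i-w_i\|\le\delta\|v_i\|\le\delta$ gives $\|w_1-w_2\|\le\eta_M+2\delta$ and $1-\delta\le\|w_1\|\le1+\delta$. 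Rescaling $(w_1,w_2)$ by $M^{-1}$ with $M:=\max(\|w_1\|,\|w_2\|)\in[1-\delta,1+\delta]$ produces an admissible pair for $\eta(\N_1,\N_2)$, so $\eta(\N_1,\N_2)\le(\eta_M+2\delta)/(1-\delta)$; the elementary inequality $(\eta_M+2\delta)/(1-\delta)\le\eta_M+4\delta$, valid because $\eta_M\le1<2-4\delta$, completes the argument. The one place requiring genuine care — making the constant come out as exactly $4\delta$ rather than an unspecified multiple of $\delta$ — is this last estimate, and isolating the regime $\delta\ge1/4$ is what keeps the denominator $1-\delta$ safely away from $0$; I expect this to be the main (if modest) obstacle.

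Finally, assertion 4 reduces to assertion 3. Fix $\lambda_0\in\Omega$; for $v\in\M(\lambda_0)=\mathrm{ran}\,P_\M(\lambda_0)$ one has $v=P_\M(\lambda_0)v$, so $w:=P_\M(\lambda)v\in\M(\lambda)$ satisfies $\|v-w\|=\|(P_\M(\lambda_0)-P_\M(\lambda))v\|\le\|P_\M(\lambda_0)-P_\M(\lambda)\|\,\|v\|$, and the same estimate holds with the roles of $\lambda$ and $\lambda_0$ exchanged; likewise for $\N$. Setting $\delta(\lambda):=\max\{\|P_\M(\lambda)-P_\M(\lambda_0)\|,\|P_\N(\lambda)-P_\N(\lambda_0)\|\}$, which tends to $0$ as $\lambda\to\lambda_0$ by continuity of the projection families, two applications of assertion 3 (one in each direction) yield $|\eta(\M(\lambda),\N(\lambda))-\eta(\M(\lambda_0),\N(\lambda_0))|\le4\delta(\lambda)$ for $\lambda$ near enough $\lambda_0$ that $\delta(\lambda)<1$, which is precisely continuity at $\lambda_0$.
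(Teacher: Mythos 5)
Your proof is correct and is a genuinely self-contained alternative to the paper's argument, which relies on Gohberg--Goldberg--Kaashoek for three of the four parts. The paper cites \cite{GOH} for assertions 1 and 2 outright and invokes identities (13.1.4), (13.2.5), (13.2.7) of \cite{GOH} to obtain the Lipschitz bound $|\eta(\M(\lambda),\N(\lambda)) - \eta(\M(\lambda_0),\N(\lambda_0))| \le \sqrt{2}\bigl(\|P_\M(\lambda)-P_\M(\lambda_0)\| + \|P_\N(\lambda)-P_\N(\lambda_0)\|\bigr)$ for part 4; you instead prove parts 1 and 2 from scratch (the decomposition $x = Px - (-(I-P)x)$ for part 1, compactness of the constraint set for part 2) and derive part 4 as a corollary of part 3 via the observation that $w = P_\M(\lambda)v$ realises the hypothesis of part 3 with $\delta = \|P_\M(\lambda)-P_\M(\lambda_0)\|$, giving the two-sided Lipschitz bound $4\delta(\lambda)$ in place of the paper's $\sqrt{2}$-constant. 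For part 3 the paper's normalisation is slightly slicker: it rescales $w_1$ to $z_1 := w_1/\|w_1\|$ so that $\max(\|z_1\|,\|z_2\|)=1$ holds by construction, then bounds $\|v_i - z_i\| \le 2\delta$ directly and never needs the denominator $1-\delta$ or the case split $\delta \ge 1/4$; your common-rescaling approach produces $(\eta_M+2\delta)/(1-\delta)$ and needs the elementary inequality $(\eta_M+2\delta)/(1-\delta) \le \eta_M + 4\delta$ plus the observation that the claim is vacuous for $\delta \ge 1/4$, all of which is correct but slightly longer. The net effect is that your version is more elementary and self-contained at the cost of a bit more algebra; the paper's is shorter but outsources more to the literature.
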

\begin{proof} The first two assertions are derived in \cite[p. 396]{GOH} and \cite[Proposition 13.2.1]{GOH}, respectively. For the third assertion take $\eps > 0$. There exists $v_1 \in \M_1$ and $v_2 \in \M_2$ with $\max(\|v_1\|,\|v_2\|) = 1$ such that $\|v_1-v_2\| \leq \eta(\M_1,\M_2) + \eps$. Without loss of generality we may assume $\|v_1\| = 1$. By hypothesis there exists $w_1 \in \N_1$ such that $\|v_1 - w_1\| \leq \delta$. Because we have $\delta < 1$, we can normalize $w_1$ and define $z_1 := \frac{w_1}{\|w_1\|}$. One readily estimates $\|v_1 - z_1\| \leq 2\delta$. Similarly, there exists $w_2 \in \N_2$ such that $\|v_2 - w_2\| \leq \delta$. In the case $\|w_2\| > 1$, take $z_2 := \frac{w_2}{\|w_2\|}$. One easily verifies $\|v_2 - z_2\| \leq 2\delta$. In the case $\|w_2\| \leq 1$, we just take $z_2 := w_2$. Finally, we estimate
\begin{align*} \eta(\N_1,\N_2) \leq \|z_1 - z_2\| \leq \|v_1 - v_2\| + \|v_1 - z_1\| + \|v_2 - z_2\| \leq \eta(\M_1,\M_2) + 4\delta + \eps.\end{align*}
Since $\eps$ is arbitrarily chosen, the second assertion follows. Finally, for the fourth assertion let $P_\M(\lambda)$ and $P_\N(\lambda)$ be continuous families of projections on $\Omega$ with ranges $\M(\lambda)$ and $\N(\lambda)$, respectively. With the aid of identities (13.1.4), (13.2.5) and (13.2.7) in \cite{GOH} we derive for $\lambda_0 \in \Omega$
\begin{align*} |\eta(\M(\lambda),\N(\lambda)) - \eta(\M(\lambda_0),\N(\lambda_0))| \leq \sqrt{2}\left(\|P_\M(\lambda) - P_\M(\lambda_0)\| + \|P_\N(\lambda) - P_\N(\lambda_0)\|\right).\end{align*}
This shows that $\lambda \to \eta(\M(\lambda),\N(\lambda))$ is continuous on $\Omega$.
\end{proof}
\section{Proofs of technicalities} \label{A2}
\begin{lem} \label{boundG}
The map $\G$, defined in (\ref{defupsilon}), is bounded at $\infty$.
\end{lem}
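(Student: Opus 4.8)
The plan is to identify $\V_{in}(\cdot,\lambda)$ with the unique bounded solution of a second-order matrix ODE, and then to estimate that solution for large $|\lambda|$ by treating the ODE as a perturbation of the constant-coefficient operator $D_2\partial_{xx}-\lambda$, whose resolvent on $L^1$ and $L^\infty$ has norm $\ord(|\lambda|^{-1})$.

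\emph{Step 1 (reduction to a second-order problem).} Fix $\lambda\in\Ce_\Lambda\setminus\N(\E_{f,0})$ and write the unique bounded solution $\X_{in}(x,\lambda)$ of (\ref{fastinhom}) from Theorem \ref{Sturminhom} in $(n\times m)$-blocks as $\X_{in}=\bigl(\begin{smallmatrix} P & Q\\ R & S\end{smallmatrix}\bigr)$, so that $\V_{in}=P$. From the block structure of $\A_{22,0}$ and $\A_{21,0}$, the pair $(Q,S)$ solves (column by column) the homogeneous equation (\ref{redstab}); since $\lambda\notin\N(\E_{f,0})$, (\ref{redstab}) has an exponential dichotomy on $\R$ by Theorem \ref{Sturm}-2, hence $Q\equiv S\equiv 0$. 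The remaining blocks satisfy $P'=D_2^{-1}R$ and $R'=(\partial_v G(u_0,v_\ho(x),0)+\lambda)P+\partial_u G(u_0,v_\ho(x),0)$, i.e. $\V_{in}(\cdot,\lambda)$ is the unique bounded (indeed, by Theorem \ref{Sturminhom}-3, exponentially localized) $C^2$-solution of
\begin{align*}
D_2\partial_{xx}v=\bigl(M(x)+\lambda\bigr)v+N(x),\qquad M(x):=\partial_v G(u_0,v_\ho(x),0),\quad N(x):=\partial_u G(u_0,v_\ho(x),0).
\end{align*}

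\emph{Step 2 (resolvent estimate and Neumann series).} For $\lambda\notin(-\infty,0]$ the diagonal operator $D_2\partial_{xx}-\lambda$ is invertible on $C_b(\R,\C^n)$, with kernel assembled from the scalar Green's functions $-\tfrac{1}{2\sqrt{d_j\lambda}}e^{-\sqrt{\lambda/d_j}|x-y|}$ (with $\mathrm{Re}\sqrt{\lambda/d_j}>0$), $d_j>0$ the diagonal entries of $D_2$. Computing the row and column integrals of the kernel gives $\|(D_2\partial_{xx}-\lambda)^{-1}\|_{L^1\to L^1},\ \|(D_2\partial_{xx}-\lambda)^{-1}\|_{L^\infty\to L^\infty}=\bigl(|\lambda|\cos(\tfrac12\arg\lambda)\bigr)^{-1}$, and for $\lambda\in\Ce_\Lambda$ with $|\lambda|$ sufficiently large one has $|\arg\lambda|$ bounded away from $\pi$, so both norms are $\le (c_\Lambda|\lambda|)^{-1}$ for a constant $c_\Lambda>0$. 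Since $M$ is bounded on $\R$ ($v_\ho$ ranges in a compact set and $G\in C^2$), for $|\lambda|$ large enough that $\|(D_2\partial_{xx}-\lambda)^{-1}M\|<\tfrac12$ in both norms, a bounded $C^2$-solution $v$ of the equation in Step 1 satisfies $v=(D_2\partial_{xx}-\lambda)^{-1}(Mv+N)$, hence is uniquely given by $\V_{in}(\cdot,\lambda)=\bigl(I-(D_2\partial_{xx}-\lambda)^{-1}M\bigr)^{-1}(D_2\partial_{xx}-\lambda)^{-1}N$. Estimating in $L^1$,
\begin{align*}
\|\V_{in}(\cdot,\lambda)\|_{L^1}\le 2\,\|(D_2\partial_{xx}-\lambda)^{-1}N\|_{L^1}\le \frac{2}{c_\Lambda|\lambda|}\,\|N\|_{L^1}.
\end{align*}

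\emph{Step 3 (conclusion).} Since $G(u,0,\epsilon)=0$ and $H_2(u,0)=0$ for all $u$ by \ref{assS1}, and $v_\ho(x)\to0$ exponentially (Remark \ref{homoclin}), the maps $N(x)=\partial_u G(u_0,v_\ho(x),0)$ and $x\mapsto\partial_u H_2(u_0,v_\ho(x))$ are exponentially localized, hence lie in $L^1(\R)$, while $x\mapsto\partial_v H_2(u_0,v_\ho(x))$ is bounded. Therefore, for $|\lambda|$ large in $\Ce_\Lambda$,
\begin{align*}
\left\|\G(\lambda)-\int_{-\infty}^\infty\partial_u H_2(u_0,v_\ho(x))\,dx\right\|\le \|\partial_v H_2(u_0,v_\ho(\cdot))\|_\infty\,\|\V_{in}(\cdot,\lambda)\|_{L^1}\le \frac{2\,\|N\|_{L^1}\,\|\partial_v H_2(u_0,v_\ho(\cdot))\|_\infty}{c_\Lambda|\lambda|},
\end{align*}
which stays bounded as $|\lambda|\to\infty$; note that for $|\lambda|$ large (\ref{redstab}) has an exponential dichotomy on $\R$, so $\lambda\notin\N(\E_{f,0})$ and $\G$ has no poles there. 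This proves the lemma, and in fact shows $\G(\lambda)\to\int_{-\infty}^\infty\partial_u H_2(u_0,v_\ho(x))\,dx=\G_0$ as $\lambda\to\infty$. The only point requiring care is the uniform-in-$\lambda$ resolvent bound in Step 2: one must verify that $D_2\partial_{xx}-\lambda$ has norm $\ord(|\lambda|^{-1})$ on the whole set $\{\lambda\in\Ce_\Lambda:|\lambda|\ge R\}$, including $\lambda$ with bounded real part and large imaginary part, which is exactly why the factor $\cos(\tfrac12\arg\lambda)$ must be controlled; the rest is routine perturbation theory resting on Theorems \ref{Sturm} and \ref{Sturminhom}.
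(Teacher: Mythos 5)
Correct, and genuinely different from the paper's argument. The paper works entirely with the first-order system (\ref{redstab}): it rescales $(v,q)\mapsto(v,\sqrt{\lambda}w)$ to put the system in a form whose exponential dichotomy on $\R$ (obtained by roughness, Proposition \ref{RoughnessR}) has constants $K_1$ (independent of $\lambda$) and $\mu(\lambda)\sim\sqrt{\lambda}$; undoing the rescaling gives constants $K_2(\lambda)\sim\sqrt{\lambda}$, $\mu(\lambda)\sim\sqrt{\lambda}$ with $\lambda$-independent ratio $K_2/\mu$, and then Proposition \ref{inhomexpdi}, combined with the exponential decay of $\A_{21,0}$, gives $\|\X_{in}(x,\lambda)\|\le K_3e^{-\mu_3|x|}$ with $K_3,\mu_3$ independent of $\lambda$, from which boundedness of $\G$ follows. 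You instead observe (correctly) that the last $m$ column-blocks of $\X_{in}$ solve the homogeneous problem and hence vanish, reduce to the second-order equation for $\V_{in}$ alone, and solve it by Neumann series around the constant-coefficient operator $D_2\partial_{xx}-\lambda$, whose explicit Green's function gives the $\ord(|\lambda|^{-1})$ resolvent bound in $L^1$ and $L^\infty$ that drives the whole estimate. The trade-offs: your route avoids the dichotomy/roughness machinery (only the resolvent kernel of the scalar heat-type operator is used), gives the stronger conclusion $\G(\lambda)\to\G_0$ at rate $\ord(|\lambda|^{-1})$ rather than mere boundedness, and covers all of $\{\lambda\in\Ce_\Lambda:|\lambda|\ \text{large}\}$ rather than only the positive real axis (the latter suffices for the paper's applications in Proposition \ref{slowEvans}-2 and Lemma \ref{C2}); the paper's route stays entirely within the first-order/exponential-dichotomy framework used everywhere else in Appendix \ref{A1}, which keeps the exposition uniform. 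Your last caveat — that the bound $\cos(\tfrac12\arg\lambda)\ge c>0$ must hold uniformly on $\Ce_\Lambda\cap\{|\lambda|\ge R\}$ — is the one genuine technical point, and your argument handles it correctly: for $|\lambda|\ge R$ with $\mathrm{Re}\,\lambda>\Lambda$ one has $\cos(\arg\lambda)>\Lambda/R$, so taking $R$ large enough keeps $\arg\lambda$ in a compact subset of $(-\pi,\pi)$.
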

\begin{proof}
The coordinate change $(v,q) \mapsto (v,\sqrt{\lambda}w)$ puts system (\ref{redstab}) into the form,
\begin{align}
\left\{\!
\begin{array}{rcl} D_2 \partial_x v &=& \sqrt{\lambda}w \\ \partial_x w &=& \left(\frac{\partial_v G(u_0,v_{\ho}(x),0)}{\sqrt{\lambda}} + \sqrt{\lambda}\right)v \end{array}\right., \ \ \ v,w \in \C^n.  \label{linsys8}
\end{align}
If $\lambda > 0$ is sufficiently large, system (\ref{linsys8}) has by Proposition \ref{RoughnessR} an exponential dichotomy on $\R$ with constants $K_1,\mu(\lambda) > 0$, where $K_1$ is independent of $\lambda$ and $\mu(\lambda) = \tfrac{1}{2}\|\sqrt{D_2}\|^{-1} \sqrt{\lambda}$. Therefore, system (\ref{redstab}) has an exponential dichotomy on $\R$ with constants $K_2(\lambda),\mu(\lambda) > 0$, where $K_2 = \sqrt{\lambda}K_2'$ and $K_2'$ is independent of $\lambda$. Note that $\frac{K_2(\lambda)}{\mu(\lambda)}$ is $\lambda$-independent. Moreover, since $G$ vanishes at $v = 0$ by \ref{assS1}, $\A_{21,0}$ is exponentially localized by Remark \ref{homoclin}. Combining these fact with Proposition \ref{inhomexpdi} yields that, for $\lambda > 0$ sufficiently large, there exists $K_3,\mu_3 > 0$, independent of $\lambda$, such that $\|\X_{in}(x,\lambda)\| \leq K_3 e^{-\mu_3|x|}$. This concludes the proof.
\end{proof}
\begin{lem} \label{C2}
Let $m = 1$. The trace $t(\lambda) = \mathrm{Tr}(\Upsilon(\lambda)\T_s(2\check{L}_0,0,\lambda))$ diverges to $\infty$ as $\lambda \to \infty$.
\end{lem}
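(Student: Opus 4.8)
The plan is to exploit the fact that, for $m=1$, the slow limit problem (\ref{slowintr}) reduces --- after the normalization $D_1=1$ permitted by Remark \ref{quantD12} --- to the scalar Schr\"odinger-type equation $\partial_{\xx\xx}u = (\partial_u H_1(u_\s(\xx),0,0)+\lambda)u$, whose potential is uniformly large and positive as $\lambda\to\infty$. Writing $\T_s(2\check{L}_0,0,\lambda) = \left(\begin{smallmatrix} a(\lambda) & b(\lambda) \\ c(\lambda) & d(\lambda)\end{smallmatrix}\right)$ and using $\Upsilon(\lambda) = \left(\begin{smallmatrix} 1 & 0 \\ \G(\lambda) & 1\end{smallmatrix}\right)$, a direct computation gives $t(\lambda) = a(\lambda)+d(\lambda)+\G(\lambda)b(\lambda)$. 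By Lemma \ref{boundG} the factor $\G(\lambda)$ stays bounded as $\lambda\to\infty$, so it suffices to show that $a(\lambda)+d(\lambda)\to\infty$ while $b(\lambda)$ is comparatively negligible.

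First I would fix $C_0>0$ with $|\partial_u H_1(u_\s(\xx),0,0)|\le C_0$ on $[0,2\check{L}_0]$; this is possible since $u_\s$ takes values in the compact set $\K_U$ (Remark \ref{S3.4}) and $H_1\in C^2$. For $\lambda>C_0$ the potential $q(\xx) := \partial_u H_1(u_\s(\xx),0,0)+\lambda$ satisfies $0<\lambda-C_0\le q(\xx)\le\lambda+C_0$. Then I would compare the solutions $u_1$ (with $u_1(0)=1,u_1'(0)=0$) and $u_2$ (with $u_2(0)=0,u_2'(0)=1$) of $\partial_{\xx\xx}u=q(\xx)u$ with the constant-coefficient solutions of $\partial_{\xx\xx}u=(\lambda\mp C_0)u$. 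A standard maximum-principle argument --- monotonicity of $w/\cosh(\alpha\xx)$, where $w$ is the difference of the two solutions, $\alpha^2=\lambda\mp C_0$, and one uses $w(0)=w'(0)=0$ together with $w''\ge\alpha^2 w$ --- yields $a(\lambda)=u_1(2\check{L}_0)\ge\cosh(2\check{L}_0\sqrt{\lambda-C_0})$, $d(\lambda)=u_2'(2\check{L}_0)\ge\cosh(2\check{L}_0\sqrt{\lambda-C_0})$, and $0<b(\lambda)=u_2(2\check{L}_0)\le\sinh(2\check{L}_0\sqrt{\lambda+C_0})/\sqrt{\lambda+C_0}$; the positivity of $u_1,u_2$ on $(0,\infty)$ needed to run the comparison follows from convexity.

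Combining these bounds with $|\G(\lambda)|\le C$ for $\lambda$ large (Lemma \ref{boundG}),
\begin{align*}
t(\lambda)\ \ge\ 2\cosh\!\big(2\check{L}_0\sqrt{\lambda-C_0}\big) - \frac{C\,\sinh\!\big(2\check{L}_0\sqrt{\lambda+C_0}\big)}{\sqrt{\lambda+C_0}}.
\end{align*}
The first term diverges, while the ratio of the second to the first is $O\!\big(\lambda^{-1/2}e^{2\check{L}_0(\sqrt{\lambda+C_0}-\sqrt{\lambda-C_0})}\big)$; since $\sqrt{\lambda+C_0}-\sqrt{\lambda-C_0}=2C_0/(\sqrt{\lambda+C_0}+\sqrt{\lambda-C_0})\to0$, this ratio tends to $0$. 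Hence $t(\lambda)\to\infty$. The only genuinely technical point is the Sturm/maximum-principle comparison producing the explicit bounds on $a,d,b$; everything else is bookkeeping. An alternative to that step would be to mimic the dichotomy-and-roughness argument used in the proof of Lemma \ref{boundG} for the rescaled slow system, but the elementary scalar comparison appears shortest here.
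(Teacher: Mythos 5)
Your proof is correct, but it takes a genuinely different route from the paper. The paper rescales the slow limit problem (\ref{slowintr}) via $p\mapsto\sqrt{\lambda}p$, compares the resulting evolution over $[0,2\check{L}_0]$ to the \emph{constant-coefficient} system obtained by dropping $\partial_u H_1(u_\s(\xx),0,0)$, and controls the error with the Gr\"onwall-type Lemma \ref{Palmer}; this yields the asymptotic $t(\lambda)=e^{2\sqrt{\lambda}\check{L}_0}\bigl(1+\ord(\lambda^{-1/2})\bigr)$. You instead work directly with the scalar Schr\"odinger operator and prove two-sided pointwise bounds on the principal solutions $u_1,u_2$ by an elementary maximum-principle (Sturm) comparison against the constant potentials $\lambda\mp C_0$. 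Both arguments rely on Lemma \ref{boundG} to tame the $\G(\lambda)b(\lambda)$ term. The paper's method is sharper (it gives the precise leading exponential) and generalizes more naturally to $m>1$, which is why it is phrased in terms of evolutions and dichotomies; your comparison argument is more self-contained and elementary but inherently scalar. Both correctly reduce to the same bookkeeping: $t(\lambda)=a+d+\G b$, $a+d$ grows like $e^{2\check{L}_0\sqrt{\lambda}}$, and $\G b$ is a lower-order correction because $\G$ is bounded and the factor $1/\sqrt{\lambda}$ kills the growth of $b$ relative to $a+d$.
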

\begin{proof} Take $\lambda > 0$. Consider system,
\begin{align}
\left\{\!
\begin{array}{rcl} \partial_\xx u &=& \sqrt{\lambda} p \\ \partial_\xx p &=& \left(\frac{1}{\sqrt{\lambda}} \ \frac{\partial H_1}{\partial u} (u_\s(\xx),0,0) + \sqrt{\lambda}\right)u\end{array}\right., \ \ \ u,p \in \C, \label{linsys13}
\end{align}
with evolution $\T_{s1}(\xx,\yy,\lambda)$. Denote by $\T_{s2}(\xx,\yy,\lambda)$ the evolution of the autonomous system,
\begin{align}
\left\{\!
\begin{array}{rcl} \partial_\xx u &=& \sqrt{\lambda} p \\ \partial_\xx p &=& \sqrt{\lambda} u \end{array}\right., \ \ \ u,p \in \C. \label{linsys12}
\end{align}
Let $M > 0$ be a bound of $\partial_u H_1 (u_\s(\cdot),0,0)$ on $[0,2\check{L}_0]$. System (\ref{linsys12}) has bounded growth with constants $K = 1$ and $\mu = \sqrt{\lambda}$. Therefore, it holds by Proposition \ref{Palmer} for $\lambda > 0$ sufficiently large
\begin{align} \|\T_{s1}(2\check{L}_0,0,\lambda) - \T_{s2}(2\check{L}_0,0,\lambda)\| \leq \frac{2M\check{L}_0e^{M}}{\sqrt{\lambda}}e^{2\sqrt{\lambda}\check{L}_0}. \label{slowestim2}\end{align}
On the other hand, system (\ref{slowintr}) is equivalent to system (\ref{linsys13}) upon performing a coordinate change. Indeed, take $C_\lambda := \left(\begin{smallmatrix} I & 0 \\ 0 & \sqrt{\lambda} \end{smallmatrix}\right)$. It holds
\begin{align} C_\lambda \T_{s1}(2\check{L}_0,0,\lambda) C_\lambda^{-1} = \T_s(2\check{L}_0,0,\lambda), \label{slowestim1}\end{align}
We approximate $t(\lambda)$ for $\lambda > 0$ sufficiently large with the aid of Lemma \ref{boundG}, (\ref{slowestim2}) and (\ref{slowestim1})
\begin{align*} \begin{split}t(\lambda) &= \mathrm{tr}\left(\left(\begin{array}{cc} 1 & 0 \\ \frac{\G(\lambda)}{\sqrt{\lambda}} & 1\end{array}\right)T_{s1}(2\check{L}_0,0,\lambda)\right)
= \mathrm{tr}\left(\left(\begin{array}{cc} 1 & 0 \\ \frac{\G(\lambda)}{\sqrt{\lambda}} & 1\end{array}\right)\T_{s2}(2\check{L}_0,0,\lambda)\right) + \ord\left(\tfrac{1}{\sqrt{\lambda}}e^{2\sqrt{\lambda}\check{L}_0}\right) \\ &= e^{-2\sqrt{\lambda}\check{L}_0} + e^{2\sqrt{\lambda}\check{L}_0} + \tfrac{1}{2\sqrt{\lambda}}\G(\lambda)(e^{2\sqrt{\lambda}\check{L}_0} - e^{-2\sqrt{\lambda}\check{L}_0}) + \ord\left(\tfrac{1}{\sqrt{\lambda}}e^{2\sqrt{\lambda}\check{L}_0}\right) \\&= e^{2\sqrt{\lambda}\check{L}_0}\left(1 +\ord\left(\tfrac{1}{\sqrt{\lambda}}\right)\right),\end{split}\end{align*}
In the latter approximation we have used explicit expressions of the evolution $\T_{s2}(\xx,\yy,\lambda)$ of the linear autonomous system (\ref{linsys12}). We conclude $t(\lambda) \to \infty$ as $\lambda \to \infty$.
\end{proof}
\begin{proof}[Proof of Proposition \ref{slowEvans}-2] Let $\lambda > 0$. Putting $\yy = \sqrt{\lambda} \xx$ and $p = \sqrt{\lambda D_1} r$ rescales system (\ref{slowintr}) into
\begin{align}
\left\{\!\begin{array}{rcl} \sqrt{D_1} \partial_\yy u &=& r \\ \sqrt{D_1} \partial_\yy r &=& \left(\frac{\partial_u H_1 \left(u_\s\left(\lambda^{-1/2}\yy\right),0,0\right)}{\lambda} + I\right)u \end{array}\right., \ \ \ u,r \in \C^{m}. \label{linsys10}
\end{align}
Denote by $\T_{s1}(\yy,\check{z},\lambda)$ the evolution of system (\ref{linsys10}). It holds
\begin{align} C_\lambda \Upsilon_1(\lambda) \T_{s1}(2\sqrt{\lambda}\check{L}_0,0,\lambda) C_\lambda^{-1} = \Upsilon(\lambda)\T_s(2\check{L}_0,0,\lambda), \label{slowesti5}\end{align}
with $C_\lambda := \left(\begin{smallmatrix} I & 0 \\ 0 & \sqrt{\lambda D_1}\end{smallmatrix}\right)$ and $\Upsilon_1(\lambda) := \left(\begin{smallmatrix} I & 0 \\ \left[\sqrt{\lambda D_1}\right]^{-1}G(\lambda) & I\end{smallmatrix}\right)$. System (\ref{linsys10}) is close to,
\begin{align}
\left\{\!\begin{array}{rcl} \sqrt{D_1} \partial_\yy u &=& r \\ \sqrt{D_1} \partial_\yy r &=& u \end{array}\right., \ \ \ u,r \in \C^{m}. \label{linsys11}
\end{align}
Clearly, (\ref{linsys11}) has an exponential dichotomy on $\R$ with constants $K_1 = 1$ and $\mu_1 = \|D_1\|^{-1/2}$. The corresponding rank $m$ projections $P_1 = \frac{1}{2}\left(\begin{smallmatrix} I & -I \\ -I & I\end{smallmatrix}\right)$ are independent of $x$, since (\ref{linsys11}) is autonomous. Let $M_1 > 0$ be a bound of  $\partial_u H_1(u_\s(\cdot),0,0)$ on $[0,2\check{L}_0]$. So, by roughness (Proposition \ref{roughnessintervals}) system (\ref{linsys10}) has, for $\lambda > 0$ sufficiently large, an exponential dichotomy on $[0,2\sqrt{\lambda}\check{L}_0]$ with constants $K_2,\mu_2 > 0$, independent of $\lambda$. The corresponding projections $P_2(x,\lambda), x \in [0,2\sqrt{\lambda}\check{L}_0]$ satisfy
\begin{align}\|P_2(x,\lambda) - P_1\| \leq \frac{144M_1}{\mu_1\lambda}. \label{slowesti1}\end{align}
Now, choose bases $B_1^{u,s} \in \mathrm{Mat}_{2m \times m}(\C)$ of $P_1[\C^{2m}] = B_1^s[\C^m]$ and $\ker(P_1) = B_1^u[\C^m]$. Define $B_2^s(\lambda) = P_2(0,\lambda)B_1^s$ and $B_2^u(\lambda) = (I-P_2(2\sqrt{\lambda}\check{L}_0,\lambda))B_1^u$. By estimate (\ref{slowesti1}) it holds
\begin{align}  \|B_2^{u,s}(\lambda) - B_1^{u,s}\| = \ord\left(\frac{1}{\lambda}\right). \label{slowesti2}\end{align}
Consider the invertible matrix,
\begin{align*}\h(\lambda) := \left(\T_{s1}(0,2\sqrt{\lambda}\check{L}_0,\lambda)B_2^u(\lambda), B_2^s(\lambda)\right).\end{align*}
By Lemma \ref{boundG} and (\ref{slowesti2}) we have
\begin{align*} \left(\Upsilon_1(\lambda) \T_{s1}(2\sqrt{\lambda}\check{L}_0,0,\lambda) - \gamma\right)\h(\lambda) = \left(B_1^u,-\gamma B_1^s\right) +  \ord\left(\frac{1}{\sqrt{\lambda}}\right).\end{align*}
Taking determinants in the previous expression gives by (\ref{slowesti5})
\begin{align*} \E_{s,0}(\lambda,\gamma)\det(\h(\lambda)) &= \det\left(\Upsilon_1(\lambda) \T_{s1}(2\sqrt{\lambda}\check{L}_0,0,\lambda) - \gamma\right)\det(\h(\lambda)) = (-\gamma)^m \det\left(B_1^u,B_1^s\right) + \ord\left(\frac{1}{\sqrt{\lambda}}\right).\end{align*}
By construction $\det(B_1^u,B_1^s)$ is non-zero and independent of $\lambda$. Combining this with $\det(\h(\lambda)) \neq 0$ yields that $\E_{s,0}(\lambda,\gamma)$ must be non-trivial for $\lambda > 0$ sufficiently large.
\end{proof}

\paragraph*{Acknowledgement.} Bj\"orn de Rijk would like to thank Frits Veerman for taking the time to introduce him into the subject and for all the helpful discussions.

\bibliographystyle{plain}
\bibliography{mybib}

\end{document}